\theoremstyle{plain}
\newtheorem{theorem}{Theorem}[section]
\newtheorem{proposition}[theorem]{Proposition}
\newtheorem{corollary}[theorem]{Corollary}
\newtheorem{lemma}[theorem]{Lemma}
\theoremstyle{definition}
\newtheorem{definition}[theorem]{Definition}
\newtheorem{passage}[theorem]{}
\newtheorem{example}[theorem]{Example}
\theoremstyle{remark}
\newtheorem{remark}[theorem]{Remark}
\numberwithin{equation}{section}
\begin{document}

\title[Lift derived equivalences of abelian surface to generalized Kummer]{Lifting derived equivalences of abelian surfaces to generalized Kummer varieties}

\author{Yuxuan Yang}

\address{Department of Mathematics and Statistics, University of Massachusetts Amherst, Amherst, Massachusetts 01002}

\curraddr{Department of Mathematics and Statistics,
University of Massachusetts Amherst, Amherst, Massachusetts 01002}
\email{yuxuanyang@umass.edu or yuxuanyangalggeom@gmail.com}

\subjclass[2020]{14J42 (Primary), 18A05 (Secondary)}

\date{March 4th, 2026.}

\keywords{Algebraic geometry, Category theory}

\begin{abstract}
In this article, we study the $G$-autoequivalences of the derived category $\mathbf{D}^b_G(A)$ of $G$-equivariant objects for an abelian variety $A$ with $G$ being a finite subgroup of $\mathrm{Pic}^0(A)$. We provide a result analogue to Orlov's short exact sequence for derived equivalences of abelian varieties. It can be generalized to the derived equivalences of abelian varieties for a same $G$ in general. Furthermore, we find derived equivalences of generalized Kummer varieties by lifting derived equivalences of abelian surfaces using the $G$-equivariant version of Orlov's short exact sequence and some ``splitting" propositions. 
\end{abstract}

\maketitle

\tableofcontents

\section*{Introduction}
The prototype of the main result of this paper is the lift of a derived equivalence of K3 surfaces to a derived equivalence of the corresponding Hilbert schemes. Ploog states in \cite{Ploog:05} and \cite{Ploog:07} that a Fourier-Mukai equivalence, $\mathrm{FM}_{P}:\mathbf{D}^b(S_1)\to\mathbf{D}^b(S_2)$, of K3 surfaces yields $$\mathrm{FM}^{\mathfrak{S}_n}_{P^{\boxtimes n}}:\mathbf{D}^b(S_1^n)\xrightarrow[]{\simeq}\mathbf{D}^b(S_2^n),$$
where the kernel $P^{\boxtimes n}:=P\boxtimes\dots\boxtimes P=\overset{n}{\underset{i=1}{\oplus}}\pi_i^*P\in\mathbf{D}^b((S_1\times S_2)^n)$ is canonically $(\mathfrak{S}_n)_\Delta$-linearized, by \cite{Ploog:05}. It gives a derived equivalence of the corresponding Hilbert schemes via Haiman's result and the derived McKay correspondence.

The situation for generalized Kummer varieties is not easy. We use the following notation to set up.

Let $A$ be an abelian variety, and let $G\leq \mathrm{Pic}^0(A)\cong \widehat{A}$ be a finite subgroup. Conjugating the translation action of $G$ on $\mathbf{D}^b(\widehat{A})$ via Mukai's equivalence $$\Phi_{\mathcal{P}}:\mathbf{D}^b(A)\xrightarrow[]{\simeq}\mathbf{D}^b(\widehat{A}),$$
where $\mathcal{P}$ is the Poincar\'{e} line bundle over $A$, we get the action $$\rho_g:=\Phi_{\mathcal{P}}^{-1}\circ(t_g)_*\circ\Phi_{\mathcal{P}}=-\otimes\mathcal{L}_g:\mathbf{D}^b(A)\xrightarrow[]{\simeq}\mathbf{D}^b(A)$$
associated to $g\in G\leq \widehat{A}$ corresponding to $\mathcal{L}_g\in\mathrm{Pic}^0(A)$. As in \cite[subsections 2.1 and 3.1]{Beckmann Oberdieck:23} (see also subsection \ref{BO 2.1 3.1}), we get the equivariant category $\mathbf{D}^b_G(A)$ with objects being $G$-equivariant objects. Then the derived category $\mathbf{D}^b_G(A)$ is equivalent to $\mathbf{D}^b(B)$, where $B$ is the abelian variety with homomorphism $q:B\to A$ such that $G=\mathrm{ker}(\widehat{q})$, where the induced map $\widehat{q}$ is $\mathrm{Pic}^0(A)\cong\widehat{A}\to\widehat{B}\cong\mathrm{Pic}^0(B)$.

The set $G\textrm{-}\mathrm{Aut}(\mathbf{D}^b_G(A))$ of the derived autoequivalences of $\mathbf{D}^b_G(A)$, using the definition in \cite[subsection 2.1]{Beckmann Oberdieck:23}, consists of $G$-functors of the form $(f,\sigma)$, where $f$ is in $\mathrm{Aut}(\mathbf{D}^b(A))$ and $\sigma$ is a set of $G$-equivariance natural transformations for $f$. We get the forgetful natural map $$F_q:G\textrm{-}\mathrm{Aut}(\mathbf{D}^b_G(A))\to\mathrm{Aut}(\mathbf{D}^b(A)), (f,\sigma)\mapsto f.$$

In addition, we obtain a composition of natural maps $$\lambda_q:G\textrm{-}\mathrm{Aut}(\mathbf{D}^b_G(A))\to\mathrm{Aut}(\mathbf{D}^b_G(A))\simeq\mathrm{Aut}(\mathbf{D}^b(B)).$$

Let notation be as above over an algebraically closed field of characteristic $0$. Set $V_A:=\mathrm{H}^1(A\times \widehat{A}, \mathbb{Z})=\mathrm{H}^1(A, \mathbb{Z})\times\mathrm{H}^1(\widehat{A}, \mathbb{Z})$ with the pairing $(-,-)_{V_A}$ defined to be $$((\alpha_1,\beta_1),(\alpha_2,\beta_2))_{V_A}:=\beta_1(\alpha_2)+\beta_2(\alpha_1),$$ where $(\alpha_1,\beta_1), (\alpha_2,\beta_2)\in V_A\simeq\mathrm{H}^1(A, \mathbb{Z})\times\mathrm{H}^1({A}, \mathbb{Z})^*$. Let $\mathrm{SO}(V_A)$ be the group of special orthogonal maps from $V_A$ to $V_A$ with respect to the pairing $(-,-)_{V_A}$. Its subgroup $\mathrm{SO}^+(V_A):=\mathrm{SO}(V_A)\cap\mathrm{ker}(\mathrm{sn}_{\mathbb{R}})$ consists of maps with trivial spinor norms, where $\mathrm{ker}(\mathrm{sn}_{\mathbb{R}})\subset\mathrm{O}(V_A\otimes\mathbb{R})$ as below.

As in the notation before \cite[Theorem 1.1]{Gritsenko Hulek Sankaran:09}, in general, the spinor norm over a field $K\not=\mathbb{F}_2$ for an integral even lattice $(L,(-,-))$ is defined to be a group homomorphism $\mathrm{sn}_K:\mathrm{O}(L\otimes K)\to K^\times/(K^\times)^2$ with $$\mathrm{sn}_K(g)=(-\frac{(v_1,v_1)}{2})\cdot...\cdot(-\frac{(v_m,v_m)}{2})(K^\times)^2,$$ where for $K\not=\mathbb{F}_2$, any $g\in\mathrm{O}(L\otimes K)$ can be represented as a product of reflections $g=s_{v_1}s_{v_2}...s_{v_n}$, where $v_i\in L\otimes K$.

The subgroup $\mathrm{SO}_{\mathrm{Hdg}}(V_A)$ consists of all the maps in $\mathrm{SO}(V_A)$ that preserve the Hodge structure of $V_A$.

Denote Orlov's representation by $$\rho_A:\mathrm{Aut}(\mathbf{D}^b(A))\to \mathrm{SO}_{\mathrm{Hdg}}(V_A),$$
where $\mathrm{image}(\rho_A)=\mathrm{SO}^+_{\mathrm{Hdg}}(V_A)$ by \cite[Proposition 5.44 and Corollary 9.57]{Huybrechts:06} and \cite[Lemma 4.1]{Markman:23}. The group $\mathrm{SO}^+_{\mathrm{Hdg}}(V_A)$ embeds naturally as a group of automorphisms of $A\times\widehat{A}$. Then we have an exact sequence $$0\to\mathrm{Alb}(A)\times\widehat{A}\times\mathbb{Z}\to\mathrm{Aut}(\mathbf{D}^b(A))\xrightarrow[]{\rho_A}\mathrm{SO}^+_{\mathrm{Hdg}}(V_A)\to0$$ via \cite[Corollaries 9.57 and 9.61]{Huybrechts:06} and Remark \ref{two Orlov's representation remark}. 

We have the isometry $(q^*,(\widehat{q}^*)^{-1}): V_{A,\mathbb{Q}}:=V_A\otimes_{\mathbb{Z}}\mathbb{Q}\to V_{B,\mathbb{Q}}:=V_B\otimes_{\mathbb{Z}}\mathbb{Q}$. Then we define $G\textrm{-}\mathrm{SO}^+_{\mathrm{Hdg}}(V_B)\leq\mathrm{SO}^+_{\mathrm{Hdg}}(V_B)$ as the subgroup consisting of all Hodge isometries $\gamma\in\mathrm{SO}^+_{\mathrm{Hdg}}(V_B)$ such that the extension of $\gamma$ to $V_{B,\mathbb{Q}}=V_B\otimes_{\mathbb{Z}}\mathbb{Q}$ leaves the lattice $q^*\mathrm{H}^1(A,\mathbb{Z})\oplus\mathrm{H}^1(\widehat{A},\mathbb{Z})$ invariant as the lattice in $V_{B,\mathbb{Q}}$ via the isometry $(q^*,(\widehat{q}^*)^{-1})$.

We have the following result, which is the $G$-equivariant case of the Orlov's short exact sequence.

\begin{theorem}[Theorem \ref{Th_GrhoA}]
    Let the notation be as above over an algebraically closed field of characteristic $0$. We have a short exact sequence $$0\to\mathrm{Alb}(B)\times\widehat{A}\times\mathbb{Z}\to G\textrm{-}\mathrm{Aut}(\mathbf{D}^b_G(A))\xrightarrow[]{G\textrm{-}\rho_A}G\textrm{-}\mathrm{SO}^+_{\mathrm{Hdg}}(V_B)\to 0.$$ 
    It fits into the commutative diagram with exact rows below, where the first and the third exact rows come from Orlov's Theorem.
    \begin{align*}
    \xymatrix{
    0\ar[r]&\mathrm{Alb}(B)\times\widehat{B}\times\mathbb{Z}\ar[r]&\mathrm{Aut}(\mathbf{D}^b(B))\ar[r]^{\rho_B}&\mathrm{SO}^+_{\mathrm{Hdg}}(V_B)\ar[r]&0\\
    0\ar[r]&\mathrm{Alb}(B)\times\widehat{A}\times\mathbb{Z}\ar[r]\ar[u]^{\mathrm{id}_{\mathrm{Alb}(B)}\times\widehat{q}\times\mathrm{id}_{\mathbb{Z}}}\ar[d]_{q_*\times\mathrm{id}_{\widehat{A}}\times\mathrm{id}_{\mathbb{Z}}}&G\textrm{-}\mathrm{Aut}(\mathbf{D}^b(A))\ar[r]^{G\textrm{-}\rho_A}\ar[u]^{\lambda_q}\ar[d]_{F_q}&G\textrm{-}\mathrm{SO}^+_{\mathrm{Hdg}}(V_B)\ar[r]\ar[u]^{\cup}\ar[d]_{\mathrm{res}}&0\\
    0\ar[r]&\mathrm{Alb}(A)\times\widehat{A}\times\mathbb{Z}\ar[r]&\mathrm{Aut}(\mathbf{D}^b(A))\ar[r]^{\rho_A}&\mathrm{SO}^+_{\mathrm{Hdg}}(V_A)\ar[r]&0
    }
    \end{align*}
\end{theorem}

Moreover, by subsection \ref{finite index Th_GrhoA}, the image $F_q(G\textrm{-}\mathrm{Aut}(\mathbf{D}^b(A)))$ is a subgroup of finite index $[G\textrm{-}\mathrm{SO}^+_{\mathrm{Hdg}}(V_B):\mathrm{SO}^+_{\mathrm{Hdg}}(V_A)]$ of $\mathrm{Aut}(\mathbf{D}^b(A))$. It fits in a short exact sequence $0\to\mathrm{ker}(q)\to G\textrm{-}\mathrm{Aut}(\mathbf{D}^b(A))\xrightarrow{F_q}F_q(G\textrm{-}\mathrm{Aut}(\mathbf{D}^b(A)))\to0$.

Furthermore, given two $n$-dimensional abelian varieties $A$ and $A'$ and embeddings of a finite group $G$ in both $\mathrm{Pic}^0(A)$ and $\mathrm{Pic}^0(A')$, the above can be generalized to $G$-functors between $\mathbf{D}^b_G(A)$ and $\mathbf{D}^b_G(A')$, which are derived equivalences. (See Theorem \ref{Th_GrhoAA'}.)
~\\

After the preparation in Section 1, we may use the tool to find some derived equivalences of generalized Kummer varieties in Section 2. 

Let $\Sigma:A^n\to A,n\geq2$, be the summation morphism for an abelian surface $A$ and let $N_A$ be its kernel. We consider the morphism 
\begin{align*}
    q:N_A\times A&\to A^n\\
    ((a_1,\dots,a_n),a)&\mapsto(a_1+a,\dots,a_n+a).
\end{align*}
Let $\mathfrak{S}_n$ act on $N_A\times A$ by the natural action on $N_A$ and the trivial action on $A$. Then $$\mathrm{ker}(q)=\{((a,a,\dots,a),-a)|a\in A[n]\}.$$

For the diagonal embedding $\widehat{\Sigma}:\widehat{A}\to\widehat{A}^n$ induced from $\Sigma:A^n\to A$, the summation, we have $\widehat{\Sigma}(\widehat{A}[n])=\mathrm{ker}(\widehat{q})$. Set $G:=\mathrm{ker}(\widehat{q})\leq\widehat{A}^n, \mathcal{G}:=\widehat{A}[n]$. Then $\widehat{\Sigma}$ restricts to an isomorphism $\mathcal{G}\xrightarrow[]{\cong}G$. We get the maps 
$$\mathrm{Aut}(\mathbf{D}^b(N_A\times A))\xleftarrow[]{\lambda_{q}}G\textrm{-}\mathrm{Aut}(\mathbf{D}^b_G(A^n))\xrightarrow[]{F_{q}}\mathrm{Aut}(\mathbf{D}^b(A^n)).$$

The Bridgeland-King-Reid theorem \cite{BKR:01} yields the equivalences $$\mathbf{D}^b_{\mathfrak{S}_n}(A^n)\simeq\mathbf{D}^b(A^{[n]}), \mathbf{D}^b_{\mathfrak{S}_n}(N_A\times A)\simeq\mathbf{D}^b(\mathrm{Kum}^{n-1}(A)\times A).$$
The group $\mathfrak{S}_n\times G$ acts on $\mathbf{D}^b(A^n)$, since the actions of $G$ and $\mathfrak{S}_n$ commute. We get an equivalence $\mathbf{D}^b_{\mathfrak{S}_n}(N_A\times A)\simeq\mathbf{D}^b_{\mathfrak{S}_n\times G}(A^n)\simeq\mathbf{D}_{\mathfrak{S}_n}(\mathbf{D}_G(A^n))$. By \cite[Proposition 3.3]{Beckmann Oberdieck:23} and BKR, we have maps $$\mathrm{Aut}(\mathbf{D}^b(\mathrm{Kum}^{n-1}(A)\times A)\xleftarrow[]{\widetilde{\lambda}_{q}}G\textrm{-}\mathrm{Aut}(\mathbf{D}_G(\mathbf{D}^b_{\mathfrak{S}_n}(A^n)))\xrightarrow[]{\widetilde{F}_{q}}\mathrm{Aut}(\mathbf{D}^b(A^{[n]})).$$
Let $\delta_{A}:\mathrm{Aut}(\mathbf{D}^b(A))\to\mathrm{Aut}(\mathbf{D}^b_{\mathfrak{S}_n}(A^n))\simeq\mathrm{Aut}(\mathbf{D}^b(A^{[n]}))$ be the natural functor and $n_A:A\to A$ be the map, multiplication by $n$. We have a map $\widetilde{\delta}_A$ as follows.

\begin{lemma}[Diagram (\ref{delta_EqA}) from Proposition \ref{delta_AA' Eq}]
    For an abelian surface $A$, there exists a map $\widetilde{\delta}_A$ that makes the square below commutative.
    \begin{align*}
        \xymatrix{
        \mathrm{Aut}(\mathbf{D}^b(\mathrm{Kum}^{n-1}(A)\times A))&
        G\textrm{-}\mathrm{Aut}(\mathbf{D}_G(\mathbf{D}^b_{\mathfrak{S}_n}(A^n)))\ar[l]_(0.45){\widetilde{\lambda}_{q}}\ar[r]^(0.58){\widetilde{F}_{q}}&\mathrm{Aut}(\mathbf{D}^b(A^{[n]}))\\
        \mathrm{Aut}(\mathbf{D}^b(A))&\mathcal{G}\textrm{-}\mathrm{Aut}(\mathbf{D}^b_{\mathcal{G}}(A))\ar[u]^{\widetilde{\delta}_{A}}\ar[r]_(0.55){F_{n_A}}\ar[l]^{{\lambda_{n_A}}}&\mathrm{Aut}(\mathbf{D}^b(A))\ar[u]^{{\delta_{A}}}
        }
    \end{align*}
\end{lemma}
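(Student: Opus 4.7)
The plan is to construct $\widetilde{\delta}_A$ explicitly by promoting the $\mathcal{G}$-equivariance data via external tensor products, and then to verify the right-hand square $\widetilde{F}_q \circ \widetilde{\delta}_A = \delta_A \circ F_{n_A}$ on the nose. Given $(f,\sigma)\in\mathcal{G}\textrm{-}\mathrm{Aut}(\mathbf{D}^b_{\mathcal{G}}(A))$, the first step is to take the underlying autoequivalence $f\in\mathrm{Aut}(\mathbf{D}^b(A))$ and form $\delta_A(f)\in\mathrm{Aut}(\mathbf{D}^b_{\mathfrak{S}_n}(A^n))$, the autoequivalence induced by $f^{\boxtimes n}$ with its canonical $\mathfrak{S}_n$-linearization of \cite[Lemma 3.7]{Ploog:05}. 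This is exactly $\delta_A\circ F_{n_A}(f,\sigma)$, so commutativity of the right square will hold by construction once a $G$-equivariance is added on top without altering this underlying $\mathfrak{S}_n$-equivariant functor.

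The second step is to build the $G$-equivariance on $\delta_A(f)$ from the $\mathcal{G}$-equivariance $\sigma$. Since $\widehat{\Sigma}$ identifies $\mathcal{G}=\widehat{A}[n]$ with $G\leq\widehat{A}^n$ via $g\mapsto(g,\dots,g)$, the action $\rho_{\widehat{\Sigma}(g)}$ on $\mathbf{D}^b(A^n)$ is tensoring with $\mathcal{L}_g^{\boxtimes n}$. I would define $\widetilde{\sigma}_{\widehat{\Sigma}(g)}:=\sigma_g^{\boxtimes n}$, the external tensor product of the natural isomorphism $\sigma_g\colon f\circ\rho_g\xrightarrow{\sim}\rho_g\circ f$ with itself $n$ times; this provides a natural isomorphism $\delta_A(f)\circ\rho_{\widehat{\Sigma}(g)}\xrightarrow{\sim}\rho_{\widehat{\Sigma}(g)}\circ\delta_A(f)$.

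The third step is the coherence verification. The $\mathcal{G}$-cocycle condition on $\sigma$ transports to the $G$-cocycle condition on $\widetilde{\sigma}$ via the group isomorphism $\widehat{\Sigma}\colon\mathcal{G}\xrightarrow{\cong}G$, using functoriality of $\boxtimes$. Because $G$ sits diagonally inside $\widehat{A}^n$, the $\mathfrak{S}_n$-action fixes $G$ pointwise, and both equivariance structures on $\delta_A(f)$ arise from external tensor products, so they commute in the sense required by the iterated equivariant category. This defines $\widetilde{\delta}_A(f,\sigma)\in G\textrm{-}\mathrm{Aut}(\mathbf{D}_G(\mathbf{D}^b_{\mathfrak{S}_n}(A^n)))$. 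Functoriality of $\boxtimes$ also makes $\widetilde{\delta}_A$ a group homomorphism, and commutativity of the square $\widetilde{F}_q\circ\widetilde{\delta}_A=\delta_A\circ F_{n_A}$ is then immediate since $\widetilde{F}_q$ merely discards the $G$-piece of the structure.

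The main obstacle will be the coherence check that the $\mathfrak{S}_n$- and $G$-equivariances on $\delta_A(f)$ are compatible in the strong sense needed for an object of the iterated equivariant category $\mathbf{D}_G(\mathbf{D}^b_{\mathfrak{S}_n}(A^n))$. This is essentially a bookkeeping exercise that relies on the diagonal placement of $G$ inside $\widehat{A}^n$ (so that $\mathfrak{S}_n$ acts trivially on $G$) and on chasing the naturality squares for the external tensor product, in the spirit of the linearization constructions of \cite{Ploog:05} and the formalism of subsections 2.1 and 3.1 of \cite{Beckmann Oberdieck:23}.
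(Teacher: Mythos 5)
Your construction is essentially the paper's: the paper packages the $E\mapsto E^{\boxtimes n}$ assignment as a 2-functor $\Pi$ on derived categories and defines $\widetilde{\delta}_{A}(f,\sigma)=(\Pi(f),(\Pi(\sigma_g)=\sigma_g^{\boxtimes n})_{g\in\mathcal{G}\cong G})$, which is exactly what you propose with $\widetilde{\sigma}_{\widehat{\Sigma}(g)}:=\sigma_g^{\boxtimes n}$; the commutativity of the right square then follows by construction in both treatments. The only presentational difference is that the paper delegates the coherence checks (cocycle condition for the lifted $\sigma$'s, compatibility of the $\mathfrak{S}_n$- and $G$-equivariances) to the 2-functoriality of $\Pi$, whereas you sketch them directly by hand.
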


Now that we obtain the derived autoequivalences of $\mathbf{D}^b(\mathrm{Kum}^{n-1}(A)\times A)$, we may split to get the derived autoequivalences of $\mathbf{D}^b(\mathrm{Kum}^{n-1}(A))$ by the following.

\begin{theorem}[Corollary \ref{split_NAA}]
    Let notation be as above over an algebraically closed field of characteristic $0$. For arbitrary $(f,\sigma)\in\mathcal{G}\textrm{-}\mathrm{Aut}(\mathbf{D}^b_{\mathcal{G}}(A))$, the splitting 
     \begin{equation*}
         \widetilde{\lambda}_{q}(\widetilde{\delta}_{A}(f,\sigma))=\Phi_{(f,\sigma)}\times\Psi_{(f,\sigma)}
     \end{equation*} holds for a unique combination of $\begin{cases}
         \Phi_{(f,\sigma)}\in\mathrm{Aut}(\mathbf{D}^b(\mathrm{Kum}^{n-1}(A)))\\
         \Psi_{(f,\sigma)}\in\mathrm{Aut}(\mathbf{D}^b(A)).
     \end{cases}$
     
\end{theorem}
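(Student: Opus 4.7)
The strategy is to reduce the corollary to a general splitting principle for Fourier--Mukai autoequivalences of products whose Orlov cohomological action respects the decomposition into the two factors, and then to verify this hypothesis for $\widetilde{\lambda}_q(\widetilde{\delta}_A(f,\sigma))$ using the $G$-equivariant Orlov sequence of Theorem \ref{Th_GrhoA} together with the explicit Ploog-type construction of $\widetilde{\delta}_A$.

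First, I would trace the underlying Fourier--Mukai kernel. By the commutative diagram of the preceding lemma, $\widetilde{F}_q\circ\widetilde{\delta}_A=\delta_A\circ F_{n_A}$, so on $A^n$ the autoequivalence is the $\mathfrak{S}_n$-linearized exterior tensor power built from $F_{n_A}(f,\sigma)\in\mathrm{Aut}(\mathbf{D}^b(A))$, carrying in addition the $G$-equivariance datum coming from $\sigma$. Using the equivalence $\mathbf{D}^b_{\mathfrak{S}_n\times G}(A^n)\simeq\mathbf{D}^b_{\mathfrak{S}_n}(N_A\times A)$ supplied by $q$, and applying Bridgeland--King--Reid on both sides, the resulting kernel on $(N_A\times A)^{\times 2}$ is $\mathfrak{S}_n$-equivariant and, since $\mathfrak{S}_n$ acts trivially on the $A$-factor while $q$ is a group homomorphism, factors into a piece depending only on the $N_A$-coordinates and a piece depending only on the $A$-coordinates.

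Second, I would check the block-diagonality of the induced Hodge isometry. Theorem \ref{Th_GrhoA}, applied to $q\colon N_A\times A\to A^n$ with $G=\ker(\widehat{q})$, identifies the image of $\widetilde{\lambda}_q(\widetilde{\delta}_A(f,\sigma))$ in $\mathrm{SO}_{\mathrm{Hdg}}(V_{N_A\times A})$ with an isometry determined by $\rho_A(F_{n_A}(f,\sigma))$ via the restriction map appearing in the right column of the theorem's diagram. Because $\delta_A$ is built from the diagonal inside $A^n$ (which corresponds to the $N_A$-direction inside $N_A\times A$ under $q$) while the extra $A$-factor records only the global translation coordinate absorbed by $F_{n_A}$, the resulting isometry preserves the decomposition $V_{N_A\times A}=V_{N_A}\oplus V_A$ in block-diagonal form, and the $\mathfrak{S}_n$-action descends to the $\mathrm{Kum}^{n-1}(A)$-factor only.

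Third, with the block-diagonal form on cohomology in hand, I would invoke a general splitting proposition: a block-diagonal class in $\mathrm{SO}_{\mathrm{Hdg}}(V_{\mathrm{Kum}^{n-1}(A)\times A})$ lifts uniquely modulo the translation/shift kernel of Orlov's exact sequence for $\mathrm{Kum}^{n-1}(A)\times A$, and that kernel itself splits along the two factors. Combining these gives the decomposition $\widetilde{\lambda}_q(\widetilde{\delta}_A(f,\sigma))=\Phi_{(f,\sigma)}\times\Psi_{(f,\sigma)}$, and uniqueness of the factors follows because any two such product decompositions of the same equivalence would differ by an autoequivalence acting as the identity on each factor, hence the identity. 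The hard part will be rigorously establishing the block-diagonal form on cohomology: it requires carefully tracking the Ploog $\boxtimes n$ kernel through two Bridgeland--King--Reid equivalences and simultaneously carrying the $\mathfrak{S}_n\times G$-linearization from one equivariant model to the other via $q$, and then verifying that after all this descent the induced cohomological action genuinely respects the product decomposition of $V_{N_A\times A}$ rather than mixing the two factors.
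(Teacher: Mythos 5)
Your overall strategy --- reduce to the Hodge/Orlov level, split there, then lift back --- is the same as the paper's, and your identification of $V_{N_A\times A}=V_{N_A}\oplus V_A$ together with the use of Theorem \ref{Th_GrhoA} is pointed in the right direction. However, two of your steps contain genuine errors and one is left too vague to carry the argument.

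First, the claim in your opening paragraph that after BKR and descent "the resulting kernel on $(N_A\times A)^{\times 2}$\ldots factors into a piece depending only on the $N_A$-coordinates and a piece depending only on the $A$-coordinates" is not justified and is too strong. The kernel one gets from Orlov's surjectivity is a line bundle supported on the graph of a symplectic isomorphism, and such a line bundle need not factor as an external product even when the isomorphism does. The paper avoids this pitfall: it never claims the kernel itself factors. Instead it shows that the symplectic map $\eta=\gamma_{N_A\times A}(\widetilde\lambda_q(\widetilde\delta_A(f,\sigma)))$ factors (via the $\mathfrak{S}_n$-equivariance of $\mathrm{H}^1(\eta)$, which forces $\eta$ to preserve the $\mathfrak{S}_n$-invariant summand $\mathrm{H}^1(A\times\widehat A)$), then uses Orlov's surjectivity to split a chosen preimage $\Phi=\Phi_1\times\Phi_2$, and finally shows the residual translation/shift $t$ in the kernel of Orlov's sequence for $N_A\times A$ also splits along the two factors. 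The factoring of the honest derived equivalence is thus a theorem that requires the exact-sequence structure, not something read off the kernel.

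Second, and more seriously, in your third step you invoke "Orlov's exact sequence for $\mathrm{Kum}^{n-1}(A)\times A$." No such exact sequence exists: Orlov's theorem describes $\mathrm{Aut}(\mathbf D^b(X))$ only for $X$ an abelian variety, and $\mathrm{Kum}^{n-1}(A)\times A$ is not abelian ($\mathrm{Kum}^{n-1}(A)$ is hyper-K\"ahler). The paper keeps the entire Orlov apparatus on the abelian side, working inside $\mathbf D^b_{\mathfrak{S}_n}(N_A\times A)$, and only at the very end passes to $\mathbf D^b(\mathrm{Kum}^{n-1}(A)\times A)$ via BKR. You need to follow that order: split as an $\mathfrak{S}_n$-equivariant autoequivalence of $N_A\times A$ first, then conjugate by BKR. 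Finally, your uniqueness argument ("two decompositions would differ by an autoequivalence acting as the identity on each factor, hence the identity") skips the actual work: the paper establishes uniqueness by comparing $\gamma$-images of the candidate factors and then observing that the translation/shift element $t$ of the Orlov kernel for $N_{A'}\times A'$ admits a unique factorization $t=t_1\times t_2$, which pins down the derived factors. That step should be made explicit rather than asserted.
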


So, we may lift any derived autoequivalence of an abelian surface in the image of $F_{n_A}$ to a derived autoequivalence of the corresponding generalized Kummer variety. This leads to a series of autoequivalences to generalized Kummer varieties with an explicit description. Moreover, we have
\begin{corollary}[Corollary \ref{split_NAA index n2}]
    Let notation be as above over an algebraically closed field of characteristic $0$. For an arbitrary $f\in\mathrm{Aut}(\mathbf{D}^b(A))$, up to finite index $n^2$, the splitting 
     \begin{equation*}
         \widetilde{\lambda}_{q}(\widetilde{\delta}_{A}(f,\sigma))=\Phi_{(f,\sigma)}\times\Psi_{(f,\sigma)}
     \end{equation*} holds for a unique combination of $\begin{cases}
         \Phi_{(f,\sigma)}\in\mathrm{Aut}(\mathbf{D}^b(\mathrm{Kum}^{n-1}(A)))\\
         \Psi_{(f,\sigma)}\in\mathrm{Aut}(\mathbf{D}^b(A))
     \end{cases}$,
     where $\sigma$ is a set of $\mathcal{G}$-equivariance natural transformations, such that $(f,\sigma)\in\mathcal{G}\textit{-}\mathrm{Aut}(\mathbf{D}^b_{\mathcal{G}}(A))$.

\end{corollary}

This idea can be extended to the derived equivalences among generalized Kummer varieties and $\mathcal{G}$-functors $\mathbf{D}^b_\mathcal{G}(A)\xrightarrow[]{\simeq}\mathbf{D}^b_\mathcal{G}(A')$, which are derived equivalences. See Theorem \ref{split_NAANA'A'} and Theorem \ref{split_NAANA'A' index n2}.

We may describe the derived (auto)equivalences among generalized Kummer varieties obtained above via Orlov's representation as in Theorem \ref{describe derived equiv NANA'} and Corollary \ref{describe derived equiv NA}. In particular, we may end this paper by some investigation about derived autoequivalences of Kummer K3 surfaces.

In Section \ref{Relating derived equivalences of isogeneous abelian varieties}, we do some prepartation about the investigation for the derived equivalences of generalized Kummer varities. In subsections \ref{BO 2.1 3.1} and \ref{G-functors between equivariant categories of abelian varieties}, we recall and develop notations and tools about group actions of categories, $G$-functors and equivariant categories, in general and of abelian varieties. In the rest of the subsections, we state and prove the $G$-equivariant case of the Orlov's short exact sequence (Theorem \ref{Th_GrhoA} and Theorem \ref{Th_GrhoAA'}). In Section \ref{Generalized Kummer varieties}, we state and prove Theorem \ref{split_NAANA'A'} and Corollary \ref{split_NAA} to get a series of derived equivalences of generalized Kummer varieties in subsections \ref{DbGA to DbKumXA} and \ref{DbGA to DbKum}. We may describe the derived (auto)equivalences among generalized Kummer varieties obtained above via Orlov's representation in subsection \ref{Derived equivalences between generalized Kummer varieties from Theorem}. Some note about derived autoequivalences of Kummer K3 surfaces follows at the end.
\phantom{1}

I am indebted to my advisor Eyal Markman for his thoughts, comments, and support on the first version of this paper. I would like to thank David Ploog, David Zhiyuan Bai and Ziwei Lu for their advice and comments to improve the exposition.

\section{Relating derived equivalences of isogeneous abelian varieties}\label{Relating derived equivalences of isogeneous abelian varieties}
As in the Introduction, we need to develop some notation about $G$-functors before stating the theorem about the $G$-equivariant case of Orlov's short exact sequence.

\subsection{Group actions of categories, $G$-functors and equivariant categories.}\label{BO 2.1 3.1}
In this subsection, we recall the definitions, notations, and some results of group actions of categories, $G$-functors and equivariant categories in \cite[subsections 2.1 and 3.1]{Beckmann Oberdieck:23} used later. Then we may introduce two natural maps $F,\lambda$ from the set of $G$-functors.

First, we focus on group actions of categories.

Let $G$ be a finite group, and let $\mathcal{D}$ be a category.

\begin{definition}\label{def of group action on category}
    An \textit{action} $(\rho,\theta)$ of $G$ on $\mathcal{D}$ consists of 
    \begin{enumerate}
        \item for every $g\in G$, an autoequivalence $\rho_g:\mathcal{D}\to\mathcal{D}$,
        \item for every pair $g,h\in G$, an isomorphism of functors $\theta_{g,h}:\rho_g\circ\rho_h\to\rho_{gh}$,
    \end{enumerate}
    such that for all triples $g,h,k\in G$, we have the commutative diagram
    \begin{align*}
        \xymatrix{
        \rho_g\circ\rho_h\circ\rho_k\ar[rr]^{\rho_g\theta_{h,k}}\ar[d]_{\theta_{g,h}\rho_k}&&\rho_g\circ\rho_{hk}\ar[d]^{\theta_{g,hk}}\\
        \rho_{gh}\circ\rho_k\ar[rr]^{\theta_{gh,k}}&&\rho_{ghk}.
        }
    \end{align*}
\end{definition}

Recall the 2-category of categories $\mathfrak{Cats}$, whose objects are categories, the morphisms are functors between categories, and the 2-morphisms are natural transformations. Similarly, we have the 2-category $G\textrm{-}\mathfrak{Cats}$ of categories with a $G$-action, by the morphisms and 2-morphisms defined below.

\begin{definition}\label{def of G-functor}
    A morphism or $G$-\textit{functor} $$(f,\sigma):(\mathcal{D},\rho,\theta)\to(\mathcal{D}',\rho',\theta')$$
    between categories with $G$-actions is a pair of a functor $f:\mathcal{D}\to\mathcal{D}'$, together with 2-isomorphisms $\sigma_g:f\circ\rho_g\to\rho'_g\circ f$ such that $(f,\sigma)$ intertwines the associativity relations on both sides, i.e. such that the following diagram commutes:
    \begin{align}\label{G-fun sigma commutative diagram}
        \xymatrix{
        f\circ\rho_g\circ\rho_h\ar[rr]^{f\theta_{g,h}}\ar[d]_{\sigma_g\rho_h}&&f\circ\rho_{gh}\ar[dd]^{\sigma_{gh}}\\
        \rho_g'\circ f\circ\rho_h\ar[d]_{\rho'_g\sigma_h}&&\\
        \rho_g'\circ\rho_h'\circ f\ar[rr]^{\theta'_{g,h}f}&&\rho'_{gh}\circ f.
        }
    \end{align}
\end{definition}

\begin{definition}
    A 2-morphism of $G$-functors $(f,\sigma)\to(\widetilde{f},\widetilde{\sigma})$ is a 2-morphism $t:f\to\widetilde{f}$ that interviews the $\sigma_g$, i.e. such that the following diagram commutes:
    \begin{align*}
        \xymatrix{
        f\circ\rho_g\ar[rr]^{\sigma_g}\ar[d]_{t\rho_g}&&\rho'_g\circ f\ar[d]^{\rho'_gt}\\
        \widetilde{f}\circ\rho_g\ar[rr]^{\widetilde{\sigma}_g}&&\rho'_g\circ\widetilde{f}.
        }
    \end{align*}
\end{definition}

By passing to an equivalent category, one can
(and we often will) assume that $\theta_{g,h}=\mathrm{id},\forall g,h\in G$ in the action $(\rho,\theta)$ because of \cite[Theorem 5.4]{Shinder:18}. Here, we say that a $G$-action $(\rho,\theta)$ on $\mathcal{D}$ is equivalent to a $G$-action $(\rho',\theta')$ on $\mathcal{D}'$ if we have an equivalence in $G$-$\mathfrak{Cats}$, that is, $(\mathcal{D},\rho,\theta)\simeq(\mathcal{D}',\rho',\theta')$.

Moreover, for a finite abelian group $G$ and a smooth projective variety $X$ over a field $k$, the set of equivalence classes of $G$-actions on $\mathbf{D}^b(X)$ is an $\mathrm{H}^2(G,k^*)$-torsor by \cite[Theorem 1.11]{Bayer Perry:23}.

Now, we get to the definition of equivariant categories.

Let $(\rho,\theta)$ be an action of a finite group $G$ on an additive $\mathbb{C}$-linear category $\mathcal{D}$.

\begin{definition}\label{def of equivariant category}
    The \textit{equivariant category} $\mathcal{D}_G$ is defined as follows.
    \begin{enumerate}
        \item Objects of $\mathcal{D}_G$ are pairs $(E,\phi_E)$, where $E$ is an object in $\mathcal{D}$ and $$\phi_E=(\phi_{E,g}:E\to\rho_g(E))_{g\in G}$$ 
        is a family of isomorphisms such that the diagram
        \begin{align}\label{phi_gh diagram}
            \xymatrix{
            E\ar@(dr,dl)[rrrrrr]^{\phi_{E,gh}}\ar[rr]^(0.45){\phi_{E,g}}&&\rho_g(E)\ar[rr]^(0.45){\rho_g(\phi_{E,h})}&&\rho_g(\rho_h(E))\ar[rr]^(0.55){\theta^E_{g,h}}&&\rho_{gh}(E)
            }
        \end{align}
        commutes for all $g,h\in G$.
        \item A morphism from $(E,\phi_E)$ to $(E',\phi_E)$ is a morphism $m:E\to E'$ in $\mathcal{D}$ which commutes with linearizations, i.e. such that the diagram
        \begin{align}\label{morphism in D_G diagram}
            \xymatrix{
            E\ar[rr]^{m}\ar[d]_{\phi_{E,g}}&&E'\ar[d]^{\phi_{E',g}}\\
            \rho_g(E)\ar[rr]^{\rho_g(m)}&&\rho_g(E')
            }
        \end{align}
        commutes for every $g\in G$.
    \end{enumerate}
\end{definition}

After recalling some contents of \cite[subsections 2.1 and 3.1]{Beckmann Oberdieck:23}, we develop two natural maps $F,\lambda$ from the set of $G$-functors.

\begin{passage}\label{def of F lambda}
    Let $G$ be a finite group, and let $(\rho,\theta),(\rho',\theta')$ be an action of $G$ on an additive $\mathbb{C}$-linear category $\mathcal{D},\mathcal{D'}$ respectively. Denote $G\textrm{-}\mathrm{Fun}(\mathcal{D}_G,\mathcal{D}'_G)$ as the set of $G$-functors from $\mathcal{D}_G$ to $\mathcal{D}'_G$. Denote $\mathrm{Fun}(\mathcal{D},\mathcal{D}')$ as the set of functors from $\mathcal{D}$ to $\mathcal{D}'$. Then we have a natural forgetful map $$F:G\textrm{-}\mathrm{Fun}(\mathcal{D}_G,\mathcal{D}'_G)\to\mathrm{Fun}(\mathcal{D},\mathcal{D}'), (f,\sigma)\mapsto f.$$

Denote $\mathrm{Fun}(\mathcal{D}_G,\mathcal{D}'_G)$ as the set of functors from $\mathcal{D}_G$ to $\mathcal{D}'_G$. We may define another natural map $\lambda:G\textrm{-}\mathrm{Fun}(\mathcal{D}_G,\mathcal{D}'_G)\to\mathrm{Fun}(\mathcal{D}_G,\mathcal{D}'_G)$. 

Precisely, for $(f,\sigma)\in G\textrm{-}\mathrm{Fun}(\mathcal{D}_G,\mathcal{D}'_G)$, the functor $\lambda(f,\sigma)\in\mathrm{Fun}(\mathcal{D}_G,\mathcal{D}'_G)$ can be described as follows.

\begin{enumerate}
    \item The functor $\lambda(f,\sigma)$ maps an object $(E,\phi_E)$ of $\mathcal{D}_G$, where $$\phi_E=(\phi_{E,g}:E\to\rho_g(E))_{g\in G}$$ is a family of isomorphisms that satisfy the commutative diagram (\ref{phi_gh diagram}), to an object $(f(E),\phi_{f(E)})$ of $\mathcal{D}'_G$. Here, the family of isomorphisms  $\phi_{f(E)}$ is composed of $\phi_{f(E),g}:f(E)\xrightarrow[\cong]{f(\phi_{E,g})}f(\rho_g(E))\xrightarrow[\cong]{\sigma_g(E)}\rho'_g(f(E)), \forall g\in G$, such that the diagram 
    \begin{align*}
            \xymatrix{
            f(E)\ar@(dr,dl)[rrrrrr]^{\phi_{f(E),gh}}\ar[rr]^(0.45){\phi_{f(E),g}}&&\rho'_g(f(E))\ar[rr]^(0.45){\rho'_g(\phi_{f(E),h})}&&\rho'_g(\rho'_h(f(E)))\ar[rr]^(0.55){\theta'^{f(E)}_{g,h}}&&\rho'_{gh}(f(E))
            }
        \end{align*}
    commutes for all $g,h\in G$.

    Actually, it comes from the lower left triangle of the following commutative diagram for all $g,h\in G$.
    \begin{align*}
        \xymatrix{
        f(E)\ar@(ur,ul)[rrrrrr]^{f(\phi_{E,gh})}\ar[rr]_(0.45){f(\phi_{E,g})}\ar@{=}[ddd]\ar[drr]_(0.45){\phi_{f(E),g}}&&f(\rho_g(E))\ar[rr]_(0.45){f(\rho_g(\phi_{E,h}))}\ar[d]^{\sigma_g(E)}_{\cong}&&f(\rho_g(\rho_h(E)))\ar[rr]_(0.55){f(\theta^E_{g,h})}\ar[d]_{\sigma_g(\rho_h(E))}^{\cong}&&f(\rho_{gh}(E))\ar[ddd]_{\sigma_{gh}(E)}^{\cong}\\
        &&\rho'_g(f(E))\ar[rr]^(0.45){\rho'_g(f(\phi_{E,h}))}\ar[drr]_{\rho'_g(\phi_{f(E),h})}&&\rho'_g(f(\rho_g(E)))\ar[d]^{\rho'_g(\sigma_h(E))}_{\cong}&&\\
        &&&&\rho'_g(\rho'_h(f(E)))\ar[drr]^(0.55){\theta'^{f(E)}_{g,h}}&&\\
        f(E)\ar[rrrrrr]^{\phi_{f(E),gh}}&&&&&&\rho'_{gh}(f(E))
        }
    \end{align*}
    Here, \begin{enumerate}[(i)]
        \item the top commutative part comes from applying the functor $f\in\mathrm{Fun}(\mathcal{D},\mathcal{D}')$ to (\ref{phi_gh diagram});
        \item the left and middle commutative triangles are originated from the definition of $\phi_{f(E)}$;
        \item the middle commutative square is from the 2-isomorphism $\sigma_g$;
        \item the right commutative part is from (\ref{G-fun sigma commutative diagram});
        \item the outer circle is originated from the definition of $\phi_{f(E)}$.
    \end{enumerate}  
    \item The functor $\lambda(f,\sigma)$ maps a morphism from $(E,\phi_E)$ to $(E',\phi_{E'})$ in $\mathcal{D}_G$, which is a morphism $m:E\to E'$ in $\mathcal{D}$ such that the diagram (\ref{morphism in D_G diagram}) commutes for every $g\in G$, to a morphism from $(f(E),\phi_{f(E)}))$ to $(f(E'),\phi_{f(E')})$ in $\mathcal{D}'_G$, which is a morphism $f(m):f(E)\to f(E')$ in $\mathcal{D}'$ from the functor $f\in\mathrm{Fun}(\mathcal{D},\mathcal{D}')$, such that the diagram 
    \begin{align*}
        \xymatrix{
        f(E)\ar[rr]^{f(m)}\ar[d]_{\phi_{f(E),g}}&&f(E')\ar[d]^{\phi_{f(E'),g}}\\
        \rho'_g(f(E))\ar[rr]^{\rho'_g(f(m))}&&\rho'_g(f(E'))
        }
    \end{align*}
    commutes for every $g\in G$.

    Actually, it comes from the outer circle of the following commutative diagram for every $g\in G$.
    \begin{align*}
        \xymatrix{
        f(E)\ar[dd]_{\phi_{f(E),g}}\ar[rrrrrr]^{f(m)}\ar[drr]_{f(\phi_{f(E),g})}&&&&&&f(E')\ar[dd]^{\phi_{f(E'),g}}\ar[dll]^{f(\phi_{E',g})}\\
        &&f(\rho_g(E))\ar[rr]^{f(\rho_g(m))}\ar[dll]_{\sigma_g(E)}^{\cong}&&f(\rho_g(E'))\ar[drr]^{\sigma_g(E')}_{\cong}&&\\
        \rho'_g(f(E))\ar[rrrrrr]^{\rho'_g(f(m))}&&&&&&\rho'_g(f(E'))
        }
    \end{align*}

    Here, \begin{enumerate}[(i)]
        \item the top commutative trapezoid comes from applying the functor $f$ in $\mathrm{Fun}(\mathcal{D},\mathcal{D}')$ to (\ref{morphism in D_G diagram});
        \item the left and right commutative triangles are originated from the definition of $\phi_{f(E)}$ and $\phi_{f(E')}$ respectively;
        \item the lower commutative trapezoid is from the 2-isomorphism $\sigma_g$.
    \end{enumerate}  
\end{enumerate}

\end{passage}

Note that the natural map $\lambda$ depends on the group $G$ and the group actions $(\rho,\theta),(\rho',\theta')$. But the forgetful map $F$ does not.

\subsection{$G$-functors between equivariant categories of abelian varieties}\label{G-functors between equivariant categories of abelian varieties}

From now on, we will specialize to the categories of abelian varieties. In this subsection, we develop the notation in the previous sections for the categories of abelian varieties, which will be used in later research.

\begin{passage}\label{Db(B) vs DbG(A)}
Let $A$ be an abelian variety, and let $G\leq \mathrm{Pic}^0(A)\cong\widehat{A}$ be a finite subgroup. By the Appell-Humbert theorem, there exists a lift of $G$ to a linearized group of autoequivalences of $\mathbf{D}^b(A)$. Alternatively, conjugating the translation action of $G$ on $\mathbf{D}^b(\widehat{A})$ via Mukai's equivalence $\Phi_{\mathcal{P}}:\mathbf{D}^b(A)\xrightarrow[]{\simeq}\mathbf{D}^b(\widehat{A})$, where $\mathcal{P}$ is the Poincar\'{e} line bundle over $A$, we get the action $$\rho_g:=\Phi_{\mathcal{P}}^{-1}\circ(t_g)_*\circ\Phi_{\mathcal{P}}=-\otimes\mathcal{L}_g:\mathbf{D}^b(A)\xrightarrow[]{\simeq}\mathbf{D}^b(A),$$
which is the derived equivalence associated to $g\in G\leq \widehat{A}$, where $\mathcal{L}_g\in\mathrm{Pic}^0(A)$ is the corresponding line bundle. As in Definition \ref{def of group action on category} for the notation of an action $(\rho,\theta)$ of a subgroup $G$ on $\mathbf{D}^b(A)$, where $\theta_{g,h}=\mathrm{id}$ for
all $g,h\in G$, we get the equivariant category $\mathbf{D}^b_G(A)$ with objects being $G$-equivariant objects. Then the derived category $\mathbf{D}^b_G(A)$ is equivalent to $\mathbf{D}^b(B)$, where $B$ is the abelian variety with homomorphism $q:B\to A$ such that $G=\mathrm{ker}(\widehat{q})$, where the induced map $\widehat{q}$ is $$\widehat{q}:\mathrm{Pic}^0(A)\cong\widehat{A}\to\widehat{B}\cong\mathrm{Pic}^0(B).$$ 
More precisely, $\widehat{B}=\widehat{A}/G$ and $B=\mathrm{Pic}^0(\mathrm{Pic}^0(A)/G)$. 

As in \cite[Example 3.6]{Beckmann Oberdieck:23}, this derived equivalence is denoted by $$\psi_q:\mathbf{D}^b(B)\xrightarrow{\simeq}\mathbf{D}^b_G(A),$$
where an object $F\in\mathbf{D}^b(B)$ is mapped to $(q_*F,\phi_{q_*F})$ with the family of isomorphisms being $$\phi_{q_*F}=(\phi_{q_*F,g}:=-\otimes\mathcal{L}_g:q_*F\xrightarrow{\cong}(q_*F)\otimes\mathcal{L}_g=\rho_g(q_*F))_{g\in G}.$$ The derived equivalence $\psi_q$ also maps a morphism $m:F\to F'$ in $\mathbf{D}^b(B)$ to a morphism from $(q_*F,\phi_{q_*F})$ to $(q_*F',\phi_{q_*F'})$ in $\mathbf{D}^b_G(A)$, which is a morphism $q_*(m)$ from $q_*F$ to $q_*F'$ in $\mathbf{D}^b(A)$ such that the diagram \begin{align*}
    \xymatrix{
    q_*F\ar[rr]^{q_*(m)}\ar[d]_{\phi_{q_*F,g}=-\otimes\mathcal{L}_g}&&q_*F'\ar[d]^{\phi_{q_*F',g}=-\otimes\mathcal{L}_g}\\
    \rho_g(q_*F)\ar[rr]^{\rho_g(q_*(m))}&&\rho_g(q_*F)
    }
\end{align*}
commutes for every $g\in G$, where $\rho_g(q_*(m)):=(-\otimes\mathcal{L}_g)\circ q_*(m)\circ(-\otimes\mathcal{L}_g^{-1})$.

In addition, we have an induced action $(\widehat{\rho},\widehat{\theta})$ of $G$ on $\mathbf{D}^b(\widehat{A})$ with $$\widehat{\rho}_g=(t_g)_*:\mathbf{D}^b(\widehat{A})\xrightarrow{\simeq}\mathbf{D}^b(\widehat{A}),g\in G\leq\widehat{A},\widehat{\theta}=\mathrm{id}.$$ Then we have a natural derived equivalence $$\Xi_q:\mathbf{D}^b(\widehat{B})\xrightarrow{\simeq}\mathbf{D}^b_G(\widehat{A}),$$
where an object $E\in\mathbf{D}^b(\widehat{B})$ is mapped to $(\widehat{q}^*E,\phi_{\widehat{q}^*E})$ with the family of isomorphisms being $$\phi_{\widehat{q}^*E}=(\phi_{\widehat{q}^*E,g}:=(t_g)_*:\widehat{q}^*E\xrightarrow{\cong}(t_g)_*(\widehat{q}^*E))_{g\in G}.$$
The derived equivalence $\Xi_q$ also maps a morphism $m:E\to E'$ in $\mathbf{D}^b(B)$ to a morphism from $(\widehat{q}^*E,\phi_{\widehat{q}^*E})$ to $(\widehat{q}^*E',\phi_{\widehat{q}^*E'})$ in $\mathbf{D}^b_G(\widehat{A})$, which is a morphism $\widehat{q}^*(m)$ from $\widehat{q}^*E$ to $\widehat{q}^*E'$ in $\mathbf{D}^b(\widehat{A})$ such that the diagram \begin{align*}
    \xymatrix{
    \widehat{q}^*E\ar[rr]^{\widehat{q}^*(m)}\ar[d]_{\phi_{\widehat{q}^*E,g}=(t_g)_*}&&\widehat{q}^*E'\ar[d]^{\phi_{\widehat{q}^*E',g}=(t_g)_*}\\
    \widehat{\rho}_g(\widehat{q}^*E)\ar[rr]^{\widehat{\rho}_g(\widehat{q}^*(m))}&&\widehat{\rho}_g(\widehat{q}^*E')
    }
\end{align*}
commutes for every $g\in G$, where $\widehat{\rho}_g(\widehat{q}^*(m)):=(t_g)_*\circ \widehat{q}^*(m)\circ(t_{-g})_*$.
\end{passage}

\begin{passage}\label{G-fun}
Using Definition \ref{def of G-functor}, a \textit{G-functor} from $\mathbf{D}^b_G(A)$ to itself is a pair $(f,\sigma)$ of a functor $f: \mathbf{D}^b(A)\to\mathbf{D}^b(A)$ with a set of natural transformations $$\sigma=(\sigma_g:f\circ \rho_g\to \rho_g\circ f)_{g\in G},$$
which are isomorphisms of functors, such that $\sigma_g$'s are compatible with the associativity natural transformations in the definition of a $G$-action $(\rho,\theta)$ on $\mathbf{D}^b(A)$. We refer to $\{\sigma_g|g\in G\}$ as a set of \textit{G-equivariance natural transformations for f}. 

Let $G\textrm{-}\mathrm{Fun}(\mathbf{D}^b_G(A), \mathbf{D}^b_G(A))$ be the set of $G$-functors from $\mathbf{D}^b_G(A)$ to $\mathbf{D}^b_G(A)$. We get the natural forgetful map $$F_q:G\textrm{-}\mathrm{Fun}(\mathbf{D}^b_G(A), \mathbf{D}^b_G(A))\to\mathrm{Fun}(\mathbf{D}^b(A), \mathbf{D}^b(A)), (f,\sigma)\mapsto f.$$
Also, we obtain a composition $$\lambda_q:G\textrm{-}\mathrm{Fun}(\mathbf{D}^b_G(A), \mathbf{D}^b_G(A))\to\mathrm{Fun}(\mathbf{D}^b_G(A), \mathbf{D}^b_G(A))\cong\mathrm{Fun}(\mathbf{D}^b(B), \mathbf{D}^b(B)).$$ 
Denote $G\textrm{-}\mathrm{Fun}(\mathbf{D}^b(B), \mathbf{D}^b(B)):=\lambda_q(G\textrm{-}\mathrm{Fun}(\mathbf{D}^b_G(A), \mathbf{D}^b_G(A)))$.
\end{passage}

Note that the map $\lambda_q$ is dependent on the group $G$ and the group action $(\rho,\theta)$. This information can be obtained by the map $q$. So, we use the subscript $q$. We use the subscript $q$ for the map $F_q$ in the same premises.

The following two remarks give an overview of the notation above in a general setting.

\begin{remark}
    We may generalize the notation to more general actions of a group $G$ on the derived category of an abelian variety or even a smooth projective variety $A$ to obtain the following maps. 
    \begin{align*}
    \xymatrix{
    &\mathrm{Fun}(\mathbf{D}^b(A), \mathbf{D}^b(A))\\
    G\textrm{-}\mathrm{Fun}(\mathbf{D}^b_G(A), \mathbf{D}^b_G(A))\ar[ur]^{F_q}\ar[dr]^{\lambda_q}&\\
    &\lambda_q(G\textrm{-}\mathrm{Fun}(\mathbf{D}^b_G(A), \mathbf{D}^b_G(A)))\ar[r]^(0.55){\subset}&\mathrm{Fun}(\mathbf{D}^b_G(A), \mathbf{D}^b_G(A))}
    \end{align*}
    But we can not get a variety $B$ such that $\mathbf{D}^b(B)\simeq\mathbf{D}^b_G(A)$ in general. 
\end{remark}

\begin{remark}\label{Fq from G-}
    The objects of $\mathbf{D}^b_G(A)$ are pairs $(E,\phi)$, where $E$ is an object of $\mathbf{D}^b(A)$ and $\phi=(\phi_g:E\to\rho_g(E))_{g\in G}$ is a family of isomorphisms compatible with the natural transformations $\theta=(\theta_{g,h}:\rho_g\circ\rho_h\to\rho_{gh})_{g,h\in G}$ (see Definition \ref{def of equivariant category}). We will refer to $\phi$ as a \textit{G-linearization} of $E$. Not all objects of $\mathbf{D}^b(A)$ admit $G$-linearizations. So, a functor $\Phi$ from $\mathbf{D}^b_G(A)$ to $\mathbf{D}^b_G(A)$ does not need to be a lift of a functor from $\mathbf{D}^b(A)$ to $\mathbf{D}^b(A)$. That is, $\Phi$ does not need to be a $G$-functor. Thus, we define the forgetful map $F_q$ from $G\textrm{-}\mathrm{Fun}(\mathbf{D}^b_G(A), \mathbf{D}^b_G(A))$, but not from $\mathrm{Fun}(\mathbf{D}^b_G(A), \mathbf{D}^b_G(A))$.
\end{remark}

Now we can go back to our notation that $A$ is an abelian variety and $G$ is a finite subgroup of $\mathrm{Pic}^0(A)\cong\widehat{A}$. 

\begin{remark}\label{lambda_q explain B}
Passage \ref{def of F lambda} gives the definition of $\lambda$ in general settings. For our case, another way to understand $\lambda_q$ is to use the commutative diagram by Lemma \ref{lambda preserve integral structure_A}
\begin{align*}
        \xymatrix{
    \mathrm{Fun}(\mathbf{D}^b(A), \mathbf{D}^b(A))\ar@{-->}[r]&\mathrm{Fun}(\mathbf{D}^b(B), \mathbf{D}^b(B))\ar[d]^{\simeq}&\\
    G\textrm{-}\mathrm{Fun}(\mathbf{D}^b_G(A), \mathbf{D}^b_G(A))\ar[u]^{F_q}\ar[r]^{\lambda_q}&\mathrm{Fun}(\mathbf{D}^b_G(A), \mathbf{D}^b_G(A))\ar[r]^{\simeq}&\mathrm{Fun}(\mathbf{D}^b(B), \mathbf{D}^b(B)),}
\end{align*}
where the dashed arrow comes from the lifting so that the diagram below commutes.
\begin{align*}
        \xymatrix{
    \mathbf{D}^b(B)\ar[rr]\ar[d]^{q_*}&\phantom{1}&\mathbf{D}^b(B)\ar[d]^{q_*}\\
    \mathbf{D}^b(A)\ar[rr]&\phantom{1}\ar@{=>}[u]^{\mathrm{lift}}&\mathbf{D}^b(A)}
\end{align*}

Note that such a lift does not always work for every element of $\mathrm{Fun}(\mathbf{D}^b(A), \mathbf{D}^b(A))$ by Remark \ref{Fq from G-}. But it works for every element of $F_q(G\textrm{-}\mathrm{Fun}(\mathbf{D}^b(A), \mathbf{D}^b(A)))$.

\end{remark}

\begin{remark}\label{lambda not inj}
    Note that the natural map $$\lambda_q:G\textrm{-}\mathrm{Fun}(\mathbf{D}^b_G(A), \mathbf{D}^b_G(A))\to\mathrm{Fun}(\mathbf{D}^b_G(A),\mathbf{D}^b_G(A))$$
    is not injective, as elements of its domain involve a choice of autoequivalence in $\mathrm{Fun}(\mathbf{D}^b(A),\mathbf{D}^b(A))$. 
    
    For example, if $L$ is a line bundle on $A$, then tensorization by $q^*L$ is an element of $\mathrm{Aut}(\mathbf{D}^b(B))\simeq\mathrm{Aut}(\mathbf{D}^b_G(A))$, while tensorization by $L$ extends to an element of $G\textrm{-}\mathrm{Aut}(\mathbf{D}^b_G(A))$. 
    
    That is, $\lambda_q((-\otimes L, \sigma=\mathrm{id}))=-\otimes q^*L$. But $\exists L\not=L'\in\mathrm{Pic}(A)$ with $q^*L\cong q^*L'$. So we have $\lambda_q((-\otimes L', \sigma=\mathrm{id}))=-\otimes q^*L$.
\end{remark}

So far, we get natural maps as follows.

\begin{align}\label{Flambda_FunA}
    \xymatrix{
    &\mathrm{Fun}(\mathbf{D}^b(A), \mathbf{D}^b(A))&\\
    G\textrm{-}\mathrm{Fun}(\mathbf{D}^b_G(A), \mathbf{D}^b_G(A))\ar[ur]^{F_q}\ar[dr]^{\lambda_q}&&\\
    &\lambda_q(G\textrm{-}\mathrm{Fun}(\mathbf{D}^b_G(A), \mathbf{D}^b_G(A)))\ar[r]^(0.55){\subset}\ar@{=}[d]&\mathrm{Fun}(\mathbf{D}^b_G(A), \mathbf{D}^b_G(A))\ar@{=}[d]\\
    &G\textrm{-}\mathrm{Fun}(\mathbf{D}^b(B), \mathbf{D}^b(B))\ar[r]^{\subset}&\mathrm{Fun}(\mathbf{D}^b(B), \mathbf{D}^b(B))}
\end{align}

All the notations above can be restricted to autoequivalences between categories. So we get the natural maps below, where we still use the notation $F_q, \lambda_q$ as the maps restricted to autoequivalences. 

\begin{align}\label{Flambda_AutA}
    \xymatrix{
    &\mathrm{Aut}(\mathbf{D}^b(A))&\\
    G\textrm{-}\mathrm{Aut}(\mathbf{D}^b_G(A))\ar[ur]^{F_q}\ar[dr]^{\lambda_q}&&\\
    &\lambda_q(G\textrm{-}\mathrm{Aut}(\mathbf{D}^b_G(A)))\ar[r]^(0.55){\subset}\ar@{=}[d]&\mathrm{Aut}(\mathbf{D}^b_G(A))\ar@{=}[d]\\
    &G\textrm{-}\mathrm{Aut}(\mathbf{D}^b(B))\ar[r]^{\subset}&\mathrm{Aut}(\mathbf{D}^b(B))}
\end{align}

\begin{remark}\label{AA'notation}
    Furthermore, given two $n$-dimensional abelian varieties $A$ and $A'$ and embeddings of a finite subgroup $G$ of both $\mathrm{Pic}^0(A)$ and $\mathrm{Pic}^0(A')$, we get linearization $(\rho',\theta')$ of $G$ on $\mathbf{D}^b(A')$ similar to $(\rho, \theta)$ of $G$ on $\mathbf{D}^b(A)$ in passage \ref{Db(B) vs DbG(A)}. In addition, the derived category $\mathbf{D}^b_G(A')$ is equivalent to $\mathbf{D}^b(B')$, where $B'$ is the abelian variety with homomorphism $q':B'\to A'$ such that $G'=\mathrm{ker}(\widehat{q'})\cong G$, where the induced map $\widehat{q'}$ is $\mathrm{Pic}^0(A')\cong\widehat{A'}\to\widehat{B'}\cong\mathrm{Pic}^0(B')$. More precisely, $$\widehat{B'}=\widehat{A'}/G, B'=\mathrm{Pic}^0(\mathrm{Pic}^0(A')/G).$$ 
    
    By Definition \ref{def of G-functor}, a \textit{G-functor} from $\mathbf{D}^b_G(A)$ to $\mathbf{D}^b_G(A')$ is a pair $(f,\sigma)$ of a functor $f: \mathbf{D}^b(A)\to\mathbf{D}^b(A')$ with a set of natural transformations $$\sigma=(\sigma_g:f\circ \rho_g\to \rho'_g\circ f)_{g\in G},$$
    which are isomorphisms of functors, such that $\sigma_g$'s are compatible with the associativity natural transformations in the definition of $G$-actions $(\rho,\theta)$ and $(\rho',\theta')$ on $\mathbf{D}^b(A)$ and $\mathbf{D}^b(A')$ respectively. With similar notations, we get maps and the ones restricted to equivalences of categories.

\begin{align}\label{Flambda_FunAA'}
    \xymatrix{
    &\mathrm{Fun}(\mathbf{D}^b(A), \mathbf{D}^b(A'))&\\
    G\textrm{-}\mathrm{Fun}(\mathbf{D}^b_G(A), \mathbf{D}^b_G(A'))\ar[ur]^{F_{q,q'}}\ar[dr]^{\lambda_{q,q'}}&&\\
    &\lambda_{q,q'}(G\textrm{-}\mathrm{Fun}(\mathbf{D}^b_G(A), \mathbf{D}^b_G(A')))\ar[r]^(0.55){\subset}\ar@{=}[d]&\mathrm{Fun}(\mathbf{D}^b_G(A), \mathbf{D}^b_G(A'))\ar@{=}[d]\\
    &G\textrm{-}\mathrm{Fun}(\mathbf{D}^b(B), \mathbf{D}^b(B'))\ar[r]^{\subset}&\mathrm{Fun}(\mathbf{D}^b(B), \mathbf{D}^b(B'))}
\end{align}

\begin{align}\label{Flambda_EqAA'}
    \xymatrix{
    &\mathrm{Eq}(\mathbf{D}^b(A), \mathbf{D}^b(A'))&\\
    G\textrm{-}\mathrm{Eq}(\mathbf{D}^b_G(A), \mathbf{D}^b_G(A'))\ar[ur]^{F_{q,q'}}\ar[dr]^{\lambda_{q,q'}}&&\\
    &\lambda_{q,q'}(G\textrm{-}\mathrm{Eq}(\mathbf{D}^b_G(A), \mathbf{D}^b_G(A')))\ar[r]^(0.55){\subset}\ar@{=}[d]&\mathrm{Eq}(\mathbf{D}^b_G(A), \mathbf{D}^b_G(A'))\ar@{=}[d]\\
    &G\textrm{-}\mathrm{Eq}(\mathbf{D}^b(B), \mathbf{D}^b(B'))\ar[r]^{\subset}&\mathrm{Eq}(\mathbf{D}^b(B), \mathbf{D}^b(B'))}
\end{align}

Moreover, we have a similar way to understand $\lambda_{q,q'}$ as in Remark \ref{lambda_q explain B}.

Indeed, we have commutative diagram by Lemma \ref{lambda preserve integral structure_AA'}
\begin{align*}
        \xymatrix{
    \mathrm{Fun}(\mathbf{D}^b(A), \mathbf{D}^b(A'))\ar@{-->}[r]&\mathrm{Fun}(\mathbf{D}^b(B), \mathbf{D}^b(B'))\ar[d]^{\simeq}&\\
    G\textrm{-}\mathrm{Fun}(\mathbf{D}^b_G(A), \mathbf{D}^b_G(A'))\ar[u]^{F_{q,q'}}\ar[r]^(0.5){\lambda_{q,q'}}&\mathrm{Fun}(\mathbf{D}^b_G(A), \mathbf{D}^b_G(A'))\ar[r]^{\simeq}&\mathrm{Fun}(\mathbf{D}^b(B), \mathbf{D}^b(B')),}
\end{align*}
where the dashed arrow comes from the lifting so that the diagram below commutes.
\begin{align*}
        \xymatrix{
    \mathbf{D}^b(B)\ar[rr]\ar[d]^{q_*}&\phantom{1}&\mathbf{D}^b(B')\ar[d]^{q'_*}\\
    \mathbf{D}^b(A)\ar[rr]&\phantom{1}\ar@{=>}[u]^{\mathrm{lift}}&\mathbf{D}^b(A')}
\end{align*}

Note that such a lift does not always work for every element of $\mathrm{Fun}(\mathbf{D}^b(A), \mathbf{D}^b(A'))$ by Remark \ref{Fq from G-}. But it works for every element of $F_{q,q'}(G\textrm{-}\mathrm{Fun}(\mathbf{D}^b(A), \mathbf{D}^b(A')))$.
\end{remark}

From now on, we focus on the finite group $G\leq \mathrm{Pic}^0(A)$ (and $G\cong G'\leq \mathrm{Pic}^0(A')$).

\begin{example}\label{G=Ahat[n]}
    If $G=\widehat{A}[n]$, which is the subgroup of torsion points of order dividing $n\in\mathbb{Z}\backslash\{0\}$, then $B=A$ and $q=n_A:B=A\to A$ is the multiplication by $n$. So we get 
    \begin{align*}
        \xymatrix{
        &\mathrm{Fun}(\mathbf{D}^b(A),\mathbf{D}^b(A))&\\
        G\textrm{-}\mathrm{Fun}(\mathbf{D}^b_G(A), \mathbf{D}^b_G(A))\ar[ur]^{F_{n_A}}\ar[dr]^{\lambda_{n_A}}&&\\
        &G\textrm{-}\mathrm{Fun}(\mathbf{D}^b(A),\mathbf{D}^b(A))\ar[r]^(0.53){\subset}&\mathrm{Fun}(\mathbf{D}^b(A), \mathbf{D}^b(A))
        }
    \end{align*}
    and
    \begin{align*}
        \xymatrix{
        &\mathrm{Aut}(\mathbf{D}^b(A))&\\
        G\textrm{-}\mathrm{Aut}(\mathbf{D}^b_G(A))\ar[ur]^{F_{n_A}}\ar[dr]^{\lambda_{n_A}}&&\\
        &G\textrm{-}\mathrm{Aut}(\mathbf{D}^b(A))\ar[r]^(0.53){\subset}&\mathrm{Aut}(\mathbf{D}^b(A)).
        }
    \end{align*}
    Moreover, if $G=\widehat{A'}[n]$, then $B'=A'$ and $q'=n_{A'}:B'=A'\to A'$ is the multiplication by $n$. We obtain 
    \begin{align*}
        \xymatrix{
        &\mathrm{Fun}(\mathbf{D}^b(A),\mathbf{D}^b(A'))&\\
        G\textrm{-}\mathrm{Fun}(\mathbf{D}^b_G(A), \mathbf{D}^b_G(A'))\ar[ur]^{F_{n_A,n_{A'}}}\ar[dr]^{\lambda_{n_A,n_{A'}}}&&\\
        &G\textrm{-}\mathrm{Fun}(\mathbf{D}^b(A),\mathbf{D}^b(A'))\ar[r]^(0.53){\subset}&\mathrm{Fun}(\mathbf{D}^b(A), \mathbf{D}^b(A'))
        }
    \end{align*}
    and
     \begin{align*}
        \xymatrix{
        &\mathrm{Eq}(\mathbf{D}^b(A),\mathbf{D}^b(A'))&\\
        G\textrm{-}\mathrm{Eq}(\mathbf{D}^b_G(A), \mathbf{D}^b_G(A'))\ar[ur]^{F_{n_A,n_{A'}}}\ar[dr]^{\lambda_{n_A,n_{A'}}}&&\\
        &G\textrm{-}\mathrm{Eq}(\mathbf{D}^b(A),\mathbf{D}^b(A'))\ar[r]^(0.53){\subset}&\mathrm{Eq}(\mathbf{D}^b(A), \mathbf{D}^b(A')).
        }
    \end{align*}
\end{example}

\subsection{Relating the images of $\lambda_q(f,\sigma)$ (and $\lambda_{q,q'}(f,\sigma)$) and $f$ via Orlov's representation.}\label{Relating the images of lambda_q(f,sigma) and f via Orlov's representation}

To start, we state Orlov's short exact sequence for the derived equivalences of abelian varieties clearly.

Let notation be as above over an algebraically closed field of characteristic $0$. Set $V_A:=\mathrm{H}^1(A\times \widehat{A}, \mathbb{Z})=\mathrm{H}^1(A, \mathbb{Z})\times\mathrm{H}^1(\widehat{A}, \mathbb{Z})$ with the pairing $(-,-)_{V_A}$ defined to be $$((\alpha_1,\beta_1),(\alpha_2,\beta_2))_{V_A}:=\beta_1(\alpha_2)+\beta_2(\alpha_1),$$ where $(\alpha_1,\beta_1), (\alpha_2,\beta_2)\in V_A\simeq\mathrm{H}^1(A, \mathbb{Z})\times\mathrm{H}^1({A}, \mathbb{Z})^*$. Then $V_A$ is an even unimodular
lattice, the orthogonal direct sum of four copies of the hyperbolic plane. For the bilinear symmetric form $Q_A(v,v'):=\frac{1}{2}(v,v')_{V_A}$ for $v,v'\in V_A$, we get an integral quadratic form $Q_A(v):=Q_A(v,v)$. Similar for the lattice $(V_{A'},(-,-)_{V_{A'}})$ and the form $Q_{A'}$. Let $\mathrm{SO}(V_A,V_A')$ be the set of special orthogonal maps from $V_A$ to $V_{A'}$ with respect to the pairings $(-,-)_{V_A}, (-,-)_{V_{A'}}$. The subset $\mathrm{SO}_{\mathrm{Hdg}}(V_A,V_{A'})$ consists of all the maps in $\mathrm{SO}(V_A,V_{A'})$ that preserve the Hodge structures of $V_A, V_{A'}$. 

Recall in \cite[Definition 9.46]{Huybrechts:06}, the set of symplectic maps $\mathrm{Sp}(A,A')$ is composed of all isomorphisms $g=\begin{pmatrix}
    g_1&g_2\\
    g_3&g_4
\end{pmatrix}:A\times \widehat{A}\xrightarrow{\cong}A'\times\widehat{A'}$ such that $g^{-1}=\begin{pmatrix}
    \phantom{-}\widehat{g_4}&-\widehat{g_2}\\
    -\widehat{g_3}&\phantom{-}\widehat{g_1}
\end{pmatrix}$. In particular, for $A=A'$ case, $\mathrm{Sp}(A):=\mathrm{Sp}(A,A)$ is a subgroup of $\mathrm{Aut}(A\times\widehat{A})$.

We recall Orlov's fundamental short exact sequence for derived equivalences of abelian varieties in the following Theorem.

\begin{theorem}[Orlov, {\cite[Theorem 5.1]{Magni:22}}]\label{Orlov fundamental th}
    Let $A$ and $A'$ be two abelian varieties over an algebraically closed field of characteristic $0$, then we have a short exact sequence of groups $$0\to\mathrm{Alb}(A)\times\widehat{A}\times\mathbb{Z}\to\mathrm{Aut}(\mathbf{D}^b(A))\xrightarrow{\gamma_A}\mathrm{Sp}(A)\to 0,$$ and a surjective map $\gamma_{A,A'}:\mathrm{Eq}(\mathbf{D}^b(A),\mathbf{D}^b(A'))\twoheadrightarrow\mathrm{Sp}(A,A')$. Here, $1\in\mathbb{Z}$ is mapped to the shift functor $[1]$, $a\in A\cong\mathrm{Alb}(A)$ is mapped to the derived equivalence $(t_a)_*$, induced from translation and $\alpha\in\widehat{A}$ is mapped to the derived equivalence $-\otimes\mathcal{L}_\alpha$, by tensoring with the line bundle $\mathcal{L}_{\alpha}\in\mathrm{Pic}^0(A)$ corresponding to $\alpha\in\widehat{A}$.

    Moreover, the maps $\gamma_A$ and $\gamma_{A,A'}$ are compatible in the sense that $$\gamma_{A,A'}(\Phi'\circ\Phi)=\gamma_{A,A'}(\Phi')\circ\gamma_A(\Phi),\forall\Phi\in\mathrm{Aut}(\mathbf{D}^b(A)),\forall\Phi'\in\mathrm{Eq}(\mathbf{D}^b(A),\mathbf{D}^b(A')).$$
\end{theorem}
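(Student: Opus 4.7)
\medskip
\noindent\textbf{Proof proposal.} The plan is to follow Orlov's original strategy via Fourier--Mukai kernels. Since $A$ and $A'$ are smooth projective with trivial canonical bundles, Orlov's representability theorem guarantees that every exact equivalence $\Phi:\mathbf{D}^b(A)\xrightarrow{\simeq}\mathbf{D}^b(A')$ is of the form $\mathrm{FM}_P$ for a kernel $P\in\mathbf{D}^b(A\times A')$, unique up to isomorphism. The idea is then to extract a symplectic isomorphism $A\times\widehat{A}\to A'\times\widehat{A'}$ from $P$ by examining how $\Phi$ conjugates the rank-one autoequivalences parametrized by these groups.

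First I would construct $\gamma_{A,A'}$. For $(a,\alpha)\in A\times\widehat{A}$, the autoequivalence $(t_a)_*\circ(-\otimes\mathcal{L}_\alpha)$ of $\mathbf{D}^b(A)$ has a well-defined image under conjugation by $\Phi$ inside $\mathrm{Aut}(\mathbf{D}^b(A'))$. I would check that this image is again of the form $(t_{a'})_*\circ(-\otimes\mathcal{L}_{\alpha'})$, up to shift, and that $(a',\alpha')$ depends algebraically on $(a,\alpha)$; this defines the homomorphism $\gamma_{A,A'}(\Phi)$. The compatibility $\gamma_{A,A'}(\Phi'\circ\Phi)=\gamma_{A,A'}(\Phi')\circ\gamma_A(\Phi)$ is then tautological from this definition by conjugation. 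That $\gamma_{A,A'}(\Phi)$ actually lies in $\mathrm{Sp}(A,A')$ follows from the fact that $\Phi$ admits a quasi-inverse with kernel $P^\vee$ suitably shifted (using triviality of $\omega_{A'}$); the conjugation action of this quasi-inverse yields the required inverse block formula after unwinding how the hat functor dualizes translations and line-bundle tensorings.

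Next I would identify the kernel of $\gamma_A$. If $\gamma_A(\Phi)$ is the identity, then $\Phi$ commutes, up to natural isomorphism, with every translation and every $\mathrm{Pic}^0$-tensoring. A standard argument (the skyscraper $k(0)$ must be sent to a simple object admitting compatible linearizations for the entire family, hence to a shifted skyscraper of the same type) then shows that $\Phi$ differs from a composition of a translation, a $\mathrm{Pic}^0$-tensoring, and a shift only by an automorphism of $k(0)$, i.e., only by a scalar. This identifies the kernel with $\mathrm{Alb}(A)\times\widehat{A}\times\mathbb{Z}$, the factor $\mathbb{Z}$ recording the shift.

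The main obstacle, as usual, is surjectivity of $\gamma_A$ onto $\mathrm{Sp}(A)$ and of $\gamma_{A,A'}$ onto $\mathrm{Sp}(A,A')$. The plan is to decompose an arbitrary symplectic isomorphism into generators and to realize each generator by an explicit Fourier--Mukai kernel: translations and $\mathrm{Pic}^0$-tensorings realize the unipotent block-triangular symplectics; push/pullback along isogenies $A\to A'$ realizes the block-diagonal pieces; and the Fourier--Mukai transform with the Poincar\'{e} bundle provides the ``symplectic swap'' exchanging the two factors $A$ and $\widehat{A}$. The nontrivial content of this step is a presentation of $\mathrm{Sp}(A,A')$ by such generators, analogous to the classical decomposition of a symplectic matrix into shears and a Weyl-group element; once that is granted, composing the corresponding kernels realizes any prescribed element of $\mathrm{Sp}(A,A')$ and establishes surjectivity.
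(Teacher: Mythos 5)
The paper does not prove this statement; it simply recalls it, citing \cite[Theorem~5.1]{Magni:22} (cf.\ also \cite[Corollary~9.57]{Huybrechts:06} and Orlov). So there is no internal proof to compare against; I can only judge your sketch on its own merits.

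Your plan is a faithful summary of Orlov's original strategy (kernel via Orlov representability, $\gamma_{A,A'}$ as the Rouquier isomorphism obtained by conjugating the identity component $A\times\widehat{A}\subset\mathrm{Aut}(\mathbf{D}^b(A))$, the symplectic condition from the adjoint, and surjectivity via a presentation of $\mathrm{Sp}$ by elementary generators realized by shears, isogenies, and the Poincar\'e swap). Two points deserve care. First, in the kernel identification you write that after normalizing $\Phi$ by a shift and translation so that $\Phi(k(0))\cong k(0)$, the remaining ambiguity is ``only by an automorphism of $k(0)$, i.e., only by a scalar.'' That is not quite right: once $\Phi$ fixes $k(0)$ and commutes with all $(t_a)_*$, it fixes every skyscraper, so $\Phi\cong -\otimes M$ for some $M\in\mathrm{Pic}(A)$; the condition $\gamma_A(\Phi)=\mathrm{id}$ is what forces $\varphi_M=0$, i.e.\ $M\in\mathrm{Pic}^0(A)\cong\widehat{A}$. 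The $\widehat{A}$-factor of the kernel thus survives the normalization and is not a scalar; scalars are invisible anyway since autoequivalences are taken up to natural isomorphism. Second, your surjectivity step, via a generator-and-relations decomposition of $\mathrm{Sp}(A,A')$, is Orlov's route and genuinely requires that decomposition, which is the technical heart and must be supplied; the source you cite (Magni, following Huybrechts \cite[Prop.~9.55]{Huybrechts:06}) instead sidesteps the generator presentation by directly producing, for a given symplectic $g$, a Fourier--Mukai kernel supported on a line bundle over the graph of $g$ inside $(A\times\widehat{A})\times(A'\times\widehat{A'})$. That route is more uniform and treats the $\mathrm{Eq}(\mathbf{D}^b(A),\mathbf{D}^b(A'))$ case painlessly (including the case $\mathrm{Sp}(A,A')=\varnothing$), whereas the generator approach is cleaner conceptually but has more bookkeeping. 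Either works; your sketch would be complete once the generator presentation and the $\widehat{A}$-normalization point above are filled in.
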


Note that the proof of surjectivity of $\gamma_A$ is complete using the Appendix in the new 2025 version of the paper \cite{Orlov:25}.

\begin{remark}\label{Spin vs Orlov representation}
    On the other hand, using the notation of the spin group in \cite[subsection 4.3]{Golyshev:01}, we may denote the spin representation of the group of derived autoequivalences of an abelian variety $A$ by $$\dagger_A:\mathrm{Aut}(\mathbf{D}^b(A))\to\mathrm{Spin}_{\mathrm{Hdg}}(V_A).$$ For an arbitrary choice of a parallel transport operator $\eta:\mathrm{H}^*(A,\mathbb{Z})\to\mathrm{H}^*(A',\mathbb{Z})$ for abelian varieties $A,A'$, we may define the spin set $\mathrm{Spin}(V_A,V_{A'})$ to be the set of maps $f:\mathrm{H}^*(A,\mathbb{Z})\to\mathrm{H}^*(A',\mathbb{Z})$ such that $\eta^{-1}\circ f$ belongs to the image of $\mathrm{Spin}(V_A)$ in $\mathrm{GL}(\mathrm{H}^*(A,\mathbb{Z}))$. Then, we obtain the spin map $$\dagger_{A,A'}:\mathrm{Eq}(\mathbf{D}^b(A), \mathbf{D}^b(A'))\to\mathrm{Spin}_{\mathrm{Hdg}}(V_A,V_{A'}).$$

    The spin representations and the Orlov's representations above are related by \cite[Corollary 9.61]{Huybrechts:06} and \cite[Corollary 4.3.8]{Golyshev:01}. To be precise, we get a commutative diagram with exact rows and columns of group homomorphisms.
    \begin{align}\label{Spin vs Orlov diagram A}
      \xymatrix{
      &\mathbb{Z}/2\mathbb{Z}&&&\\
      0\ar[r]&\mathrm{Alb}(A)\times\widehat{A}\times\mathbb{Z}\ar[r]\ar@{->>}[u]&\mathrm{Aut}(\mathbf{D}^b(A))\ar[r]^(0.6){\gamma_A}&\mathrm{Sp}(A)\ar[r]&0\\
      0\ar[r]&\mathrm{Alb}(A)\times\widehat{A}\times2\mathbb{Z}\ar[r]\ar@{^(-_>}[u]&\mathrm{Aut}(\mathbf{D}^b(A))\ar[r]^{\dagger_A}\ar@{=}[u]&\mathrm{Spin}_{\mathrm{Hdg}}(V_A)\ar[r]\ar@{->>}[u]^{\lambda_A}_{2:1}&0\\
      &&&\mathbb{Z}/2\mathbb{Z}\ar@{^(-_>}[u]&
    }
  \end{align}
  Here, the map $\lambda_A$ is a homomorphism with kernel generated by $\dagger_A([1])$ of order 2. In addition, we have a commutative diagram in general.
  \begin{align*}
      \xymatrix{
      \mathrm{Eq}(\mathbf{D}^b(A), \mathbf{D}^b(A'))\ar@{->>}[r]^(0.6){\gamma_{A,A'}}&\mathrm{Sp}(A,A')\\
      \mathrm{Eq}(\mathbf{D}^b(A), \mathbf{D}^b(A'))\ar@{->>}[r]^{\dagger_{A,A'}}\ar@{=}[u]&\mathrm{Spin}_{\mathrm{Hdg}}(V_A,V_{A'})\ar@{->>}[u]^{\lambda_{A,A'}}_{2:1}\\
    }
  \end{align*}
  Here, the map $\lambda_{A,A'}$ is a map with kernel of order 2.

  Moreover, the maps $\dagger_A$ and $\dagger_{A,A'}$ are compatible in the sense that $$\dagger_{A,A'}(\Phi'\circ\Phi)=\dagger_{A,A'}(\Phi')\circ\dagger_A(\Phi),\forall\Phi\in\mathrm{Aut}(\mathbf{D}^b(A)),\forall\Phi'\in\mathrm{Eq}(\mathbf{D}^b(A),\mathbf{D}^b(A')).$$

\end{remark}

    \phantom{111}

    Now, we may introduce a subgroup $\mathrm{SO}^+(V_A):=\mathrm{SO}(V_A)\cap\mathrm{ker}(\mathrm{sn}_{\mathbb{R}})$ of $\mathrm{SO}(V_A)$ consisting of maps with a trivial spinor norm, where $\mathrm{ker}(\mathrm{sn}_{\mathbb{R}})\subset\mathrm{O}(V_A\otimes\mathbb{R})$ as below.%
    \footnote{Note that any element of $\mathrm{O}(V_A)$ can be represented as a product of reflections $s_{v_1}s_{v_2}...s_{v_m}$, where $v_i\in V_A$ and $(v_i,v_i)=\pm2$ by \cite[4.3]{Wall:62}.}%

    As in the notation before \cite[Theorem 1.1]{Gritsenko Hulek Sankaran:09}, in general, the spinor norm over a field $K\not=\mathbb{F}_2$ for an integral even lattice $(L,(-,-))$ is defined to be a group homomorphism $\mathrm{sn}_K:\mathrm{O}(L\otimes K)\to K^\times/(K^\times)^2$ with $$\mathrm{sn}_K(g)=(-\frac{(v_1,v_1)}{2})\cdot...\cdot(-\frac{(v_m,v_m)}{2})(K^\times)^2,$$ where for $K\not=\mathbb{F}_2$, any $g\in\mathrm{O}(L\otimes K)$ can be represented as a product of reflections $g=s_{v_1}s_{v_2}...s_{v_n}$, where $v_i\in L\otimes K$.

    The subgroup $\mathrm{SO}^+_{\mathrm{Hdg}}(V_A)$ consists of all the maps in $\mathrm{SO}^+(V_A)$ that preserve the Hodge structure of $V_A$. By \cite[Lemma 4.1]{Markman:23}, we have a homomorphism $$\mathrm{Spin}_{\mathrm{Hdg}}(V_A)\to\mathrm{SO}^+_{\mathrm{Hdg}}(V_A),$$ which is a double covering of $\mathrm{SO}^+_{\mathrm{Hdg}}(V_A)$.%
    \footnote{In general, for a real or rational vector space $V$ with a bilinear symmetric form $Q$, we have a double covering homomorphism $\mathrm{Spin}(V,Q)\to\mathrm{SO}^+(V,Q)$ by \cite[Lemma 4.1]{Markman:23}. But for a complex vector space $V$ with a bilinear symmetric form $Q$, we have a double covering homomorphism $\mathrm{Spin}(V,Q)\to\mathrm{SO}(V,Q)$ by \cite[Proposition 20.28]{Fulton Harris:04}. Actually, the difference between the two results comes from the part of the proof related to the generators of the orthogonal group $\mathrm{O}(V,Q)$. For the real or rational vector space case, the group $\mathrm{O}(V,Q)$ can be proved to be generated by reflections $s_v$, where $(v,v)_V=\pm2$ after normalization. For the case of complex vector space, the group $\mathrm{O}(V,Q)$ is generated by reflections $s_v$, where $(v,v)_V=-2$ after normalization over complex numbers. Moreover, the difference can be explained by the fact that the group $\mathrm{Spin}$ for a vector space over a field has no non-trivial characters.}%

\begin{remark}\label{two Orlov's representation remark}
    Actually, a symplectic map in $\mathrm{Sp}(A,A')$ corresponds to a Hodge isometry in $\mathrm{SO}_{\mathrm{Hdg}}(V_A,V_{A'})$ in the sense of Remark \ref{SP vs SO remark} over an algebraically closed field of characteristic $0$. Moreover, $f\in\mathrm{Sp}(A)\Leftrightarrow F\in\mathrm{SO}^+_{\mathrm{Hdg}}(V_A)$ over an algebraically closed field of characteristic $0$ using the diagram (\ref{Spin vs Orlov diagram A}) and the double covering $\mathrm{Spin}_{\mathrm{Hdg}}(V_A)\to\mathrm{SO}^+_{\mathrm{Hdg}}(V_A)$.%
    \footnote{It leads to $\mathrm{Sp}(A)=\mathrm{SO}^+(V_A)$ as an advanced result of \cite[Proposition 4.3.2]{Golyshev:01}.}%

    In the rest of the paper, we will use the following two sets of notation to present Orlov's representation and Orlov's map. \begin{align*}
    &\gamma_A:\mathrm{Aut}(\mathbf{D}^b(A))\to\mathrm{Sp}(A), \phantom{111111}\gamma_{A,A'}:\mathrm{Eq}(\mathbf{D}^b(A),\mathbf{D}^b(A'))\to\mathrm{Sp}(A,A')\\
    &\rho_A:\mathrm{Aut}(\mathbf{D}^b(A))\to\mathrm{SO}_{\mathrm{Hdg}}(V_A), \phantom{11}\rho_{A,A'}:\mathrm{Eq}(\mathbf{D}^b(A),\mathbf{D}^b(A'))\to\mathrm{SO}_{\mathrm{Hdg}}(V_A,V_{A'})
\end{align*} We define a subset of $\mathrm{SO}(V_A,V_{A'})$ to be $$\mathrm{SO}^+(V_A,V_{A'}):=\{F\in\mathrm{SO}(V_A,V_{A'})|F_{A,A'}^{-1}\circ F\in\mathrm{SO}^+(V_A)\}, \forall F_{A,A'}\in\mathrm{image}(\rho_{A,A'}).$$ It is a well-defined set. We get a subset $\mathrm{SO}^+_{\mathrm{Hdg}}(V_A,V_{A'})$ consists of all maps in $\mathrm{SO}^+(V_A,V_{A'})$ that preserve the Hodge structures of $V_A, V_{A'}$. Then, $$\mathrm{image}(\rho_A)=\mathrm{SO}^+_{\mathrm{Hdg}}(V_A), \mathrm{image}(\rho_{A,A'})=\mathrm{SO}^+_{\mathrm{Hdg}}(V_A, V_{A'}).$$ Moreover, we have the following two sets of equivalenet notation for Orlov's representation and Orlov's map.
\begin{align*}
    &\gamma_A:\mathrm{Aut}(\mathbf{D}^b(A))\to\mathrm{Sp}(A), \phantom{111111}\gamma_{A,A'}:\mathrm{Eq}(\mathbf{D}^b(A),\mathbf{D}^b(A'))\to\mathrm{Sp}(A,A')\\
    &\rho_A:\mathrm{Aut}(\mathbf{D}^b(A))\to\mathrm{SO}^+_{\mathrm{Hdg}}(V_A), \phantom{11}\rho_{A,A'}:\mathrm{Eq}(\mathbf{D}^b(A),\mathbf{D}^b(A'))\to\mathrm{SO}^+_{\mathrm{Hdg}}(V_A,V_{A'})
\end{align*}

Here, $\gamma_{A,A'}(\Phi_\mathcal{E})\in\mathrm{Sp}(A,A')$ corresponds to $\rho_{A,A'}(\Phi_\mathcal{E})\in\mathrm{SO}^+_{\mathrm{Hdg}}(V_A,V_{A'})$ for a derived equivalence $\Phi_\mathcal{E}\in\mathrm{Eq}(\mathbf{D}^b(A),\mathbf{D}^b(A'))$ in the sense of Remark \ref{SP vs SO remark}, where $\rho_{A,A'}(\Phi_\mathcal{E})$ is the restriction of the induced Hodge isometry in cohomology $$(\gamma_{A,A'}(\Phi_\mathcal{E}))_*:\mathrm{H}^*(A\times\widehat{A},\mathbb{Q})\xrightarrow{\simeq}\mathrm{H}^*(A'\times\widehat{A'},\mathbb{Q})$$
to $\rho_{A,A'}(\Phi_\mathcal{E}):V_A=\mathrm{H}^1(A\times\widehat{A},\mathbb{Z})\xrightarrow{\simeq}\mathrm{H}^1(A'\times\widehat{A'},\mathbb{Z})=V_{A'}$. It is denoted by $$\rho_{A,A'}(\Phi_\mathcal{E})=(\gamma_{A,A'}(\Phi_\mathcal{E}))_*|_{V_A,V_{A'}}.$$ 
In particular, the group $\mathrm{SO}_{\mathrm{Hdg}}(V_A)$ embeds naturally as a group of automorphisms of $A\times\widehat{A}$. 

In addition, the map $\rho_A$ fits the exact sequence $$0\to\mathrm{Alb}(A)\times\widehat{A}\times\mathbb{Z}\to\mathrm{Aut}(\mathbf{D}^b(A))\xrightarrow[]{\rho_A}\mathrm{SO}^+_{\mathrm{Hdg}}(V_A)\to0.$$
As in Theorem \ref{Orlov fundamental th}, the maps $\rho_A$ and $\rho_{A,A'}$ are compatible in the sense that $$\rho_{A,A'}(\Phi'\circ\Phi)=\rho_{A,A'}(\Phi')\circ\rho_A(\Phi),\forall\Phi\in\mathrm{Aut}(\mathbf{D}^b(A)),\forall\Phi'\in\mathrm{Eq}(\mathbf{D}^b(A),\mathbf{D}^b(A')).$$
\end{remark}

\phantom{111}

Returning to the finite subgroup $G$, the homomorphism $q:B\to A$ induces several homomorphisms in cohomology.

The homomorphism $q:B\to A$ induces the map $\widehat{q}:\widehat{A}\to\widehat{B}$. They induce homomorphisms of cohomology $q^*:\mathrm{H}^1(A,\mathbb{Q})\to \mathrm{H}^1(B,\mathbb{Q}), \widehat{q}^*:\mathrm{H}^1(\widehat{B},\mathbb{Q})\to \mathrm{H}^1(\widehat{A},\mathbb{Q})$. We have $q^*(\mathrm{H}^1(A,\mathbb{Z}))\subset\mathrm{H}^1(B,\mathbb{Z})$ and $\mathrm{H}^1(\widehat{A},\mathbb{Z})\subset\mathrm{H}^1(\widehat{B},\mathbb{Q})$ as natural inclusions, where $\widehat{q}^*\mathrm{H}^1(\widehat{B},\mathbb{Z})\subset\mathrm{H}^1(\widehat{A},\mathbb{Z})\subset\mathrm{H}^1(\widehat{A},\mathbb{Q})\xrightarrow[\simeq]{(\widehat{q}^*)^{-1}}\mathrm{H}^1(\widehat{B},\mathbb{Q})$ fits the latter.

We have the isometry $\iota:=(q^*,(\widehat{q}^*)^{-1}): V_{A,\mathbb{Q}}:=V_A\otimes_{\mathbb{Z}}\mathbb{Q}\to V_{B,\mathbb{Q}}:=V_B\otimes_{\mathbb{Z}}\mathbb{Q}$. Then we define $G\textrm{-}\mathrm{SO}^+_{\mathrm{Hdg}}(V_B)\leq\mathrm{SO}^+_{\mathrm{Hdg}}(V_B)$ as the subgroup consisting of all Hodge isometries $\gamma\in\mathrm{SO}^+_{\mathrm{Hdg}}(V_B)$ such that the extension of $\gamma$ to $V_{B,\mathbb{Q}}$ leaves the lattice $q^*\mathrm{H}^1(A,\mathbb{Z})\oplus\mathrm{H}^1(\widehat{A},\mathbb{Z})$ invariant as the lattice in $V_{B,\mathbb{Q}}$ via the isometry $\iota$. Then
\begin{align}
    G\textrm{-}\mathrm{SO}^+_{\mathrm{Hdg}}(V_B)=\mathrm{SO}^+_{\mathrm{Hdg}}(V_B)\cap (\iota\circ\mathrm{SO}^+_{\mathrm{Hdg}}(V_A)\circ\iota^{-1}).
\end{align}

Similarly, the map $q':B'\to A'$ induces maps $\widehat{q'}, (q')^*,(\widehat{q'})^*$ with natural inclusions $(q')^*(\mathrm{H}^1(A',\mathbb{Z}))\subset\mathrm{H}^1(B',\mathbb{Z})$ and $\mathrm{H}^1(\widehat{A'},\mathbb{Z})\subset\mathrm{H}^1(\widehat{B'},\mathbb{Q})$. The latter is compatible with $(\widehat{q'})^*$. 
We have the isometry $\iota':=((q')^*,(\widehat{q'}^*)^{-1}): V_{A',\mathbb{Q}}\to V_{B',\mathbb{Q}}$. Then we define $G\textrm{-}\mathrm{SO}^+_{\mathrm{Hdg}}(V_B,V_{B'})\subset\mathrm{SO}^+_{\mathrm{Hdg}}(V_B,V_{B'})$ as the subset consisting of all Hodge isometries $\gamma\in\mathrm{SO}^+_{\mathrm{Hdg}}(V_B,V_{B'})$ such that the extension of $\gamma$ to $V_{B,\mathbb{Q}}\to V_{B',\mathbb{Q}}$ maps the lattice $q^*\mathrm{H}^1(A,\mathbb{Z})\oplus\mathrm{H}^1(\widehat{A},\mathbb{Z})$ in $V_{B,\mathbb{Q}}$ to the lattice $(q')^*\mathrm{H}^1(A',\mathbb{Z})\oplus\mathrm{H}^1(\widehat{A'},\mathbb{Z})$ in $V_{B',\mathbb{Q}}$ via the isometries $\iota,\iota'$. Then
\begin{align}
    G\textrm{-}\mathrm{SO}^+_{\mathrm{Hdg}}(V_B,V_{B'})=\mathrm{SO}^+_{\mathrm{Hdg}}(V_B,V_{B'})\cap (\iota'\circ\mathrm{SO}^+_{\mathrm{Hdg}}(V_A,V_{A'})\circ\iota^{-1}).
\end{align}

In addition, we may define the subgroup $G\textrm{-}\mathrm{Sp}(A)$ of $\mathrm{Sp}(A)$ and the subset $G\textrm{-}\mathrm{Sp}(A,A')$ of $\mathrm{Sp}(A,A')$ as the subset that corresponds to $G\textrm{-}\mathrm{SO}^+_{\mathrm{Hdg}}(V_A)$ and $G\textrm{-}\mathrm{SO}^+_{\mathrm{Hdg}}(V_A,V_{A'})$ by Remark \ref{two Orlov's representation remark}, respectively.

Now, we may state the main result for this section, which is the $G$-equivariant case of the Orlov's short exact sequence.

\begin{theorem}\label{Th_GrhoA}
    Let the notation be as above over an algebraically closed field of characterstic $0$. We have a short exact sequence $$0\to\mathrm{Alb}(B)\times\widehat{A}\times\mathbb{Z}\to G\textrm{-}\mathrm{Aut}(\mathbf{D}^b_G(A))\xrightarrow[]{G\textrm{-}\rho_A}G\textrm{-}\mathrm{SO}^+_{\mathrm{Hdg}}(V_B)\to 0.$$ It fits into the commutative diagram with exact rows below, where the first and the third exact rows come from Orlov's Theorem \ref{Orlov fundamental th} and Remark \ref{two Orlov's representation remark}.
    \begin{align}\label{diagram_GrhoA}
    \xymatrix{
    0\ar[r]&\mathrm{Alb}(B)\times\widehat{B}\times\mathbb{Z}\ar[r]&\mathrm{Aut}(\mathbf{D}^b(B))\ar[r]^{\rho_B}&\mathrm{SO}^+_{\mathrm{Hdg}}(V_B)\ar[r]&0\\
    0\ar[r]&\mathrm{Alb}(B)\times\widehat{A}\times\mathbb{Z}\ar[r]\ar[u]^{\mathrm{id}_{\mathrm{Alb}(B)}\times\widehat{q}\times\mathrm{id}_{\mathbb{Z}}}\ar[d]_{q_*\times\mathrm{id}_{\widehat{A}}\times\mathrm{id}_{\mathbb{Z}}}&G\textrm{-}\mathrm{Aut}(\mathbf{D}^b(A))\ar[r]^{G\textrm{-}\rho_A}\ar[u]^{\lambda_q}\ar[d]_{F_q}&G\textrm{-}\mathrm{SO}^+_{\mathrm{Hdg}}(V_B)\ar[r]\ar[u]^{\cup}\ar[d]_{\mathrm{res}}&0\\
    0\ar[r]&\mathrm{Alb}(A)\times\widehat{A}\times\mathbb{Z}\ar[r]&\mathrm{Aut}(\mathbf{D}^b(A))\ar[r]^{\rho_A}&\mathrm{SO}^+_{\mathrm{Hdg}}(V_A)\ar[r]&0
    }
\end{align}
\end{theorem}

By the surjectivity of $G\textrm{-}\rho_A$ in (\ref{diagram_GrhoA}), we get
\begin{corollary}\label{Lift derived eq_A}
    Let the notation be as above over an algebraically closed field of characterstic $0$. A derived autoequivalence $\Phi$ in $\mathrm{Aut}(\mathbf{D}^b(A))$ satisfies the condition $(\Phi,\sigma)\in G\textrm{-}\mathrm{Aut}(\mathbf{D}^b(A))$ for $\sigma=(\sigma_g)_{g\in G}$, a set of $G$-equivariance natural transformations, if and only if $\rho_A(\Phi)|_{V_B}\in G\textrm{-}\mathrm{SO}^+_{\mathrm{Hdg}}(V_B)$.
\end{corollary}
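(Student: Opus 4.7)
My plan is a diagram chase in (\ref{diagram_GrhoA}) combined with the surjectivity of $G\textrm{-}\rho_A$ furnished by Theorem \ref{Th_GrhoA}.

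\textbf{Forward direction.} Assume $(\Phi,\sigma)\in G\textrm{-}\mathrm{Aut}(\mathbf{D}^b_G(A))$. By Theorem \ref{Th_GrhoA}, the element $G\textrm{-}\rho_A((\Phi,\sigma))$ lies in $G\textrm{-}\mathrm{SO}_{\mathrm{Hdg}}(V_B)$. Commutativity of the lower-right square of (\ref{diagram_GrhoA}) gives $\mathrm{res}\bigl(G\textrm{-}\rho_A((\Phi,\sigma))\bigr)=\rho_A(F_q((\Phi,\sigma)))=\rho_A(\Phi)$. Since $\mathrm{res}$ is induced by the isometry $\iota=(q^*,(\widehat{q}^*)^{-1})$, this says exactly that $\rho_A(\Phi)|_{V_B}$ (viewed through $\iota$) coincides with $G\textrm{-}\rho_A((\Phi,\sigma))$ and therefore lies in $G\textrm{-}\mathrm{SO}_{\mathrm{Hdg}}(V_B)$.

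\textbf{Reverse direction.} Suppose $\rho_A(\Phi)|_{V_B}\in G\textrm{-}\mathrm{SO}_{\mathrm{Hdg}}(V_B)$. By the surjectivity of $G\textrm{-}\rho_A$, pick $(\Phi_0,\sigma_0)\in G\textrm{-}\mathrm{Aut}(\mathbf{D}^b_G(A))$ with $G\textrm{-}\rho_A((\Phi_0,\sigma_0))=\rho_A(\Phi)|_{V_B}$. Applying the forward direction to $(\Phi_0,\sigma_0)$ yields $\rho_A(\Phi_0)|_{V_B}=\rho_A(\Phi)|_{V_B}$, and since $\iota$ is an isomorphism, $\rho_A(\Phi_0)=\rho_A(\Phi)$. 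Exactness of the bottom row of (\ref{diagram_GrhoA}) then gives $k:=\Phi_0^{-1}\circ\Phi\in\mathrm{Alb}(A)\times\widehat{A}\times\mathbb{Z}$.

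\textbf{Lifting the kernel element.} The left column of (\ref{diagram_GrhoA}) shows that $F_q$ restricts on kernels to $q_*\times\mathrm{id}_{\widehat{A}}\times\mathrm{id}_{\mathbb{Z}}$. Since $G=\ker(\widehat{q})$ is finite, $\widehat{q}$ is an isogeny, hence so is $q$, and thus $q_*\colon\mathrm{Alb}(B)\to\mathrm{Alb}(A)$ is surjective. Therefore $k$ lifts to $(k,\tau)\in G\textrm{-}\mathrm{Aut}(\mathbf{D}^b_G(A))$ with $G\textrm{-}\rho_A((k,\tau))=\mathrm{id}$. The composition $(\Phi_0,\sigma_0)\circ(k,\tau)$ then has underlying functor $\Phi_0\circ k=\Phi$, yielding the required $G$-equivariance structure $\sigma$.

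The only conceptually delicate point is matching the informal symbol $\rho_A(\Phi)|_{V_B}$ with the formal element of $G\textrm{-}\mathrm{SO}_{\mathrm{Hdg}}(V_B)$ obtained by conjugating via $\iota$; once that identification and the surjectivity of $q_*$ on $\mathrm{Alb}$'s are recorded, the argument is wholly formal from Theorem \ref{Th_GrhoA}. I anticipate no further obstacles.
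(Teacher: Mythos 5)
Your argument is correct, and it takes exactly the route the paper gestures at: the forward direction is the commutativity of the lower-right square of diagram (\ref{diagram_GrhoA}) (i.e., the well-definedness inclusion of Proposition \ref{GrhoA GrhoAA' inclusions}), and the reverse direction is the surjectivity of $G\textrm{-}\rho_A$ together with surjectivity of $q_*$ on the $\mathrm{Alb}$ factor of the kernels to adjust the underlying functor by a translation. Your filling-in of the kernel-lifting step is precisely the right detail that the paper leaves implicit.
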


We have a generalized version for $\mathrm{Eq}(\mathbf{D}^b(A), \mathbf{D}^b(A'))$.

\begin{theorem}\label{Th_GrhoAA'}
    Let the notation be as above over an algebraically closed field of characteristic $0$. We have a set-theoretic surjective map $$G\textrm{-}\mathrm{Eq}(\mathbf{D}^b_G(A),\mathbf{D}^b_G(A'))\xrightarrow[]{G\textrm{-}\rho_{A,A'}}G\textrm{-}\mathrm{SO}^+_{\mathrm{Hdg}}(V_B,V_{B'}).$$ It fits into the commutative diagram below, where the top and bottom surjective maps come from Orlov's Theorem \ref{Orlov fundamental th} and Remark \ref{two Orlov's representation remark}.
    \begin{align}\label{diagram_GrhoAA'}
    \xymatrix{
    \mathrm{Eq}(\mathbf{D}^b(B),\mathbf{D}^b(B'))\ar[rr]^{\rho_{B,B'}}&&\mathrm{SO}^+_{\mathrm{Hdg}}(V_B,V_{B'})\\
    G\textrm{-}\mathrm{Eq}(\mathbf{D}^b(A),\mathbf{D}^b(A'))\ar[rr]^{G\textrm{-}\rho_{A,A'}}\ar[u]^{\lambda_{q,q'}}\ar[d]_{F_{q,q'}}&&G\textrm{-}\mathrm{SO}^+_{\mathrm{Hdg}}(V_B,V_{B'})\ar[u]^{\cup}\ar[d]_{\mathrm{res}}\\
    \mathrm{Eq}(\mathbf{D}^b(A), \mathbf{D}^b(A'))\ar[rr]^{\rho_{A,A'}}&&\mathrm{SO}^+_{\mathrm{Hdg}}(V_A,V_{A'})
    }
\end{align}
\end{theorem}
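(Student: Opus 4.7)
The plan is to define
\[
G\textrm{-}\rho_{A,A'}\ :=\ \rho_{B,B'}\circ\lambda_{q,q'},
\]
using the identifications $\mathbf{D}^b_G(A)\simeq\mathbf{D}^b(B)$ and $\mathbf{D}^b_G(A')\simeq\mathbf{D}^b(B')$ from passage \ref{Db(B) vs DbG(A)} and Remark \ref{AA'notation}. With this definition, the commutativity of the upper square of diagram (\ref{diagram_GrhoAA'}) is tautological, once one verifies that the image in fact lands in the subset $G\textrm{-}\mathrm{SO}_{\mathrm{Hdg}}(V_B,V_{B'})\subset\mathrm{SO}_{\mathrm{Hdg}}(V_B,V_{B'})$.

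The technical core of the argument is the identity
\[
\iota'^{-1}\circ\rho_{B,B'}(\lambda_{q,q'}(f,\sigma))\circ\iota\ =\ \rho_{A,A'}(f)
\]
on $V_A$, for every $(f,\sigma)\in G\textrm{-}\mathrm{Eq}(\mathbf{D}^b_G(A),\mathbf{D}^b_G(A'))$. Unpacking Remark \ref{AA'notation}, if $f$ is realized as a Fourier--Mukai transform with kernel $P$ on $A\times A'$, then $\lambda_{q,q'}(f,\sigma)$ is represented by a kernel on $B\times B'$ obtained from $P$ by descent/push--pull along $q\times q'$; the induced action on cohomology then intertwines with that of $f$ through $q^*,(q')^*$ on the $\mathrm{H}^1$-factors and through $(\widehat{q}^*)^{-1},((\widehat{q'})^*)^{-1}$ on the dual factors, which is precisely the statement that the isometries $\iota,\iota'$ conjugate one to the other. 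This simultaneously establishes the lattice condition (so that $G\textrm{-}\rho_{A,A'}$ indeed lands in $G\textrm{-}\mathrm{SO}_{\mathrm{Hdg}}(V_B,V_{B'})$) and the commutativity of the lower square $\mathrm{res}\circ G\textrm{-}\rho_{A,A'}=\rho_{A,A'}\circ F_{q,q'}$.

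For surjectivity, pick $\gamma\in G\textrm{-}\mathrm{SO}_{\mathrm{Hdg}}(V_B,V_{B'})$. The defining lattice condition places $\iota'^{-1}\circ\gamma\circ\iota$ in $\mathrm{SO}_{\mathrm{Hdg}}(V_A,V_{A'})$, so by Orlov's Theorem \ref{Orlov fundamental th} together with Remark \ref{two Orlov's representation remark} we may pick $f\in\mathrm{Eq}(\mathbf{D}^b(A),\mathbf{D}^b(A'))$ with $\rho_{A,A'}(f)=\iota'^{-1}\circ\gamma\circ\iota$. The next step is to show that any such $f$ admits a $G$-equivariance structure $\sigma$, giving $(f,\sigma)\in G\textrm{-}\mathrm{Eq}(\mathbf{D}^b_G(A),\mathbf{D}^b_G(A'))$; combined with the identity displayed above, this yields $G\textrm{-}\rho_{A,A'}(f,\sigma)=\gamma$ up to an element of the Orlov kernel $\mathrm{Alb}(B)\times\widehat{B}\times\mathbb{Z}$, which is absorbed by postcomposing with an appropriate translation, tensorization by a line bundle in $\mathrm{Pic}^0$, and shift, each of which visibly carries a canonical $G$-equivariance.

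The principal obstacle is the last step: showing that the cohomological lattice condition on $\rho_{A,A'}(f)$ guarantees the existence of a $G$-equivariance structure $\sigma$ on $f$. This is the two-variety analogue of Corollary \ref{Lift derived eq_A}, and I would prove it in parallel with the single-variety argument for Theorem \ref{Th_GrhoA}, relying on Lemma \ref{lambda preserve integral structure_AA'} (promised in Remark \ref{AA'notation}) to relate the existence of a lift of $f$ along $\lambda_{q,q'}$ to the preservation of the $B$-lattice by the Hodge isometry $\rho_{A,A'}(f)$.
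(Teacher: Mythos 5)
Your overall structure---defining $G\textrm{-}\rho_{A,A'}:=\rho_{B,B'}\circ\lambda_{q,q'}$, using the compatibility of $\lambda_{q,q'}$ with $q^*,(q')^*$ in cohomology to obtain both the commuting squares and the lattice condition for well-definedness, and then proving surjectivity by realizing $\iota'^{-1}\circ\gamma\circ\iota$ as $\rho_{A,A'}(f)$ via Orlov and equipping $f$ with a $G$-equivariance structure---tracks the paper's Propositions \ref{GrhoA GrhoAA' inclusions} and \ref{GrhoA GrhoAA' surjective} as assembled in the final subsection of Section 1.

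The gap is exactly at the step you flag as the ``principal obstacle,'' and the lemma you point to will not close it. Lemma \ref{lambda preserve integral structure_AA'} runs in the wrong direction: it \emph{assumes} that $(f,\sigma)$ is already a $G$-functor and only then concludes $\lambda_{q,q'}(f,\sigma)^{\mathrm H}\circ q^*=(q')^*\circ f^{\mathrm H}$; it cannot manufacture $\sigma$ out of a lattice condition on $\rho_{A,A'}(f)$. What the paper actually uses to produce $\sigma$ is Lemma \ref{find sigma for overline f_0}. Its hypothesis is verified by the same cohomological lifting criterion you already invoke: because $\gamma\in G\textrm{-}\mathrm{SO}_{\mathrm{Hdg}}(V_B,V_{B'})$ preserves the $\iota$-lattices, the symplectic map $\gamma_{A,A'}(f)$ is a lift of $\gamma_{B,B'}(\lambda_{q,q'}(f,\sigma))$ along $q\times\widehat{q}$ and $q'\times\widehat{q'}$, and a diagram chase shows it therefore carries $\{0\}\times G\subset A\times\widehat{A}$ into $\{0\}\times\widehat{A'}$. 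That condition is what makes the Rouquier-type argument in Lemma \ref{find sigma for overline f_0} go through: for each $g\in G$, the conjugate $f\circ\rho_g\circ f^{-1}$ is again tensorization by a line bundle in $\mathrm{Pic}^0(A')$, so conjugation by $f$ defines a second $G$-action on $\mathbf{D}^b(A')$ by twists, $f$ is tautologically a $G$-functor for this conjugate action, and Lemma \ref{unique group action on Db(X) for finite abel group} identifies the conjugate action with the standard one. Without this construction (and the verification that $\gamma_{A,A'}(f)$ indeed sends $G$ into $\{0\}\times\widehat{A'}$), the existence of $\sigma$ is unproved, and the surjectivity argument does not close.
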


By the surjectivity of $G\textrm{-}\rho_{A,A'}$ in (\ref{diagram_GrhoAA'}), we get
\begin{corollary}\label{Lift derived eq_AA'}
    Let the notation be as above over an algebraically closed field of characterstic $0$. A derived equivalence $\Phi\in\mathrm{Eq}(\mathbf{D}^b(A),\mathbf{D}^b(A'))$ satisfies the condition $(\Phi,\sigma)\in G\textrm{-}\mathrm{Eq}(\mathbf{D}^b(A),\mathbf{D}^b(A'))$ for $\sigma=(\sigma_g)_{g\in G}$, a set of $G$-equivariance natural transformations if and only if $\rho_{A,A'}(\Phi)|_{V_B,V_{B'}}\in G\textrm{-}\mathrm{SO}^+_{\mathrm{Hdg}}(V_B,V_{B'})$.
\end{corollary}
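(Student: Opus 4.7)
The plan is to extract both implications from Theorem \ref{Th_GrhoAA'}, using the commutativity of diagram (\ref{diagram_GrhoAA'}) together with the surjectivity of $G\textrm{-}\rho_{A,A'}$.

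For the $(\Rightarrow)$ direction, I would start with $(\Phi,\sigma)\in G\textrm{-}\mathrm{Eq}(\mathbf{D}^b_G(A),\mathbf{D}^b_G(A'))$, set $\gamma:=G\textrm{-}\rho_{A,A'}(\Phi,\sigma)\in G\textrm{-}\mathrm{SO}_{\mathrm{Hdg}}(V_B,V_{B'})$, and apply the commutativity of the lower square to obtain $\rho_{A,A'}(\Phi)=\rho_{A,A'}(F_{q,q'}(\Phi,\sigma))=\mathrm{res}(\gamma)$. Unraveling the restriction map through the isometries $\iota$ and $\iota'$ of Section 1.3 shows that $\gamma$ coincides with the transport of $\rho_{A,A'}(\Phi)$ across $\iota$ and $\iota'$ to an integral map $V_B\to V_{B'}$, i.e., $\gamma=\rho_{A,A'}(\Phi)|_{V_B,V_{B'}}$, which is by construction in $G\textrm{-}\mathrm{SO}_{\mathrm{Hdg}}(V_B,V_{B'})$.

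For the $(\Leftarrow)$ direction, I would set $\gamma:=\rho_{A,A'}(\Phi)|_{V_B,V_{B'}}$ and use the surjectivity of $G\textrm{-}\rho_{A,A'}$ to pick $(\Phi'',\sigma'')\in G\textrm{-}\mathrm{Eq}(\mathbf{D}^b_G(A),\mathbf{D}^b_G(A'))$ with $G\textrm{-}\rho_{A,A'}(\Phi'',\sigma'')=\gamma$. Commutativity then forces $\rho_{A,A'}(\Phi'')=\mathrm{res}(\gamma)=\rho_{A,A'}(\Phi)$, so $h:=\Phi\circ(\Phi'')^{-1}$ lies in $\ker(\rho_{A'})=\mathrm{Alb}(A')\times\widehat{A'}\times\mathbb{Z}$ by Orlov's short exact sequence. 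The key remaining observation is that $h$ itself carries a $G$-equivariant structure: since $G=\ker(\widehat{q'})$ is finite, $q':B'\to A'$ is an isogeny, so $q'_*:\mathrm{Alb}(B')\to\mathrm{Alb}(A')$ is surjective. Together with the left column of diagram (\ref{diagram_GrhoA}) applied to the pair $(B',A')$, this identifies $\mathrm{Alb}(A')\times\widehat{A'}\times\mathbb{Z}$ as the image of $\ker(G\textrm{-}\rho_{A'})$ under $F_{q'}$; hence $h$ lifts to some $(h,\tau)\in G\textrm{-}\mathrm{Aut}(\mathbf{D}^b_G(A'))$, and the composition $(h,\tau)\circ(\Phi'',\sigma'')$ is a $G$-functor with underlying autoequivalence $\Phi$, providing the required $\sigma$.

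The forward direction is essentially bookkeeping with the commutative diagram. The main obstacle is the converse: bridging between an arbitrary $\Phi$ and the particular lift $\Phi''$ produced by surjectivity requires exploiting the explicit structure of $\ker(\rho_{A'})$ from Orlov's short exact sequence together with the isogeny property of $q'$ to guarantee that the correction term $h\in\ker(\rho_{A'})$ is itself liftable to a $G$-autoequivalence.
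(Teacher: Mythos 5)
Your proof is correct and is the natural expansion of the paper's one-line justification ("by the surjectivity of $G\textrm{-}\rho_{A,A'}$"): the forward direction follows from the inclusion of Proposition \ref{GrhoA GrhoAA' inclusions} together with commutativity of the lower square of diagram (\ref{diagram_GrhoAA'}), and the converse correctly combines surjectivity of $G\textrm{-}\rho_{A,A'}$ with the explicit kernel description $\ker(G\textrm{-}\rho_{A'})=\mathrm{Alb}(B')\times\widehat{A'}\times\mathbb{Z}$ from Theorem \ref{Th_GrhoA} and the surjectivity of $q'_*:\mathrm{Alb}(B')\to\mathrm{Alb}(A')$ to lift the correction term $h=\Phi\circ(\Phi'')^{-1}\in\ker(\rho_{A'})$ to a $G$-functor. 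This is the same route the paper intends; you have simply made explicit the kernel-level bookkeeping that the paper leaves to the reader.
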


Note that the main results above can be expressed in another way using the Orlov's representation $\gamma_{-}$ to the set of symplectic maps. We will not use this kind of expression in the rest of the paper.

\begin{example}\label{G-SO V_B G=Ahat[n]}
    In Example \ref{G=Ahat[n]}, $G\textrm{-}\mathrm{SO}^+_{\mathrm{Hdg}}(V_A)$ is the subgroup of $\mathrm{SO}^+_{\mathrm{Hdg}}(V_A)$ consisting of all elements of $\mathrm{SO}^+_{\mathrm{Hdg}}(V_A)$, which map $n\mathrm{H}^1(A,\mathbb{Z})\oplus\frac{1}{n}\mathrm{H}^1(\widehat{A},\mathbb{Z})$ to itself. Let $\Phi_{\mathcal{P}}:\mathbf{D}^ b(A)\to\mathbf{D}^b(\widehat{A})$ be the equivalence with  the Fourier-Mukai kernel being the Poincar\'e line bundle $\mathcal{P}$. We get an isometry $$\phi_{\mathcal{P}}:=\rho_{A,\widehat{A}}(\Phi_{\mathcal{P}})=\begin{pmatrix}
        0&-1\\
        1&0
    \end{pmatrix}:V_A=\mathrm{H}^1(A,\mathbb{Z})\oplus\mathrm{H}^1(\widehat{A},\mathbb{Z})\to V_{\widehat{A}}=\mathrm{H}^1(\widehat{A},\mathbb{Z})\oplus\mathrm{H}^1(A,\mathbb{Z}),$$ by \cite[Example 9.38 v)]{Huybrechts:06}. Obviously, $\phi_\mathcal{P}\not\in G\textrm{-}\mathrm{SO}^+_{\mathrm{Hdg}}(V_A,V_{A'})$. By Corollary \ref{Lift derived eq_A}, it suggests that $\Phi_{\mathcal{P}}$ does not lift to a $G$-functor between $\mathbf{D}^b_G(A)$ and $\mathbf{D}^b_G(\widehat{A})$.
\end{example}

\subsection{Relating the cohomological actions of $\lambda_q(f,\sigma)$ (and $\lambda_{q,q'}(f,\sigma)$) and $f$ on the spin representation.}

As the first step in proving Theorem \ref{Th_GrhoA}, we define the map $G\textrm{-}\rho_A:=\rho_B\circ\lambda_q$. We prove the inclusion $$G\textrm{-}\rho_A(G\textrm{-}\mathrm{Aut}(\mathbf{D}^b_G(A)))\subset G\textrm{-}\mathrm{SO}^+_{\mathrm{Hdg}}(V_B)$$ in this subsection. Similarly, we define the map $G\textrm{-}\rho_{A,A'}:=\rho_{B,B'}\circ\lambda_{q,q'}$ in Theorem \ref{Th_GrhoAA'} and prove the inclusion $$G\textrm{-}\rho_{A,A'}(G\textrm{-}\mathrm{Eq}(\mathbf{D}^b_G(A),\mathbf{D}^b_G(A')))\subset G\textrm{-}\mathrm{SO}^+_{\mathrm{Hdg}}(V_B,V_{B'})$$ in this subsection.

Let $q:B\to A, \widehat{q}, q_*,\widehat{q}^*$ be as above with a similar notation for $q',\widehat{q'}, (q')_*,\widehat{q'}^*$. We also denote the integral cohomology homomorphisms and the induced maps of the derived categories by the same symbols. Let $\mathcal{P}_A$ be the Poincar\'e line bundle over $A\times\widehat{A}$ and $\mathcal{P}_B, \mathcal{P}_{A'}, \mathcal{P}_{B'}$ be its analogues. Let $\phi_{\mathcal{P}_A}:\mathrm{H}^*(A,\mathbb{Z})\to\mathrm{H}^*(\widehat{A},\mathbb{Z})$ be the isomorphism in cohomology associated with the derived equivalence $$\Phi_{\mathcal{P}_A}:\mathbf{D}^b(A)\to\mathbf{D}^b(\widehat{A}).$$ Denote by $\phi_{\mathcal{P}_B}, \phi_{\mathcal{P}_{A'}}, \phi_{\mathcal{P}_{B'}}$ its analogues. Then, we have the following relation between $\Phi_{\mathcal{P}_A}$ and $\Phi_{\mathcal{P}_B}$. (Analogue for $\Phi_{\mathcal{P}_{A'}}$ and $\Phi_{\mathcal{P}_{B'}}$.)

\begin{proposition}\label{Phi PA vs Phi VB}
    Let the notation be as above. Then $\Phi_{\mathcal{P}_A}\circ q_*\circ\Phi_{\mathcal{P}_B}^{-1}\cong\widehat{q}^*$. That is, we have the following commutative diagram.
    \begin{align*}
        \xymatrix{
        \mathbf{D}^b(B)\ar[r]^{q_*}\ar[d]^{\simeq}_{\Phi_{\mathcal{P}_B}}&\mathbf{D}^b(A)\ar[d]^{\Phi_{\mathcal{P_A}}}_{\simeq}\\
        \mathbf{D}^b(\widehat{B})\ar[r]_{{\widehat{q}}^*}&\mathbf{D}^b(\widehat{A})
        }
    \end{align*}
\end{proposition}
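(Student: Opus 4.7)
The plan is to verify the commutativity by a direct Fourier--Mukai computation using base change and the fundamental compatibility of the Poincaré bundles under the isogeny $q$. Writing the statement multiplicatively as $\Phi_{\mathcal{P}_A}\circ q_*\cong \widehat{q}^{\,*}\circ \Phi_{\mathcal{P}_B}$, I would evaluate both sides on an arbitrary object $F\in\mathbf{D}^b(B)$ and identify them canonically.

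The key input is the standard compatibility $(q\times\mathrm{id}_{\widehat{A}})^*\mathcal{P}_A\cong(\mathrm{id}_B\times\widehat{q})^*\mathcal{P}_B$ on $B\times\widehat{A}$, which follows from the universal property of $\mathcal{P}_B$ together with the definition of $\widehat{q}$. Granting this, the computation proceeds in five steps, applied to $F\in\mathbf{D}^b(B)$:
\begin{enumerate}
    \item[(i)] Expand $\Phi_{\mathcal{P}_A}(q_*F)=Rp_{\widehat{A},*}\bigl(p_A^{\,*}q_*F\otimes\mathcal{P}_A\bigr)$, where $p_A,p_{\widehat{A}}$ are the projections from $A\times\widehat{A}$.
    \item[(ii)] Apply flat base change to the Cartesian square obtained from $q\colon B\to A$ by product with $\widehat{A}$; this gives $p_A^{\,*}q_*F\cong(q\times\mathrm{id})_*p_B^{\,*}F$, where $p_B\colon B\times\widehat{A}\to B$ is the projection.
    \item[(iii)] Use the projection formula for $q\times\mathrm{id}$ to pull $(q\times\mathrm{id})_*$ past the tensor product with $\mathcal{P}_A$, and replace $(q\times\mathrm{id})^*\mathcal{P}_A$ by $(\mathrm{id}_B\times\widehat{q})^*\mathcal{P}_B$ via the compatibility above.
    \item[(iv)] Factor $p_B\colon B\times\widehat{A}\to B$ through $\mathrm{id}_B\times\widehat{q}\colon B\times\widehat{A}\to B\times\widehat{B}$ and the projection $\widetilde{p}_B\colon B\times\widehat{B}\to B$, so that the integrand on $B\times\widehat{A}$ becomes $(\mathrm{id}_B\times\widehat{q})^*\bigl(\widetilde{p}_B^{\,*}F\otimes\mathcal{P}_B\bigr)$.
    \item[(v)] Apply flat base change once more to the Cartesian square
    \[
    \xymatrix{
    B\times\widehat{A}\ar[r]^{\mathrm{id}_B\times\widehat{q}}\ar[d]&B\times\widehat{B}\ar[d]^{\pi_{\widehat{B}}}\\
    \widehat{A}\ar[r]_{\widehat{q}}&\widehat{B},
    }
    \]
    which identifies the $R$-pushforward with $\widehat{q}^{\,*}R\pi_{\widehat{B},*}\bigl(\widetilde{p}_B^{\,*}F\otimes\mathcal{P}_B\bigr)=\widehat{q}^{\,*}\Phi_{\mathcal{P}_B}(F)$.
\end{enumerate}

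All of these steps are unambiguous since the homomorphism $q$ between abelian varieties is flat (in fact faithfully flat, being a surjective homomorphism with kernel equal to $\widehat{G}$), so both invocations of base change are valid on $\mathbf{D}^b$. The main technical point, and the only non-formal one, is verifying the Poincaré bundle identity $(q\times\mathrm{id})^*\mathcal{P}_A\cong(\mathrm{id}\times\widehat{q})^*\mathcal{P}_B$; this I would establish by observing that both sides restrict to $\mathcal{L}_{\widehat{q}(\alpha)}\in\mathrm{Pic}^0(B)$ on $B\times\{\alpha\}$ and are trivial along $\{0_B\}\times\widehat{A}$, and then invoking the seesaw principle together with the defining universal property of $\mathcal{P}_B$. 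Once this identification is in place, steps (i)--(v) chain together to produce a canonical isomorphism $\Phi_{\mathcal{P}_A}(q_*F)\cong\widehat{q}^{\,*}\Phi_{\mathcal{P}_B}(F)$, functorial in $F$, which is precisely the desired commutativity.
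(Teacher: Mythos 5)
Your proof is correct and hinges on exactly the same key lemma as the paper's, namely the Poincar\'e--bundle identity $(q\times\mathrm{id}_{\widehat{A}})^*\mathcal{P}_A\cong(\mathrm{id}_B\times\widehat{q})^*\mathcal{P}_B$ established via the seesaw principle. The only stylistic difference is that the paper phrases the rest as an identification of Fourier--Mukai kernels by citing \cite[Exercise 5.12 ii), iii)]{Huybrechts:06}, whereas you unfold that citation into an explicit object-wise computation with flat base change and the projection formula; these are the same argument.
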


\begin{proof}
    By \cite[Exercise 5.12 $ii)$ and $iii)$]{Huybrechts:06}, the Fourier-Mukai kernel of $\Phi_{\mathcal{P}_A}\circ q_*$ and $\widehat{q}^*\circ\Phi_{\mathcal{P}_B}$ is $(q\times\mathrm{id}_{\widehat{A}})^*\mathcal{P}_A\in\mathbf{D}^b(B\times\widehat{A})$ and $(\mathrm{id}_{B}\times\widehat{q})^*\mathcal{P}_B\in\mathbf{D}^b(B\times\widehat{A})$, respectively. It suffices to prove $(q\times\mathrm{id}_{\widehat{A}})^*\mathcal{P}_A\cong(\mathrm{id}_{B}\times\widehat{q})^*\mathcal{P}_B$.
    
    Using the defining property of the Poincar\'e line bundles \cite[Section 5 in Chapter 2]{Birkenhake Lange:04}, we have $((q\times\mathrm{id}_{\widehat{A}})^*\mathcal{P}_A)|_{B\times\{[L]\}}\cong((\mathrm{id}_{B}\times\widehat{q})^*\mathcal{P}_B)|_{B\times\{[L]\}}$ for all $L\in\mathrm{Pic}^0(A)$, where $[L]\in\widehat{A}$ denotes the corresponding point. We may also find that both $(q\times\mathrm{id}_{\widehat{A}})^*\mathcal{P}_A$ and $(\mathrm{id}_{B}\times\widehat{q})^*\mathcal{P}_B$ restrict to the trivial line bundle over $\{0\}\times \widehat{A}$. Then the isomorphism of two line bundles can be proved by using the Seesaw Principle \cite[Corollary A.9]{Birkenhake Lange:04}.
\end{proof}

We have similar results in the level of cohomology. Since $\mathrm{H}^1(\widehat{A},\mathbb{Z})$ is naturally isomorphic to $\mathrm{H}^1({A},\mathbb{Z})^*$, $\mathrm{H}^*(\widehat{A},\mathbb{Z})$ is naturally the dual of $\mathrm{H}^*({A},\mathbb{Z})$. Under this isomorphism, the isomorphism $\phi_{\mathcal{P}_A}$ corresponds to Poincar\'e duality, up to a sign change in each degree as in \cite[Lemma 9.23 and Corollary 9.24]{Huybrechts:06}. Similar for $A'$. Hence, we get $\mathrm{deg}(q)\cdot\phi_{\mathcal{P}_A}=\widehat{q}^*\circ\phi_{\mathcal{P}_B}\circ q^*, \mathrm{deg}(q')\cdot\phi_{\mathcal{P}_{A'}}=\widehat{q'}^*\circ\phi_{\mathcal{P}_{B'}}\circ (q')^*$. That is to say, the diagrams below commute.
\begin{align*}
    \xymatrix{
    \mathrm{H}^*(B,\mathbb{Z})\ar[d]^{\phi_{\mathcal{P}_B}}&\mathrm{H}^*(A,\mathbb{Z})\ar[d]^{\mathrm{deg}(q)\cdot\phi_{\mathcal{P}_A}}\ar[l]_{q^*}&\mathrm{H}^*(B',\mathbb{Z})\ar[d]^{\phi_{\mathcal{P}_{B'}}}&\mathrm{H}^*(A',\mathbb{Z})\ar[d]^{\mathrm{deg}(q')\cdot\phi_{\mathcal{P}_{A'}}}\ar[l]_{(q')^*}\\
    \mathrm{H}^*(\widehat{B},\mathbb{Z})\ar[r]^{\widehat{q}^*}&\mathrm{H}^*(\widehat{A},\mathbb{Z})&\mathrm{H}^*(\widehat{B'},\mathbb{Z})\ar[r]^{\widehat{q'}^*}&\mathrm{H}^*(\widehat{A'},\mathbb{Z})\\
    }
\end{align*}

Indeed, since $\phi_{\mathcal{P}_A}=(-1)^{\frac{m(m+1)}{2}}\mathrm{PD}_{A,m}:\mathrm{H}^m(A,\mathbb{Q})\xrightarrow{\simeq}\mathrm{H}^{4-m}(\widehat{A},\mathbb{Q})$ (and the analog result for $\phi_{\mathcal{P}_B}$) by \cite[Lemma 9.23 and Corollary 9.24]{Huybrechts:06}, we have the following computation for arbitrary $\alpha\in\mathrm{H}^m(A,\mathbb{Z})$.
\begin{align*}
    &\widehat{q}^*\circ\phi_{\mathcal{P}_B}\circ q^*(\alpha)=(-1)^{\frac{m(m+1)}{2}}\widehat{q}^*(\mathrm{PD}_B(q^*(\alpha)))=(-1)^{\frac{m(m+1)}{2}}\widehat{q}^*(\int_{B}q^*(\alpha)\cup(-))\\
    =&(-1)^{\frac{m(m+1)}{2}}\int_Bq^*\alpha\cup q^*(-)=(-1)^{\frac{m(m+1)}{2}}\mathrm{deg}(q)\int_A\alpha\cup(-)\\
    =&(-1)^{\frac{m(m+1)}{2}}\mathrm{deg}(q)\mathrm{PD}_A(\alpha)=\mathrm{deg}(q)\phi_{\mathcal{P}_A}(\alpha)
\end{align*}
It gives $\mathrm{deg}(q)\cdot\phi_{\mathcal{P}_A}=\widehat{q}^*\circ\phi_{\mathcal{P}_B}\circ q^*$. Analogue for $\mathrm{deg}(q')\cdot\phi_{\mathcal{P}_{A'}}=\widehat{q'}^*\circ\phi_{\mathcal{P}_{B'}}\circ (q')^*$.

Keep the notation of (\ref{Flambda_EqAA'}). Let $(f,\sigma)$ be an element of $G\textrm{-}\mathrm{Eq}(\mathbf{D}^b_G(A),\mathbf{D}^b_G(A'))$. Let $f^{\mathrm{H}}:\mathrm{H}^*(A,\mathbb{Z})\to\mathrm{H}^*(A',\mathbb{Z})$ be the cohomological action of $f$ and denote 
by $\lambda_{q,q'}(f,\sigma)^{\mathrm{H}}:\mathrm{H}^*(B,\mathbb{Z})\to\mathrm{H}^*(B',\mathbb{Z})$ that of $\lambda_{q,q'}(f,\sigma)$.

\begin{lemma}\label{lambda preserve integral structure_AA'}
    The equality $\lambda_{q,q'}(f,\sigma)^\mathrm{H}\circ q^*=(q')^*\circ f^\mathrm{H}$ holds in $\mathrm{Hom}(\mathrm{H}^*(A,\mathbb{Q}), \mathrm{H}^*(B',\mathbb{Q}))$. In particular, $\lambda_{q,q'}(f,\sigma)^\mathrm{H}$ is an element of the image of $\mathrm{Spin}(V_B,V_{B'})$ in $\mathrm{GL}(\mathrm{H}^*(B,\mathbb{Z}), \mathrm{H}^*(B',\mathbb{Z}))$, which maps the lattice $q^*\mathrm{H}^1(A,\mathbb{Z})$ of $V_{B,\mathbb{Q}}$ to the lattice $(q')^*\mathrm{H}^1(A',\mathbb{Z})$ of $V_{B',\mathbb{Q}}$.
\end{lemma}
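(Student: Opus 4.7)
The plan is to derive the cohomological identity from a commutative square of derived functors involving the pushforwards $q_*,q'_*$, and then to convert pushforwards to pullbacks using that $q^*$ and $(q')^*$ are isomorphisms on rational cohomology of common degree $|G|$.

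First, I will unravel the definition of $\lambda_{q,q'}(f,\sigma)$ via the equivalences $\psi_q,\psi_{q'}$ of Passage \ref{Db(B) vs DbG(A)}. The defining relation $\psi_{q'}\circ\lambda_{q,q'}(f,\sigma)\cong\lambda(f,\sigma)\circ\psi_q$, where the underlying non-equivariant functor of $\lambda(f,\sigma)\colon\mathbf{D}^b_G(A)\to\mathbf{D}^b_G(A')$ is $f$, gives upon forgetting the $G$-linearization and using that the underlying object of $\psi_q(F)$ is $q_*F$ the natural isomorphism $q'_*\circ\lambda_{q,q'}(f,\sigma)\cong f\circ q_*$ of functors $\mathbf{D}^b(B)\to\mathbf{D}^b(A')$. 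Passing to cohomology---the Todd contributions in Grothendieck--Riemann--Roch vanish because $q,q'$ are étale---this produces
\begin{equation*}
q'_*\circ\lambda_{q,q'}(f,\sigma)^{\mathrm{H}}\;=\;f^{\mathrm{H}}\circ q_*
\end{equation*}
as maps $\mathrm{H}^*(B,\mathbb{Q})\to\mathrm{H}^*(A',\mathbb{Q})$.

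Next, I will convert $q_*$ into $q^*$. The projection formula gives $q_*\circ q^*=\deg(q)\cdot\mathrm{id}$, and since $q,q'$ are isogenies of abelian varieties, both $q^*$ and $(q')^*$ are isomorphisms on rational cohomology with $q'_*=\deg(q')\cdot((q')^*)^{-1}$ on $\mathbb{Q}$-coefficients. Composing the identity above on the right with $q^*$, inverting $q'_*$ on the left, and cancelling the common factor $\deg(q)=\deg(q')=|G|$ yields the main equality
\begin{equation*}
\lambda_{q,q'}(f,\sigma)^{\mathrm{H}}\circ q^*\;=\;(q')^*\circ f^{\mathrm{H}} \quad \text{in } \mathrm{Hom}(\mathrm{H}^*(A,\mathbb{Q}),\mathrm{H}^*(B',\mathbb{Q})).
\end{equation*}

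For the ``in particular'' part, since $\lambda_{q,q'}(f,\sigma)\in\mathrm{Eq}(\mathbf{D}^b(B),\mathbf{D}^b(B'))$, the spin version of Orlov's theorem recalled in Remark \ref{Spin vs Orlov representation} places its cohomological action in the image of $\mathrm{Spin}(V_B,V_{B'})$ inside $\mathrm{GL}(\mathrm{H}^*(B,\mathbb{Z}),\mathrm{H}^*(B',\mathbb{Z}))$. Applying the main equality to an integral class $\alpha\in\mathrm{H}^1(A,\mathbb{Z})$ and reading off the $\mathrm{H}^1$-component then gives the lattice assertion, since $f^{\mathrm{H}}$ preserves integral cohomology. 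The hard part is the first paragraph---identifying $\lambda_{q,q'}(f,\sigma)$ as the functor making the square with $q_*,q'_*$ commute, which is really the content of the construction of $\lambda_{q,q'}$; the remaining manipulations are formal consequences of the degree identity $q_*q^*=\deg(q)\cdot\mathrm{id}$ and the equality $\deg(q)=\deg(q')=|G|$.
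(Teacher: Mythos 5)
Your proof is correct and reaches the same pivotal identity $q'_*\circ\lambda_{q,q'}(f,\sigma)^{\mathrm{H}} = f^{\mathrm{H}}\circ q_*$ that the paper's argument isolates (the paper writes the left factor as $\phi_{\mathcal{P}_{A'}}^{-1}\circ\widehat{q'}^*\circ\phi_{\mathcal{P}_{B'}}$, which equals $q'_*$ by Proposition \ref{Phi PA vs Phi VB} and the degree identity). But your route to that identity is genuinely different and shorter. The paper conjugates everything through the Poincar\'e--Mukai equivalences, invokes Lemma \ref{G-Phi P_A equation} to relate $\Xi_q$, $G\textrm{-}\Phi_{\mathcal{P}_A}$, $\psi_q$, and $\Phi_{\mathcal{P}_B}$, identifies $\Phi_{\mathcal{P}_A}\circ q_*\circ\Phi_{\mathcal{P}_B}^{-1}\cong\widehat{q}^*$, and then pushes $\mathrm{ch}(\widetilde F)$ through. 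You instead extract the natural isomorphism $q'_*\circ\lambda_{q,q'}(f,\sigma)\cong f\circ q_*$ directly from the definitional relation $\psi_{q'}\circ\lambda_{q,q'}(f,\sigma)\cong\lambda(f,\sigma)\circ\psi_q$ by applying the forgetful functor (using that the underlying object of $\psi_q(F)$ is $q_*F$), pass to Chern characters (GRR with trivial relative Todd class), and then convert $q_*$ to $q^*$ with the two elementary facts $q_*q^*=\deg(q)\cdot\mathrm{id}$ and $\deg(q)=\deg(q')=|G|$. This bypasses the dual-side apparatus (Proposition \ref{Phi PA vs Phi VB}, Lemma \ref{G-Phi P_A equation}) entirely and keeps the whole computation on the $B$- and $B'$-sides, which is cleaner. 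What the paper's route buys in exchange is the explicit interplay with $\widehat{q}^*$, which it needs anyway for the rest of the proof of Proposition \ref{GrhoA GrhoAA' inclusions} (the $\widehat{q}^*\mathrm{H}^1(\widehat{B},\mathbb{Z})$ part of the lattice).

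One wrinkle in the ``in particular'' part: ``reading off the $\mathrm{H}^1$-component'' is a bit too quick, since the cohomological FM transform $\lambda_{q,q'}(f,\sigma)^{\mathrm{H}}$ on $\mathrm{H}^*(B,\mathbb{Q})$ does not preserve cohomological degree, so $\lambda_{q,q'}(f,\sigma)^{\mathrm{H}}(q^*\alpha)$ for $\alpha\in\mathrm{H}^1(A,\mathbb{Z})$ need not land in $\mathrm{H}^1(B',\mathbb{Q})$. The lattice statement is really about the Orlov/$\mathrm{SO}$ representation on $V_B=\mathrm{H}^1(B\times\widehat{B},\mathbb{Z})$, and passing from the $\mathrm{H}^*$-equality to that requires the dictionary in Remark \ref{Spin vs Orlov representation}. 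The paper's own proof doesn't spell this step out either; I only flag it because your phrasing makes it sound like a direct degree comparison, which it isn't.
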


\begin{proof}
    Consider the following diagram with left and right squares being commutative.
    \begin{align*}
    \xymatrix{
    \mathrm{H}^*(\widehat{A},\mathbb{Q})&&\mathrm{H}^*(A,\mathbb{Q})\ar[d]^{q^*}\ar[ll]_{\mathrm{deg}(q)\cdot\phi_{\mathcal{P}_A}}\ar[rr]^{f^\mathrm{H}}&&\mathrm{H}^*(A',\mathbb{Q})\ar[d]^{(q')^*}\ar[rr]^{\mathrm{deg}(q')\cdot\phi_{\mathcal{P}_{A'}}}&&\mathrm{H}^*(\widehat{A'},\mathbb{Q})\\
    \mathrm{H}^*(\widehat{B},\mathbb{Q})\ar[u]^{\widehat{q}^*}&&\mathrm{H}^*({B},\mathbb{Q})\ar[ll]_{\phi_{\mathcal{P}_B}}\ar[rr]^{\lambda_{q,q'}(f,\sigma)^\mathrm{H}}&&\mathrm{H}^*({B'},\mathbb{Q})\ar[rr]^{\phi_{\mathcal{P}_{B'}}}&&\mathrm{H}^*(\widehat{B'},\mathbb{Q})\ar[u]_{\widehat{q'}^*}
    }
\end{align*}
    It suffices to prove the commutativity of the whole diagram.
    
    Since $\mathrm{deg}(q')=\#G=\mathrm{deg}(q)$. It is equivalent to saying, 
    \begin{align}\label{outer diagram commute for lambda_AA'}
        (\phi_{\mathcal{P}_{A'}}^{-1}\circ\widehat{q'}^*\circ\phi_{\mathcal{P}_{B'}})\circ\lambda_{q,q'}(f,\sigma)^\mathrm{H}=f^H\circ(\phi_{\mathcal{P}_A}^{-1}\circ\widehat{q}^*\circ\phi_{\mathcal{P}_B}).
    \end{align}
    The equivalence $\Phi_{\mathcal{P}_A}$ conjugates the $G$-action on $\mathbf{D}^b(A)$ via the subgroup $\mathrm{ker}(\widehat{q})$ of $\widehat{A}$ to the action of $G$ on $\mathbf{D}^b(\widehat{A})$ via translation automorphisms. An object $\widetilde{F}\in\mathbf{D}^b(B)$ corresponds to $(F,\phi_F)\in\mathbf{D}^b_G(A)$ by the equivalence $$\psi_q:\mathbf{D}^b(B)\simeq\mathbf{D}^b_G(A)$$
    in passage \ref{Db(B) vs DbG(A)}. Such an object $(F,\phi_F)$ corresponds to a $G$-equivariant object $G\textrm{-}\Phi_{\mathcal{P}_A}(F,\phi_F)=(\widehat{q}^*(E),\phi_{\widehat{q}^*(E)})$, the pullback of some element $E\in\mathbf{D}^b(\widehat{B})$. So we get $\Phi_{\mathcal{P}_B}(\widetilde{F})=E$. Here, we use the induced $G$-functor $G\textrm{-}\Phi_{\mathcal{P}_A}$ and the result $\Xi_q\cong G\textrm{-}\Phi_{\mathcal{P}_A}\circ\psi_q\circ\Phi_{\mathcal{P}_B}^{-1}$ in Lemma \ref{G-Phi P_A equation}, where the derived equivalence $$\Xi_q:\mathbf{D}^b(\widehat{B})\to\mathbf{D}^b_G(\widehat{A})$$ is in passage \ref{Db(B) vs DbG(A)}.
    
    The equality $\widehat{q}^*(\mathrm{ch}(E))=\mathrm{ch}(\Phi_{\mathcal{P}_A}(F))$ yields $$(\phi_{\mathcal{P}_A}^{-1}\circ\widehat{q}^*\circ\phi_{\mathcal{P}_B})\mathrm{ch}(\widetilde{F})=\mathrm{ch}(F).$$
    
    Equation (\ref{outer diagram commute for lambda_AA'}) follows from the equation above applied to $\lambda_{q,q'}(f,\sigma)(\widetilde{F})$ and $f(F)$ on two sides. To be explicit, we get the equalities:
    \begin{align*}
        (\phi_{\mathcal{P}_{A'}}^{-1}\circ\widehat{q'}^*\circ\phi_{\mathcal{P}_{B'}})\circ\lambda_{q,q'}(f,\sigma)^\mathrm{H}\mathrm{ch}(\widetilde{F})&=(\phi_{\mathcal{P}_{A'}}^{-1}\circ\widehat{q'}^*\circ\phi_{\mathcal{P}_{B'}})\mathrm{ch}(\lambda_{q,q'}(f,\sigma)\widetilde{F})=\mathrm{ch}(f(F))\\
        f^\mathrm{H}(\phi_{\mathcal{P}_A}^{-1}\circ\widehat{q}^*\circ\phi_{\mathcal{P}_B})\mathrm{ch}(\widetilde{F})&=f^\mathrm{H}\mathrm{ch}(F)=\mathrm{ch}(f(F)).
    \end{align*}
    Then, we get equation (\ref{outer diagram commute for lambda_AA'}) by comparing the left-hand sides.
\end{proof}

Now, we state the result used in the proof above.

\begin{lemma}\label{G-Phi P_A equation}
    Let the notation be as above. Then $\Xi_q= G\textrm{-}\Phi_{\mathcal{P}_A}\circ\psi_q\circ\Phi_{\mathcal{P}_B}^{-1}$, where $G\textrm{-}\Phi_{\mathcal{P}_A}:\mathbf{D}^b_G(A)\to\mathbf{D}^b_G(\widehat{A})$ is the $G$-functor naturally induced by the functor $\Phi_{\mathcal{P}_A}$. That is, we have the following commutative diagram for the derived equivalences.
    \begin{align*}
        \xymatrix{
        \mathbf{D}^b(\widehat{B})\ar[r]^{\Phi_{\mathcal{P}_B}^{-1}}_{\simeq}\ar[d]_{\Xi_q}^{\simeq}&\mathbf{D}^b(B)\ar[d]^{\psi_q}_{\simeq}\\
        \mathbf{D}^b_G(\widehat{A})&\mathbf{D}^b_G(A)\ar[l]^{G\textrm{-}\Phi_{\mathcal{P}_A}}_{\simeq}
        }
    \end{align*}
\end{lemma}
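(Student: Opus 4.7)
The plan is to chase an object $E\in\mathbf{D}^b(\widehat{B})$ around the diagram and verify both that the underlying objects in $\mathbf{D}^b(\widehat{A})$ coincide and that the $G$-linearizations match. Note that the composition $G\textrm{-}\Phi_{\mathcal{P}_A}\circ\psi_q\circ\Phi_{\mathcal{P}_B}^{-1}$ is a priori a derived equivalence to $\mathbf{D}^b_G(\widehat{A})$, equipped with the $G$-action $(\widehat{\rho},\widehat{\theta})$ described in passage \ref{Db(B) vs DbG(A)}, because Mukai's equivalence conjugates the translation action on $\mathbf{D}^b(\widehat{A})$ to the tensorization action $\rho_g=-\otimes\mathcal{L}_g$ on $\mathbf{D}^b(A)$, so $(\widehat{\rho},\widehat{\theta})$ is exactly the action used in the construction of $G\textrm{-}\Phi_{\mathcal{P}_A}$ via passage \ref{G-f}.

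First I would check agreement on underlying objects. Given $E\in\mathbf{D}^b(\widehat{B})$, the functor $\psi_q\circ\Phi_{\mathcal{P}_B}^{-1}$ sends $E$ to $(q_*\Phi_{\mathcal{P}_B}^{-1}(E),\phi)$ with $\phi_g=-\otimes\mathcal{L}_g$, and then $G\textrm{-}\Phi_{\mathcal{P}_A}$ forgets to $\Phi_{\mathcal{P}_A}(q_*\Phi_{\mathcal{P}_B}^{-1}(E))$. By Proposition \ref{Phi PA vs Phi VB} there is a canonical isomorphism
\[
\Phi_{\mathcal{P}_A}\circ q_*\xrightarrow{\simeq}\widehat{q}^*\circ\Phi_{\mathcal{P}_B},
\]
so applying it to $\Phi_{\mathcal{P}_B}^{-1}(E)$ yields a canonical isomorphism $\Phi_{\mathcal{P}_A}(q_*\Phi_{\mathcal{P}_B}^{-1}(E))\cong\widehat{q}^*(E)$, which is precisely the underlying object of $\Xi_q(E)$.

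Second I would verify the linearizations coincide under this isomorphism. The linearization on $\Xi_q(E)$ is $\phi_{\widehat{q}^*E,g}=(t_g)_*$ by definition (passage \ref{Db(B) vs DbG(A)}). The linearization on $G\textrm{-}\Phi_{\mathcal{P}_A}\circ\psi_q\circ\Phi_{\mathcal{P}_B}^{-1}(E)$ is, by the description of passage \ref{G-f} applied to the functor $\Phi_{\mathcal{P}_A}$, the image of $-\otimes\mathcal{L}_g$ under $\Phi_{\mathcal{P}_A}$, i.e. the composition $\Phi_{\mathcal{P}_A}(-\otimes\mathcal{L}_g):\Phi_{\mathcal{P}_A}(q_*\Phi_{\mathcal{P}_B}^{-1}(E))\xrightarrow{\cong}\Phi_{\mathcal{P}_A}((q_*\Phi_{\mathcal{P}_B}^{-1}(E))\otimes\mathcal{L}_g)=\widehat{\rho}_g(\Phi_{\mathcal{P}_A}(q_*\Phi_{\mathcal{P}_B}^{-1}(E)))$, the last equality using the relation $\widehat{\rho}_g=(t_g)_*=\Phi_{\mathcal{P}_A}\circ(-\otimes\mathcal{L}_g)\circ\Phi_{\mathcal{P}_A}^{-1}$. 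Transporting along the isomorphism from Proposition \ref{Phi PA vs Phi VB}, this translates into $(t_g)_*$ on $\widehat{q}^*E$, matching the linearization on $\Xi_q(E)$.

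Finally I would check naturality in $E$, which follows formally from the naturality of the isomorphism $\Phi_{\mathcal{P}_A}\circ q_*\cong\widehat{q}^*\circ\Phi_{\mathcal{P}_B}$ of functors (Proposition \ref{Phi PA vs Phi VB}) and from the naturality built into passages \ref{Db(B) vs DbG(A)} and \ref{G-f}; this assembles the object-wise isomorphisms into an isomorphism of functors $\Xi_q\cong G\textrm{-}\Phi_{\mathcal{P}_A}\circ\psi_q\circ\Phi_{\mathcal{P}_B}^{-1}$. The main obstacle is purely bookkeeping: one must keep straight that the underlying functor of $G\textrm{-}\Phi_{\mathcal{P}_A}$ is $\Phi_{\mathcal{P}_A}$ while its action on linearizations is to apply $\Phi_{\mathcal{P}_A}$ to each $\phi_g$, and then use Mukai's defining relation $\widehat{\rho}_g\circ\Phi_{\mathcal{P}_A}=\Phi_{\mathcal{P}_A}\circ\rho_g$ to recognize the result as the translation linearization; no further geometric input beyond Proposition \ref{Phi PA vs Phi VB} is required.
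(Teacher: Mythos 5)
Your proposal is correct and follows essentially the same route as the paper's proof: both chase an object around the square, use Proposition \ref{Phi PA vs Phi VB} to identify the underlying objects $\Phi_{\mathcal{P}_A}(q_*\Phi_{\mathcal{P}_B}^{-1}E)\cong\widehat{q}^*E$, and use the Mukai conjugation relation $\widehat{\rho}_g=(t_g)_*=\Phi_{\mathcal{P}_A}\circ(-\otimes\mathcal{L}_g)\circ\Phi_{\mathcal{P}_A}^{-1}$ to match the linearizations, with morphisms/naturality handled by the functoriality of the isomorphism in Proposition \ref{Phi PA vs Phi VB}. The only cosmetic difference is that you start from $E\in\mathbf{D}^b(\widehat{B})$, whereas the paper starts from $\widetilde{F}\in\mathbf{D}^b(B)$ and sets $E=\Phi_{\mathcal{P}_B}(\widetilde{F})$.
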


\begin{proof}
    On the level of objects, we start from an arbitrary object $\widetilde{F}$ in $\mathbf{D}^b(B)$. Then $\psi_q(\widetilde{F})=(F,\phi_F)\in\mathbf{D}^b_G(A)$, where $$F=q_*\widetilde{F},\phi_{F,g}=-\otimes\mathcal{L}_g:F\xrightarrow{\cong}F\otimes\mathcal{L}_g=\rho_g(F)$$ for $g\in G$ with the corresponding line bundle being $\mathcal{L}_g\in\mathrm{Pic}^0(A)$. Then, we define the induced $G$-functor $G\textrm{-}\Phi_{\mathcal{P}_A}$ such that $$G\textrm{-}\Phi_{\mathcal{P}_A}(F,\phi_F)=(\Phi_{\mathcal{P}_A}(F),\Phi_{\mathcal{P}_A}(\phi_F)),$$
    where $$(\Phi_{\mathcal{P}_A}(\phi_F))_g=\Phi_{\mathcal{P}_A}(\phi_{F,g})=(t_g)_*:\Phi_{\mathcal{P}_A}(F)\xrightarrow{\cong}\Phi_{\mathcal{P}_A}(F\otimes\mathcal{L}_g)=(t_g)_*(\Phi_{\mathcal{P}_A}(F))=\widehat{\rho}_g(\Phi_{\mathcal{P}_A}(F))$$ for $g\in G$.

    On the other hand, the natural derived equivalence $\Xi_q:\mathbf{D}^b(\widehat{B})\to\mathbf{D}^b_G(\widehat{A})$ in passage \ref{Db(B) vs DbG(A)} maps the object $E:=\Phi_{\mathcal{P}_B}(\widetilde{F})$ in $\mathbf{D}^b(\widehat{B})$ to $\Xi_q(E)=(\widehat{q}^*E,\phi_{\widehat{q}^*E})$, where $\phi_{\widehat{q}*E,g}:=(t_g)_*:\widehat{q}^*E\xrightarrow{\cong}(t_g)_*(\widehat{q}^*E)=\widehat{\rho}_g(\widehat{q}^*E)$ for $g\in G$. 
    
    Since $\Phi_{\mathcal{P}_A}(F)=\Phi_{\mathcal{P}_A}(q_*\widetilde{F})=\Phi_{\mathcal{P}_A}\circ q_*\circ\Phi_{\mathcal{P}_B}^{-1}(E)\cong \widehat{q}^*E$ by Proposition \ref{Phi PA vs Phi VB}, we have $(\Phi_{\mathcal{P}_A}(F),\Phi_{\mathcal{P}_A}(\phi_F))\cong(\widehat{q}^*E,\phi_{\widehat{q}^*E})$ in $\mathbf{D}^b_G(\widehat{A})$. It gives a proof on the level of objects. To be explicit, we have 
    \begin{align*}
        \xymatrix{
        E=\Phi_{\mathcal{P}_B}(\widetilde{F})\ar@{|->}[d]&&\widetilde{F}\ar@{|->}[ll]\ar@{|->}[d]\\
        (\widehat{q}^*E,\phi_{\widehat{q}^*E})\ar@{<->}[d]^{\cong}&&(q_*\widetilde{F}=F,\phi_F)\ar@{|->}[lld]\\
        (\Phi_{\mathcal{P}_A}(F),\Phi_{\mathcal{P}_A}(\phi_F))&&
        }
    \end{align*} for the derived equivalence in the diagram in Lemma \ref{G-Phi P_A equation}.

    Using similar technology, we can prove on the level of cohomology. Briefly speaking, an arbitrary morphism $m:\widetilde{F}\to\widetilde{F}'$ in $\mathbf{D}^b(B)$ maps as in the following for the derived equivalence in the diagram in Lemma \ref{G-Phi P_A equation}.
    \begin{align*}
        \xymatrix{
        (\Phi_{\mathcal{P}_B}(m):E\to E')\ar@{|->}[d]&&(m:\widetilde{F}\to\widetilde{F'})\ar@{|->}[ll]\ar@{|->}[d]\\
        (\widehat{q}^*(\Phi_{\mathcal{P}_B}(m)):(\widehat{q}^*E,\phi_{\widehat{q}^*E})\to(\widehat{q}^*E',\phi_{\widehat{q}^*E'}))\ar@{<->}^{\cong}[d]&&(q_*m:(F,\phi_F)\to(F',\phi_{F'}))\ar@{|->}[lld]\\
        (\Phi_{\mathcal{P}_A}(q_*(m)):(\Phi_{\mathcal{P}_A}(F),\Phi_{\mathcal{P}_A}(\phi_F))\to(\Phi_{\mathcal{P}_A}(F'),\Phi_{\mathcal{P}_A}(\phi_F')))&&
        }
    \end{align*}
    Here, the morphism $q_*m:(F,\phi_F)\to(F',\phi_{F'})$ in $\mathbf{D}^b_G(A)$ is a morphism $q_*m$ from $F$ to $F'$ in $\mathbf{D}^b(A)$ such that the diagram \begin{align*}
        \xymatrix{
        F\ar[rr]^{q_*m}\ar[d]_{\phi_{F,g}=-\otimes\mathcal{L}_g}&&F'\ar[d]^{\phi_{F',g}=-\otimes\mathcal{L}_g}\\
        \rho_g(F)=F\otimes\mathcal{L}_g\ar[rr]^{\rho_g(q_*(m))}&&\rho_g(F')=F'\otimes\mathcal{L}_g
        }
    \end{align*}
commutes for every $g\in G$. The morphism $\widehat{q}^*(\Phi_{\mathcal{P}_B}(m)):(\widehat{q}^*E,\phi_{\widehat{q}^*E})\to(\widehat{q}^*E',\phi_{\widehat{q}^*E'})$ in $\mathbf{D}^b_G(\widehat{A})$ is a morphism $\widehat{q}^*(\Phi_{\mathcal{P}_B}(m)):\widehat{q}^*E\to\widehat{q}^*E'$ in $\mathbf{D}^b(\widehat{A})$ such that the diagram \begin{align*}
        \xymatrix{
        \widehat{q}^*E\ar[rr]^{\widehat{q}^*(\Phi_{\mathcal{P}_B}(m))}\ar[d]_{\phi_{\widehat{q}^*E,g}=(t_g)_*}&&\widehat{q}^*E'\ar[d]^{\phi_{\widehat{q}^*E',g}=(t_g)_*}\\
        \widehat{\rho}_g(\widehat{q}^*E)=(t_g)_*(\widehat{q}^*E)\ar[rr]^{\widehat{\rho}_g\circ\widehat{q}^*\circ\Phi_{\mathcal{P}_B}(m)}&&\widehat{\rho}_g(\widehat{q}^*E')=(t_g)_*(\widehat{q}^*E')
        }
    \end{align*}
commutes for every $g\in G$. The morphism $$\Phi_{\mathcal{P}_A}(q_*(m)):(\Phi_{\mathcal{P}_A}(F),\Phi_{\mathcal{P}_A}(\phi_F))\to(\Phi_{\mathcal{P}_A}(F'),\Phi_{\mathcal{P}_A}(\phi_F'))$$ in $\mathbf{D}^b_G(\widehat{A})$ is a morphism $\Phi_{\mathcal{P}_A}(q_*(m)):\Phi_{\mathcal{P}_A}(F)\to\Phi_{\mathcal{P}_A}(F')$ in $\mathbf{D}^b(\widehat{A})$ such that the diagram \begin{align*}
        \xymatrix{
        \Phi_{\mathcal{P}_A}(F)\ar[rr]^{\Phi_{\mathcal{P}_A}(q_*(m))}\ar[d]_{(\Phi_{\mathcal{P}_A}(\phi_F))_g=(t_g)_*}&&\Phi_{\mathcal{P}_A}(F')\ar[d]^{(\Phi_{\mathcal{P}_A}(\phi_{F'}))_g=(t_g)_*}\\
        \widehat{\rho}_g(\Phi_{\mathcal{P}_A}(F))=(t_g)_*(\Phi_{\mathcal{P}_A}(F))\ar[rr]^{\Phi_{\mathcal{P}_A}\circ\rho_g\circ q_*(m)}&&\widehat{\rho}_g(\Phi_{\mathcal{P}_A}(F'))=(t_g)_*(\Phi_{\mathcal{P}_A}(F'))
        }
    \end{align*}
commutes for every $g\in G$. 
\end{proof}

We have a version of the above lemma for derived autoequivalences.
\begin{lemma}\label{lambda preserve integral structure_A}
    The equality $\lambda_{q}(f,\sigma)^\mathrm{H}\circ q^*=q^*\circ f^\mathrm{H}$ holds in $\mathrm{Hom}(\mathrm{H}^*(A,\mathbb{Q}), \mathrm{H}^*(B,\mathbb{Q}))$. In particular, $\lambda_{q}(f,\sigma)^\mathrm{H}$ is an element of the image of $\mathrm{Spin}(V_B)$ in $\mathrm{GL}(\mathrm{H}^*(B,\mathbb{Z}))$, which keeps the lattice $q^*\mathrm{H}^1(A,\mathbb{Z})$ of $V_{B,\mathbb{Q}}$ invariant.
\end{lemma}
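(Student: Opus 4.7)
The plan is to specialize the proof of Lemma \ref{lambda preserve integral structure_AA'} to the case $A'=A$, $B'=B$, $q'=q$; the structure of the argument transfers verbatim. First I would arrange the Poincar\'e-duality rectangle whose outer vertical arrows are $\mathrm{deg}(q)\cdot\phi_{\mathcal{P}_A}$ above and $\phi_{\mathcal{P}_B}$ below (connected by $\widehat{q}^*$), and whose interior horizontal arrows are $f^\mathrm{H}$ on top and $\lambda_q(f,\sigma)^\mathrm{H}$ on the bottom, connected by $q^*$. The two outer squares commute by the identity $\mathrm{deg}(q)\cdot\phi_{\mathcal{P}_A}=\widehat{q}^*\circ\phi_{\mathcal{P}_B}\circ q^*$ established just before Lemma \ref{lambda preserve integral structure_AA'}, so (up to the common factor $\mathrm{deg}(q)=\#G$) the desired middle identity $\lambda_q(f,\sigma)^\mathrm{H}\circ q^*=q^*\circ f^\mathrm{H}$ is equivalent to
\[(\phi_{\mathcal{P}_A}^{-1}\circ\widehat{q}^*\circ\phi_{\mathcal{P}_B})\circ\lambda_q(f,\sigma)^\mathrm{H}=f^\mathrm{H}\circ(\phi_{\mathcal{P}_A}^{-1}\circ\widehat{q}^*\circ\phi_{\mathcal{P}_B}).\]

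Next, I would verify this identity by testing both sides on the Chern character of an arbitrary $\widetilde{F}\in\mathbf{D}^b(B)$. Writing $(F,\phi_F):=\psi_q(\widetilde{F})$ and $E:=\Phi_{\mathcal{P}_B}(\widetilde{F})$, Lemma \ref{G-Phi P_A equation} gives $G\textrm{-}\Phi_{\mathcal{P}_A}(F,\phi_F)\cong\Xi_q(E)=(\widehat{q}^*E,\phi_{\widehat{q}^*E})$, so in particular $\Phi_{\mathcal{P}_A}(F)\cong\widehat{q}^*E$, and hence $(\phi_{\mathcal{P}_A}^{-1}\circ\widehat{q}^*\circ\phi_{\mathcal{P}_B})(\mathrm{ch}(\widetilde{F}))=\mathrm{ch}(F)$. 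The crucial input is that by the construction of $\lambda_q$ in passage \ref{def of F lambda}, the object $\lambda_q(f,\sigma)(\widetilde{F})$ corresponds through $\psi_q$ to $(f(F),\phi_{f(F)})$, so the same formula applied to $\lambda_q(f,\sigma)(\widetilde{F})$ gives
\[(\phi_{\mathcal{P}_A}^{-1}\widehat{q}^*\phi_{\mathcal{P}_B})\lambda_q(f,\sigma)^\mathrm{H}\mathrm{ch}(\widetilde{F})=\mathrm{ch}(f(F))=f^\mathrm{H}\mathrm{ch}(F)=f^\mathrm{H}(\phi_{\mathcal{P}_A}^{-1}\widehat{q}^*\phi_{\mathcal{P}_B})\mathrm{ch}(\widetilde{F}),\]
which establishes the displayed equation on Chern characters, and therefore on all of $\mathrm{H}^*(B,\mathbb{Q})$ since the Chern character is a $\mathbb{Q}$-linear isomorphism $K(B)_{\mathbb{Q}}\to\mathrm{H}^*(B,\mathbb{Q})$.

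Finally, for the Spin/lattice assertion I would invoke diagram (\ref{Flambda_AutA}): since $\lambda_q(f,\sigma)$ is a genuine derived autoequivalence of $\mathbf{D}^b(B)$, Orlov's theorem in the spin form of Remark \ref{Spin vs Orlov representation} places $\lambda_q(f,\sigma)^\mathrm{H}$ in the image of $\mathrm{Spin}(V_B)$ in $\mathrm{GL}(\mathrm{H}^*(B,\mathbb{Z}))$, and the intertwining relation $\lambda_q(f,\sigma)^\mathrm{H}\circ q^*=q^*\circ f^\mathrm{H}$ just proved forces it to preserve the sublattice $q^*\mathrm{H}^1(A,\mathbb{Z})\subset V_{B,\mathbb{Q}}$. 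I do not expect any substantive obstacle beyond the autoequivalence case already handled in Lemma \ref{lambda preserve integral structure_AA'}: the only care required is the bookkeeping that the linearization $\phi_F$ transports correctly through $G\textrm{-}\Phi_{\mathcal{P}_A}$, which is precisely the content of Lemma \ref{G-Phi P_A equation}, while the $\sigma_g$'s enter only through the well-definedness of $\lambda_q$ and drop out of the cohomological argument.
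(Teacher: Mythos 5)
Your proposal is correct and mirrors the paper's treatment exactly: the paper states Lemma \ref{lambda preserve integral structure_A} merely by remarking that it is the autoequivalence version of Lemma \ref{lambda preserve integral structure_AA'}, and you carry out the $A'=A$, $B'=B$, $q'=q$ specialization of that proof step for step (same Poincar\'e-duality rectangle, same reduction to the conjugated identity, same invocation of Lemma \ref{G-Phi P_A equation}, same Chern-character verification, same Orlov/Spin conclusion). One small imprecision in your added justification: the Chern character is a $\mathbb{Q}$-linear isomorphism $K(B)_{\mathbb{Q}}\to\mathrm{H}^{\mathrm{even}}(B,\mathbb{Q})$, not onto the full $\mathrm{H}^*(B,\mathbb{Q})$, so testing on Mukai vectors alone only gives the equality on even cohomology --- upgrading to all of $\mathrm{H}^*$ (in particular to $\mathrm{H}^1$, which is where the lattice assertion lives) requires an extra step such as compatibility of the cohomological Fourier--Mukai action with the Clifford-module structure; the paper's own proof of Lemma \ref{lambda preserve integral structure_AA'} leaves this implicit, so this is a feature you inherit rather than introduce.
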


An application of Lemma \ref{lambda preserve integral structure_A} and Lemma \ref{lambda preserve integral structure_AA'} is the direct proof of the desired inclusions.

\begin{proposition}\label{GrhoA GrhoAA' inclusions}
    Let notation be as in Theorem \ref{Th_GrhoA} and Theorem \ref{Th_GrhoAA'}. We have inclusions 
    \begin{align*}
        G\textrm{-}\rho_A(G\textrm{-}\mathrm{Aut}(\mathbf{D}^b_G(A)))&\subset G\textrm{-}\mathrm{SO}^+_{\mathrm{Hdg}}(V_B)\\
        G\textrm{-}\rho_{A,A'}(G\textrm{-}\mathrm{Eq}(\mathbf{D}^b_G(A),\mathbf{D}^b_G(A')))&\subset G\textrm{-}\mathrm{SO}^+_{\mathrm{Hdg}}(V_B,V_{B'}).
    \end{align*}
\end{proposition}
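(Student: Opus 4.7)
The plan is to verify that $\rho_B(\lambda_q(f,\sigma))$ preserves the sublattice $\iota(V_A)=q^*\mathrm{H}^1(A,\mathbb{Z})\oplus(\widehat{q}^*)^{-1}\mathrm{H}^1(\widehat{A},\mathbb{Z})\subset V_{B,\mathbb{Q}}$ summand by summand. Since $\rho_B(\lambda_q(f,\sigma))\in\mathrm{SO}_{\mathrm{Hdg}}(V_B)$ is automatic from Orlov's Theorem \ref{Orlov fundamental th} applied to $B$, preservation of $\iota(V_A)$ is exactly what places $G\textrm{-}\rho_A(f,\sigma)$ in the intersection $G\textrm{-}\mathrm{SO}_{\mathrm{Hdg}}(V_B)=\mathrm{SO}_{\mathrm{Hdg}}(V_B)\cap(\iota\circ\mathrm{SO}_{\mathrm{Hdg}}(V_A)\circ\iota^{-1})$. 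The same structure of argument will handle the $(A,A')$ case.

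Preservation of the first summand $q^*\mathrm{H}^1(A,\mathbb{Z})$ is delivered directly by the lattice-preservation clause of Lemma \ref{lambda preserve integral structure_A}, interpreted through the $2{:}1$ cover $\dagger_B\twoheadrightarrow\rho_B$ of Remark \ref{Spin vs Orlov representation}. For the second summand $(\widehat{q}^*)^{-1}\mathrm{H}^1(\widehat{A},\mathbb{Z})$, I apply Lemma \ref{lambda preserve integral structure_A} to the Poincar\'e-dual situation. Setting $\widehat{G}:=\ker(q)\leq B=\mathrm{Pic}^0(\widehat{B})$ and using the isogeny $\widehat{q}:\widehat{A}\to\widehat{B}$ together with the $\widehat{G}$-action on $\mathbf{D}^b(\widehat{B})$ by tensor with line bundles in $\widehat{G}\subset\mathrm{Pic}^0(\widehat{B})$, the Poincar\'e-conjugate $\Phi_{\mathcal{P}_B}\circ\lambda_q(f,\sigma)\circ\Phi_{\mathcal{P}_B}^{-1}\in\mathrm{Aut}(\mathbf{D}^b(\widehat{B}))$ assembles, with induced equivariance data, into a $\widehat{G}$-functor of $\mathbf{D}^b_{\widehat{G}}(\widehat{B})$ whose $\lambda_{\widehat{q}}$-image is $\Phi_{\mathcal{P}_A}\circ f\circ\Phi_{\mathcal{P}_A}^{-1}\in\mathrm{Aut}(\mathbf{D}^b(\widehat{A}))$; this compatibility is the Poincar\'e-dualized version of Lemma \ref{G-Phi P_A equation}. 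The lemma applied to this dual setup yields preservation of $\widehat{q}^*\mathrm{H}^1(\widehat{B},\mathbb{Z})\subset V_{\widehat{A},\mathbb{Q}}$ by $\rho_{\widehat{A}}(\Phi_{\mathcal{P}_A}\circ f\circ\Phi_{\mathcal{P}_A}^{-1})$; transporting this statement back through the Poincar\'e isometries---using the identity $\mathrm{deg}(q)\phi_{\mathcal{P}_A}=\widehat{q}^*\circ\phi_{\mathcal{P}_B}\circ q^*$ recorded just before Lemma \ref{lambda preserve integral structure_AA'}---translates into preservation of $(\widehat{q}^*)^{-1}\mathrm{H}^1(\widehat{A},\mathbb{Z})$ by $\rho_B(\lambda_q(f,\sigma))$ on the $V_B$ side. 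The $G\textrm{-}\rho_{A,A'}$ case proceeds identically, with Lemma \ref{lambda preserve integral structure_AA'} in place of Lemma \ref{lambda preserve integral structure_A}.

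The main obstacle I anticipate is the bookkeeping needed to verify that the Poincar\'e-conjugate genuinely defines a $\widehat{G}$-functor on the dual side and that the Poincar\'e-isometry translation correctly sends ``preservation in $V_{\widehat{A}}$'' to ``preservation in $V_B$''; these rest on Proposition \ref{Phi PA vs Phi VB} and Lemma \ref{G-Phi P_A equation}, together with straightforward linear algebra in the lattice identifications.
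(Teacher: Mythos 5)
Your strategy is close in spirit to the paper's first proof of this proposition (the one that invokes Lemma \ref{lambda preserve integral structure_A} and Lemma \ref{lambda preserve integral structure_AA'}), but the summand-by-summand decomposition you propose has a genuine gap, and the Poincar\'e-dual half of it does not deliver what you claim.

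First, you underestimate what Lemma \ref{lambda preserve integral structure_A} already buys. Its real content is the cohomology identity $\lambda_q(f,\sigma)^{\mathrm{H}}\circ q^*=q^*\circ f^{\mathrm{H}}$: the cohomological action of $\lambda_q(f,\sigma)$, which is the spin element $\dagger_B(\lambda_q(f,\sigma))$, preserves not just a degree-one sublattice but the entire spinor lattice $q^*\mathrm{H}^*(A,\mathbb{Z})\subset\mathrm{H}^*(B,\mathbb{Q})$. Because $q^*\mathrm{H}^*(A,\mathbb{Z})=\wedge^*q^*\mathrm{H}^1(A,\mathbb{Z})$ is precisely the half-spinor module for the quadratic lattice $\iota(V_A)=q^*\mathrm{H}^1(A,\mathbb{Z})\oplus(\widehat{q}^*)^{-1}\mathrm{H}^1(\widehat{A},\mathbb{Z})$ (wedging by the first summand and contracting by the second both preserve it, which follows from $\widehat{q}^*$ being the transpose of $q^*$), a spin element that preserves both $\mathrm{H}^*(B,\mathbb{Z})$ and $q^*\mathrm{H}^*(A,\mathbb{Z})$ has $\mathrm{SO}$ image preserving both $V_B$ and $\iota(V_A)$ as integral lattices. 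That is the full inclusion, \emph{both} summands, from a single application of the lemma; no separate second-summand argument is needed.

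Second, the Poincar\'e-dual detour as you describe it does not land where you want it to. Applying the lemma to the dual isogeny $\widehat{q}:\widehat{A}\to\widehat{B}$ with $\widehat{G}=\ker(q)$ yields preservation of the sublattice $\widehat{q}^*\mathrm{H}^1(\widehat{B},\mathbb{Z})\subset V_{\widehat{A},\mathbb{Q}}$. When you transport this back through $\phi_{\mathcal{P}_A}^{-1}$ you obtain the sublattice $\widehat{q}^*\mathrm{H}^1(\widehat{B},\mathbb{Z})\subset\mathrm{H}^1(\widehat{A},\mathbb{Z})\subset V_{A,\mathbb{Q}}$, and under $\iota=(q^*,(\widehat{q}^*)^{-1})$ this becomes $\mathrm{H}^1(\widehat{B},\mathbb{Z})\subset V_B$ --- a \emph{smaller} (and trivially preserved) lattice, not the overlattice $(\widehat{q}^*)^{-1}\mathrm{H}^1(\widehat{A},\mathbb{Z})$ you claim. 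The pullback $\widehat{q}^*$ contracts lattices; your transport therefore flows in the wrong direction. (In the concrete Example \ref{G-SO V_B G=Ahat[n]} this is the difference between $\mathrm{H}^1(\widehat{A},\mathbb{Z})$ and $\tfrac{1}{n}\mathrm{H}^1(\widehat{A},\mathbb{Z})$.) Moreover, after conjugating by $\phi_{\mathcal{P}_A}$ and $\phi_{\mathcal{P}_B}$ and using $\mathrm{deg}(q)\phi_{\mathcal{P}_A}=\widehat{q}^*\circ\phi_{\mathcal{P}_B}\circ q^*$, the dual lemma is \emph{equivalent} to the original Lemma \ref{lambda preserve integral structure_A}; it cannot supply more lattice information than the original equality already contains. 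So the summand split is both unnecessary and, as executed, produces a circularity: the translation from the $\widehat{A}$-side to the $B$-side is itself exactly the lemma's consequence.

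Finally, note that the paper also gives a second, independent proof that does not pass through the spin picture at all: it works with Orlov's symplectic representation $\gamma$, shows via Proposition \ref{Phi PA vs Phi VB} and the kernel-level diagram for $F_{\mathcal{E}}$ that $\gamma_{B,B'}(f)$ lifts to $\gamma_{A,A'}(\overline{f})$ (diagram (\ref{lift gamma BB' f})), and concludes by the cohomological lifting criterion. Your proposal does not resemble that route.
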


This proposition proves the well-defined property of the maps $G\textrm{-}\rho_A, G\textrm{-}\rho_{A,A'}$ and the commutativity of the squares in (\ref{diagram_GrhoA}) (\ref{diagram_GrhoAA'}) related to $G\textrm{-}\rho_A, G\textrm{-}\rho_{A,A'}$. 

\begin{proof}[Proof of Proposition \ref{GrhoA GrhoAA' inclusions} using Lemma \ref{lambda preserve integral structure_A} and Lemma \ref{lambda preserve integral structure_AA'}]
    For every $G$-functor $(f,\sigma)\in G\textrm{-}\mathrm{Eq}(\mathbf{D}^b_G(A),\mathbf{D}^b_G(A'))$, the cohomology map $\lambda_{q,q'}(f,\sigma)^\mathrm{H}$ maps the lattice $q^*\mathrm{H}^1(A,\mathbb{Z})$ of $V_{B,\mathbb{Q}}$ to the lattice $(q')^*\mathrm{H}^1(A',\mathbb{Z})$ of $V_{B',\mathbb{Q}}$ by Lemma \ref{lambda preserve integral structure_AA'}. The map also maps the lattice $\widehat{q}^*\mathrm{H}^1(\widehat{B},\mathbb{Z})$ of $V_{B,\mathbb{Q}}$ to the lattice $\widehat{q'}^*\mathrm{H}^1(\widehat{B'},\mathbb{Z})$ of $V_{B',\mathbb{Q}}$ using the commutative diagram in the proof of Lemma \ref{lambda preserve integral structure_AA'}. Recall the natural inclusion $\mathrm{H}^1(\widehat{A},\mathbb{Z})\subset\mathrm{H}^1(\widehat{B},\mathbb{Q})$ fits $\widehat{q}^*\mathrm{H}^1(\widehat{B},\mathbb{Z})\subset\mathrm{H}^1(\widehat{A},\mathbb{Z})\subset\mathrm{H}^1(\widehat{A},\mathbb{Q})\xrightarrow[\simeq]{(\widehat{q}^*)^{-1}}\mathrm{H}^1(\widehat{B},\mathbb{Q})$. (Analogue for $A'$.) Then the cohomology map $\lambda_{q,q'}(f,\sigma)^\mathrm{H}$ maps the lattice $\mathrm{H}^1(\widehat{A},\mathbb{Z})$ of $V_{B,\mathbb{Q}}$ to the lattice $\mathrm{H}^1(\widehat{A'},\mathbb{Z})$ of $V_{B',\mathbb{Q}}$. By Remarks \ref{two Orlov's representation remark} and \ref{Spin vs Orlov representation}, we get $$G\textrm{-}\rho_{A,A'}(f,\sigma)=\lambda_{q,q'}(f,\sigma)^\mathrm{H}|_{V_B, V_{B'}}\in G\textit{-}\mathrm{SO}^+_{\mathrm{Hdg}}(V_B,V_{B'}).$$ 
    It gives the inclusion $G\textrm{-}\rho_{A,A'}(G\textrm{-}\mathrm{Eq}(\mathbf{D}^b_G(A),\mathbf{D}^b_G(A')))\subset G\textrm{-}\mathrm{SO}^+_{\mathrm{Hdg}}(V_B,V_{B'})$. 
    
    The derived autoequivalence case follows by Lemma \ref{lambda preserve integral structure_AA'} in the same way.
\end{proof}

Another application of Lemma \ref{lambda preserve integral structure_A} and Lemma \ref{lambda preserve integral structure_AA'} is to give an intuitive thought of Remark \ref{lambda_q explain B} and Remark \ref{AA'notation}, which provide an alternative way to explain $\lambda_q$ and $\lambda_{q,q'}$, respectively.

In fact, we have an alternative proof of Proposition \ref{GrhoA GrhoAA' inclusions} using the definition of $\gamma_{A,A'}, \gamma_{B,B'}$ in Orlov's Theorem \ref{Orlov fundamental th}.

\begin{proof}[Proof of Proposition \ref{GrhoA GrhoAA' inclusions}]
    ~\\
    For derived equivalence case, consider arbitrary $(\overline{f}=\Phi_{\overline{\mathcal{E}}},\sigma)\in G\textrm{-}\mathrm{Eq}(\mathbf{D}^b_G(A),\mathbf{D}^b_G(A'))$, we obtain the following via the notation in (\ref{Flambda_EqAA'}).
    \begin{align*}
    \xymatrix{
     G\textrm{-}\mathrm{Eq}(\mathbf{D}^b(B),\mathbf{D}^b(B'))&G\textrm{-}\mathrm{Eq}(\mathbf{D}^b_G(A),\mathbf{D}^b_G(A'))\ar[l]_{\lambda_{q,q'}}\ar[r]^(0.55){F_{q,q'}}&\mathrm{Eq}(\mathbf{D}^b(A),\mathbf{D}^b(A))\\
     f&(\overline{f},\sigma)\ar@{|->}[l]\ar@{|->}[r]&\overline{f}
    }
    \end{align*} 
    As in Remark \ref{two Orlov's representation remark}, we get $$(\gamma_{A,A'}(\overline{f}))_*|_{V_A, V_{A'}}=\rho_{A,A'}(\overline{f})\in\mathrm{SO}^+_{\mathrm{Hdg}}(V_A,V_{A'}), (\gamma_{B,B'}({f}))_*|_{V_B,V_{B'}}=\rho_{B,B'}({f})\in\mathrm{SO}^+_{\mathrm{Hdg}}(V_B,V_{B'}).$$
    By the alternative way to explain $\lambda_{q,q'}$ in Remark \ref{AA'notation}, the derived equivalence $\overline{f}$ can lift to $f$ via
    \begin{align}\label{lift f bar to f}
        \xymatrix{
    \mathbf{D}^b_G(A)\ar@{-}[r]^{\simeq}&\mathbf{D}^b(B)\ar[rr]^{f=\Phi_{\mathcal{E}}}\ar[d]^{q_*}&\phantom{1}&\mathbf{D}^b(B')\ar[d]^{(q')_*}\ar@{-}[r]^{\simeq}&\mathbf{D}^b_G(A').\\
    &\mathbf{D}^b(A)\ar[rr]_{\overline{f}=\Phi_{\overline{\mathcal{E}}}}&\phantom{1}\ar@{=>}[u]^{\mathrm{lift}}&\mathbf{D}^b(A')}&
    \end{align} The derived equivalence $\overline{f}^{-1}$ can be lifted similarly.
    
    By definition, $G\textrm{-}\rho_{A,A'}:=\rho_{B,B'}\circ\lambda_{q,q'}$, it suffices to prove that $$G\textrm{-}\rho_{A,A'}((\overline{f},\sigma))=(\gamma_{B,B'}({f}))_*|_{V_B,V_{B'}}\in G\textrm{-}\mathrm{SO}^+_{\mathrm{Hdg}}(V_B,V_{B'}).$$ 
    Obviously, $(\gamma_{B,B'}({f}))_*|_{V_B,V_{B'}}\in \mathrm{SO}^+_{\mathrm{Hdg}}(V_B,V_{B'})$ by applying Remark \ref{two Orlov's representation remark} to $B,B'$. It is enough to check that $(\gamma_{B,B'}({f}))_*$ maps $q^*\mathrm{H}^1(A,\mathbb{Z})\oplus\mathrm{H}^1(\widehat{A},\mathbb{Z})\subset V_{B,\mathbb{Q}}$ to $$(q')^*\mathrm{H}^1(A',\mathbb{Z})\oplus\mathrm{H}^1(\widehat{A'},\mathbb{Z})\subset V_{B',\mathbb{Q}}.$$
    
    \textit{Step 1:} Use the relation between ${f}=\Phi_{{\mathcal{E}}}$ and $\overline{f}=\Phi_{\overline{\mathcal{E}}}$ above to get relation between $F_{{\mathcal{E}}}$ and $F_{\overline{\mathcal{E}}}$.
    
    Here we use the notation in \cite[Definition 9.34]{Huybrechts:06} that $F_{{\mathcal{E}}}$ fits the commutative diagram
    \begin{align}\label{def of F_E}
        \xymatrix{
        \mathbf{D}^b(B\times\widehat{B})\ar[d]^{F_\mathcal{E}}\ar[r]^{\mathrm{id}\times\Phi_{{\mathcal{P}}_B}^{-1}}&\mathbf{D}^b(B\times{B})\ar[r]^{(\mu_{B})_*}&\mathbf{D}^b(B\times{B})\ar[d]^{\Phi_{\mathcal{E}}\times\Phi_{\mathcal{E}_R}}\\
        \mathbf{D}^b(B'\times\widehat{B'})&\mathbf{D}^b(B'\times{B'})\ar[l]^{\mathrm{id}\times\Phi_{{\mathcal{P}}_{B'}}}&\mathbf{D}^b(B'\times{B'}),\ar[l]^{(\mu_{B'})^*}
        }
    \end{align}
    where $\mu_B$ is the map $B\times B\to B\times B, (b_1,b_2)\mapsto(b_1+b_2,b_2)$ (similar to $\mu_{B'}$), and $\Phi_{{\mathcal{E}}_R}$ is the right adjoint of $\Phi_{{\mathcal{E}}}$ in the opposition direction. Similar for $F_{\overline{\mathcal{E}}}$. Using the lifting property of $\overline{f}=\Phi_{\overline{\mathcal{E}}}$ as in (\ref{lift f bar to f}) and of $\overline{f}^{-1}=(\Phi_{\overline{\mathcal{E}}})^{-1}$, we get the commutative diagram
    \begin{align*}
        \xymatrix{
        \mathbf{D}^b(B\times{B})\ar[rr]^{\Phi_{\mathcal{E}}\times\Phi_{\mathcal{E}_R}}\ar[d]^{q_*\times q_*}&&\mathbf{D}^b(B'\times{B'})\ar[d]_{(q')_*\times (q')_*}\\
        \mathbf{D}^b(A\times{A})\ar[rr]_{\Phi_{\overline{\mathcal{E}}}\times\Phi_{\overline{\mathcal{E}}_R}}&&\mathbf{D}^b(A'\times{A'}).
        }
    \end{align*}
    Combining with the definition of $F_{\mathcal{E}}$ and $F_{\overline{\mathcal{E}}}$, we have the following commutative diagram with the middle square as above.
    \begin{align*}
        \xymatrix{
        \mathbf{D}^b(B\times\widehat{B})\ar[r]^{\mathrm{id}\times\Phi_{{\mathcal{P}}_B}^{-1}}\ar[d]^{q_*\times(\Phi_{\mathcal{P}_A}\circ q_*\circ\Phi_{\mathcal{P}_B}^{-1})}&\mathbf{D}^b(B\times{B})\ar[r]^{(\mu_{B})_*}\ar[d]^{q_*\times q_*}&\mathbf{D}^b(B\times{B})\ar[r]^{\Phi_{\mathcal{E}}\times\Phi_{\mathcal{E}_R}}\ar[d]^{q_*\times q_*}&\mathbf{D}^b(B'\times{B'})\ar[r]^{(\mu_{B'})^*}\ar[d]_{(q')_*\times (q')_*}&\mathbf{D}^b(B'\times\widehat{B'})\ar[r]^{\mathrm{id}\times\Phi_{{\mathcal{P}}_{B'}}}\ar@<-2ex>[d]_{(q')_*\times (q')_*}&\mathbf{D}^b(B'\times\widehat{B'})\ar[d]_{q'_*\times(\Phi_{\mathcal{P}_{A'}}\circ (q')_*\circ\Phi_{\mathcal{P}_{B'}}^{-1})}\\
        \mathbf{D}^b(A\times\widehat{A})\ar[r]_{\mathrm{id}\times\Phi_{{\mathcal{P}}_A}^{-1}}&\mathbf{D}^b(A\times{A})\ar[r]_{(\mu_{A})_*}&\mathbf{D}^b(A\times{A})\ar[r]_{\Phi_{\overline{\mathcal{E}}}\times\Phi_{\overline{\mathcal{E}}_R}}&\mathbf{D}^b(A'\times{A'})\ar[r]_{(\mu_{A'})^*}&\mathbf{D}^b(A'\times\widehat{A'})\ar[r]_{\mathrm{id}\times\Phi_{{\mathcal{P}}_{A'}}}&\mathbf{D}^b(A'\times\widehat{A'})
        }
    \end{align*}
    Here, the squares related to $\mu$'s are commutative by direct computation. Obviously, the leftmost and rightmost ones are commutative. Consequently, we get the following relation between $F_{{\mathcal{E}}}$ and $F_{\overline{\mathcal{E}}}$. Here, we use Proposition \ref{Phi PA vs Phi VB} showing that $\Phi_{\mathcal{P}_A}\circ q_*\circ\Phi_{\mathcal{P}_B}^{-1}\cong\widehat{q}^*$ and its analogue for $q',\widehat{q'}$.
    \begin{align}\label{lift FEbar to FE}
        \xymatrix{
        \mathbf{D}^b(B\times\widehat{B})\ar[r]^{F_{\mathcal{E}}}\ar[d]_{q_*\times\widehat{q}^*}&\mathbf{D}^b(B'\times\widehat{B'})\ar[d]^{(q')_*\times\widehat{q'}^*}\\
        \mathbf{D}^b(A\times\widehat{A})\ar[r]_{F_{\overline{\mathcal{E}}}}&\mathbf{D}^b(A'\times\widehat{A'})
        }
    \end{align}
    That is to say, $F_{{\mathcal{E}}}$ and $F_{\overline{\mathcal{E}}}$ behave well with respect to $q_*\times\widehat{q}^*$ and $(q')_*\times\widehat{q'}^*$. (Remark \ref{lift FEbar to FE remark} contains further information.)
    
    \textit{Step 2:} Find the relation between $\gamma_{B,B'}(f)$ and $\gamma_{A,A'}(\overline{f})$.
    
    Using \cite[Proposition 9.39 and Corollary 9.47]{Huybrechts:06} and Theorem \ref{Orlov fundamental th}, we get 
    \begin{equation}\label{F_E vs gamma}
        \begin{aligned}
            F_{{\mathcal{E}}}&=(-\otimes {N}_{\mathcal{E}})\circ(\gamma_{B,B'}(f))_*, \exists{N}_{\mathcal{E}}\in\mathrm{Pic}(B'\times\widehat{B'})\\
        F_{\overline{\mathcal{E}}}&=(-\otimes {N}_{\overline{\mathcal{E}}})\circ(\gamma_{A,A'}(\overline{f}))_*, \exists{N}_{\overline{\mathcal{E}}}\in\mathrm{Pic}(A'\times\widehat{A'}).
        \end{aligned}
    \end{equation}
    Precisely, the derived equivalence $-\otimes {N}_{\mathcal{E}}$ is generated by shifts $[1]$, the derived equivalence by tensoring with line bundle $-\otimes{L},L\in\mathrm{Pic}^0(A')$, and the derived equivalence induced from translations $(t_{b'})_*$ for $b\in B'$. The pairs
    $$[1]\phantom{1}\mathrm{and}\phantom{1}[1];\phantom{1}-\otimes{L}\phantom{1}\mathrm{and}\phantom{1}-\otimes (q')^*L;\phantom{1}(t_{b'})_*\phantom{1}\mathrm{and}\phantom{1}(t_{b'})_*,$$
    behave well with respect to $(q')_*\times\widehat{q'}^*$. So, $-\otimes {N}_{\mathcal{E}}$ and $-\otimes {N}_{\overline{\mathcal{E}}}$ behave well with respect to $(q')_*\times\widehat{q'}^*$. Thus, we have a commutative diagram 
    \begin{align*}
        \xymatrix{
        \mathbf{D}^b(B\times\widehat{B})\ar[rr]^{(\gamma_{B,B'}(f))_*}\ar[d]_{q_*\times\widehat{q}^*}&&\mathbf{D}^b(B'\times\widehat{B'})\ar[d]^{(q')_*\times\widehat{q'}^*}\\
        \mathbf{D}^b(A\times\widehat{A})\ar[rr]_{(\gamma_{A,A'}(\overline{f}))_*}&&\mathbf{D}^b(A'\times\widehat{A'}).
        }
    \end{align*}
    
    Consider in the level of varieties, we have the following commutative diagram.

    \begin{align}\label{lift gamma BB' f}
        \xymatrix{
        B\times\widehat{B}\ar[rr]^{\gamma_{B,B'}(f)}&&B'\times\widehat{B'}\\
        B\times\widehat{A}\ar[u]^{\mathrm{id}_B\times\widehat{q}}\ar[d]_{q\times\mathrm{id}_{\widehat{A}}}&&B'\times\widehat{A'}\ar[u]_{\mathrm{id}_{B'}\times\widehat{q'}}\ar[d]^{q'\times\mathrm{id}_{\widehat{A'}}}\\
        A\times\widehat{A}\ar[rr]^{\gamma_{A,A'}(\overline{f})}&&A'\times\widehat{A'}
        }
    \end{align}

    \textit{Step 3:} Use the lifting criterion in cohomology to prove the result.
    
    The lifting criterion in the level of cohomology yields that the homomorphism $\gamma_{B,B'}(f)$ has a lift to a map $A\times\widehat{A}\to A'\times\widehat{A'}$ with respect to $q,\widehat{q}, q',\widehat{q'}$ as in (\ref{lift gamma BB' f}) if and only if $$(\gamma_{B,B'}(f))^*((q')^*\mathrm{H}^1(A',\mathbb{Z})\oplus((\widehat{q'})^*)^{-1}\mathrm{H}^1(\widehat{A'},\mathbb{Z}))=q^*\mathrm{H}^1(A,\mathbb{Z})\oplus(\widehat{q}^*)^{-1}\mathrm{H}^1(\widehat{A},\mathbb{Z}).$$
    Using the natural inclusions, it is equivalent to $$(\gamma_{B,B'}(f))^*((q')^*\mathrm{H}^1(A',\mathbb{Z})\oplus\mathrm{H}^1(\widehat{A'},\mathbb{Z}))=q^*\mathrm{H}^1(A,\mathbb{Z})\oplus\mathrm{H}^1(\widehat{A},\mathbb{Z}).$$
    By the diagram (\ref{lift gamma BB' f}), the homomorphism $\gamma_{B,B'}(f)$ lifts to $\gamma_{A,A'}(\overline{f})$. So the equation holds. Since $(\gamma_{B,B'}(f))^*|_{V_B, V_{B'}}\in\mathrm{SO}^+_{\mathrm{Hdg}}(V_{B'},V_B)$, we get
    $$(\gamma_{B,B'}(f))^*|_{V_B, V_{B'}}\in G\textrm{-}\mathrm{SO}^+_{\mathrm{Hdg}}(V_{B'},V_B).$$
    So we may claim $(\gamma_{B,B'}(f))_*|_{V_B, V_{B'}}\in G\textrm{-}\mathrm{SO}^+_{\mathrm{Hdg}}(V_{B},V_{B'})$.
    
    Since the derived equivalence case is set, the derived autoequivalence version follows.
\end{proof}

\begin{remark}\label{lift FEbar to FE remark}
    In the proof of Proposition \ref{GrhoA GrhoAA' inclusions}, we have a lift of $F_{\overline{\mathcal{E}}}$ to $F_\mathcal{E}$ as in (\ref{lift FEbar to FE}). This means that $F_{\overline{\mathcal{E}}}$ admits a set of $(G\times G)$-equivariance natural transformations $\overline{\sigma}$ and $F_{\mathcal{E}}$ is the derived equivalence corresponding to $(F_{\overline{\mathcal{E}}},\overline{\sigma})$ via the equivalence $\mathbf{D}^b_{G\times G}(A\times \widehat{A})\simeq\mathbf{D}^b(B\times\widehat{B})$ and its analogue for $A',B'$. Above, $G\times G$ is a subgroup of $\mathrm{Aut}(\mathbf{D}^b(A\times\widehat{A}))$, where $(L,M)\in\mathrm{ker}(\widehat{q})\times\mathrm{ker}(\widehat{q})=G\times G$ acts by $\pi_A^*L\otimes(\tau_{(0,[M])})_*(-)=(L\otimes-)\times((\tau_{[M]})_*(-)$. Here, $\tau_{(0,[M])}:A\times\widehat{A}\to A\times\widehat{A}$ and $\tau_{[M]}:\widehat{A}\to\widehat{A}$ are the translations by the points $(0,[M])$ and $[M]$, respectively. (The point $[M]\in\widehat{A}$ corresponds to $M\in\mathrm{Pic}^0(A)$.) By (\ref{def of F_E}), this is equivalent to the statement that the Fourier-Mukai kernel $\overline{\mathcal{E}}$ is $(G\times G)\times (G\times G)$-equivariant.%
    \footnote{Note, however, that its support is point-wise invariant with respect to a diagonal copy of $G\times G$.}%
    This is, a priori, a non-trivial statement as $\overline{\mathcal{E}}$ is supported as a line bundle $N_{\overline{\mathcal{E}}}$ over the graph of the isomorphism $\gamma_{A,A'}(\overline{f})$, where $c_1(N_{\overline{\mathcal{E}}})\not=0$, by recalling the proof of \cite[Proposition 9.39]{Huybrechts:06}. So, the translations of $N_{\overline{\mathcal{E}}}$ need not be isomorphic to $N_{\overline{\mathcal{E}}}$ even if they leave its support invariant (but not point-wise invariant).
\end{remark}

\begin{remark}\label{S1 S2 GrhoA GrhoAA' inclusions remark}
    We may review the reasoning in Step 1 and Step 2 of Proposition \ref{GrhoA GrhoAA' inclusions} to see that for $\lambda_{q,q'}(\overline{f},\sigma)=f$, the derived equivalence $\overline{f}\in\mathrm{Eq}(\mathbf{D}^b(A),\mathbf{D}^b(A'))$ can be lifted to $f\in\mathrm{Eq}(\mathbf{D}^b(A),\mathbf{D}^b(A'))$ as in (\ref{lift f bar to f}). That is,
    \begin{align}\label{lift f bar to f brief}
        \xymatrix{
    \mathbf{D}^b(B)\ar[rr]^{f}\ar[d]^{q_*}&\phantom{1}&\mathbf{D}^b(B')\ar[d]^{(q')_*}\\
    \mathbf{D}^b(A)\ar[rr]_{\overline{f}}&\phantom{1}\ar@{=>}[u]^{\mathrm{lift}}&\mathbf{D}^b(A').}
    \end{align}
    For $\overline{f}$ and $f$ related as in (\ref{lift f bar to f brief}), the diagram (\ref{lift gamma BB' f}) is commutative.
\end{remark}

\subsection{Surjectivity in Theorem \ref{Th_GrhoA} and Theorem \ref{Th_GrhoAA'}}
In this subsection, we state the surjectivity of $G\textrm{-}\rho_A$ in Theorem \ref{Th_GrhoA} and $G\textrm{-}\rho_{A,A'}$ in Theorem \ref{Th_GrhoAA'}.

\begin{proposition}\label{GrhoA GrhoAA' surjective}
    Let notation be as in Theorem \ref{Th_GrhoA} and Theorem \ref{Th_GrhoAA'}. The following two maps are surjective. 
    \begin{align*}
        G\textrm{-}\rho_A:G\textrm{-}\mathrm{Aut}(\mathbf{D}^b_G(A))&\to G\textrm{-}\mathrm{SO}^+_{\mathrm{Hdg}}(V_B)\\
        G\textrm{-}\rho_{A,A'}:G\textrm{-}\mathrm{Eq}(\mathbf{D}^b_G(A),\mathbf{D}^b_G(A'))&\to G\textrm{-}\mathrm{SO}^+_{\mathrm{Hdg}}(V_B,V_{B'}).
    \end{align*}
\end{proposition}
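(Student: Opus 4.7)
Since the two claims are structurally parallel, I focus on the surjectivity of $G\textrm{-}\rho_{A,A'}$; the case of $G\textrm{-}\rho_A$ will follow from the special case $A = A'$ restricted to invertible equivalences.

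Given $\gamma \in G\textrm{-}\mathrm{SO}_{\mathrm{Hdg}}(V_B, V_{B'})$, I extract the restricted Hodge isometry $\gamma_A := (\iota')^{-1}\circ\gamma\circ\iota \in \mathrm{SO}_{\mathrm{Hdg}}(V_A, V_{A'})$. By Orlov's surjectivity (Theorem \ref{Orlov fundamental th} and Remark \ref{two Orlov's representation remark}), I pick $\overline{f} \in \mathrm{Eq}(\mathbf{D}^b(A), \mathbf{D}^b(A'))$ with $\rho_{A,A'}(\overline{f}) = \gamma_A$; equivalently, the symplectic isomorphism $g_A := \gamma_{A,A'}(\overline{f}) \in \mathrm{Sp}(A, A')$ is prescribed. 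Because $\gamma \in G\textrm{-}\mathrm{SO}$, running the lifting criterion in cohomology (as in Step 3 of the alternative proof of Proposition \ref{GrhoA GrhoAA' inclusions}) in reverse yields a unique $g_B \in \mathrm{Sp}(B, B')$ lifting $g_A$ through the commutative diagram analogous to (\ref{lift gamma BB' f}), and $g_B$ corresponds to $\gamma$ under the dictionary of Remark \ref{two Orlov's representation remark}.

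The remaining task is to promote this cohomological lifting to the derived-categorical level: construct $f \in \mathrm{Eq}(\mathbf{D}^b(B), \mathbf{D}^b(B'))$ satisfying $(q')_*\circ f \cong \overline{f}\circ q_*$ as in diagram (\ref{lift f bar to f brief}). Given such an $f$, Remark \ref{AA'notation} --- essentially the converse of Remark \ref{S1 S2 GrhoA GrhoAA' inclusions remark} --- furnishes a $G$-equivariance structure $\sigma$ on $\overline{f}$ with $(\overline{f}, \sigma) \in G\textrm{-}\mathrm{Eq}(\mathbf{D}^b_G(A), \mathbf{D}^b_G(A'))$ and $\lambda_{q,q'}(\overline{f}, \sigma) = f$. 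Then $G\textrm{-}\rho_{A,A'}(\overline{f}, \sigma) = \rho_{B,B'}(f) = \gamma$ by construction.

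The main obstacle is the construction of $f$. The Fourier-Mukai kernel $\overline{\mathcal{E}} \in \mathbf{D}^b(A \times A')$ of $\overline{f}$ is, by Orlov's structure theorem, a semi-homogeneous line bundle (up to shift) supported on the graph of $g_A$; the lifting of $g_A$ to $g_B$ ensures this support is setwise $(G \times G)$-invariant. Upgrading this to a genuine $(G \times G)$-linearization on $\overline{\mathcal{E}}$, possibly after adjusting $\overline{f}$ within its Orlov equivalence class by a kernel element in $\mathrm{Alb}(A') \times \widehat{A'} \times \mathbb{Z}$, is the technical crux; the obstruction should lie in a finite-group cohomology that vanishes in the spirit of Lemma \ref{unique group action on Db(X) for finite abel group}. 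Once the linearization is found, its equivariant pushforward along $q \times q'$ yields a kernel on $B \times B'$ and hence the desired $f$.
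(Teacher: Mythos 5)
Your overall architecture is correct and matches the paper's: reduce to Orlov surjectivity, pick $\overline{f}\in\mathrm{Eq}(\mathbf{D}^b(A),\mathbf{D}^b(A'))$ realising the prescribed isometry on $V_A,V_{A'}$, and then upgrade $\overline{f}$ to a $G$-functor. But you leave the central step---the actual production of the $G$-equivariance data $\sigma$ (equivalently the lift $f$ with $(q')_*\circ f\cong\overline{f}\circ q_*$)---unproved, replacing it with the hope that ``the obstruction should lie in a finite-group cohomology that vanishes.'' That is a genuine gap, and the route you gesture towards (linearising the Fourier--Mukai kernel $\overline{\mathcal{E}}$ directly) is precisely the one that Remark \ref{lift FEbar to FE remark} of the paper flags as delicate: $\overline{\mathcal{E}}$ is (up to shift) a line bundle $N_{\overline{\mathcal{E}}}$ on the graph of $\gamma_{A,A'}(\overline{f})$ with $c_1(N_{\overline{\mathcal{E}}})\neq 0$, and even though the $(G\times G)\times(G\times G)$-action leaves that graph setwise invariant, the translates of $N_{\overline{\mathcal{E}}}$ need not be isomorphic to $N_{\overline{\mathcal{E}}}$. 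In that situation there is no $1$-cochain of isomorphisms to begin with, and an $\mathrm{H}^2(\cdot,\mathbb{C}^*)$-vanishing argument is the wrong tool: vanishing of $\mathrm{H}^2$ kills the obstruction to making a given family of isomorphisms coherent, but it does not produce the isomorphisms in the first place. You also appeal to a ``converse of Remark \ref{S1 S2 GrhoA GrhoAA' inclusions remark}'' via Remark \ref{AA'notation}; but Remark \ref{AA'notation} only describes $\lambda_{q,q'}$ for inputs that are \emph{already} $G$-functors, so it cannot be used as a converse without a separate argument.

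The paper's proof routes around the kernel entirely. After producing $\overline{f}_0$ from $\overline{F}\in\mathrm{Sp}(A,A')$ (its Step 1, the analogue of your first half), it invokes Lemma \ref{find sigma for overline f_0}: because $\gamma_{A,A'}(\overline{f}_0)$ carries $G\subset\{0\}\times\widehat{A}$ into $\{0\}\times\widehat{A'}$, conjugation by $\overline{f}_0$ transports the ``tensor by $\mathcal{L}_g$'' action $(\rho,\theta)$ of $G$ on $\mathbf{D}^b(A)$ to a $G$-action $(\rho',\theta')$ on $\mathbf{D}^b(A')$, and $\overline{f}_0$ is \emph{tautologically} a $G$-functor between $(\mathbf{D}^b(A),\rho,\theta)$ and $(\mathbf{D}^b(A'),\rho',\theta')$ with $\sigma_g=\mathrm{id}$. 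Lemma \ref{unique group action on Db(X) for finite abel group} (vanishing of $\mathrm{H}^2(G,\mathbb{C}^*)$ for finite abelian $G$) is then used only at the categorical level, to identify $(\rho',\theta')$ with the standard $G$-action on $\mathbf{D}^b(A')$; no linearization of $\overline{\mathcal{E}}$ is ever constructed. A final translation adjustment (Step 3, which your ``possibly after adjusting $\overline{f}$'' anticipates) finishes the argument. So the shape of your argument is right, but the key lemma that makes it go through is missing, and your proposed replacement would need a substantially different (and harder) justification.
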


\begin{proof}
    For the map $G\textrm{-}\rho_{A,A'}$, we start from an arbitrary element of $G\textrm{-}\mathrm{SO}^+_{\mathrm{Hdg}}(V_B,V_{B'})$. It is also an element of $\mathrm{SO}^+_{\mathrm{Hdg}}(V_B,V_{B'})$. Such an element is of the form $\rho_{B,B'}(f)$ for some $f\in\mathrm{Eq}(\mathbf{D}^b(B),\mathbf{D}^b(B'))$, since $\rho_{B,B'}:\mathrm{Eq}(\mathbf{D}^b(B),\mathbf{D}^b(B'))\to \mathrm{SO}^+_{\mathrm{Hdg}}(V_B,V_{B'})$ is surjective. Suppose that we can find $(\overline{f},\sigma)\in G\textrm{-}\mathrm{Eq}(\mathbf{D}^b_G(A),\mathbf{D}^b_G(A'))$ such that $\lambda_{q,q'}(\overline{f},\sigma)=f$, we may get $G\textrm{-}\rho_{A,A'}(\overline{f},\sigma)=\rho_{B,B'}(f)$ by the definition of $G\textrm{-}\rho_{A,A'}$. So we obtain the surjectivity of $G\textrm{-}\rho_{A,A'}$.

    To find the desired $(\overline{f},\sigma)\in G\textrm{-}\mathrm{Eq}(\mathbf{D}^b_G(A),\mathbf{D}^b_G(A'))$, we need to find $\overline{f}$ such that $\overline{f}$ has the lifting $f$ as in (\ref{lift f bar to f brief}) and find a suitable set of natural transformations $\sigma=(\sigma_g)_{g\in G}$.

    \textit{Step 1:} Find some $\overline{f}_0$ using symplectic isomorphisms.

    By Remark \ref{S1 S2 GrhoA GrhoAA' inclusions remark}, we may consider the isomorphism $\overline{F}$ induced by $F:=\gamma_{B,B'}(f)$ by the following commutative diagram as the first step to find $\overline{f}$. \begin{align}\label{lift gamma BB' f Fbar}
        \xymatrix{
        B\times\widehat{B}\ar[rr]^{F:=\gamma_{B,B'}(f)}&&B'\times\widehat{B'}\\
        B\times\widehat{A}\ar[u]^{\mathrm{id}_B\times\widehat{q}}\ar[d]_{q\times\mathrm{id}_{\widehat{A}}}&&B'\times\widehat{A'}\ar[u]_{\mathrm{id}_{B'}\times\widehat{q'}}\ar[d]^{q'\times\mathrm{id}_{\widehat{A'}}}\\
        A\times\widehat{A}\ar[rr]^{\overline{F}}&&A'\times\widehat{A'}
        }
    \end{align}

    Indeed, such an isomorphism $\overline{F}$ can be obtained.
    
    Starting from $\rho_{B,B'}(f)=(\gamma_{B,B'}(f))_*|_{V_B,V_{B'}}\in G\textrm{-}\mathrm{SO}^+_{\mathrm{Hdg}}(V_B,V_{B'})$, we get $$(\gamma_{B,B'}(f))^*|_{V_B,V_B'}\in G\textrm{-}\mathrm{SO}^+_{\mathrm{Hdg}}(V_{B'},V_{B}).$$
    Using the lifting criterion in cohomology as in Step 3 of the proof for Proposition \ref{GrhoA GrhoAA' inclusions}, the desired $\overline{F}$ that lifts to $F$ can be achieved. As ${\gamma_{B,B'}(f)}\in\mathrm{Sp}(B,B')$, we obtain $\overline{F}\in\mathrm{Sp}(A,A')$ by definition of the symplectic map (\cite[Definition 4.4]{Magni:22}) and the commutative diagram (\ref{lift gamma BB' f Fbar}).

    Using the isomorphism $\overline{F}$, we may determine $\overline{f}\in\mathrm{Eq}(\mathbf{D}^b(A),\mathbf{D}^b(A'))$, up to $\mathrm{Alb}(A')\times \widehat{A'}\times\mathbb{Z}$, such that $\overline{F}=\gamma_{A,A'}(\overline{f})$ by Theorem \ref{Orlov fundamental th}. Say $\overline{f}_0\in\mathrm{Eq}(\mathbf{D}^b(A),\mathbf{D}^b(A'))$, such that $\overline{F}=\gamma_{A,A'}(\overline{f}_0)$. In this way, we form the diagram (\ref{lift gamma BB' f}). 
    
    The $\mathrm{Alb}(A')\times \widehat{A'}\times\mathbb{Z}$ part corresponds to the information of $-\otimes N_{\overline{\mathcal{E}}}$ in Step 2 of Proposition \ref{GrhoA GrhoAA' inclusions}. Actually, by \cite[Proposition 9.39]{Huybrechts:06}, the process to determine $\overline{f}$ via $\overline{F}$ involves a line bundle over the graph of $\overline{F}$ with non-zero first Chern class in general.

    \textit{Step 2:} Find a suitable set of natural transformations $\sigma=(\sigma_g)_{g\in G}$ such that $(\overline{f}_0,\sigma)\in G\textrm{-}\mathrm{Eq}(\mathbf{D}^b_G(A),\mathbf{D}^b_G(A'))$.

    Actually, it can be done using Lemma \ref{find sigma for overline f_0}, which gives a set of natural transformations. Note that the hypothesis of Lemma \ref{find sigma for overline f_0} is naturally satisfied since $\rho_{B,B'}(f)\in G\textit{-}\mathrm{SO}^+_{\mathrm{Hdg}}(V_B,V_{B'})$.

    \textit{Step 3:} Find a suitable $G$-functor that lifts to $f=F_{\mathcal{E}}$ to complete the proof.

    By Step 2, there exists a set of $G$-equivariance natural transformations $\sigma$ of $\overline{f}_0$. By the surjectivity result in Theorem \ref{Orlov fundamental th}, the graph of $\overline{F}$ is the support of the Fourier-Mukai kernel of $F_{\overline{f}_0}$ and the graph of $F=\gamma_{B,B'}(f)$ is the support of the Fourier-Mukai kernel of $F_{t\circ f}$, for a translate of $f$ by an element $t\in B\times\mathrm{Pic}^0(B)\times\mathbb{Z}$. So $(\overline{f}_0,\sigma)$ lifts to $t\circ f$ as in (\ref{lift f bar to f brief}) and $\lambda_{q,q'}(\overline{f}_0,\sigma)=t\circ f$. We get $$\lambda_{q,q'}(\overline{t}^{-1}\circ\overline{f}_0,\overline{t}^{-1}\circ\sigma)=f,$$
    where the translation $\overline{t}\in A\times\mathrm{Pic}^0(A)\times\mathbb{Z}$ lifts to $t$ by the contents about (\ref{F_E vs gamma}).
\end{proof}

We end this subsection by stating the following general lemma, which is used in the above reasoning.
\begin{lemma}\label{find sigma for overline f_0}
    Let $\Phi:\mathbf{D}^b(A)\to\mathbf{D}^b(A')$ be an equivalence of the derived categories of two abelian varieties $A$ and $A'$ and let $G\leq\mathrm{Pic}^0(A), \mathrm{Pic}^0(A')$ be a finite subgroup. Assume that the Orlov's isomorphism $\gamma_{A,A'}(\Phi):A\times\widehat{A}\to A'\times\widehat{A'}$ maps $G$ into the subgroup $\{0\}\times\widehat{A'}$ of $A'\times\widehat{A'}$. Then, there exists a set $\sigma$ of $G$-equivariance natural transformations for $\Phi$ such that $(\Phi,\sigma)\in G\textrm{-}\mathrm{Eq}(\mathbf{D}^b_G(A),\mathbf{D}^b_G(A'))$, where the $G$-equivariant categories $\mathbf{D}^b_G(A),\mathbf{D}^b_G(A')$ are defined as in passage \ref{Db(B) vs DbG(A)}. 
\end{lemma}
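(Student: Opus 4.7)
\emph{Setup and reduction.} By hypothesis, for each $g \in G$ there is a unique $\alpha_g \in \widehat{A'}$ with $\gamma_{A,A'}(\Phi)(0,g) = (0,\alpha_g)$; since $\gamma_{A,A'}(\Phi)$ is a group isomorphism, the map $g \mapsto \alpha_g$ is a group homomorphism $G \to \widehat{A'}$. Applying passage \ref{Db(B) vs DbG(A)} to the image subgroup $G' := \{\alpha_g : g \in G\} \subset \widehat{A'}$, I equip $\mathbf{D}^b(A')$ with the $G$-action $(\rho',\theta' = \mathrm{id})$ where $\rho'_g := - \otimes \mathcal{L}'_{\alpha_g}$. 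The goal is then to produce natural isomorphisms $\sigma_g : \Phi \circ \rho_g \to \rho'_g \circ \Phi$ satisfying diagram (\ref{G-fun sigma commutative diagram}). Setting $\rho^{(2)}_g := \Phi \circ \rho_g \circ \Phi^{-1}$ gives a second strict $G$-action on $\mathbf{D}^b(A')$ (with associator $\theta^{(2)} = \mathrm{id}$ inherited from $\theta = \mathrm{id}$), so the problem reduces to producing a coherent family of natural isomorphisms of autoequivalences $\tau_g : \rho^{(2)}_g \to \rho'_g$ and setting $\sigma_g := \tau_g \Phi$ (whiskering with $\Phi$).

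\emph{Pointwise isomorphism (main step).} Writing $\Phi = \Phi_\mathcal{E}$ as a Fourier-Mukai transform with kernel $\mathcal{E} \in \mathbf{D}^b(A \times A')$, the Fourier-Mukai kernels of $\Phi \circ \rho_g$ and $\rho'_g \circ \Phi$ are $\mathcal{E} \otimes \pi_A^* \mathcal{L}_g$ and $\mathcal{E} \otimes \pi_{A'}^* \mathcal{L}'_{\alpha_g}$ respectively, so constructing $\tau_g$ is equivalent to exhibiting an isomorphism
\[
\mathcal{E} \otimes M_g \;\cong\; \mathcal{E}, \qquad M_g := \pi_A^* \mathcal{L}_g \otimes \pi_{A'}^* (\mathcal{L}'_{\alpha_g})^{-1} \in \mathrm{Pic}^0(A \times A').
\]
Since $\rho_g \in \ker(\gamma_A)$, the conjugate $\rho^{(2)}_g$ lies in $\ker(\gamma_{A'}) \cong \mathrm{Alb}(A') \times \widehat{A'} \times \mathbb{Z}$. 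The key input, standard in the structure theory of Fourier-Mukai equivalences of abelian varieties, is that conjugation by $\Phi$ restricts to a group isomorphism $\ker(\gamma_A) \to \ker(\gamma_{A'})$ whose $(\mathrm{Alb} \times \widehat{(-)})$-component coincides with the restriction $\gamma_{A,A'}(\Phi)|_{A \times \widehat{A}}$ and whose shift component is the identity on $\mathbb{Z}$. Under this identification, the element $(0,g,0)$ encoding $\rho_g$ is mapped by the hypothesis exactly to $(0,\alpha_g,0)$ encoding $\rho'_g$, yielding $\tau_g$.

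\emph{Coherence via $H^2(G,\mathbb{C}^*)=0$ and conclusion.} Fix any pointwise isomorphisms $\tau^0_g : \rho^{(2)}_g \to \rho'_g$. Since $\mathrm{End}(\mathrm{id}_{\mathbf{D}^b(A')}) = H^0(A',\mathcal{O}_{A'}) = \mathbb{C}$, any other choice differs from $\tau^0_g$ by a scalar $c_g \in \mathbb{C}^*$. The failure of the coherence identity
\[
\tau_{gh} \;=\; (\rho'_g \tau_h) \circ (\tau_g \rho^{(2)}_h),
\]
forced by (\ref{G-fun sigma commutative diagram}) with $\theta = \theta' = \mathrm{id}$, is recorded for the $\tau^0_g$ by a 2-cocycle $\alpha \in Z^2(G,\mathbb{C}^*)$; the rescaling $\tau^0_g \leadsto c_g \tau^0_g$ restores coherence iff $\alpha$ is written as a coboundary, with obstruction $[\alpha] \in H^2(G,\mathbb{C}^*)$. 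By the computation in the proof of Lemma \ref{unique group action on Db(X) for finite abel group}, $H^2(G,\mathbb{C}^*) = 0$ for every finite abelian $G$, so a coherent family $\{\tau_g\}$ exists, and the corresponding $\sigma_g := \tau_g \Phi$ satisfies $(\Phi,\sigma) \in G\textrm{-}\mathrm{Eq}(\mathbf{D}^b_G(A), \mathbf{D}^b_G(A'))$. The main obstacle is the pointwise step: the hypothesis $\gamma_{A,A'}(\Phi)(\{0\} \times G) \subset \{0\} \times \widehat{A'}$ is precisely what forces the translation and shift components of the normal form of $\rho^{(2)}_g \in \ker(\gamma_{A'})$ to vanish, without which the pointwise isomorphism would fail and no coherence argument could recover it.
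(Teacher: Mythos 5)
Your proposal is correct, but it organizes the argument differently from the paper, and the comparison is worth noting.

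The paper's proof is essentially tautological: using the Rouquier isomorphism interpretation of $\gamma_{A,A'}(\Phi)$ and the hypothesis, it observes that conjugation $\rho'_g := \Phi\circ\rho_g\circ\Phi^{-1}$ defines a $G$-action by (functors isomorphic to) line-bundle twists on $\mathbf{D}^b(A')$, chooses this conjugated action as the $G$-action on the target (licensed by Lemma~\ref{unique group action on Db(X) for finite abel group}, which says there is only one such action up to equivalence), and then $\Phi$ carries $\sigma = \mathrm{id}$ tautologically. You instead fix the ``standard'' line-bundle action $\rho'_g := -\otimes\mathcal{L}'_{\alpha_g}$ on $\mathbf{D}^b(A')$ from the start and construct the natural isomorphisms $\Phi\circ\rho_g\to\rho'_g\circ\Phi$ by hand, in two steps: (i) pointwise existence, via the Rouquier isomorphism identifying the conjugation map on $\ker(\gamma_A)\to\ker(\gamma_{A'})$ with the restriction of $\gamma_{A,A'}(\Phi)$ on $A\times\widehat{A}$ (and trivially on the shift component), which is exactly where the hypothesis $\gamma_{A,A'}(\Phi)(\{0\}\times G)\subset\{0\}\times\widehat{A'}$ enters; and (ii) coherence, by recording the failure of the $G$-functor condition as a class in $H^2(G,\mathbb{C}^*)$ and invoking its vanishing for finite abelian $G$. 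Both proofs ultimately rest on the same two inputs (Rouquier isomorphism, $H^2(G,\mathbb{C}^*)=0$), but your version replaces the appeal to Lemma~\ref{unique group action on Db(X) for finite abel group} as a black box with an inline obstruction-theory computation. What your route buys is transparency: the role of the hypothesis is isolated into the pointwise step, and one sees exactly which cocycle would obstruct coherence if $G$ were nonabelian. The paper's route buys brevity, since the trivial $\sigma_L=\mathrm{id}$ for the conjugated action requires no cocycle bookkeeping at all. One small point to keep in mind in your write-up: $\rho^{(2)}_g=\Phi\circ\rho_g\circ\Phi^{-1}$ is only a strict $G$-action after fixing a strict quasi-inverse of $\Phi$; this is the usual harmless technicality, but worth a sentence.
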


\begin{proof}
    By the contents related to \cite[Proposition 9.45]{Huybrechts:06}, the Orlov's isomorphism $\gamma_{A,A'}(\Phi):A\times\widehat{A}\to A'\times\widehat{A'}$ is the Rouquier isomorphism associated with $\Phi$. In other words, $A\times\widehat{A}\cong\mathrm{Aut}^0(A)\ltimes\mathrm{Pic}^0(A)$ is the identity component of $\mathrm{Aut}(\mathbf{D}^b(A))$ and $\gamma_{A,A'}(\Phi)$ sends $(a,\alpha)\in A\times\widehat{A}\leq\mathrm{Aut}(\mathbf{D}^b(A))$ to $\Phi\circ (a,\alpha)\circ\Phi^{-1}$ in the sense that the composition $\Phi\circ (a,\alpha)\circ\Phi^{-1}$ is in the identity component $A'\times\widehat{A'}$ of $\mathrm{Aut}(\mathbf{D}^b(A'))$.

    We have the following example as a phototype. For the element $(0,\alpha)$ being the functor $-\otimes\mathcal{L}_{\alpha}$ of tensorization by a line bundle $\mathcal{L}_{\alpha}\in\mathrm{Pic}^0(A)$, then the element $\Phi\circ (0,\alpha)\circ\Phi^{-1}$ is tensorization with the line bundle $\mathcal{L}'_{\alpha_1}\in\mathrm{Pic}^0(A')$ such that the corresponding points $\alpha\in\widehat{A}$ and $\alpha_1\in\widehat{A'}$ satisfy $(0,\alpha_1)=\gamma_{A,A'}(\Phi)(0,\alpha)$.

    As in Definition \ref{def of group action on category}, we consider the following group action $(\rho,\theta)$ of $\widehat{A}\cong\mathrm{Pic}^0(A)$ on $\mathbf{D}^b(A)$. Given $\alpha\in\widehat{A}$, let $\rho_{\alpha}:=-\otimes\mathcal{L}_{\alpha}:\mathbf{D}^b(A)\to\mathbf{D}^b(A)$ and $\theta=\mathrm{id}$, where $\mathcal{L}_{\alpha}\in\mathrm{Pic}^0(A)$ is the corresponding line bundle. Restricting to $G\leq \mathrm{Pic}^0(A)$, we get an action, still denoted by $(\rho,\theta)$, of $G$ on $\mathbf{D}^b(A)$. That is exactly the one defined in passage \ref{Db(B) vs DbG(A)}. Conjugation by $\Phi$ induces an action $(\rho_1',\theta_1')$ of $\mathrm{Pic}^0(A)$ on $\mathbf{D}^b(A')$, where $$\rho'_{1,\alpha}:=\Phi\circ\rho_{\alpha}\circ\Phi^{-1}, \theta_1'=\mathrm{id}.$$
    Precisely, $\rho'_{1,\alpha}=-\otimes\mathcal{L}'_{\alpha_1}$. It is a tautology that $\Phi$ induces a $\mathrm{Pic}^0(A)$-functor, $(\Phi,\sigma):\mathbf{D}^b_{\mathrm{Pic}^0(A)}(A)\to\mathbf{D}^b_{\mathrm{Pic}^0(A)}(A')$, where given $\alpha\in\mathrm{Pic}^0(A)$, the 2-isomorphism $\sigma_{\alpha}:\Phi\circ\rho_{\alpha}\to\rho'_{\alpha}\circ\Phi$ is the natural composition $$\Phi\circ\rho_{\alpha}\xrightarrow{\cong}\rho'_{1,\alpha}\circ\Phi\xrightarrow{(-\otimes\mathcal{L}'_{\alpha_1}\otimes\mathcal{L}'_{\alpha})\circ\Phi}\rho'_{\alpha}\circ\Phi.$$
    Restricting to the subgroup $G$ of $\mathrm{Pic}^0(A)$, we get the desired result.
\end{proof}

\subsection{Final part of the proof of Theorem \ref{Th_GrhoA} and Theorem \ref{Th_GrhoAA'}}
We may prove Theorem \ref{Th_GrhoAA'} directly using Proposition \ref{GrhoA GrhoAA' inclusions} for the well-defined property of the map $G\textrm{-}\rho_{A,A'}$ and the commutativity of (\ref{diagram_GrhoAA'}) and Proposition \ref{GrhoA GrhoAA' surjective} for the surjectivity.
Similarly, most parts of Theorem \ref{Th_GrhoA} can be proved except the one related to $\mathrm{ker}(G\textrm{-}\rho_A)$. Indeed, we get the commutative diagram with exact rows \begin{align*}\label{weakdiagram_GrhoA}
    \xymatrix{
    0\ar[r]&\mathrm{Alb}(B)\times\widehat{B}\times\mathbb{Z}\ar[r]&\mathrm{Aut}(\mathbf{D}^b(B))\ar[r]^{\rho_B}&\mathrm{SO}^+_{\mathrm{Hdg}}(V_B)\ar[r]&0\\
    &&G\textrm{-}\mathrm{Aut}(\mathbf{D}^b(A))\ar[r]^{G\textrm{-}\rho_A}\ar[u]^{\lambda_q}\ar[d]_{F_q}&G\textrm{-}\mathrm{SO}^+_{\mathrm{Hdg}}(V_B)\ar[r]\ar[u]^{\cup}\ar[d]_{\mathrm{res}}&0\\
    0\ar[r]&\mathrm{Alb}(A)\times\widehat{A}\times\mathbb{Z}\ar[r]&\mathrm{Aut}(\mathbf{D}^b(A))\ar[r]^{\rho_A}&\mathrm{SO}^+_{\mathrm{Hdg}}(V_A)\ar[r]&0.
    }
\end{align*}
Consequently, the elements of $\mathrm{ker}(G\textrm{-}\rho_A)$ are generated by shifts, the derived autoequivalences induced by translations, and the derived autoequivalences by tensoring with the line bundles whose first Chern classes are trivial.

For an arbitrary $L\in\mathrm{Pic}^0(A)$, we get $(\overline{f},\sigma)\in\mathrm{Aut}(\mathbf{D}^b(A))$, where $\overline{f}=-\otimes L$ and $\sigma=(\sigma_g=\mathrm{id}:\overline{f}\circ\rho_g\to\rho_g\circ\overline{f})_{g\in G}$. By Remark \ref{lambda not inj}, $\lambda_q((\overline{f},\sigma))=-\otimes q^*L$. Hence, $G\textrm{-}\rho_A((\overline{f},\sigma))=\rho_B(-\otimes q^*L)$ is trivial. Thus, we get $\widehat{A}\leq\mathrm{ker}(G\textrm{-}\rho_A)$. 

The derived autoequivalences generated by the shift $[1]$ are in the kernel. Therefore, $\widehat{A}\times\mathbb{Z}\leq\mathrm{ker}(G\textrm{-}\rho_A)$.

Pick an arbitrary $a\in \mathrm{Alb}(A)$. If $(\overline{f}=(t_a)_*,\sigma)\in\mathrm{ker}(G\textrm{-}\rho_A)$, then $(t_a)_*$ is $G$-invariant. That is, $a\in\mathrm{Image}(\mathrm{Alb}(q):\mathrm{Alb}(B)\hookrightarrow\mathrm{Alb}(A))$. It yields $$\mathrm{ker}(G\textrm{-}\rho_A)=\mathrm{Alb}(B)\times\widehat{A}\times\mathbb{Z}.$$

On the other hand, for $(f,\sigma)\in\mathrm{ker}(G\textrm{-}\rho_A)$, we have $\lambda_q(f,\sigma)\in\mathrm{ker}(\rho_B)$ and $F_q(f,\sigma)=f\in\mathrm{ker}(\rho_A)$. By the lifting information in $\lambda_q$, $$\mathrm{ker}(G\textrm{-}\rho_A)=\Delta(\mathrm{Alb}(A)\times\widehat{A}\times\mathbb{Z},\mathrm{Alb}(B)\times\widehat{B}\times\mathbb{Z}),$$
where the ``diagonal" is defined as the set of elements $(b,L_1,k_1)\in\mathrm{Alb}(B)\times\widehat{A}\times\mathbb{Z}$, which corresponds to $\begin{cases}
    (a,L_1,k_1)\in\mathrm{Alb}(A)\times\widehat{A}\times\mathbb{Z}\\
    (b,L_2,k_2)\in\mathrm{Alb}(B)\times\widehat{B}\times\mathbb{Z}
\end{cases}$ such that $\begin{cases}
    a=q(b)\\
    L_2=\widehat{q}(L_1)\\
    k_1=k_2.
\end{cases}$ It gives the morphisms between the three kernels.

Therefore, the proof of (\ref{diagram_GrhoA}) and Theorem \ref{Th_GrhoA} is complete.

\subsection{Finite index properties for Theorem \ref{Th_GrhoA}}\label{finite index Th_GrhoA}

In this subsection, we establish the finite index properties for Theorem \ref{Th_GrhoA} using Lemma \ref{find sigma for overline f_0}.

Indeed, Corollary \ref{Lift derived eq_A} and Lemma \ref{find sigma for overline f_0} are related by Step 2 of the proof of Proposition \ref{GrhoA GrhoAA' surjective}. So, the image of $F_q$ is a subgroup of finite index $$[G\textrm{-}\mathrm{SO}^+_{\mathrm{Hdg}}(V_B):\mathrm{SO}^+_{\mathrm{Hdg}}(V_A)]$$ of $\mathrm{Aut}(\mathbf{D}^b(A))$.

Moreover, we have a short exact sequence 
\begin{align*}
    0\to\mathrm{ker}(q)&\to G\textrm{-}\mathrm{Aut}(\mathbf{D}^b(A))\xrightarrow{F_q}F_q(G\textrm{-}\mathrm{Aut}(\mathbf{D}^b(A)))\to0,\\
    a&\mapsto((t_a)_*,\sigma=\mathrm{id})
\end{align*}
where the description of the first homomorphism comes from Definition \ref{def of G-functor} and the fact that $G\leq\widehat{A}\cong\mathrm{Pic}^0(A)$. More precisely, (\ref{diagram_GrhoA}) can be extended to the following.
\begin{equation}\label{diagram_GrhoA+}
    \xymatrix{
    0\ar[r]&\mathrm{Alb}(B)\times\widehat{B}\times\mathbb{Z}\ar[r]&\mathrm{Aut}(\mathbf{D}^b(B))\ar[r]^{\rho_B}&\mathrm{SO}^+_{\mathrm{Hdg}}(V_B)\ar[r]&0\\
    0\ar[r]&\mathrm{Alb}(B)\times\widehat{A}\times\mathbb{Z}\ar[r]\ar[u]^{\mathrm{id}_{\mathrm{Alb}(B)}\times\widehat{q}\times\mathrm{id}_{\mathbb{Z}}}\ar[d]_{q_*\times\mathrm{id}_{\widehat{A}}\times\mathrm{id}_{\mathbb{Z}}}&G\textrm{-}\mathrm{Aut}(\mathbf{D}^b(A))\ar[r]^{G\textrm{-}\rho_A}\ar[u]^{\lambda_q}\ar[d]_{F_q}&G\textrm{-}\mathrm{SO}^+_{\mathrm{Hdg}}(V_B)\ar[r]\ar[u]^{\cup}\ar@{=}[d]&0\\
    0\ar[r]&\mathrm{Alb}(A)\times\widehat{A}\times\mathbb{Z}\ar[r]\ar@{=}[d]&F_q(G\textrm{-}\mathrm{Aut}(\mathbf{D}^b(A)))\ar[r]^{\rho_A}\ar@{^(-_>}[d]&G\textrm{-}\mathrm{SO}^+_{\mathrm{Hdg}}(V_B)\ar[r]\ar[d]_{\mathrm{res}}&0\\
    0\ar[r]&\mathrm{Alb}(A)\times\widehat{A}\times\mathbb{Z}\ar[r]&\mathrm{Aut}(\mathbf{D}^b(A))\ar[r]^{\rho_A}&\mathrm{SO}^+_{\mathrm{Hdg}}(V_A)\ar[r]&0
    }
\end{equation}

See subsection \ref{finite index n2} for a more detailed investigation of a special case.

\subsection{Symplectic maps Versus Hodge isometries}
In this subsection, we establish the following lemma that indicates the relation between symplectic maps and Hodge isometries of the first integral homology of abelian varieties. 

We first focus on the field of complex numbers.

Recall the notation related to the dual complex torus as in \cite[Section 4 in Chapter 2]{Birkenhake Lange:04}. Let $V$ be a complex vector space and $\Lambda\subset V$ be a lattice, then the quotient $X=V/\Lambda$ is a compact complex torus. Actually, $\Lambda=\mathrm{H}_1(X,\mathbb{Z})$. Define $\overline{\Omega}:=\mathrm{Hom}_{\overline{\mathbb{C}}}(V,\mathbb{C})$. In fact, $\overline{\Omega}$ is canonically isomorphic to $\mathrm{Hom}_{\mathbb{R}}(V,\mathbb{R})$. The isomorphism is given by $l\mapsto k=\mathrm{Im}l$ with the inverse map $k\mapsto l(-)=-k(i-)+ik(-)$. So we get a nondegenerate canonical $\mathbb{R}$-bilinear form $$\langle-,-\rangle:\overline{\Omega}\times V\to\mathbb{R},\langle l,v\rangle=\mathrm{Im}l(v).$$ Denote the \textit{dual lattice} of $\Lambda$ to be a lattice $\widehat{\Lambda}:=\{l\in\overline{\Omega}|\langle l,\Lambda\rangle\subset\mathbb{Z}\}$ of $\overline{\Omega}$. Then, the dual complex torus is denoted by $\widehat{X}=\overline{\Omega}/\widehat{\Lambda}$. Indeed, \cite[Proposition 4.1 in Chapter 2]{Birkenhake Lange:04} gives a canonical isomorphism $\widehat{X}\xrightarrow{\cong}\mathrm{Pic}^0(X)$. 

\begin{lemma}\label{Sp implies SO lemma} 
    Let $X_i=V_i/\Lambda_i,i=1,2$ be two compact complex tori of same dimension. Let $\widehat{X_i}=\overline{\Omega_i}/\widehat{\Lambda_i}$ be their dual complex tori. Let $$F=\begin{pmatrix}
        F_1&F_2\\F_3&F_4
    \end{pmatrix}:\Lambda_1\oplus\widehat{\Lambda_1}\xrightarrow{\cong}\Lambda_2\oplus\widehat{\Lambda_2}$$ be an isomorphism between lattices. Then $F$ is an isometry with respect to the bilinear pairings on $\Lambda_i\oplus\widehat{\Lambda_i}$ given by $(\begin{pmatrix}
        \lambda\\\widehat{\lambda}
    \end{pmatrix},\begin{pmatrix}
        \mu\\\widehat{\mu}
        \end{pmatrix})_{\Lambda_i\oplus\widehat{\Lambda_i}}:=\langle\widehat{\lambda},\mu\rangle+\langle\widehat{\mu},\lambda\rangle$ if %
        \footnote{It is not always true for the ``only if" statement since the bilinear pairings on $\Lambda_i\oplus\widehat{\Lambda_i}$ are not always positive definite.}%
    \begin{equation}\label{symplectic F}
       \begin{pmatrix}
        F_1&F_2\\F_3&F_4
    \end{pmatrix}^{-1}=\begin{pmatrix}
        \phantom{-}\widehat{F_4}&-\widehat{F_2}\\
        -\widehat{F_3}&\phantom{-}\widehat{F_1}
    \end{pmatrix}. 
    \end{equation}
\end{lemma}

\begin{proof}
    It suffices to prove that $\forall(\lambda,\widehat{\lambda})\in\Lambda_1\oplus\widehat{\Lambda_1},(\mu,\widehat{\mu})\in\Lambda_2\oplus\widehat{\Lambda_2}$,
    \begin{equation}\label{equiv relation of lemma Sp vs SO}
        (\begin{pmatrix}
        F_1&F_2\\F_3&F_4
    \end{pmatrix}\begin{pmatrix}
        \lambda\\\widehat{\lambda}
    \end{pmatrix},\begin{pmatrix}
        \mu\\\widehat{\mu}
    \end{pmatrix})=(\begin{pmatrix}
        \lambda\\\widehat{\lambda}
    \end{pmatrix},\begin{pmatrix}
        \phantom{-}\widehat{F_4}&-\widehat{F_2}\\
        -\widehat{F_3}&\phantom{-}\widehat{F_1}
    \end{pmatrix}\begin{pmatrix}
        \mu\\\widehat{\mu}
    \end{pmatrix}).
    \end{equation}

    The isomorphism $F$ is the restriction of a map $F:V_1\times\overline{\Omega_1}\to V_2\times\overline{\Omega_2}$, still denoted by $F$. This map contains a lot of information, for example, $$F_2:\overline{\Omega_1}\to V_2, \widehat{F_2}:\overline{\Omega_2}\to\mathrm{Hom}_{\overline{\mathbb{C}}}(\overline{\Omega_1},\mathbb{C}).$$

    Consider the double anti-duality isomorphism $$\widetilde{\kappa_i}:V_i\to\mathrm{Hom}_{\overline{\mathbb{C}}}(\mathrm{Hom}_{\overline{\mathbb{C}}}(V_i,\mathbb{C}),\mathbb{C})=\mathrm{Hom}_{\overline{\mathbb{C}}}(\overline{\Omega_i},\mathbb{C})$$ mapping an element $\lambda$ to $\overline{\mathrm{ev}_\lambda}$, the dual of the evaluation map, we get, for example, $$\widetilde{\kappa_1}\circ\widehat{F_2}(l_2)=l_2\circ F_2,l_2\in\overline{\Omega_2}=\mathrm{Hom}_{\overline{\mathbb{C}}}(V_2,\mathbb{C}).$$ Similarly for $\widehat{F_1}, \widehat{F_3}, \widehat{F_4}$. 

    Implicitly using the double anti-duality isomorphisms and the maps $$\widehat{F_1}:\widehat{\Lambda_2}\to\widehat{\Lambda_1},\widehat{F_2}:\widehat{\Lambda_2}\to\widehat{\widehat{\Lambda_1}},\widehat{F_3}:\widehat{\widehat{\Lambda_2}}\to\widehat{\Lambda_1},\widehat{F_4}:\widehat{\widehat{\Lambda_2}}\to\widehat{\widehat{\Lambda_1}}$$ from $F:V_1\times\overline{\Omega_1}\to V_2\times\overline{\Omega_2}$, we find that the maps $\widehat{F_1},\widehat{F_2},\widehat{F_3},\widehat{F_4}$ on the right-hand side of (\ref{symplectic F}) are identified with $$\widehat{F_1}:\widehat{\Lambda_2}\to\widehat{\Lambda_1},\widetilde{\kappa_1}^{-1}\circ\widehat{F_2}:\widehat{\Lambda_2}\to{\Lambda_1},\widehat{F_3}\circ\widetilde{\kappa_2}:{\Lambda_2}\to\widehat{\Lambda_1},\widetilde{\kappa_1}^{-1}\circ\widehat{F_4}\circ\widetilde{\kappa_2}:{\Lambda_2}\to{\Lambda_1}$$ respectively. Therefore, the formula (\ref{equiv relation of lemma Sp vs SO}) is equivalent to $$(\begin{pmatrix}
        F_1&F_2\\F_3&F_4
    \end{pmatrix}\begin{pmatrix}
        \lambda\\\widehat{\lambda}
    \end{pmatrix},\begin{pmatrix}
        \mu\\\widehat{\mu}
    \end{pmatrix})=(\begin{pmatrix}
        \lambda\\\widehat{\lambda}
    \end{pmatrix},\begin{pmatrix}
        \widetilde{\kappa_1}^{-1}\circ\widehat{F_4}\circ\widetilde{\kappa_2}&-\widetilde{\kappa_1}^{-1}\circ\widehat{F_2}\\
        -\widehat{F_3}\circ\widetilde{\kappa_2}&\widehat{F_1}
    \end{pmatrix}\begin{pmatrix}
        \mu\\\widehat{\mu}
    \end{pmatrix}).$$ 
    Using definition of the pairings $\langle-,-\rangle,(-,-)_{\Lambda_i\oplus\widehat{\Lambda_i}}$, the formula is equivalent to 
    \begin{align*}
        &\mathrm{Im}(\widehat{\mu})(F_1(\lambda)+F_2(\widehat{\lambda}))+\mathrm{Im}(F_3(\lambda)+F_4(\widehat{\lambda}))(\mu)\\
        =&\mathrm{Im}((-\widehat{F_3}\circ\widetilde{\kappa_2})(\mu)+\widehat{F_1}(\widehat{\mu}))(\lambda)+\mathrm{Im}((\widetilde{\kappa_1}\circ\widehat{F_4}\circ\widetilde{\kappa_2})(\mu)-(\widetilde{\kappa_1}^{-1}\circ\widehat{F_2})(\widehat{\mu}))(\widehat{\lambda}).
    \end{align*}
    This formula is valid by the following computations of the four terms.\begin{align*}
        \mathrm{Im}(\widehat{F_1}(\widehat{\mu}))(\lambda)&=\mathrm{Im}(\widehat{\mu}\circ F_1)(\lambda)=\mathrm{Im}\widehat{\mu}(F_1(\lambda))\\
        \mathrm{Im}(-\widehat{F_3}\circ\widetilde{\kappa_2}(\mu))(\lambda)&=-\mathrm{Im}(\overline{\mathrm{ev}_\mu}\circ F_3)(\lambda)=\mathrm{Im}(\mathrm{ev}_\mu\circ F_3)(\lambda)=\mathrm{Im}F_3(\lambda)(\mu)\\
        -\mathrm{Im}(\widetilde{\kappa_1}^{-1}\circ\widehat{F_2}(\widehat{\mu}))(\widehat{\lambda})&=\mathrm{Im}(\widehat{\mu}\circ F_2)(\lambda)=\mathrm{Im}\widehat{\mu}(F_2(\widehat{\lambda}))\\
        \mathrm{Im}((\widetilde{\kappa_1}^{-1}\circ\widehat{F_4}\circ\widetilde{\kappa_2})(\mu))&=\mathrm{Im}(F_4(\widehat{\lambda}))(\mu)
    \end{align*}

    Here, the first two computations use the formula analogue to $$\widetilde{\kappa_1}\circ\widehat{F_2}(l_2)=l_2\circ F_2,l_2\in\overline{\Omega_2}=\mathrm{Hom}_{\overline{\mathbb{C}}}(V_2,\mathbb{C}),$$
    while the last two computations use the formula $$\langle\widetilde{\kappa}(\lambda),\widetilde{\lambda}\rangle=\mathrm{Im}(\overline{\mathrm{ev}_\lambda}(\widehat{\lambda}))=-\mathrm{Im}\widehat{\lambda}(\lambda)=-\langle\lambda,\widehat{\lambda}\rangle$$
    once and twice, respectively.
\end{proof}

\begin{remark}\label{SP vs SO remark}
    Given an arbitrary derived equivalence between abelian varieties, $\Phi:\mathbf{D}^b(X_1)\xrightarrow{\simeq}\mathbf{D}^b(X_2)$, we obtain a symplectic isomorphism $$f:=\gamma_{X_1,X_2}(\Phi):X_1\times \widehat{X_1}\xrightarrow{\cong}X_2\times\widehat{X_2}$$ by Theorem \ref{Orlov fundamental th}. (Note that every element of $\mathrm{Sp}(X_1,X_2)$ is in the image of Orlov's representation $\gamma_{X_1,X_2}$ by Theorem \ref{Orlov fundamental th}.) Its rational representation is an induced isomorphism $F:\Lambda_1\times\widehat{\Lambda_1}\xrightarrow{\cong}\Lambda_2\times\widehat{\Lambda_2}$. On the other hand, since the lattice $\Lambda_i\oplus\widehat{\Lambda_i}$ satisfies $$\Lambda_i\oplus\widehat{\Lambda_i}=\mathrm{H}_1(X_i,\mathbb{Z})\oplus\mathrm{H}_1(\widehat{X_i},\mathbb{Z})\simeq\mathrm{H}^1(\widehat{X_i},\mathbb{Z})\times\mathrm{H}^1(X_i,\mathbb{Z})=V_{X_i},$$ the isomorphism $F$ can be seen as the restriction of the induced isomorphism of cohomology $f_*:\mathrm{H^*}(X_1\times\widehat{X_1},\mathbb{Q})\xrightarrow{\simeq}\mathrm{H^*}(X_2\times\widehat{X_2},\mathbb{Q})$, that preserves Hodge structures, to $F:V_{X_1}\xrightarrow{\cong}V_{X_2}$.
    
    By the definition of symplectic maps, $f\in\mathrm{Sp}(X_1,X_2)$ implies that the isomorphism $F:\Lambda_1\oplus\widehat{\Lambda_1}\xrightarrow{\cong}\Lambda_2\oplus\widehat{\Lambda_2}$ satisfies (\ref{symplectic F}). Lemma \ref{equiv relation of lemma Sp vs SO} shows that such a condition is equivalent to $F$ being an isometry with respect to the bilinear pairings on $\Lambda_i\oplus\widehat{\Lambda_i}$ given by $$((\lambda,\widehat{\lambda}),(\mu,\widehat{\mu}))_{\Lambda_i\oplus\widehat{\Lambda_i}}:=\langle\widehat{\lambda},\mu\rangle+\langle\widehat{\mu},\lambda\rangle.$$ Since the dual lattices $\widehat{\Lambda_i}$ are defined in $\mathrm{Hom}_{\mathbb{R}}(V,\mathbb{R})$, which is canonically isomorphic to $\overline{\Omega}$, where $l\in\overline{\Omega}$ corresponds to $\mathrm{Im}l\in\mathrm{Hom}_{\mathbb{R}}(V,\mathbb{R})$, the bilinear pairing $(-,-)_{\Lambda_i\oplus\widehat{\Lambda_i}}$ is equivalent to the pairing $(-,-)_{V_{X_i}}$ given by $$((\alpha_1,\beta_1),(\alpha_2,\beta_2))_{V_{X_i}}:=\beta_1(\alpha_2)+\beta_2(\alpha_1),$$ where $(\alpha_1,\beta_1), (\alpha_2,\beta_2)\in V_{X_i}=\mathrm{H}^1(X_i, \mathbb{Z})\times\mathrm{H}^1({X_i}, \mathbb{Z})^*$. 

    In summary, $f\in\mathrm{Sp}(X_1,X_2)\Rightarrow F\in\mathrm{SO}_{\mathrm{Hdg}}(V_{X_1},V_{X_2})$ over $\mathbb{C}$. Since every algebraically closed field of characteristic $0$ is contained in $\mathbb{C}$, it is also valid for all algebraically closed fields of characteristic $0$.
\end{remark}
    Moreover, $f\in\mathrm{Sp}(X_1,X_2)\Leftrightarrow F\in\mathrm{SO}^+_{\mathrm{Hdg}}(V_{X_1},V_{X_2})$ over an algebraically closed field of characteristic $0$ as in Remark \ref{two Orlov's representation remark}.

\section{Generalized Kummer varieties}\label{Generalized Kummer varieties}
    In this section, we will focus on generalized Kummer varieties of abelian surfaces. Using the tool about the $G$-equivariant version of Orlov's short exact sequence, we may find derived equivalences of generalized Kummer varieties by lifting some derived equivalences of abelian surfaces. We will also give a description of the derived equivalences obtained.
    
    Let $A$ be an abelian surface. Let $\Sigma:A^n\to A,n\geq2$, be the summation morphism and let $N_A$ be its kernel. Consider the morphism 
    \begin{align*}
        q:N_A\times A&\to A^n\\
        ((a_1,\dots,a_n),a)&\mapsto(a_1+a,\dots,a_n+a).
    \end{align*}
    Let $\mathfrak{S}_n$ act on $N_A\times A$ by the natural action on $N_A$ and the trivial action on $A$. Then $\mathrm{ker}(q)=\{((a,a,\dots,a),-a)|a\in A[n]\}$. We have the commutative diagram 
    \begin{align}\label{Sigma diagram}
        \xymatrix{
        N_A\ar[r]\ar[d]&N_A\ar[d]\\
        N_A\times A\ar[r]^{q}\ar[d]_{\pi_A}&A^n\ar[d]^{\Sigma}\\
        A\ar[r]_{n_A}&A,
        }
    \end{align}
    where $\pi_A$ is the projection, $n_A$ is the multiplication by $n$ and the top vertical arrows are the inclusions of the fibers over $0\in A$. For the diagonal embedding $\widehat{\Sigma}:\widehat{A}\to\widehat{A}^n$, we have $\widehat{\Sigma}(\widehat{A}[n])=\mathrm{ker}(\widehat{q})$. Set $G:=\mathrm{ker}(\widehat{q})\leq\widehat{A}^n, \mathcal{G}:=\widehat{A}[n]$. Then $\widehat{\Sigma}$ restricts to an isomorphism $\mathcal{G}\xrightarrow[]{\cong}G$.
    
    We have a similar diagram for another abelian surface $A'$. Using the notation as in Remark \ref{AA'notation}, we obtain $G\cong G'=\mathrm{ker}(\widehat{q'})\leq\widehat{A}'^n, \mathcal{G'}:=\widehat{A'}[n]$. Then the morphism $\widehat{\Sigma'}$ is restricted to an isomorphism $\mathcal{G'}\xrightarrow[]{\cong}G'$. Since $\widehat{\Sigma},\widehat{\Sigma'}$ are diagonal embeddings, we get an isomorphism $\mathcal{G}\cong\mathcal{G}'$.

\subsection{Lifting $\mathcal{G}$-autoequivalences of $\mathbf{D}^b_{\mathcal{G}}(A)$ to autoequivalences of \\$\mathbf{D}^b(\mathrm{Kum}^{n-1}(A)\times A)$}\label{DbGA to DbKumXA}
~\\

Using the notation of Remark \ref{AA'notation}, we get the maps
$$\mathrm{Fun}(\mathbf{D}^b(N_A\times A), \mathbf{D}^b(N_{A'}\times A'))\xleftarrow[]{\lambda_{q,q'}}G\textrm{-}\mathrm{Fun}(\mathbf{D}^b_G(A^n), \mathbf{D}^b_G(A'^n))\xrightarrow[]{F_{q,q'}}\mathrm{Fun}(\mathbf{D}^b(A^n), \mathbf{D}^b(A'^n))$$
from (\ref{Flambda_FunAA'}).

The Bridgeland-King-Reid theorem \cite{BKR:01} yields the equivalences $$\mathbf{D}^b_{\mathfrak{S}_n}(A^n)\simeq\mathbf{D}^b(A^{[n]}), \mathbf{D}^b_{\mathfrak{S}_n}(N_A\times A)\simeq\mathbf{D}^b(\mathrm{Kum}^{n-1}(A)\times A).$$
The group $\mathfrak{S}_n\times G$ acts on $\mathbf{D}^b(A^n)$, since the actions of $G$ and $\mathfrak{S}_n$ commute. As $$\mathbf{D}^b_G(A^n)\simeq\mathbf{D}^b(N_A\times A),$$
we get $\mathbf{D}^b_{\mathfrak{S}_n}(N_A\times A)\simeq\mathbf{D}^b_{\mathfrak{S}_n\times G}(A^n)$. The latter is equivalent to both $\mathbf{D}_G(\mathbf{D}^b_{\mathfrak{S}_n}(A^n))$ and $\mathbf{D}^b_{\mathfrak{S}_n}(\mathbf{D}_G(A^n))$ by \cite[Proposition 3.3]{Beckmann Oberdieck:23}. The analogous statement holds for $A'$ as well.

We have the maps

\begin{align*}
    \xymatrix{
    &\mathrm{Fun}(\mathbf{D}^b_{\mathfrak{S}_n}(A^n), \mathbf{D}^b_{\mathfrak{S}_n}(A'^n))\\
    G\textrm{-}\mathrm{Fun}(\mathbf{D}_G(\mathbf{D}^b_{\mathfrak{S}_n}(A^n)), \mathbf{D}_G(\mathbf{D}^b_{\mathfrak{S}_n}(A'^n)))\ar[dr]^{\widetilde{\lambda}_{q,q'}}\ar[ur]^{\widetilde{F}_{q,q'}}&\\
    &\mathrm{Fun}(\mathbf{D}^b_{\mathfrak{S}_n}(N_A\times A), \mathbf{D}^b_{\mathfrak{S}_n}(N_{A'}\times A')),\\
    }
\end{align*}
because a linearization of $\mathfrak{S}_n\times G$ restricts to a linearization of the subgroup ${\mathfrak{S}_n}$. Note that via BKR we get

\begin{align*}
    \xymatrix{
    &\mathrm{Fun}(\mathbf{D}^b(A^{[n]}), \mathbf{D}^b(A'^{[n]}))\\
    G\textrm{-}\mathrm{Fun}(\mathbf{D}_G(\mathbf{D}^b_{\mathfrak{S}_n}(A^n)), \mathbf{D}_G(\mathbf{D}^b_{\mathfrak{S}_n}(A'^n)))\ar[dr]^{\widetilde{\lambda}_{q,q'}}\ar[ur]^{\widetilde{F}_{q,q'}}&\\
    &\mathrm{Fun}(\mathbf{D}^b(\mathrm{Kum}^{n-1}(A)\times A), \mathbf{D}^b(\mathrm{Kum}^{n-1}(A')\times A')).\\
    }
\end{align*}

Let $\delta_{A,A'}:\mathrm{Fun}(\mathbf{D}^b(A), \mathbf{D}^b(A'))\to\mathrm{Fun}(\mathbf{D}^b_{\mathfrak{S}_n}(A^n), \mathbf{D}^b_{\mathfrak{S}_n}(A'^n))\simeq\mathrm{Fun}(\mathbf{D}^b(A^{[n]}), \mathbf{D}^b(A'^{[n]}))$ be the natural 1-functor. It is inspired by Ploog's way of obtaining derived equivalences of hyper-K\"ahler manifolds of $K3^{[n]}$-type in \cite{Ploog:05} and \cite{Ploog:07}. Recall that $\mathcal{G}:=\widehat{A}[n],\mathcal{G'}:=\widehat{A'}[n]$.

The lemma below gives a clue of lifting derived equivalences of abelian surfaces to get derived equivalences of the corresponding generalized Kummer varieties.

\begin{lemma}\label{delta_AA' Fun}
    For abelian surfaces $A,A'$, there exists a lower horizontal map $\widetilde{\delta}_{A,A'}$ that makes the square below commutative.
    \begin{align}\label{delta_FunAA'}
        \xymatrix{
        \mathrm{Fun}(\mathbf{D}^b(A),\mathbf{D}^b(A'))\ar[r]^{\delta_{A,A'}}&\mathrm{Fun}(\mathbf{D}^b(A^{[n]}), \mathbf{D}^b(A'^{[n]}))\\
        \mathcal{G}\textrm{-}\mathrm{Fun}(\mathbf{D}^b_{\mathcal{G}}(A),\mathbf{D}^b_{\mathcal{G}}(A'))\ar[u]^{F_{n_A,n_{A'}}}\ar[d]_{{\lambda_{n_A,n_{A'}}}}\ar[r]^(0.4){\widetilde{\delta}_{A,A'}}& G\textrm{-}\mathrm{Fun}(\mathbf{D}_G(\mathbf{D}^b_{\mathfrak{S}_n}(A^n)), \mathbf{D}_G(\mathbf{D}^b_{\mathfrak{S}_n}(A'^n)))\ar[u]^{\widetilde{F}_{q,q'}}\ar[d]_{\widetilde{\lambda}_{q,q'}}\\
        \mathrm{Fun}(\mathbf{D}^b(A), \mathbf{D}^b(A'))&\mathrm{Fun}(\mathbf{D}^b(\mathrm{Kum}^{n-1}(A)\times A), \mathbf{D}^b(\mathrm{Kum}^{n-1}(A')\times A'))
        }
    \end{align}
    Moreover, for the case $A=A'$, denote the map $\widetilde{\delta}_{A,A}$ by $\widetilde{\delta}_{A}$.
\end{lemma}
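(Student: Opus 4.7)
The plan is to construct $\widetilde{\delta}_{A,A'}$ by transporting the $\mathcal{G}$-equivariance data of $(f,\sigma)$ through the isomorphism $\widehat{\Sigma}\colon\mathcal{G}\xrightarrow{\cong}G$ and then verifying that the resulting functor lies in the correct equivariant category. The essential observation is that for $\widehat{a}\in\mathcal{G}=\widehat{A}[n]$, the corresponding $g=\widehat{\Sigma}(\widehat{a})\in G$ acts on $\mathbf{D}^b(A^n)$ as the external tensor product $\rho_{\widehat{a}}\boxtimes\cdots\boxtimes\rho_{\widehat{a}}$, because $\widehat{\Sigma}$ is the diagonal embedding and the associated line bundle is $\pi_1^*\mathcal{L}_{\widehat{a}}\otimes\cdots\otimes\pi_n^*\mathcal{L}_{\widehat{a}}$; the same identity holds on $A'^n$.

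Given $(f,\sigma)\in\mathcal{G}\textrm{-}\mathrm{Fun}(\mathbf{D}^b_{\mathcal{G}}(A),\mathbf{D}^b_{\mathcal{G}}(A'))$, I equip $\delta_{A,A'}(f)=f^{\boxtimes n}$ with its canonical $\mathfrak{S}_n$-linearization from \cite[Lemma~3.7]{Ploog:05} and define, for each $g=\widehat{\Sigma}(\widehat{a})\in G$,
\begin{equation*}
\widetilde{\sigma}_g \;:=\; \sigma_{\widehat{a}}\boxtimes\cdots\boxtimes\sigma_{\widehat{a}} \colon f^{\boxtimes n}\circ\rho_g \longrightarrow \rho'_g\circ f^{\boxtimes n}.
\end{equation*}
I then set $\widetilde{\delta}_{A,A'}(f,\sigma):=(\delta_{A,A'}(f),\widetilde{\sigma})$ on objects, extending to $2$-morphisms $t$ of $\mathcal{G}$-functors via $t\mapsto t^{\boxtimes n}$.

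Three points then need verification. First, the cocycle diagram (\ref{G-fun sigma commutative diagram}) for $\widetilde{\sigma}$ follows from the cocycle diagram for $\sigma$ applied factor by factor, since both the $G$-actions on $\mathbf{D}^b(A^n), \mathbf{D}^b(A'^n)$ and the transformations $\widetilde{\sigma}_g$ are purely diagonal across the $n$ factors. Second, $\widetilde{\sigma}$ is compatible with the $\mathfrak{S}_n$-linearization of $f^{\boxtimes n}$: the bundle $\pi_1^*\mathcal{L}_{\widehat{a}}\otimes\cdots\otimes\pi_n^*\mathcal{L}_{\widehat{a}}$ is $\mathfrak{S}_n$-invariant and $\widetilde{\sigma}_g$ is symmetric in the $n$ factors, so the canonical symmetrization isomorphisms of Ploog intertwine $\widetilde{\sigma}_g$ on either side, upgrading $(\delta_{A,A'}(f),\widetilde{\sigma})$ to a $(\mathfrak{S}_n\times G)$-functor. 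Through the equivalence $\mathbf{D}^b_{\mathfrak{S}_n\times G}(A^n)\simeq\mathbf{D}_G(\mathbf{D}^b_{\mathfrak{S}_n}(A^n))$ from \cite[Proposition~3.3]{Beckmann Oberdieck:23}, this yields the desired $G$-functor. Third, commutativity of the square in (\ref{delta_FunAA'}) is then immediate, because $\widetilde{F}_{q,q'}$ simply forgets the $G$-data, sending $(\delta_{A,A'}(f),\widetilde{\sigma})$ back to $\delta_{A,A'}(f)=\delta_{A,A'}(F_{n_A,n_{A'}}(f,\sigma))$.

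The main obstacle is the bookkeeping in the second point: I must verify that the canonical symmetrization isomorphisms realizing the $\mathfrak{S}_n$-linearization of $f^{\boxtimes n}$ commute strictly with the diagonal natural transformations $\widetilde{\sigma}_g$. The geometric reason is clean --- $\widehat{\Sigma}$ lands in the diagonal of $\widehat{A}^n$ and the same $\sigma_{\widehat{a}}$ appears on each factor --- but writing out the hexagonal compatibility diagram simultaneously involving $\mathfrak{S}_n$-permutations and the $G$-equivariance constitutes the one genuinely computational step.
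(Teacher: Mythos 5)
Your construction agrees with the paper's: define $\widetilde{\sigma}_g := \sigma_{\widehat{a}}^{\boxtimes n}$ for $g=\widehat{\Sigma}(\widehat{a})$, note that the diagonal nature of $\widehat{\Sigma}$ and the $\mathfrak{S}_n$-symmetry of the box-product make the cocycle and $\mathfrak{S}_n$-compatibility conditions hold factorwise, and observe that $\widetilde{F}_{q,q'}$ then forgets the added $G$-data so the square commutes. The paper packages this via a ``box-power'' $2$-functor $\Pi:\mathcal{D}^b(\mathcal{P}roj)\to\mathcal{D}^b_{\mathfrak{S}_n}(\mathcal{P}roj)$ (and notes $\Pi_A$ is multiplicative so that $\Pi_A\circ(L\otimes-)\cong\widehat\Sigma(L)\otimes\Pi_A(-)$), but the underlying idea and verifications are the same as yours.
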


\begin{proof}
    We consider an element, $(f,\sigma)\in\mathcal{G}\textrm{-}\mathrm{Fun}(\mathbf{D}^b_{\mathcal{G}}(A),\mathbf{D}^b_{\mathcal{G}}(A'))$, which consists of $f$ in $\mathrm{Fun}(\mathbf{D}^b(A),\mathbf{D}^b(A'))$ together with $\sigma=(\sigma_g)_{g\in\mathcal{G}}$, being a set of $\mathcal{G}$-equivariance natural transformations. Now $\delta_{A,A'}(f)$ is a functor from $\mathbf{D}^b_{\mathfrak{S}_n}(A^n)$ to $\mathbf{D}^b_{\mathfrak{S}_n}(A'^n)$. We need to show that the data $(\sigma_g)_{g\in \mathcal{G}}$, correspond to a set of $G$-equivariance natural transformations for $\delta_{A,A'}(f)$ via the isomorphism $\mathcal{G}\xrightarrow[]{\cong}G$ induced by the restriction of $\widehat{\Sigma}:\widehat{A}\to\widehat{A}^n$.

    Consider the functor $\Pi_A$ with objects and morphisms mapped as
    \begin{align*}
        \Pi_A:\mathbf{D}^b(A)&\to\mathbf{D}^b_{\mathfrak{S}_n}(A^n)\\
        E&\mapsto(E^{\boxtimes n},\tau)\\
        (f:E_1\to E_2)&\mapsto(f^{\boxtimes n}:(E_1^{\boxtimes n},\tau)\to (E_2^{\boxtimes n},\tau)),
    \end{align*}
    where $\tau$ is the permutation linearization, the object $E^{\boxtimes n}$ is defined as $$E\boxtimes\dots\boxtimes E=\overset{n}{\underset{i=1}{\oplus}}\pi_i^*E$$ 
    in $\mathbf{D}^b(A^n)$ and the morphism $f^{\boxtimes n}$ is defined as a morphism $$f\boxtimes\dots\boxtimes f=\overset{n}{\underset{i=1}{\oplus}}\pi_i^*f:(E_1^{\boxtimes n},\tau)\to (E_2^{\boxtimes n},\tau)$$
    in $\mathbf{D}^b_{\mathfrak{S}_n}(A^n)$ as in Definition \ref{def of equivariant category}.

    Note that $\Pi_A$ is not a functor of triangulated categories (it is not even additive). The functor $\Pi_{A}$ restricts to the subset of objects $\widehat{A}$ as $\widehat{\Sigma}$. The group $\mathrm{Pic}^0(A)$ acts on both $\mathbf{D}^b(A)$ and $\mathbf{D}^b_{\mathfrak{S}_n}(A^n)$ and $\Pi_{A}$ extends to a $\widehat{A}$-functor $(\Pi_{A},\sigma)$, where given $L\in\mathrm{Pic}^0(A)\cong\widehat{A}$, considered as an autoequivalence of $\mathbf{D}^b(A)$, we have the natural transformation $$\sigma_L:\Pi_A\circ(L\otimes -)\to\widehat{\Sigma}(L)\otimes\Pi_A(-).$$ 
    Since $\widehat{\Sigma}(L)=\Pi_A(L)$ and $\Pi_A$ is a multiplicative functor, $\Pi_A$ sends tensor products to tensor products. So, the natural transformation $\sigma_L$ is in fact an isomorphism.

    Moreover, we have a 2-functor $\Pi$ from the 2-category of derived categories of projective varieties $\mathcal{D}^b(\mathcal{P}roj)$ to the 2-category of $\mathfrak{S}_n$-equivariant derived categories of projective varieties $\mathcal{D}^b_{\mathfrak{S}_n}(\mathcal{P}roj)$.
    \begin{equation}\label{def of Pi}
        \begin{aligned}
            \Pi:\mathcal{D}^b(\mathcal{P}roj)&\to\mathcal{D}^b_{\mathfrak{S}_n}(\mathcal{P}roj)\\
        \mathbf{D}^b(A)&\xrightarrow[]{\Pi_A}\mathbf{D}^b_{\mathfrak{S}_n}(A^n)\\
        (F:\mathbf{D}^b(X)\to\mathbf{D}^b(Y))&\mapsto(F^{\boxtimes n}:\mathbf{D}^b_{\mathfrak{S}_n}(X^n)\to\mathbf{D}^b_{\mathfrak{S}_n}(Y^n))\\
        \mathrm{natural}\phantom{1}\mathrm{transformation}&\mapsto\mathrm{natural}\phantom{1}\mathrm{transformation}
        \end{aligned}
    \end{equation}
    Here, for the functor $F:\mathbf{D}^b(X)\to\mathbf{D}^b(Y)$ mapping a morphism $f:E_1\to E_2$ in $\mathbf{D}^b(X)$ to a morphism $F(f):F(E_1)\to F(E_2)$ in $\mathbf{D}^b(Y)$, the functor $$F^{\boxtimes n}:\mathbf{D}^b_{\mathfrak{S}_n}(X^n)\to\mathbf{D}^b_{\mathfrak{S}_n}(Y^n)$$
    is defined to map the morphism $\Pi (f):=f^{\boxtimes n}:(E_1^{\boxtimes n},\tau)\to (E_2^{\boxtimes n},\tau)$ in $\mathbf{D}^b_{\mathfrak{S}_n}(X^n)$ to a morphism $\Pi (F(f)):=F(f)^{\boxtimes n}:(F(E_1)^{\boxtimes n},\tau)\to (F(E_2)^{\boxtimes n},\tau)$ in $\mathbf{D}^b_{\mathfrak{S}_n}(Y^n)$.
    
    Hence, $\Pi$ induces a 1-functor $\Pi:\mathrm{Fun}(\mathbf{D}^b(A), \mathbf{D}^b(A'))\to\mathrm{Fun}(\mathbf{D}^b_{\mathfrak{S}_n}(A^{n}), \mathbf{D}^b_{\mathfrak{S}_n}(A'^{n}))$. Composing such a 1-functor $\Pi$ with conjugation by BKR, we get the 1-functor $\delta_{A,A'}$ above. Given $(f,\sigma=(\sigma_g)_{g\in\mathcal{G}})\in\mathcal{G}\textrm{-}\mathrm{Fun}(\mathbf{D}^b_{\mathcal{G}}(A),\mathbf{D}^b_{\mathcal{G}}(A'))$, we get the $G$-functor $$\widetilde{\delta}_{A,A'}(f,\sigma):=(\Pi(f),(\Pi(\sigma_g)=\sigma_g^{\boxtimes n})_{g\in\mathcal{G}\cong G})\in G\textrm{-}\mathrm{Fun}(\mathbf{D}_G(\mathbf{D}^b_{\mathfrak{S}_n}(A^n)), \mathbf{D}_G(\mathbf{D}^b_{\mathfrak{S}_n}(A'^n))).$$ So we get the map $\widetilde{\delta}_{A,A'}$ such that the square in (\ref{delta_FunAA'}) commute. To be explicit, we have 
    \begin{align*}
        \xymatrix{
        &\delta_{A,A'}(f)\ar@{<->}[d]^{BKR}\\
        f\ar@{|->}[ur]&\Pi(f)\\
        (f,\sigma)\ar@{|->}[u]\ar@{|->}[r]&\widetilde{\delta}_{A,A'}(f,\sigma)=(\Pi(f),(\Pi(\sigma_g)=\sigma_g^{\boxtimes n})_{g\in\mathcal{G}\cong G})\ar@{|->}[u]
        }
    \end{align*}
    for the maps in the square of (\ref{delta_FunAA'}).
\end{proof}

We may get the following proposition by considering derived (auto)equivalences.

\begin{proposition}\label{delta_AA' Eq}
    For abelian surfaces $A,A'$, there exists a lower horizontal map $\widetilde{\delta}_{A,A'}$ that makes the square below commutative.
    \begin{align}\label{delta_EqAA'}
        \xymatrix{
        \mathrm{Eq}(\mathbf{D}^b(A),\mathbf{D}^b(A'))\ar[r]^{\delta_{A,A'}}&\mathrm{Eq}(\mathbf{D}^b(A^{[n]}), \mathbf{D}^b(A'^{[n]}))\\
        \mathcal{G}\textrm{-}\mathrm{Eq}(\mathbf{D}^b_{\mathcal{G}}(A),\mathbf{D}^b_{\mathcal{G}}(A'))\ar[u]^{F_{n_A,n_{A'}}}\ar[d]_{{\lambda_{n_A,n_{A'}}}}\ar[r]^(0.4){\widetilde{\delta}_{A,A'}}& G\textrm{-}\mathrm{Eq}(\mathbf{D}_G(\mathbf{D}^b_{\mathfrak{S}_n}(A^n)), \mathbf{D}_G(\mathbf{D}^b_{\mathfrak{S}_n}(A'^n)))\ar[u]^{\widetilde{F}_{q,q'}}\ar[d]_{\widetilde{\lambda}_{q,q'}}\\
        \mathrm{Eq}(\mathbf{D}^b(A), \mathbf{D}^b(A'))&\mathrm{Eq}(\mathbf{D}^b(\mathrm{Kum}^{n-1}(A)\times A), \mathbf{D}^b(\mathrm{Kum}^{n-1}(A')\times A'))
        }
    \end{align}
    Moreover, for the case of derived autoequivalences, denote the map $\widetilde{\delta}_{A,A}$ by $\widetilde{\delta}_{A}$. Then the diagram above can be written as 
    \begin{align}\label{delta_EqA}
        \xymatrix{
        \mathrm{Aut}(\mathbf{D}^b(A))\ar[r]^{\delta_{A}}&\mathrm{Aut}(\mathbf{D}^b(A^{[n]}))\\
        \mathcal{G}\textrm{-}\mathrm{Aut}(\mathbf{D}^b_{\mathcal{G}}(A))\ar[u]^{F_{n_A}}\ar[d]_{{\lambda_{n_A}}}\ar[r]^(0.42){\widetilde{\delta}_{A}}& G\textrm{-}\mathrm{Aut}(\mathbf{D}_G(\mathbf{D}^b_{\mathfrak{S}_n}(A^n)))\ar[u]^{\widetilde{F}_{q}}\ar[d]_{\widetilde{\lambda}_{q}}\\
        \mathrm{Aut}(\mathbf{D}^b(A))&\mathrm{Aut}(\mathbf{D}^b(\mathrm{Kum}^{n-1}(A)\times A)).
        }
    \end{align}
\end{proposition}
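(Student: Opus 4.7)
The plan is to deduce the equivalence-level diagram \eqref{delta_EqAA'} directly from the functor-level diagram \eqref{delta_FunAA'} of Lemma~\ref{delta_AA' Fun} by verifying that the map $\widetilde{\delta}_{A,A'}(f,\sigma) := (\Pi(f), (\sigma_g^{\boxtimes n})_{g\in\mathcal{G} \cong G})$ defined in the proof of that lemma restricts to equivalences on both sides. Since \eqref{delta_FunAA'} already commutes, its commutativity is inherited on any subset where all maps restrict, so no new commutativity has to be checked.

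First I would note that the vertical arrows all restrict to maps between the corresponding equivalence subsets: the maps $F_{n_A,n_{A'}}$ and $\lambda_{n_A,n_{A'}}$ restrict by construction (see \eqref{Flambda_EqAA'}), and the $\mathfrak{S}_n$-equivariant analogues $\widetilde{F}_{q,q'}$, $\widetilde{\lambda}_{q,q'}$ restrict for the same reason after passing through the identifications provided by the Bridgeland--King--Reid theorem. The top map $\delta_{A,A'}$ also restricts: if $f = \Phi_P$ with kernel $P \in \mathbf{D}^b(A \times A')$ is an equivalence, then $f^{\boxtimes n}$ is the Fourier--Mukai functor with kernel $P^{\boxtimes n}$, which by \cite[Lemma 3.7]{Ploog:05} is canonically $(\mathfrak{S}_n)_\Delta$-linearized and induces an equivalence $\mathbf{D}^b_{\mathfrak{S}_n}(A^n) \xrightarrow{\simeq} \mathbf{D}^b_{\mathfrak{S}_n}(A'^n)$; composing with BKR produces an equivalence of $\mathbf{D}^b(A^{[n]})$ with $\mathbf{D}^b(A'^{[n]})$.

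The substantive step is showing that $\widetilde{\delta}_{A,A'}$ sends a $\mathcal{G}$-equivalence to a $G$-equivalence. Given $(f,\sigma) \in \mathcal{G}\textrm{-}\mathrm{Eq}(\mathbf{D}^b_{\mathcal{G}}(A), \mathbf{D}^b_{\mathcal{G}}(A'))$, the underlying functor $\Pi(f) = f^{\boxtimes n}$ is an equivalence by the argument above, and each $\sigma_g^{\boxtimes n}$ is an isomorphism because $\sigma_g$ is. The $G$-equivariance compatibility is inherited from the $\mathcal{G}$-equivariance under the diagonal isomorphism $\mathcal{G} = \widehat{A}[n] \xrightarrow{\cong} G = \mathrm{ker}(\widehat{q})$ obtained by restricting $\widehat{\Sigma}: \widehat{A} \to \widehat{A}^n$, since $\Pi$ is multiplicative and $\widehat{\Sigma}(L) = \Pi_A(L)$ for $L \in \widehat{A}$ as recorded in the proof of Lemma~\ref{delta_AA' Fun}. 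Thus $\widetilde{\delta}_{A,A'}(f,\sigma) \in G\textrm{-}\mathrm{Eq}(\mathbf{D}_G(\mathbf{D}^b_{\mathfrak{S}_n}(A^n)), \mathbf{D}_G(\mathbf{D}^b_{\mathfrak{S}_n}(A'^n)))$.

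The diagram \eqref{delta_EqA} for derived autoequivalences is obtained by setting $A = A'$ throughout. The main obstacle is the bookkeeping in the preceding paragraph — matching the $\mathcal{G}$- and $G$-equivariance data through the box-product construction — but this is essentially handled by the Ploog-style multiplicativity already invoked in the proof of Lemma~\ref{delta_AA' Fun}, so no new technical ingredient is needed beyond care with the identifications.
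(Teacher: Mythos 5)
Your proof follows the same route as the paper: deduce the equivalence-level diagram by restricting the functor-level diagram from Lemma~\ref{delta_AA' Fun}, with the only substantive check being that $\delta_{A,A'}$ and $\widetilde{\delta}_{A,A'}$ carry equivalences to equivalences via the Ploog-style $P^{\boxtimes n}$ kernel and the multiplicativity of $\Pi$. The paper contents itself with the one-line remark that the proposition follows ``by considering derived (auto)equivalences,'' so your write-up is essentially the paper's argument made explicit.
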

So we can lift the $\mathcal{G}$-functors, which are derived equivalences from $\mathbf{D}^b_{\mathcal{G}}(A)$ to $\mathbf{D}^b_{\mathcal{G}}(A')$, to derived equivalences from $\mathbf{D}^b(\mathrm{Kum}^{n-1}(A)\times A)$ to $\mathbf{D}^b(\mathrm{Kum}^{n-1}(A')\times A')$. Moreover, we can also lift the $\mathcal{G}$-functors, which are derived autoequivalences of $\mathbf{D}^b_{\mathcal{G}}(A)$, to derived autoequivalences of $\mathbf{D}^b(\mathrm{Kum}^{n-1}(A)\times A)$.

\subsection{Lifting $\mathcal{G}$-autoequivalences of $\mathbf{D}^b_{\mathcal{G}}(A)$ to autoequivalences of $\mathbf{D}^b(\mathrm{Kum}^{n-1}(A))$}\label{DbGA to DbKum}

The result in Proposition \ref{delta_AA' Eq} is not sufficient to obtain the derived equivalences of generalized Kummer varieties. In this subsection, we shall prove that the derived equivalences obtained above, from $\mathbf{D}^b(\mathrm{Kum}^{n-1}(A)\times A)$ to $\mathbf{D}^b(\mathrm{Kum}^{n-1}(A')\times A')$, are composed of some derived equivalences from $\mathbf{D}^b(\mathrm{Kum}^{n-1}(A))$ to $\mathbf{D}^b(\mathrm{Kum}^{n-1}(A'))$ and some derived equivalences from $\mathbf{D}^b(A)$ to $\mathbf{D}^b(A')$ in a unique way.

Notations as above. We would like to show that given an arbitrary $\mathcal{G}$-functor $(f,\sigma)\in\mathcal{G}\textrm{-}\mathrm{Eq}(\mathbf{D}^b_{\mathcal{G}}(A),\mathbf{D}^b_{\mathcal{G}}(A'))$, we have \begin{equation}\label{split_AA'}
    \widetilde{\lambda}_{q,q'}(\widetilde{\delta}_{A,A'}(f,\sigma))=\Phi_{(f,\sigma)}\times\Psi_{(f,\sigma)}
\end{equation}
for some $\Phi_{(f,\sigma)}\in\mathrm{Eq}(\mathbf{D}^b(\mathrm{Kum}^{n-1}(A)),\mathbf{D}^b(\mathrm{Kum}^{n-1}(A'))), \Psi_{(f,\sigma)}\in\mathrm{Eq}(\mathbf{D}^b(A),\mathbf{D}^b(A'))$. Moreover, we will show that this splitting is unique.

Analogue for the derived autoequivalence case. We will show that given an arbitrary $(f,\sigma)\in\mathcal{G}\textrm{-}\mathrm{Eq}(\mathbf{D}^b_{\mathcal{G}}(A))$, we have 
\begin{equation}\label{split_A}
    \widetilde{\lambda}_{q}(\widetilde{\delta}_{A}(f,\sigma))=\Phi_{(f,\sigma)}\times\Psi_{(f,\sigma)}
\end{equation}
for some $\Phi_{(f,\sigma)}\in\mathrm{Aut}(\mathbf{D}^b(\mathrm{Kum}^{n-1}(A))), \Psi_{(f,\sigma)}\in\mathrm{Aut}(\mathbf{D}^b(A))$, which are unique.

Note that the splitting for general derived functors is not easy since important tools such as Orlov's Theorem \cite[Theorem 5.14]{Huybrechts:06} and Theorem \ref{Th_GrhoAA'} used in the proof later are valid for derived equivalences only. So we only consider derived equivalence in this subsection.

We consider the following two basic examples before the proof of splitting.

\begin{example}\label{translation split eg}
    Let $(f_*,\sigma)\in\mathcal{G}\textrm{-}\mathrm{Aut}(\mathrm{D}^b_{\mathcal{G}}(A))$ be such that $f:A\to A$ is an automorphism (no need for $f(0)=0$.) and $\lambda_{n_A}(f_*,\sigma)=\widetilde{f}_*$, where $\widetilde{f}:A\to A$ is an automorphism satisfying $\widetilde{f}(a)=f(a)+a_0, \forall a\in A$ for some fixed $a_0\in A[n]$. In fact, it follows from the understanding of $\lambda_q$ using the lifting in Remark \ref{lambda_q explain B}. Note that by the lifting related to $\lambda_q$, we get $n_A\circ\widetilde{f}=f\circ n_A$ and $n\widetilde{f}(a)=f(na)$. Hence, $n\widetilde{f}(0)=f(0)$ and $\widetilde{f}(a)-\widetilde{f}(0)=f(a)-f(0)$. The map $\widetilde{f}^n:A^n\to A^n$ maps $N_A$ to $N_A$. Let $f^\#:A\to A$ be an automorphism such that $f^\#(a)=f(a)-f(0)$. Then $f^\#(0)=0$ and $f^\#$ is a group homomorphism.

    Then $\Pi_A(f)=f^n:A^n\to A^n$ lifts to the automorphism $h:N_A\times A\to N_A\times A$, given by $$h((a_1,\dots,a_n),a)=((f^\#(a_1),\dots,f^\#(a_n)),f(a))$$ since $q\circ h=f^n\circ q$. Note that $\Sigma$ in the diagram (\ref{Sigma diagram}) satisfies $$\Sigma\circ\widetilde{f}^n=f\circ\Sigma$$ but $\Sigma\circ f^n\not=f\circ\Sigma$ when $nf(0)\not=f(0)$. However, we need to lift $\Pi_A(f)=f^n$ and not $\widetilde{f}^n$.

    We abuse the notation and consider $\widetilde{\lambda}_q(\widetilde{\delta}_A(f_*,\sigma))$ as a derived autoequivalence of $\mathbf{D}^b_{\mathfrak{S}_n}(N_A\times A)$ rather than $\mathbf{D}^b(\mathrm{Kum}^{n-1}(A)\times A)$. Then we have $$\widetilde{\lambda}_q(\widetilde{\delta}_A(f_*,\sigma))=\widetilde{\lambda}_q((f^n)_*,\widehat{\Sigma}_*\sigma)=h_*,$$ where $\widehat{\Sigma}_*\sigma$ is the set of $G$-equivariance natural transformations for $(f^n)_*$, which is induced by the set $\sigma$ of $\mathcal{G}$-equivariance natural transformations for $f$ via $\mathcal{G}\xrightarrow[]{\cong}G$ restricted from the morphism $\widehat{\Sigma}$. In this case, both $f^n$ and $h$ are $\mathfrak{S}_n$-equivariant. Note that we have a factorization 
    \begin{align*}
        &(h_*:\mathbf{D}^b_{\mathfrak{S}_n}(N_A\times A)\to\mathbf{D}^b_{\mathfrak{S}_n}(N_A\times A))\\
        =&(((f^\#)^n|_{N_A})_*:\mathbf{D}^b_{\mathfrak{S}_n}(N_A)\to\mathbf{D}^b_{\mathfrak{S}_n}(N_A))\times(f_*:\mathbf{D}^b(A)\to\mathbf{D}^b(A)),
    \end{align*}
    where $(f^\#)^n|_{N_A}$ is the restriction of the automorphism $(f^\#)^n$ to $N_A$. This restriction is an $\mathfrak{S}_n$-equivariant automorphism. Thus, $(f_*,\sigma)\in\mathcal{G}\textrm{-}\mathrm{Aut}(\mathrm{D}^b_{\mathcal{G}}(A))$ satisfies the condition (\ref{split_A}) and $\widetilde{\lambda}_q(\widetilde{\delta}_A(f_*,\sigma))$ splits as desired.

    In particular, if $f=t_a$ is the translation in $A$ with $a\in A$ (one may further assume $a\in A[n]$.), we have $(f_*,\sigma=\mathrm{id})\in\mathcal{G}\textrm{-}\mathrm{Aut}(\mathrm{D}^b_{\mathcal{G}}(A))$. Obviously, $f^{\#}=\mathrm{id}_A$ and $h=(\overbrace{\mathrm{id}_A,\cdots,\mathrm{id}_A}^{n\phantom{1}\mathrm{tuple}},t_a):N_A\times A\to N_A\times A$. So we have a factorization 
    \begin{equation}\label{f=t_a split}
        \begin{aligned}
            &(\widetilde{\lambda}_q(\widetilde{\delta}_A((t_a)_*,\mathrm{id}))=h_*:\mathbf{D}^b_{\mathfrak{S}_n}(N_A\times A)\to\mathbf{D}^b_{\mathfrak{S}_n}(N_A\times A))\\
        =&(\mathrm{id}_{\mathbf{D}^b_{\mathfrak{S}_n}(N_A)}:\mathbf{D}^b_{\mathfrak{S}_n}(N_A)\to\mathbf{D}^b_{\mathfrak{S}_n}(N_A))\times((t_a)_*:\mathbf{D}^b(A)\to\mathbf{D}^b(A)).
        \end{aligned}
    \end{equation}
\end{example}

\begin{example}\label{line bundle split eg}
    Let $L$ be a line bundle on $A$ and let $f=-\otimes L\in\mathrm{Aut}(\mathbf{D}^b(A))$. Let $\sigma:=(\sigma_{L'}=\mathrm{id}:f(-\otimes L')\xrightarrow[]{\cong} f(-)\otimes L')_{L'\in\widehat{A}[n]}$ be the set of natural transformations corresponding to the commutativity of the tensor products by the line bundles $L$ and $L'$. Then $\lambda_{n_A}(f,\sigma=\mathrm{id})=-\otimes n_A^*(L)$. We have $n_A^*(L)\cong L^{\frac{n^2+n}{2}}\otimes (-1_A)^*L^{\frac{n^2-n}{2}}$ by \cite[Proposition 2.3.5]{Birkenhake Lange:04}. We claim that the following factorization holds: $$\widetilde{\lambda}_q(\widetilde{\delta}_A(f,\sigma=\mathrm{id}))=\Phi_{(f,\sigma)}\times\Psi_{(f,\sigma)},$$
    where $\Phi_{(f,\sigma)}=(-\otimes M,\sigma'):=(-\otimes L^{\boxtimes n},\Pi(\sigma))|_{N_A}$ and $\Psi_{(f,\sigma)}=-\otimes L^n$. Here we abuse the notation and regard the first factor $\Phi_{(f,\sigma)}$ as an autoequivalence of $\mathbf{D}^b_{\mathfrak{S}_n}(N_A)$ rather than $\mathbf{D}^b(\mathrm{Kum}^{n-1}(A))$. Note that $\Pi(\sigma)=\mathrm{id}$ and $\sigma'=\mathrm{id}$ since $\sigma=\mathrm{id}$. Since $\widetilde{\lambda}_q(\widetilde{\delta}_A(f,\sigma))=q^*(-\otimes L^{\boxtimes n},\Pi(\sigma))$, we need to show that $$q^*(-\otimes L^{\boxtimes n},\Pi(\sigma))\cong(-\otimes M,\sigma')\times (-\otimes L^n).$$ It suffices to show that $q^*(L^{\boxtimes n})|_{\{(x_1,\dots,x_n)\}\times A}=L^n,\forall(x_1,\dots,x_n)\in N_A$.

    Recall that $\varphi_L:A\to\widehat{A}$, given by $\varphi_L(t):=L^{-1}\otimes\tau_t^*(L)$, is a group homomorphism. Let $x=(x_1,\dots, x_n)\in N_A$ and let $\iota_x:A\hookrightarrow N_A\times A$ be the inclusion given by $\iota_x(a)=(x,a)$. Note the equality $\pi_j\circ q\circ \iota_x=\tau_{x_j}$. We have 
    \begin{align*}
        \iota_x^*(q^*(L^{\boxtimes n}))&\cong\iota_x^*(q^*( \underset{j=1}{\overset{n}{\otimes}}\pi_j^*L))\cong\underset{j=1}{\overset{n}{\otimes}}(\pi_j\circ q\circ\iota_x)^*L\cong\underset{j=1}{\overset{n}{\otimes}}\tau_{x_j}^*L\\
        &\cong L^n\otimes(\underset{j=1}{\overset{n}{\otimes}}[L^{-1}\otimes\tau_{x_j}^*L])\cong L^n\otimes\varphi_L(\underset{j=1}{\overset{n}{\sum}}x_j)\cong L^n.
    \end{align*}
    It finishes the proof. Thus, $(f,\sigma)=(-\otimes L,\mathrm{id})\in\mathcal{G}\textrm{-}\mathrm{Aut}(\mathrm{D}^b_{\mathcal{G}}(A))$ satisfies the condition (\ref{split_A}). More precisely,
    \begin{equation}\label{f=otimesL split}
        \widetilde{\lambda}_q(\widetilde{\delta}_A(-\otimes L,\mathrm{id}))=q^*(-\otimes L^{\boxtimes n},\mathrm{id})\cong(-\otimes (L^{\boxtimes n}
        |_{N_A}),\mathrm{id})\times (-\otimes L^n).
    \end{equation}
\end{example}

Similarly, we have the following splitting $$\widetilde{\lambda}_q(\widetilde{\delta}_A([1],\mathrm{id}))=\Phi_{([1],\mathrm{id})}\times\Psi_{([1],\mathrm{id})}$$
for the shift $[1]\in\mathrm{Aut}(\mathbf{D}^b(A))$, where $$\Phi_{([1],\mathrm{id})}=[1]\in\mathrm{Aut}(\mathbf{D}^b(\mathrm{Kum}^{n-1}(A)),\Psi_{([1],\mathrm{id})}=[1]\in\mathrm{Aut}(\mathbf{D}^b(A)).$$ That is,\begin{equation}\label{f=shift split}
    \widetilde{\lambda}_q(\widetilde{\delta}_A([1],\mathrm{id}))=[1]\times[1].
\end{equation} 

Combining the computation of the shift with Examples \ref{translation split eg} and \ref{line bundle split eg}, we may see that any autoequivalence $f$ of $\mathbf{D}^b(A)$, which are trivial in the Orlov's representation $\rho_A$ (i.e. shifts, tensor with line bundles and translations, by Orlov's short exact sequence), together with the set of trivial natural transformations $\sigma=\mathrm{id}$, can make the splitting (\ref{split_A}) hold. It gives a hint to consider Orlov's representation in the reasoning below.

Now, let us prove the main theorem about splitting.
\begin{theorem}\label{split_NAANA'A'}
     Let notation be as above over an algebraically closed field of characteristic $0$. For arbitrary $(f,\sigma)\in\mathcal{G}\textrm{-}\mathrm{Eq}(\mathbf{D}^b_{\mathcal{G}}(A),\mathbf{D}^b_{\mathcal{G}}(A'))$, the splitting 
     \begin{equation*}
         \widetilde{\lambda}_{q,q'}(\widetilde{\delta}_{A,A'}(f,\sigma))=\Phi_{(f,\sigma)}\times\Psi_{(f,\sigma)}
     \end{equation*} holds for a unique combination of $\begin{cases}
         \Phi_{(f,\sigma)}\in\mathrm{Eq}(\mathbf{D}^b(\mathrm{Kum}^{n-1}(A)),\mathbf{D}^b(\mathrm{Kum}^{n-1}(A')))\\
         \Psi_{(f,\sigma)}\in\mathrm{Eq}(\mathbf{D}^b(A),\mathbf{D}^b(A')).
     \end{cases}$\\
\end{theorem}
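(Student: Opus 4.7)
My strategy will be to reduce, via the $G$-equivariant Orlov sequence, to a short list of generators of $\mathcal{G}\textrm{-}\mathrm{Eq}(\mathbf{D}^b_{\mathcal{G}}(A),\mathbf{D}^b_{\mathcal{G}}(A'))$ and to verify the splitting for each, then argue uniqueness separately.

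First I would establish compositionality. Since both $\widetilde{\delta}_{A,A'}$ and $\widetilde{\lambda}_{q,q'}$ respect composition of functors, splittings $\widetilde{\lambda}_{q,q'}(\widetilde{\delta}_{A,A'}(f_i,\sigma_i)) = \Phi_i\times\Psi_i$ for $i=1,2$ combine to a splitting $(\Phi_1\circ\Phi_2)\times(\Psi_1\circ\Psi_2)$ of the composition $(f_1,\sigma_1)\circ(f_2,\sigma_2)$. It therefore suffices to check the splitting on a chosen set of generators. Using the $G$-equivariant Orlov sequence (Theorem~\ref{Th_GrhoAA'}) with $q=n_A$, $q'=n_{A'}$, any element of $\mathcal{G}\textrm{-}\mathrm{Eq}(\mathbf{D}^b_{\mathcal{G}}(A),\mathbf{D}^b_{\mathcal{G}}(A'))$ decomposes as an autoequivalence in the kernel $\mathrm{Alb}(A)\times\widehat{A}\times\mathbb{Z}$ of $\mathcal{G}\textrm{-}\rho_A$ (Theorem~\ref{Th_GrhoA}) followed by a $\mathcal{G}$-lift of a Hodge isometry $\gamma\in\mathcal{G}\textrm{-}\mathrm{SO}_{\mathrm{Hdg}}(V_A,V_{A'})$. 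The kernel is generated by shifts, translations by $a\in\mathrm{Alb}(A)$, and tensorizations by $L\in\widehat{A}$, each equipped with trivial $\mathcal{G}$-equivariance provided by Lemma~\ref{find sigma for overline f_0}; for these I can invoke the explicit splittings~(\ref{f=t_a split}), (\ref{f=otimesL split}), and~(\ref{f=shift split}) from Examples~\ref{translation split eg} and~\ref{line bundle split eg}.

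The hard part will be the splitting for the $\mathcal{G}$-lift of a Hodge isometry. Starting from $\gamma\in\mathcal{G}\textrm{-}\mathrm{SO}_{\mathrm{Hdg}}(V_A,V_{A'})$, Proposition~\ref{GrhoA GrhoAA' surjective} produces a lift $(\overline{f},\sigma)$ whose underlying Fourier-Mukai kernel $\mathcal{E}$ is, up to a line-bundle twist, supported on the graph of the Rouquier isomorphism $\overline{F}=\gamma_{A,A'}(\overline{f}):A\times\widehat{A}\to A'\times\widehat{A'}$. The Fourier-Mukai kernel of $\widetilde{\lambda}_{q,q'}(\widetilde{\delta}_{A,A'}(\overline{f},\sigma))$ is obtained from $\mathcal{E}^{\boxtimes n}$ (with its natural $\mathfrak{S}_n\times G$-linearization) by pulling back along $q\times q'$. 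The $\mathcal{G}$-condition on $\gamma$ forces $\overline{F}$ to be block-diagonal with respect to the decomposition of $V_A$ coming from the diagonal embedding $\widehat{\Sigma}$ and the projection $\pi_A$ in diagram~(\ref{Sigma diagram}); I expect this block-diagonal structure to let the pulled-back kernel factor as an external product of an $\mathfrak{S}_n$-equivariant kernel on $N_A\times N_{A'}$ with one on $A\times A'$, which would produce the splitting $\Phi_{(f,\sigma)}\times\Psi_{(f,\sigma)}$ after applying BKR. Verifying this factorization explicitly, perhaps by mimicking the line-bundle computation of Example~\ref{line bundle split eg} applied to the twist in $\mathcal{E}$, is the step I view as most delicate.

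For uniqueness, a second splitting $F\cong\Phi'\times\Psi'$ would give an isomorphism $K_{\Phi}\boxtimes K_{\Psi}\cong K_{\Phi'}\boxtimes K_{\Psi'}$ of Fourier-Mukai kernels on $\mathrm{Kum}^{n-1}(A)\times\mathrm{Kum}^{n-1}(A')\times A\times A'$. Since $\mathrm{Kum}^{n-1}$ is a simply-connected hyper-K\"ahler variety, $\mathrm{Pic}^0(\mathrm{Kum}^{n-1}(-))$ vanishes; this kills the line-bundle ambiguity that would otherwise rebalance the two tensor factors and so forces $\Phi\cong\Phi'$ and $\Psi\cong\Psi'$ as derived equivalences, yielding the uniqueness asserted in the theorem.
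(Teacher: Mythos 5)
Your scaffolding — decompose via the $G$-equivariant Orlov sequence into a kernel element composed with a lift of a Hodge isometry, dispatch the kernel part with the explicit examples, treat the lift case separately, then argue uniqueness — is a reasonable plan, but the essential step is both missing and misattributed. You claim ``the $\mathcal{G}$-condition on $\gamma$ forces $\overline{F}$ to be block-diagonal with respect to the decomposition of $V_A$ coming from $\widehat{\Sigma}$ and $\pi_A$.'' This is not correct. The $\mathcal{G}$-condition is a lattice-preservation statement: it says the rational extension of $\gamma$ preserves $n\mathrm{H}^1(A,\mathbb{Z})\oplus\frac{1}{n}\mathrm{H}^1(\widehat{A},\mathbb{Z})$, which (as Proposition~\ref{gamma_1_AA'} makes precise) is a divisibility constraint on the off-diagonal block $g_2$ — namely that $(g_2)_*$ send $\frac{1}{n}\mathrm{H}^1(\widehat{A},\mathbb{Z})$ into $n\mathrm{H}^1(A',\mathbb{Z})$ — and not a constraint forcing $g_2$ to vanish. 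The block-diagonal structure you actually need is on the Rouquier isomorphism $\eta$ of $\widetilde{\lambda}_{q,q'}(\widetilde{\delta}_{A,A'}(f,\sigma))$, as a map $N_A\times A\times\widehat{N_A}\times\widehat{A}\to N_{A'}\times A'\times\widehat{N_{A'}}\times\widehat{A'}$, and it comes from $\mathfrak{S}_n$-equivariance, not from the $\mathcal{G}$-condition: the functor is $\mathfrak{S}_n$-equivariant, hence so is $\eta$, hence $\mathrm{H}^1(\eta)$ preserves the $\mathfrak{S}_n$-invariant piece $\mathrm{H}^1(A\times\widehat{A})$ and its $\mathfrak{S}_n$-complement $\mathrm{H}^1(N_A\times\widehat{N_A})$ (which has no invariants), forcing $\eta=\eta_1\times\eta_2$. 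This equivariance observation is the first, essential step in the paper's proof and is absent from your argument.

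Even granting the block-diagonality, your plan to show the pulled-back kernel literally factors as an external product is strictly harder than what is needed, and you yourself flag it as unresolved. The obstruction is real: the kernel is a line-bundle twist of a graph on a product of abelian varieties, and line bundles on a product $X\times Y$ have a ``mixed'' component in $\mathrm{Hom}(X,\widehat{Y})$ that generally obstructs an external-product factorization. The paper avoids this entirely: given $\eta=\eta_1\times\eta_2$, it picks any product $\Phi_1\times\Phi_2$ with the same Rouquier image (Orlov surjectivity), observes that $\widetilde{\lambda}_{q,q'}(\widetilde{\delta}_{A,A'}(f,\sigma))$ differs from it by an element $t$ of Orlov's kernel on $N_{A'}\times A'$, and splits $t$ directly on generators (shifts, translations, and tensoring by $L_1\boxtimes L_2$). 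Your uniqueness argument is likewise different and not yet sound as written: the ``line-bundle rebalancing'' you propose to kill using $\mathrm{Pic}^0(\mathrm{Kum}^{n-1}(-))=0$ would have to simultaneously twist factors living on two different spaces, and you do not address the shift ambiguity in an external product; the paper instead notes any two splittings share the Rouquier images $\eta_1,\eta_2$, hence each factor differs from $\Phi_i$ by a translation, and the translation $t$ splits uniquely on generators.
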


\begin{proof}
    We abuse the notation and consider $\widetilde{\lambda}_{q,q'}(\widetilde{\delta}_{A,A'}(f,\sigma))$ as a derived equivalence in $\mathrm{Eq}(\mathbf{D}^b_{\mathfrak{S}_n}(N_A\times A),\mathbf{D}^b_{\mathfrak{S}_n}(N_{A'}\times A'))$ instead of a derived equivalence in $\mathrm{Eq}(\mathbf{D}^b(\mathrm{Kum}^{n-1}(A)\times A),\mathbf{D}^b(\mathrm{Kum}^{n-1}(A')\times A'))$.
    By Theorem \ref{Orlov fundamental th} and \cite[Proposition 9.39]{Huybrechts:06}, the derived equivalence $\widetilde{\lambda}_{q,q'}(\widetilde{\delta}_{A,A'}(f,\sigma))$ is highly dependent on the symplectic map $\eta:=\gamma_{N_A\times A, N_{A'}\times A'}(\widetilde{\lambda}_{q,q'}(\widetilde{\delta}_{A,A'}(f,\sigma)))\in\mathrm{Sp}(N_A\times A, N_{A'}\times A')$. So we start the proof by investigating $\eta$.
    
    \textit{Step 1:} Prove that $\eta$ splits into a symplectic map in $\mathrm{Sp}(N_A, N_{A'})$ and one in $\mathrm{Sp}(A, {A'})$.

    Since $\widetilde{\lambda}_{q,q'}(\widetilde{\delta}_{A,A'}(f,\sigma))\in\mathrm{Eq}(\mathbf{D}^b_{\mathfrak{S}_n}(N_A\times A),\mathbf{D}^b_{\mathfrak{S}_n}(N_{A'}\times A'))$ is $\mathfrak{S}_n$-equivariant with respect to the $\mathfrak{S}_n$-actions on $N_A\times A, N_{A'}\times A'$, which are composed of the natural actions of $\mathfrak{S}_n$ on $N_A,N_{A'}$ and the trivial actions of $\mathfrak{S}_n$ on $A,{A'}$, the symplectic map $\eta:N_A\times A\times\widehat{N_A}\times\widehat{A}\to N_{A'}\times A'\times\widehat{N_{A'}}\times\widehat{A'}$ is also $\mathfrak{S}_n$-equivariant. The group homomorphism $\eta$ is determined by the induced homomorphism in cohomology $\mathrm{H}^1(\eta):\mathrm{H}^1(N_A\times A\times\widehat{N_A}\times\widehat{A})\to \mathrm{H}^1(N_{A'}\times A'\times\widehat{N_{A'}}\times\widehat{A'})$. Since $\mathrm{H}^1(\eta)$ is also $\mathfrak{S}_n$-equivariant, it must map the $\mathfrak{S}_n$-invariant part of $\mathrm{H}^1(N_A\times A\times\widehat{N_A}\times\widehat{A})$ to the $\mathfrak{S}_n$-invariant part of $\mathrm{H}^1(N_{A'}\times A'\times\widehat{N_{A'}}\times\widehat{A'})$. So we have a homomorphism in cohomology, $\mathrm{H}^1(A\times\widehat{A})\to\mathrm{H}^1(A'\times\widehat{A'})$, which is induced by a group homomorphism $\eta_2:A\times\widehat{A}\to A'\times\widehat{A'}$. So we get $$\eta_2=p_{A\times\widehat{A},{A'}\times\widehat{{A'}}}\circ\eta\in\mathrm{Sp}(A,A'),$$ where $p_{A\times\widehat{A},{A'}\times\widehat{{A'}}}$ is a projection. The other component of $\eta$ is $$\eta_1=p_{N_A\times\widehat{N_A},N_{A'}\times\widehat{N_{A'}}}\circ\eta\in\mathrm{Sp}(N_A,N_{A'}),$$
    where $p_{N_A\times\widehat{N_A},N_{A'}\times\widehat{N_{A'}}}$ is a projection. It yields the desired splitting $$\eta=\eta_1\times\eta_2, \exists\eta_1\in\mathrm{Sp}(N_A, N_{A'}),\exists\eta_2\in\mathrm{Sp}(A,A').$$

    \textit{Step 2:} Construct a splitting of $\widetilde{\lambda}_{q,q'}(\widetilde{\delta}_{A,A'}(f,\sigma))$.

    Since the symplectic map $\eta$ splits from Step 1, we may find a derived equivalence $\Phi\in\mathrm{Eq}(\mathbf{D}^b(N_A\times A),\mathbf{D}^b(N_{A'}\times A'))$ with $\gamma_{N_A\times A, N_{A'}\times A'}(\Phi)=\eta$ such that the splitting $\Phi=\Phi_1\times\Phi_2$ holds, where $\begin{cases}
        \Phi_1\in\mathrm{Eq}(\mathbf{D}^b(N_A),\mathbf{D}^b(N_{A'}))\\
        \Phi_2\in\mathrm{Eq}(\mathbf{D}^b(A),\mathbf{D}^b({A'}))
    \end{cases}$ with $\begin{cases}
        \gamma_{N_A, N_{A'}}(\Phi_1)=\eta_1\\
        \gamma_{A, {A'}}(\Phi_2)=\eta_2
    \end{cases}$, by using the surjectivity result in Theorem \ref{Orlov fundamental th}. 
    
    Also by Theorem \ref{Orlov fundamental th}, since $\gamma_{N_A\times A, N_{A'}\times A'}(\Phi)=\eta$, the derived equivalence $\widetilde{\lambda}_{q,q'}(\widetilde{\delta}_{A,A'}(f,\sigma))$ differs from $\Phi$ by a translation $$t\in N_{A'}\times A'\times\widehat{N_{A'}}\times\widehat{A'}\times\mathbb{Z}\leq\mathrm{Aut}(\mathbf{D}^b(N_{A'}\times A')).$$
    That is, $\widetilde{\lambda}_{q,q'}(\widetilde{\delta}_{A,A'}(f,\sigma))=t\circ\Phi$, where the derived autoequivalence $t$ is generated by the ones induced from translations $(t_{(\vec{a_1'},a_2')})_*$ with $\begin{cases}
        \vec{a_1'}\in N_{A'}\\
        a_2'\in A'
    \end{cases}$, the ones by tensoring with line bundle $-\otimes(L_1\boxtimes L_2)$ with $\begin{cases}
        L_1\in\mathrm{Pic}^0(N_{A'})\cong\widehat{N_{A'}}\\
        L_2\in\mathrm{Pic}^0(A')\cong\widehat{{A'}}
    \end{cases}$, and the shift $[1]$.
    
    Since the generators $(t_{(\vec{a_1'},a_2')})_*$, $-\otimes(L_1\boxtimes L_2)$ and $[1]$ are derived autoequivalences of the equivariant category $\mathbf{D}^b_{\mathfrak{S}_n}(N_{A'}\times A')$, we have $t\in\mathrm{Aut}(\mathbf{D}^b_{\mathfrak{S}_n}(N_{A'}\times A'))$. Since $\widetilde{\lambda}_{q,q'}(\widetilde{\delta}_{A,A'}(f,\sigma))$ is a derived equivalence between equivariant categories, so are $\Phi$ and the splitting component $\Phi_1$. To summarize, we have the splitting $\Phi=\Phi_1\times\Phi_2$, where $\begin{cases}
        \Phi\in\mathrm{Eq}(\mathbf{D}^b_{\mathfrak{S}_n}(N_A\times A),\mathbf{D}^b_{\mathfrak{S}_n}(N_{A'}\times A'))\\
        \Phi_1\in\mathrm{Eq}(\mathbf{D}^b_{\mathfrak{S}_n}(N_A),\mathbf{D}^b_{\mathfrak{S}_n}(N_{A'}))\\
        \Phi_2\in\mathrm{Eq}(\mathbf{D}^b(A),\mathbf{D}^b({A'})).
    \end{cases}$

    For the generators of $t$, they can be split into derived autoequivalences of $\mathbf{D}^b_{\mathfrak{S}_n}(N_{A'})$ and derived autoequivalences of $\mathbf{D}^b({A'})$. That is,
    \begin{equation}\label{generators of t split}
        \begin{aligned}
            t_{(\vec{a_1'},a_2')})_*&=(t_{\vec{a_1'}})_*\times(t_{a_2'})_*\\
        -\otimes(L_1\boxtimes L_2)&=(-\otimes L_1)\times(-\otimes L_2)\\
        [1]&=[1]\times[1].
        \end{aligned}
    \end{equation}
    So we get the splitting $t=t_1\times t_2$, where $\begin{cases}
        t\in\mathrm{Eq}(\mathbf{D}^b_{\mathfrak{S}_n}(N_{A'}\times A'))\\
        t_1\in\mathrm{Eq}(\mathbf{D}^b_{\mathfrak{S}_n}(N_{A'}))\\
        t_2\in\mathrm{Eq}(\mathbf{D}^b({A'})).
    \end{cases}$
    Therefore, we obtain the desired splitting \begin{equation}\label{split result of main Th}
        \widetilde{\lambda}_{q,q'}(\widetilde{\delta}_{A,A'}(f,\sigma))=(t_1\circ\Phi_1)\times(t_2\circ\Phi_2),
    \end{equation}
    with $\begin{cases}
        t_1\circ\Phi_1\in\mathrm{Eq}(\mathbf{D}^b_{\mathfrak{S}_n}(N_{A}),\mathbf{D}^b_{\mathfrak{S}_n}(N_{A'}))\simeq\mathrm{Eq}(\mathbf{D}^b(\mathrm{Kum}^{n-1}(A)),\mathbf{D}^b(\mathrm{Kum}^{n-1}(A')))\\
        t_2\circ\Phi_2\in\mathrm{Eq}(\mathbf{D}^b({A}),\mathbf{D}^b({A'})).
    \end{cases}$

    \textit{Step 3:} Prove the uniqueness of the splitting of $\widetilde{\lambda}_{q,q'}(\widetilde{\delta}_{A,A'}(f,\sigma))$.

    We start from an arbitrary splitting of $\widetilde{\lambda}_{q,q'}(\widetilde{\delta}_{A,A'}(f,\sigma))$, say $$\widetilde{\lambda}_{q,q'}(\widetilde{\delta}_{A,A'}(f,\sigma))=(\widetilde{\lambda}_{q,q'}(\widetilde{\delta}_{A,A'}(f,\sigma)))_1\times(\widetilde{\lambda}_{q,q'}(\widetilde{\delta}_{A,A'}(f,\sigma)))_2$$ with $\begin{cases}
        (\widetilde{\lambda}_{q,q'}(\widetilde{\delta}_{A,A'}(f,\sigma)))_1\in\mathrm{Eq}(\mathbf{D}^b_{\mathfrak{S}_n}(N_{A}),\mathbf{D}^b_{\mathfrak{S}_n}(N_{A'}))\simeq\mathrm{Eq}(\mathbf{D}^b(\mathrm{Kum}^{n-1}(A)),\mathbf{D}^b(\mathrm{Kum}^{n-1}(A')))\\
        (\widetilde{\lambda}_{q,q'}(\widetilde{\delta}_{A,A'}(f,\sigma)))_2\in\mathrm{Eq}(\mathbf{D}^b({A}),\mathbf{D}^b({A'})).
    \end{cases}$ It should satisfy \begin{align*}
        \gamma_{N_A,N_{A'}}((\widetilde{\lambda}_{q,q'}(\widetilde{\delta}_{A,A'}(f,\sigma)))_1)&=\gamma_{N_A,N_{A'}}(\Phi_1)=\eta_1\\
        \gamma_{A,{A'}}((\widetilde{\lambda}_{q,q'}(\widetilde{\delta}_{A,A'}(f,\sigma)))_2)&=\gamma_{N_A,N_{A'}}(\Phi_2)=\eta_2
    \end{align*} by the result in Step 1. Using the exact sequence in Theorem \ref{Orlov fundamental th}, $(\widetilde{\lambda}_{q,q'}(\widetilde{\delta}_{A,A'}(f,\sigma)))_1$ and $(\widetilde{\lambda}_{q,q'}(\widetilde{\delta}_{A,A'}(f,\sigma)))_2$ differs from $\Phi_1$ and $\Phi_2$ by translations, respectively. In other words, \begin{align*}
        (\widetilde{\lambda}_{q,q'}(\widetilde{\delta}_{A,A'}(f,\sigma)))_1&=t_1'\circ\Phi_1,\phantom{1}\exists t_1'\in N_{A'}\times\widehat{N_{A'}}\times\mathbb{Z}\\
        (\widetilde{\lambda}_{q,q'}(\widetilde{\delta}_{A,A'}(f,\sigma)))_2&=t_2'\circ\Phi_2,\phantom{1}\exists t_2'\in {A'}\times\widehat{{A'}}\times\mathbb{Z}.
    \end{align*} Since $\widetilde{\lambda}_{q,q'}(\widetilde{\delta}_{A,A'}(f,\sigma))=((t_1'\circ\Phi_1)\times(t_2'\circ\Phi_2))=(t_1'\times t_2')\circ(\Phi_1\times\Phi_2)=(t_1'\times t_2')\circ\Phi$ and $\widetilde{\lambda}_{q,q'}(\widetilde{\delta}_{A,A'}(f,\sigma))=t\circ\Phi$, we have $t=t_1'\times t_2'$.
    
    Looking back on the construction of the split in Step 2, we can see that the splitting of the generators of $t$ as in (\ref{generators of t split}) is obviously unique. Then the splitting formula $t=t_1\times t_2$  of $t$ is unique and $\begin{cases}
        t_1'=t_1\\
        t_2'=t_2.
    \end{cases}$ It gives $\begin{cases}
        \widetilde{\lambda}_{q,q'}(\widetilde{\delta}_{A,A'}(f,\sigma))_1=t_1'\circ\Phi_1=t_1\circ\Phi_1\\
        \widetilde{\lambda}_{q,q'}(\widetilde{\delta}_{A,A'}(f,\sigma))_2=t_2'\circ\Phi_2=t_2\circ\Phi_2.
    \end{cases}$ Hence, the splitting of $\widetilde{\lambda}_{q,q'}(\widetilde{\delta}_{A,A'}(f,\sigma))$ in (\ref{split result of main Th}) is unique.
    
\end{proof}

We have the derived autoequivalence version of Theorem \ref{split_NAANA'A'} as follows.
\begin{corollary}\label{split_NAA}
    Let notation be as above over an algebraically closed field of characteristic $0$. For arbitrary $(f,\sigma)\in\mathcal{G}\textrm{-}\mathrm{Aut}(\mathbf{D}^b_{\mathcal{G}}(A))$, the splitting 
     \begin{equation*}
         \widetilde{\lambda}_{q}(\widetilde{\delta}_{A}(f,\sigma))=\Phi_{(f,\sigma)}\times\Psi_{(f,\sigma)}
     \end{equation*} holds for a unique combination of $\begin{cases}
         \Phi_{(f,\sigma)}\in\mathrm{Aut}(\mathbf{D}^b(\mathrm{Kum}^{n-1}(A)))\\
         \Psi_{(f,\sigma)}\in\mathrm{Aut}(\mathbf{D}^b(A)).
     \end{cases}$

\end{corollary}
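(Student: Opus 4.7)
The plan is to deduce Corollary \ref{split_NAA} as a direct specialization of Theorem \ref{split_NAANA'A'} to the case $A'=A$. In that case we have $N_{A'}=N_A$, $\mathcal{G}'=\mathcal{G}$, $q'=q$, $\mathrm{Kum}^{n-1}(A')=\mathrm{Kum}^{n-1}(A)$, and the diagram (\ref{delta_EqAA'}) collapses to diagram (\ref{delta_EqA}); in particular $\widetilde{\delta}_A=\widetilde{\delta}_{A,A}$ and $\widetilde{\lambda}_q=\widetilde{\lambda}_{q,q}$ by definition of $\widetilde{\delta}_{A,A}$ in Proposition \ref{delta_AA' Eq} and of $\widetilde{\lambda}_{q,q'}$ in Remark \ref{AA'notation}.

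The key observation is that a $\mathcal{G}$-functor from $\mathbf{D}^b_{\mathcal{G}}(A)$ to itself is a derived equivalence exactly when it is a derived autoequivalence, so
$$\mathcal{G}\textrm{-}\mathrm{Eq}(\mathbf{D}^b_{\mathcal{G}}(A),\mathbf{D}^b_{\mathcal{G}}(A))=\mathcal{G}\textrm{-}\mathrm{Aut}(\mathbf{D}^b_{\mathcal{G}}(A)).$$
The same tautology gives $\mathrm{Eq}(\mathbf{D}^b(\mathrm{Kum}^{n-1}(A)),\mathbf{D}^b(\mathrm{Kum}^{n-1}(A)))=\mathrm{Aut}(\mathbf{D}^b(\mathrm{Kum}^{n-1}(A)))$ and $\mathrm{Eq}(\mathbf{D}^b(A),\mathbf{D}^b(A))=\mathrm{Aut}(\mathbf{D}^b(A))$. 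Thus for any $(f,\sigma)\in\mathcal{G}\textrm{-}\mathrm{Aut}(\mathbf{D}^b_{\mathcal{G}}(A))$, applying Theorem \ref{split_NAANA'A'} produces the splitting
$$\widetilde{\lambda}_{q}(\widetilde{\delta}_{A}(f,\sigma))=\widetilde{\lambda}_{q,q}(\widetilde{\delta}_{A,A}(f,\sigma))=\Phi_{(f,\sigma)}\times\Psi_{(f,\sigma)},$$
with $\Phi_{(f,\sigma)}\in\mathrm{Aut}(\mathbf{D}^b(\mathrm{Kum}^{n-1}(A)))$ and $\Psi_{(f,\sigma)}\in\mathrm{Aut}(\mathbf{D}^b(A))$, uniquely determined.

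There is no real obstacle, because the three steps used in the proof of Theorem \ref{split_NAANA'A'} all go through verbatim when $A=A'$: the $\mathfrak{S}_n$-equivariance argument splits the Rouquier isomorphism $\gamma_{N_A\times A,N_A\times A}(\widetilde{\lambda}_q(\widetilde{\delta}_A(f,\sigma)))\in\mathrm{Sp}(N_A\times A)$ as $\eta_1\times\eta_2$ with $\eta_1\in\mathrm{Sp}(N_A)$ and $\eta_2\in\mathrm{Sp}(A)$; the surjectivity of Orlov's representation $\gamma_{N_A}$ and $\gamma_A$ (Theorem \ref{Orlov fundamental th}) lifts these to autoequivalences $\Phi_1,\Phi_2$; and the resulting translation discrepancy $t\in N_A\times A\times\widehat{N_A}\times\widehat{A}\times\mathbb{Z}$ uniquely splits via (\ref{generators of t split}). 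The only thing worth flagging is the compatibility of the notation in diagram (\ref{delta_EqA}) with its $(A,A')$-version in (\ref{delta_EqAA'}), which is immediate from the construction of $\widetilde{\delta}_{A,A'}$ as a composition of $\Pi$ and conjugation by BKR given in the proof of Lemma \ref{delta_AA' Fun}. Thus Corollary \ref{split_NAA} follows with no additional input beyond Theorem \ref{split_NAANA'A'}.
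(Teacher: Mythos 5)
Your proposal is correct and matches the paper's approach exactly: the paper states Corollary \ref{split_NAA} as the ``derived autoequivalence version'' of Theorem \ref{split_NAANA'A'} with no separate proof, i.e.\ it is obtained by specializing to $A'=A$, which is precisely what you do. Your remarks verifying that the three steps of the proof go through verbatim in this case are a sound (if implicit in the paper) sanity check.
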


\begin{remark}
    Actually, for $(f,\sigma)=((t_a)_*,\mathrm{id})$ in Example \ref{translation split eg}, the factorization (\ref{f=t_a split}) is exactly the first formula of (\ref{generators of t split}) with $\vec{a_1'}=\vec{0},a_2'=a$. The factorization (\ref{f=otimesL split}) for $(f,\sigma)=(-\otimes L,\mathrm{id})$ in Example \ref{line bundle split eg} is the second formula of (\ref{generators of t split}) with $L_1=(L^{\boxtimes n})|_{N_A},L_2=L^n$. The factorization (\ref{f=shift split}) for $(f,\sigma)=([1],\mathrm{id})$ in Example \ref{line bundle split eg} is the last formula of (\ref{generators of t split}).
\end{remark}

\subsection{Lifting some autoequivalences of $\mathbf{D}^b(A)$ to autoequivalences of $\mathbf{D}^b(\mathrm{Kum}^{n-1}(A))$}\label{finite index n2}

In Theorem \ref{split_NAANA'A'}, we get an equivalence $\Phi_{(f,\sigma)}$ of the derived categories $\mathbf{D}^b(\mathrm{Kum}^{n-1}(A))$ and $\mathbf{D}^b(\mathrm{Kum}^{n-1}(A'))$ for an arbitrary element $(f,\sigma)$ in $\mathcal{G}\textrm{-}\mathrm{Eq}(\mathbf{D}^b_{\mathcal{G}}(A),\mathbf{D}^b_{\mathcal{G}}(A'))$. But in practice, our objective is to obtain an equivalence of the derived categories $\mathbf{D}^b(\mathrm{Kum}^{n-1}(A))$ and $\mathbf{D}^b(\mathrm{Kum}^{n-1}(A'))$ from a derived equivalence $f$ in $\mathrm{Eq}(\mathbf{D}^b(A),\mathbf{D}^b(A'))$. In this subsection, we will see that it can be done for an arbitrary $f$ in $\mathrm{Eq}(\mathbf{D}^b(A),\mathbf{D}^b(A'))$ up to index $n^2$.

We start from the following lemma.

\begin{lemma}\label{gamma_AA' G-Eq}
    Let notation be as above over an algebraically closed field of characteristic $0$. Then the set $\gamma_{A,A'}(\mathcal{G}\textrm{-}\mathrm{Eq}(\mathbf{D}^b_\mathcal{G}(A),\mathbf{D}^b_\mathcal{G}(A')))$ consists of all the symplectic maps $$g=\begin{pmatrix}
        g_1&g_2\\
        g_3&g_4
    \end{pmatrix}\in\mathrm{Sp}(A,A')$$ with $$g_1:A\to A', g_2:\widehat{A}\to A',g_3:A\to\widehat{A'},g_4:\widehat{A}\to\widehat{A'}$$
    as in \cite[Definition 4.4]{Magni:22}, such that the induced map $(g_2)_*:\mathrm{H}^1(\widehat{A},\mathbb{Z})\to\mathrm{H}^1(A',\mathbb{Z})$ in cohomology should satisfy $(g_2)_*\frac{1}{n}\mathrm{H}^1(\widehat{A},\mathbb{Z})\subset n\mathrm{H}^1(A',\mathbb{Z})$.
\end{lemma}

Actually, it describes the set $G\textrm{-}\mathrm{Sp}(A,A')$ using the correspondence in Remark \ref{two Orlov's representation remark}. It is a computable special case of subsection \ref{finite index Th_GrhoA}.

\begin{proof}
    As a special case of Theorem \ref{Th_GrhoAA'} and Corollary \ref{Lift derived eq_AA'}, we have $$\mathcal{G}\textrm{-}\rho_{A,A'}(\mathcal{G}\textrm{-}\mathrm{Eq}(\mathbf{D}^b_\mathcal{G}(A),\mathbf{D}^b_\mathcal{G}(A')))=\mathrm{res}(\mathcal{G}\textrm{-}\mathrm{SO}^+_{\mathrm{Hdg}}(V_{A},V_{A'})).$$
    
    As in Example \ref{G-SO V_B G=Ahat[n]}, the set $\mathcal{G}\textrm{-}\mathrm{SO}^+_{\mathrm{Hdg}}(V_{A},V_{A'})$ consists of all Hodge isometries in $\mathrm{SO}^+_{\mathrm{Hdg}}(V_{A},V_{A'})$ whose extension to rational coefficients maps the subset $n\mathrm{H}^1(A,\mathbb{Z})\oplus\frac{1}{n}\mathrm{H}^1(\widehat{A},\mathbb{Z})$ to $n\mathrm{H}^1(A',\mathbb{Z})\oplus\frac{1}{n}\mathrm{H}^1(\widehat{A'},\mathbb{Z})$. We find that the restriction set, $\mathrm{res}(\mathcal{G}\textrm{-}\mathrm{SO}^+_{\mathrm{Hdg}}(V_{A},V_{A'}))$, consists of all Hodge isometries in $\mathrm{SO}^+_{\mathrm{Hdg}}(V_{A},V_{A'})$ whose extension to rational coefficients maps the subset $n\mathrm{H}^1(A,\mathbb{Z})\oplus\frac{1}{n}\mathrm{H}^1(\widehat{A},\mathbb{Z})$ to $n\mathrm{H}^1(A',\mathbb{Z})\oplus\frac{1}{n}\mathrm{H}^1(\widehat{A'},\mathbb{Z})$. It gives $$\mathrm{res}(\mathcal{G}\textrm{-}\mathrm{SO}^+_{\mathrm{Hdg}}(V_{A},V_{A'}))=\mathcal{G}\textrm{-}\mathrm{SO}^+_{\mathrm{Hdg}}(V_{A},V_{A'}).$$
    So we have $$\rho_{A,A'}(\mathcal{G}\textrm{-}\mathrm{Eq}(\mathbf{D}^b_\mathcal{G}(A),\mathbf{D}^b_\mathcal{G}(A')))=\mathcal{G}\textrm{-}\mathrm{SO}^+_{\mathrm{Hdg}}(V_{A},V_{A'}).$$
    Using the relation between $\gamma_{A,A'}$ and $\rho_{A,A'}$ in Remark \ref{two Orlov's representation remark}, we get that the set $\gamma_{A,A'}(\mathcal{G}\textrm{-}\mathrm{Eq}(\mathbf{D}^b_\mathcal{G}(A),\mathbf{D}^b_\mathcal{G}(A')))$ consists of all the symplectic maps $$g=\begin{pmatrix}
        g_1&g_2\\
        g_3&g_4
    \end{pmatrix}\in\mathrm{Sp}(A,A')$$ with $$g_1:A\to A', g_2:\widehat{A}\to A',g_3:A\to\widehat{A'},g_4:\widehat{A}\to\widehat{A'}$$
    as in \cite[Definition 4.4]{Magni:22}, such that the induced map in cohomology $g_*:V_A\to V_{A'}$ is an element in $\mathcal{G}\textrm{-}\mathrm{SO}^+_{\mathrm{Hdg}}(V_A,V_{A'})$.

    In general, a symplectic map $g\in\mathrm{Sp}(A,A')$ induces $g_*|_{V_A,V_{A'}}\in\mathrm{SO}^+_\mathrm{Hdg}(V_A,V_{A'})$ by Remark \ref{two Orlov's representation remark}. That is, 
    \begin{align*}
        (g_1)_*:\mathrm{H}^1(A,\mathbb{Z})\to\mathrm{H}^1(A',\mathbb{Z}), (g_2)_*:\mathrm{H}^1(\widehat{A},\mathbb{Z})\to\mathrm{H}^1(A',\mathbb{Z}),\\
        (g_3)_*:\mathrm{H}^1(A,\mathbb{Z})\to\mathrm{H}^1(\widehat{A'},\mathbb{Z}),(g_4)_*:\mathrm{H}^1(\widehat{A},\mathbb{Z})\to\mathrm{H}^1(\widehat{A'},\mathbb{Z}).
    \end{align*}
    Therefore,
    \begin{align*}
        &g_*|_{V_A,V_{A'}}\in\mathcal{G}\textrm{-}\mathrm{SO}^+_{\mathrm{Hdg}}(V_A,V_{A'})
        \Leftrightarrow \begin{cases}
            g_*|_{V_A,V_{A'}}\in\mathrm{SO}^+_{\mathrm{Hdg}}(V_A,V_{A'})\\
            g_*(n\mathrm{H}^1(A,\mathbb{Z})\oplus\frac{1}{n}\mathrm{H}^1(\widehat{A},\mathbb{Z}))\subset n\mathrm{H}^1(A',\mathbb{Z})\oplus\frac{1}{n}\mathrm{H}^1(\widehat{A'},\mathbb{Z})
        \end{cases}\\
        \Leftrightarrow &\begin{cases}
        g_*|_{V_A,V_{A'}}\in\mathrm{SO}^+_{\mathrm{Hdg}}(V_A,V_{A'})\\
        (g_1)_*:n\mathrm{H}^1(A,\mathbb{Z})\to n\mathrm{H}^1(A',\mathbb{Z})\\
        (g_2)_*:\frac{1}{n}\mathrm{H}^1(\widehat{A},\mathbb{Z})\to n\mathrm{H}^1(A',\mathbb{Z})\\
        (g_3)_*:n\mathrm{H}^1(A,\mathbb{Z})\to \frac{1}{n}\mathrm{H}^1(\widehat{A'},\mathbb{Z})\\
        (g_4)_*:\frac{1}{n}\mathrm{H}^1(\widehat{A},\mathbb{Z})\to \frac{1}{n}\mathrm{H}^1(\widehat{A'},\mathbb{Z})
    \end{cases}
     \Leftrightarrow \begin{cases}
        g_*|_{V_A,V_{A'}}\in\mathrm{SO}^+_{\mathrm{Hdg}}(V_A,V_{A'})\\
        (g_2)_*:\frac{1}{n}\mathrm{H}^1(\widehat{A},\mathbb{Z})\to n\mathrm{H}^1(A',\mathbb{Z}).
    \end{cases}
    \end{align*}
    Here, the condition about $(g_1)_*,(g_3)_*$ and $(g_4)_*$ is guaranteed since $g_*|_{V_A,V_{A'}}$ is an element of $\mathrm{SO}^+_{\mathrm{Hdg}}(V_A,V_{A'})$.

    In sum, the set $\gamma_{A,A'}(\mathcal{G}\textrm{-}\mathrm{Eq}(\mathbf{D}^b_\mathcal{G}(A),\mathbf{D}^b_\mathcal{G}(A')))$ consists of all the symplectic maps $$g=\begin{pmatrix}
        g_1&g_2\\
        g_3&g_4
    \end{pmatrix}\in\mathrm{Sp}(A,A')$$ with $$g_1:A\to A', g_2:\widehat{A}\to A',g_3:A\to\widehat{A'},g_4:\widehat{A}\to\widehat{A'}$$
    as in \cite[Definition 4.4]{Magni:22}, such that $(g_2)_*$ maps $\frac{1}{n}\mathrm{H}^1(\widehat{A},\mathbb{Z})$ to $n\mathrm{H}^1(A',\mathbb{Z})$.
\end{proof}

By Lemma \ref{gamma_AA' G-Eq}, the set $\gamma_{A,A'}(\mathcal{G}\textit{-}\mathrm{Eq}(\mathbf{D}^b_{\mathcal{G}}(A),\mathbf{D}^b_{\mathcal{G}}(A')))$ is a subgroup of $\mathrm{Sp}(A,A')$ of index $n^2$. Using Orlov's Theorem \ref{Orlov fundamental th}, the image of the natural forgetful map $F_{n_A,n_{A'}}:\mathcal{G}\textit{-}\mathrm{Eq}(\mathbf{D}^b_{\mathcal{G}}(A),\mathbf{D}^b_{\mathcal{G}}(A'))\to\mathrm{Eq}(\mathbf{D}^b(A),\mathbf{D}^b(A'))$ in Proposition \ref{delta_AA' Eq} is an index $n^2$ subset of $\mathrm{Eq}(\mathbf{D}^b(A),\mathbf{D}^b(A'))$.

Moreover, for an arbitrary element $f\in F_{n_A,n_{A'}}(\mathcal{G}\textit{-}\mathrm{Eq}(\mathbf{D}^b_{\mathcal{G}}(A),\mathbf{D}^b_{\mathcal{G}}(A')))\subseteq\mathrm{Eq}(\mathbf{D}^b(A),\mathbf{D}^b(A'))$, we may use Lemma \ref{find sigma for overline f_0} for $\mathcal{G}=\widehat{A}[n]\cong\mathcal{G}'=\widehat{A'}[n]$ to get a set $\sigma$ of $\mathcal{G}$-equivariance natural transformations for $f$ such that $(f,\sigma)\in\mathcal{G}\textit{-}\mathrm{Eq}(\mathbf{D}^b_{\mathcal{G}}(A),\mathbf{D}^b_{\mathcal{G}}(A'))$.

So, we may summarize the main Theorem \ref{split_NAANA'A'} and Corollary \ref{split_NAA} as follows.
\begin{theorem}\label{split_NAANA'A' index n2}
     Let notation be as above over an algebraically closed field of characteristic $0$. For an arbitrary $f\in\mathrm{Eq}(\mathbf{D}^b(A),\mathbf{D}^b(A'))$, up to finite index $n^2$, the splitting 
     \begin{equation*}
         \widetilde{\lambda}_{q,q'}(\widetilde{\delta}_{A,A'}(f,\sigma))=\Phi_{(f,\sigma)}\times\Psi_{(f,\sigma)}
     \end{equation*} holds for a unique combination of $\begin{cases}
         \Phi_{(f,\sigma)}\in\mathrm{Eq}(\mathbf{D}^b(\mathrm{Kum}^{n-1}(A)),\mathbf{D}^b(\mathrm{Kum}^{n-1}(A')))\\
         \Psi_{(f,\sigma)}\in\mathrm{Eq}(\mathbf{D}^b(A),\mathbf{D}^b(A')),
     \end{cases}$
     where $\sigma$ is a set of $\mathcal{G}$-equivariance natural transformations constructed in Lemma \ref{find sigma for overline f_0} for $f$, such that $(f,\sigma)\in\mathcal{G}\textit{-}\mathrm{Eq}(\mathbf{D}^b_{\mathcal{G}}(A),\mathbf{D}^b_{\mathcal{G}}(A'))$.
\end{theorem}

\begin{corollary}\label{split_NAA index n2}
    Let notation be as above over an algebraically closed field of characteristic $0$. For an arbitrary $f\in\mathrm{Aut}(\mathbf{D}^b(A))$, up to finite index $n^2$, the splitting 
     \begin{equation*}
         \widetilde{\lambda}_{q}(\widetilde{\delta}_{A}(f,\sigma))=\Phi_{(f,\sigma)}\times\Psi_{(f,\sigma)}
     \end{equation*} holds for a unique combination of $\begin{cases}
         \Phi_{(f,\sigma)}\in\mathrm{Aut}(\mathbf{D}^b(\mathrm{Kum}^{n-1}(A)))\\
         \Psi_{(f,\sigma)}\in\mathrm{Aut}(\mathbf{D}^b(A))
     \end{cases}$,
     where $\sigma$ is a set of $\mathcal{G}$-equivariance natural transformations constructed in Lemma \ref{find sigma for overline f_0} for $f$, such that $(f,\sigma)\in\mathcal{G}\textit{-}\mathrm{Aut}(\mathbf{D}^b_{\mathcal{G}}(A))$.

\end{corollary}

\subsection{Derived equivalences between generalized Kummer varieties from Theorem \ref{split_NAANA'A'}}\label{Derived equivalences between generalized Kummer varieties from Theorem}
By Theorem \ref{split_NAANA'A'}, we get a unique equivalence $\Phi_{(f,\sigma)}$ of the derived categories $\mathbf{D}^b(\mathrm{Kum}^{n-1}(A))$ and $\mathbf{D}^b(\mathrm{Kum}^{n-1}(A'))$ for an arbitrary element $(f,\sigma)$ in $\mathcal{G}\textrm{-}\mathrm{Eq}(\mathbf{D}^b_{\mathcal{G}}(A),\mathbf{D}^b_{\mathcal{G}}(A'))$. In this way, we obtain a great deal of derived equivalences between generalized Kummer varieties. In this subsection, we will describe these equivalences via Orlov's representation.

We abuse the notation and consider the derived equivalence $\Phi_{(f,\sigma)}$ obtained as an element of $\mathrm{Eq}(\mathbf{D}^b_{\mathfrak{S}_n}(N_A),\mathbf{D}^b_{\mathfrak{S}_n}(N_{A'}))$ rather than that of $\mathrm{Eq}(\mathbf{D}^b(\mathrm{Kum}^{n-1}(A)),\mathbf{D}^b(\mathrm{Kum}^{n-1}(A'))$.
We have $\eta_1=\gamma_{N_A,N_{A'}}(\Phi_{(f,\sigma)})$ as in the proof of Proposition \ref{split_NAA}. It suffices to describe $$\eta_1=p_{N_A\times\widehat{N_A},N_{A'}\times\widehat{N_{A'}}}\circ\gamma_{N_A\times A,N_{A'}\times A'}(\widetilde{\lambda}_{q,q'}(\widetilde{\delta}_{A,A'}(f,\sigma)))=p_{N_A\times\widehat{N_A},N_{A'}\times\widehat{N_{A'}}}\circ\eta$$
for all $(f,\sigma)\in\mathcal{G}\textrm{-}\mathrm{Eq}(\mathbf{D}^b_{\mathcal{G}}(A),\mathbf{D}^b_{\mathcal{G}}(A'))$ clearly. Once done, $\Phi_{(f,\sigma)}$ can be described by Orlov's representation in Theorem \ref{Orlov fundamental th}.

\begin{proposition}\label{gamma_1_AA'}
    Let notation be as above over an algebraically closed field of characteristic $0$. Then 
    the set $p_{N_A\times\widehat{N_A},N_{A'}\times\widehat{N_{A'}}}\circ\gamma_{N_A\times A, N_{A'}\times A'}(\widetilde{\lambda}_{q,q'}(\widetilde{\delta}_{A,A'}(\mathcal{G}\textrm{-}\mathrm{Eq}(\mathbf{D}^b_\mathcal{G}(A),\mathbf{D}^b_\mathcal{G}(A')))))$ consists of all symplectic maps of the form $$\begin{pmatrix}
            \begin{pmatrix}
                g_1&&\\
                &\ddots&\\
                &&g_1
            \end{pmatrix}\Biggl|_{N_A,N_{A'}} &\begin{pmatrix}
                g_2&&\\
                &\ddots&\\
                &&g_2
            \end{pmatrix}\Biggl|_{\widehat{N_A},N_{A'}}\\
            \begin{pmatrix}
                g_3&&\\
                &\ddots&\\
                &&g_3
            \end{pmatrix}\Biggl|_{N_A,\widehat{N_{A'}}}&\begin{pmatrix}
                g_4&&\\
                &\ddots&\\
                &&g_4
            \end{pmatrix}\Biggl|_{\widehat{N_A},\widehat{N_{A'}}}
        \end{pmatrix}\in\mathrm{Sp}(N_A,N_{A'}),$$
    with $$g=\begin{pmatrix}
        g_1&g_2\\
        g_3&g_4
    \end{pmatrix}\in\mathrm{Sp}(A,A'),\phantom{1}\mathrm{where}\phantom{1}g_1:A\to A', g_2:\widehat{A}\to A',g_3:A\to\widehat{A'},g_4:\widehat{A}\to\widehat{A'},$$
    such that $(g_2)_*$ maps $\frac{1}{n}\mathrm{H}^1(\widehat{A},\mathbb{Z})$ to $n\mathrm{H}^1(A',\mathbb{Z})$.

\end{proposition}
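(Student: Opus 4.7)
The plan is to translate the assertion into an explicit cohomological computation and then check integrality. First, specializing Example \ref{G=Ahat[n]} with $\mathcal{G} = \widehat{A}[n]$ gives $B = A$ and $q = n_A$, so Theorem \ref{Th_GrhoAA'} and Proposition \ref{GrhoA GrhoAA' surjective} identify the image of $\mathcal{G}\textrm{-}\mathrm{Eq}(\mathbf{D}^b_\mathcal{G}(A), \mathbf{D}^b_\mathcal{G}(A'))$ under $\gamma_{A,A'}\circ F_{n_A, n_{A'}}$ with the set of $g \in \mathrm{Sp}(A, A')$ whose associated Hodge isometry $\rho_{A,A'}(f)$ carries the lattice $nH^1(A,\mathbb{Z}) \oplus \tfrac{1}{n}H^1(\widehat{A},\mathbb{Z}) \subset V_{A,\mathbb{Q}}$ onto $nH^1(A',\mathbb{Z}) \oplus \tfrac{1}{n}H^1(\widehat{A'},\mathbb{Z}) \subset V_{A',\mathbb{Q}}$. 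Reading off the $(1,2)$-entry of this block-condition gives exactly $(g_2)_*(\tfrac{1}{n}H^1(\widehat{A},\mathbb{Z})) \subset nH^1(A',\mathbb{Z})$, the constraint stated in the proposition.

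Next I would compute the Orlov symplectic $\eta := \gamma_{N_A \times A, N_{A'} \times A'}(h)$ of $h := \widetilde{\lambda}_{q,q'}(\widetilde{\delta}_{A,A'}(f, \sigma))$. By the construction of the $2$-functor $\Pi$ in (\ref{def of Pi}) and Lemma \ref{delta_AA' Fun}, $h$ is a lift of $\overline{f} := f^{\boxtimes n}$ along $\widetilde{q}: N_A \times A \to A^n$ and $\widetilde{q'}$. Applying Remark \ref{S1 S2 GrhoA GrhoAA' inclusions remark} to the pair $(\overline{f}, h)$, diagram (\ref{lift gamma BB' f}) forces $\eta$ to be the unique lift of $\gamma_{A^n, A'^n}(f^{\boxtimes n}) = g^{\times n}$ (the $n$-fold diagonal symplectic of $g$) along the cohomological maps induced by $\widetilde{q}, \widetilde{q'}, \widehat{\widetilde{q}}, \widehat{\widetilde{q'}}$, up to the $\mathrm{Alb} \times \widehat{\;} \times \mathbb{Z}$ ambiguity, which is absorbed into the $\eta_2$-factor of the splitting $\eta = \eta_1 \times \eta_2$ of Step~1 of Theorem \ref{split_NAANA'A'}.

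I would then decompose cohomology explicitly. From the short exact sequence $0 \to N_A \to A^n \xrightarrow{\Sigma} A \to 0$ one gets $H^1(N_A, \mathbb{Q}) = H^1(A, \mathbb{Q})^n / \Delta H^1(A, \mathbb{Q})$ and, dually, $H^1(\widehat{N_A}, \mathbb{Q}) = \ker(\mathrm{sum}\colon H^1(\widehat{A}, \mathbb{Q})^n \to H^1(\widehat{A}, \mathbb{Q}))$. In these coordinates $\widetilde{q}^*$ sends $(\alpha_1, \ldots, \alpha_n) \in H^1(A,\mathbb{Q})^n$ to $([\alpha_\bullet], \sum \alpha_i)$, and the dual description holds for $\widehat{\widetilde{q}}^*$. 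Conjugating the diagonal action $g^{\times n}$ through these identifications and projecting to the $V_{N_A}$-factor of $V_{N_A \times A}$ yields precisely the block matrix in the proposition: each block entry $g_i$ appears $n$ times along the diagonal (the off-diagonal parts vanish because the diagonal action commutes with both the summation and the quotient-by-diagonal structure), with the restrictions $|_{N_A}, |_{\widehat{N_A}}, |_{N_{A'}}, |_{\widehat{N_{A'}}}$ imposed by the kernel-of-sum and quotient-by-diagonal descriptions.

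The principal obstacle is integrality: $\widetilde{q}$ is an isogeny with kernel of order $|A[n]| = n^4$, so inverting $\widetilde{q}^*$ or $\widehat{\widetilde{q}}^*$ requires dividing by $n$, and the conjugate of $g^{\times n}$ on $V_{N_A \times A, \mathbb{Q}}$ is a priori only rational. Tracking the $n$'s through the block decomposition shows that the only net obstruction to integrality on the $V_{N_A}$-summand comes from the $(1,2)$-block, yielding the condition $(g_2)_*(\tfrac{1}{n}H^1(\widehat{A},\mathbb{Z})) \subset nH^1(A',\mathbb{Z})$; the other blocks are automatically integral thanks to the ambient Hodge isometry structure and the symplectic identities of Remark \ref{SP vs SO remark}. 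For the opposite inclusion, one uses Proposition \ref{GrhoA GrhoAA' surjective} to lift any such $g$ back to some $(f, \sigma) \in \mathcal{G}\textrm{-}\mathrm{Eq}(\mathbf{D}^b_\mathcal{G}(A),\mathbf{D}^b_\mathcal{G}(A'))$, and rerunning the construction realizes the prescribed block-diagonal $\eta_1$.
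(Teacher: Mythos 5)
Your proposal follows essentially the same three-step structure as the paper's proof: characterize the $\gamma_{A,A'}$-image via Theorem \ref{Th_GrhoAA'} and the $(g_2)_*$-integrality condition, transport through the $n$-fold box tensor to $\mathrm{Sp}(A^n,A'^n)$, and lift/restrict along the isogeny $q\colon N_A\times A\to A^n$ to project onto the $N_A\times\widehat{N_A}$-factor. The one compression worth flagging is your assertion $\gamma_{A^n,A'^n}(f^{\boxtimes n})=g^{\times n}$, which the paper establishes via a careful diagram chase through the definition of $F_{\mathcal{E}}$ and the $2$-functor $\Pi$ (its Step 2), but your overall logic matches.
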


\begin{proof}
    \textit{Step 1:}  Compute $\gamma_{A,A'}(\mathcal{G}\textrm{-}\mathrm{Eq}(\mathbf{D}^b_\mathcal{G}(A),\mathbf{D}^b_\mathcal{G}(A')))$.

    It is done in Lemma \ref{gamma_AA' G-Eq}. Actually, it describes the set $G\textrm{-}\mathrm{Sp}(A,A')$ using the correspondence in Remark \ref{two Orlov's representation remark}.

    \textit{Step 2:} Compute $\gamma_{A^n,A'^n}(\widetilde{F}_{q,q'}(\widetilde{\delta}_{A,A'}(\mathcal{G}\textrm{-}\mathrm{Eq}(\mathbf{D}^b_\mathcal{G}(A),\mathbf{D}^b_\mathcal{G}(A')))))$.
    
    For arbitrary $(f=\Phi_{\mathcal{E}},\sigma)\in\mathcal{G}\textrm{-}\mathrm{Eq}(\mathbf{D}^b_\mathcal{G}(A),\mathbf{D}^b_\mathcal{G}(A'))$, we compare $\gamma_{A,A'}(f,\sigma)$ from $F_{\mathcal{E}}$ and $\gamma_{A^n,A'^n}(\widetilde{F}_{q,q'}(\widetilde{\delta}_{A,A'}(f,\sigma)))$ from $F_{\widetilde{F}_{q,q'}(\widetilde{\delta}_{A,A'}(\Phi_{\mathcal{E}},\sigma))}$ by applying the 2-functor $\Pi$ in (\ref{def of Pi}) and the forgetful 2-functor $\mathrm{for}:\mathcal{D}^b_{\mathfrak{S}_n}(\mathcal{P}roj)\to\mathcal{D}^b(\mathcal{P}roj)$, forgetting the lifts to the $\mathfrak{S}_n$-actions, to the definition of $F_\mathcal{E}$ in (\ref{def of F_E}) as below.
    \begin{align*}
        \xymatrix{
        \mathbf{D}^b(A\times\widehat{A})\ar[rr]^{\Pi_{A\times \widehat{A}}}\ar[dd]_{\mathrm{id}\times\Phi_{\mathcal{P}_A}^{-1}}&&\mathbf{D}^b_{\mathfrak{S}_n}(A^n\times\widehat{A}^n)\ar[rr]^{\mathrm{for}_{A^n\times\widehat{A}^n}}&&\mathbf{D}^b(A^n\times\widehat{A}^n)\ar[dd]^{\mathrm{id}\times\Phi_{\mathcal{P}_{A^n}}^{-1}}\\
        \ar@{|->}[rrrr]^{\mathrm{for}\circ\Pi}&&&&\\
        \mathbf{D}^b(A\times{A})\ar[rr]^{\Pi_{A\times A}}\ar[dd]_{(\mu_A)_*}&&\mathbf{D}^b_{\mathfrak{S}_n}(A^n\times{A}^n)\ar[rr]^{\mathrm{for}_{A^n\times A^n}}&&\mathbf{D}^b(A^n\times{A}^n)\ar[dd]^{(\mu_{A^n})_*}\\
        \ar@{|->}[rrrr]^{\mathrm{for}\circ\Pi}&&&&\\
        \mathbf{D}^b(A\times{A})\ar[rr]^{\Pi_{A\times A}}\ar[dd]_{\Phi_{\mathcal{E}}\times \Phi_{\mathcal{E}_R}}&&\mathbf{D}^b_{\mathfrak{S}_n}(A^n\times{A}^n)\ar[rr]^{\mathrm{for}_{A^n\times A^n}}&&\mathbf{D}^b(A^n\times{A}^n)\ar[dd]^{\mathrm{for}(\widetilde{\delta}_{A,A'}(\Phi_{\mathcal{E}},\sigma)\times \widetilde{\delta}_{A,A'}(\Phi_{\mathcal{E}_R},\sigma_{\Phi_{\mathcal{E}_R}}=\sigma^{-1}))}\\
        \ar@{|->}[rrrr]^{\mathrm{for}\circ\Pi}&&&&\\
        \mathbf{D}^b(A'\times{A'})\ar[rr]^{\Pi_{A'\times A'}}\ar[dd]_{(\mu_{A'})^*}&&\mathbf{D}^b_{\mathfrak{S}_n}(A'^n\times{A'}^n)\ar[rr]^{\mathrm{for}_{A'^n\times A'^n}}&&\mathbf{D}^b(A'^n\times{A'}^n)\ar[dd]^{(\mu_{A'^n})^*}\\
        \ar@{|->}[rrrr]^{\mathrm{for}\circ\Pi}&&&&\\
        \mathbf{D}^b(A'\times{A'})\ar[rr]^{\Pi_{A'\times A'}}\ar[dd]_{\mathrm{id}\times\Phi_{\mathcal{P}_{A'}}}&&\mathbf{D}^b_{\mathfrak{S}_n}(A'^n\times{A'}^n)\ar[rr]^{\mathrm{for}_{A'^n\times A'^n}}&&\mathbf{D}^b(A'^n\times{A'}^n)\ar[dd]^{\mathrm{id}\times\Phi_{\mathcal{P}_{A'^n}}}\\
        \ar@{|->}[rrrr]^{\mathrm{for}\circ\Pi}&&&&\\
        \mathbf{D}^b(A'\times\widehat{A'})\ar[rr]^{\Pi_{A'\times\widehat{A'}}}&&\mathbf{D}^b_{\mathfrak{S}_n}(A'^n\times\widehat{A'}^n)\ar[rr]^{\mathrm{for}_{A'^n\times\widehat{A'}^n}}&&\mathbf{D}^b(A'^n\times\widehat{A'}^n)
        }
    \end{align*}
    Here, $\mathrm{for}\circ\Pi(\Phi_{\mathcal{P}_A})=\Phi_{\mathcal{P}_{A^n}}, \mathrm{for}\circ\Pi((\mu_A)_*)=(\mu_{A^n})_*, \mathrm{for}\circ\Pi((\mu_A)^*)=(\mu_{A^n})^*$ by the definition of the Poincar\'e line bundle and the map $\mu_A$. Analog for $A'$. Moreover, $(\widetilde{\delta}_{A,A'}(\Phi_{\mathcal{E}},\sigma))_R=\widetilde{\delta}_{A,A'}(\Phi_{\mathcal{E}_R},\sigma_{\Phi_{\mathcal{E}_R}}=\sigma^{-1})$ by the properties of $\widetilde{\delta}_{A,A'}$. The diagram above gives $\mathrm{for}\circ\Pi(F_\mathcal{E})=F_{\widetilde{F}_{q,q'}(\widetilde{\delta}_{A,A'}(\Phi_{\mathcal{E}},\sigma))}$. 
    
    Using the formulas similar to (\ref{F_E vs gamma}), we have $$(\gamma_{A^n,A'^n}(\widetilde{F}_{q,q'}(\widetilde{\delta}_{A,A'}((f,\sigma)))))_*=\mathrm{for}\circ\Pi(\gamma_{A,A'}((f,\sigma))_*).$$ 
    By the result of Step 1, the set $\gamma_{A^n,A'^n}(\widetilde{F}_{q,q'}(\widetilde{\delta}_{A,A'}(\mathcal{G}\textrm{-}\mathrm{Eq}(\mathbf{D}^b_\mathcal{G}(A),\mathbf{D}^b_\mathcal{G}(A')))))$ consists of all the symplectic maps of the form 
    $$\begin{pmatrix}
            \begin{pmatrix}
                g_1&&\\
                &\ddots&\\
                &&g_1
            \end{pmatrix} &\begin{pmatrix}
                g_2&&\\
                &\ddots&\\
                &&g_2
            \end{pmatrix}\\
            \begin{pmatrix}
                g_3&&\\
                &\ddots&\\
                &&g_3
            \end{pmatrix}&\begin{pmatrix}
                g_4&&\\
                &\ddots&\\
                &&g_4
            \end{pmatrix}
        \end{pmatrix}
        \in\mathrm{Sp}(A^n,A'^n)$$ with $$g=\begin{pmatrix}
        g_1&g_2\\
        g_3&g_4
    \end{pmatrix}\in\mathrm{Sp}(A,A'),\phantom{1}\mathrm{where}\phantom{1}g_1:A\to A', g_2:\widehat{A}\to A',g_3:A\to\widehat{A'},g_4:\widehat{A}\to\widehat{A'},$$
    such that $(g_2)_*$ maps $\frac{1}{n}\mathrm{H}^1(\widehat{A},\mathbb{Z})$ to $n\mathrm{H}^1(A',\mathbb{Z})$.

    \textit{Step 3:} Finish the proof.
    
    From (\ref{diagram_GrhoAA'}) and Remark \ref{two Orlov's representation remark}, we have the commutative diagram
    \begin{align*}
    \xymatrix{
    \widetilde{\lambda}_{q,q'}(\widetilde{\delta}_{A,A'}(\mathcal{G}\textrm{-}\mathrm{Eq}(\mathbf{D}^b_\mathcal{G}(A),\mathbf{D}^b_\mathcal{G}(A'))))\ar[rr]^(0.58){\gamma_{N_A\times A,N_{A'}\times A'}}&&\mathrm{Sp}(N_A\times A,N_{A'}\times A')\\
    \widetilde{\delta}_{A,A'}(\mathcal{G}\textrm{-}\mathrm{Eq}(\mathbf{D}^b_\mathcal{G}(A),\mathbf{D}^b_\mathcal{G}(A')))\ar[u]^{\widetilde{\lambda}_{q,q'}}\ar[d]_{\widetilde{F}_{q,q'}}\ar[rr]^(0.55){G\textrm{-}\gamma_{N_A\times A,N_{A'}\times A'}}&&\mathrm{Sp}(N_A\times A,N_{A'}\times A')\ar[d]_{\mathrm{restriction}}\ar@{=}[u]\\
    \widetilde{F}_{q,q'}(\widetilde{\delta}_{A,A'}(\mathcal{G}\textrm{-}\mathrm{Eq}(\mathbf{D}^b_\mathcal{G}(A),\mathbf{D}^b_\mathcal{G}(A'))))\ar[rr]^(0.6){\gamma_{A^n,A'^n}}&&\mathrm{Sp}(A^n, A'^n),
    }
    \end{align*}
    where $G\textrm{-}\gamma_{N_A\times A,N_{A'}\times A'}:=\gamma_{N_A\times A,N_{A'}\times A'}\circ\widetilde{\lambda}_{q,q'}$ and the restriction map comes from the lifting via $q,q'$ as below, for $g\in\mathrm{Sp}(A,A')$.
    \begin{align*}
        \xymatrix{
        N_A\times A\times\widehat{N_A}\times\widehat{A}\ar[rr]&&N_{A'}\times A'\times\widehat{N_{A'}}\times\widehat{A'}\\
        N_A\times A\times\widehat{A}^n\ar[u]^{(\mathrm{id}_{N_A\times A})\times\widehat{q}}\ar[d]_{q\times\mathrm{id}_{\widehat{A}^n}}&&N_{A'}\times A'\times\widehat{{A'}}^n\ar[u]_{(\mathrm{id}_{N_{A'}\times A'})\times\widehat{q'}}\ar[d]^{q'\times\mathrm{id}_{\widehat{A'}^n}}\\
        A^n\times\widehat{A}^n\ar[rr]_{\Delta(g)}&\ar@{=>}[uu]^{\mathrm{lift}}&A'^n\times\widehat{A'}^n
        }
    \end{align*}

    By Step 2, we get the information of the lower side. So we get \begin{align*}
        &\gamma_{N_A\times A,N_{A'}\times A'}(\widetilde{\lambda}_{q,q'}(\widetilde{\delta}_{A,A'}(\mathcal{G}\textrm{-}\mathrm{Eq}(\mathbf{D}^b_\mathcal{G}(A),\mathbf{D}^b_\mathcal{G}(A')))))\\
        =&(\gamma_{A^n,A'^n}(\widetilde{F}_{q,q'}(\widetilde{\delta}_{A,A'}(\mathcal{G}\textrm{-}\mathrm{Eq}(\mathbf{D}^b_\mathcal{G}(A),\mathbf{D}^b_\mathcal{G}(A'))))))\circ\iota_{q,\widehat{q}})|_{N_{A'}\times A'\times\widehat{N_{A'}}\times\widehat{A'}},
    \end{align*}
    where $\iota_{q,\widehat{q}}:N_{A}\times A\times\widehat{N_{A}}\times\widehat{A}\hookrightarrow A^n\times\widehat{A}^n$ is the inclusion originated from \\$q:N_{A}\times A\to A^n$ and $\widehat{q}:\widehat{N_{A}}\times\widehat{A}\to\widehat{A}^n$. Moreover, the desired $$p_{N_A\times\widehat{N_A},N_{A'}\times\widehat{N_{A'}}}(\gamma_{N_A\times A,N_{A'}\times A'}(\widetilde{\lambda}_{q,q'}(\widetilde{\delta}_{A,A'}(\mathcal{G}\textrm{-}\mathrm{Eq}(\mathbf{D}^b_\mathcal{G}(A),\mathbf{D}^b_\mathcal{G}(A'))))))$$
    can be expressed as $$(((\gamma_{A^n,A'^n}(\widetilde{F}_{q,q'}(\widetilde{\delta}_{A,A'}(\mathcal{G}\textrm{-}\mathrm{Eq}(\mathbf{D}^b_\mathcal{G}(A),\mathbf{D}^b_\mathcal{G}(A'))))))\circ\iota_{q,\widehat{q}})\circ\iota_{N_A\times\widehat{N_A}})|_{N_{A'}\times A'\times\widehat{N_{A'}}\times\widehat{A'}})|_{N_{A'}\times\widehat{N_{A'}}},$$
    where $\iota_{N_A\times \widehat{N_{A}}}:N_A\times \widehat{N_{A}}\hookrightarrow N_A\times A\times \widehat{N_{A}}\times\widehat{A}$ is the natural inclusion. Obviously, the composition of inclusions $$\iota_{q,\widehat{q}}\circ\iota_{N_A\times \widehat{N_{A}}}:N_A\times \widehat{N_{A}}\hookrightarrow N_A\times A\times \widehat{N_{A}}\times\widehat{A}\to A^n\times\widehat{A}^n$$ is just composed by the natural inclusions $N_A\hookrightarrow A^n,\widehat{N_A}\hookrightarrow\widehat{A}^n$ by definition of $N_A$. 
    So, the set $p_{N_A\times\widehat{N_A},N_{A'}\times\widehat{N_{A'}}}\circ\gamma_{N_A\times A, N_{A'}\times A'}(\widetilde{\lambda}_{q,q'}(\widetilde{\delta}_{A,A'}(\mathcal{G}\textrm{-}\mathrm{Eq}(\mathbf{D}^b_\mathcal{G}(A),\mathbf{D}^b_\mathcal{G}(A')))))$ consists of all symplectic maps of the form $$\begin{pmatrix}
            \begin{pmatrix}
                g_1&&\\
                &\ddots&\\
                &&g_1
            \end{pmatrix}\Biggl|_{N_A,N_{A'}} &\begin{pmatrix}
                g_2&&\\
                &\ddots&\\
                &&g_2
            \end{pmatrix}\Biggl|_{\widehat{N_A},N_{A'}}\\
            \begin{pmatrix}
                g_3&&\\
                &\ddots&\\
                &&g_3
            \end{pmatrix}\Biggl|_{N_A,\widehat{N_{A'}}}&\begin{pmatrix}
                g_4&&\\
                &\ddots&\\
                &&g_4
            \end{pmatrix}\Biggl|_{\widehat{N_A},\widehat{N_{A'}}}
        \end{pmatrix}\in\mathrm{Sp}(N_A,N_{A'}),$$
    with $$g=\begin{pmatrix}
        g_1&g_2\\
        g_3&g_4
    \end{pmatrix}\in\mathrm{Sp}(A,A'),\phantom{1}\mathrm{where}\phantom{1}g_1:A\to A', g_2:\widehat{A}\to A',g_3:A\to\widehat{A'},g_4:\widehat{A}\to\widehat{A'},$$
    such that $(g_2)_*$ maps $\frac{1}{n}\mathrm{H}^1(\widehat{A},\mathbb{Z})$ to $n\mathrm{H}^1(A',\mathbb{Z})$.
\end{proof}

The proposition gives a criterion of derived equivalences between generalized Kummer varieties that can be lifted from the $\mathcal{G}$-functors, which are derived equivalences between $\mathbf{D}^b_{\mathcal{G}}(A)$ and $\mathbf{D}^b_{\mathcal{G}}(A')$. Here we abuse the notation and regard derived equivalences in $\mathrm{Eq}(\mathbf{D}^b(\mathrm{Kum}^{n-1}(A)), \mathbf{D}^b(\mathrm{Kum}^{n-1}(A')))$ as derived equivalences in $\mathrm{Eq}(\mathbf{D}^b_{\mathfrak{S}_n}(N_A), \mathbf{D}^b_{\mathfrak{S}_n}(N_{A'}))$ by conjugation via the BKR isomorphisms.

\begin{theorem}\label{describe derived equiv NANA'}
    Let notation be as above over an algebraically closed field over characteristic $0$. A derived equivalence $\Phi\in\mathrm{Eq}(\mathbf{D}^b_{\mathfrak{S}_n}(N_A), \mathbf{D}^b_{\mathfrak{S}_n}(N_{A'}))$ is of the form $\Phi_{(f,\sigma)}$ as in Theorem \ref{split_NAANA'A'} if and only if $\gamma_{N_A,N_{A'}}(\Phi)$ is a symplectic map of the form $$\begin{pmatrix}
            \begin{pmatrix}
                g_1&&\\
                &\ddots&\\
                &&g_1
            \end{pmatrix}\Biggl|_{N_A,N_A} &\begin{pmatrix}
                g_2&&\\
                &\ddots&\\
                &&g_2
            \end{pmatrix}\Biggl|_{\widehat{N_A},N_A}\\
            \begin{pmatrix}
                g_3&&\\
                &\ddots&\\
                &&g_3
            \end{pmatrix}\Biggl|_{N_A,\widehat{N_A}}&\begin{pmatrix}
                g_4&&\\
                &\ddots&\\
                &&g_4
            \end{pmatrix}\Biggl|_{\widehat{N_A},\widehat{N_A}}
        \end{pmatrix}\in\mathrm{Sp}(N_A,N_{A'}),$$
    with $$g=\begin{pmatrix}
        g_1&g_2\\
        g_3&g_4
    \end{pmatrix}\in\mathrm{Sp}(A,A'),\phantom{1}\mathrm{where}\phantom{1}g_1:A\to A', g_2:\widehat{A}\to A',g_3:A\to\widehat{A'},g_4:\widehat{A}\to\widehat{A'},$$
    such that $(g_2)_*$ maps $\frac{1}{n}\mathrm{H}^1(\widehat{A},\mathbb{Z})$ to $n\mathrm{H}^1(A',\mathbb{Z})$.
\end{theorem}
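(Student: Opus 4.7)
The forward implication follows directly from Proposition \ref{gamma_1_AA'}: if $\Phi = \Phi_{(f,\sigma)}$ for some $(f,\sigma) \in \mathcal{G}\textrm{-}\mathrm{Eq}(\mathbf{D}^b_\mathcal{G}(A),\mathbf{D}^b_\mathcal{G}(A'))$, then $\gamma_{N_A, N_{A'}}(\Phi)$ equals the image under the projection $p_{N_A\times\widehat{N_A},\,N_{A'}\times\widehat{N_{A'}}}$ of $\gamma_{N_A\times A, N_{A'}\times A'}(\widetilde{\lambda}_{q,q'}(\widetilde{\delta}_{A,A'}(f,\sigma)))$, which by Proposition \ref{gamma_1_AA'} has precisely the stated block-diagonal form with underlying $g = \gamma_{A,A'}(f,\sigma) \in \mathrm{Sp}(A,A')$ satisfying the $(g_2)_*$ lattice condition.

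For the converse, suppose $\gamma_{N_A, N_{A'}}(\Phi) = \eta_1$ has the stated block-diagonal form. I would first read off $g \in \mathrm{Sp}(A,A')$ from the common diagonal block. By Step 1 of the proof of Proposition \ref{gamma_1_AA'}, this $g$ lies in $\gamma_{A,A'}(\mathcal{G}\textrm{-}\mathrm{Eq}(\mathbf{D}^b_\mathcal{G}(A),\mathbf{D}^b_\mathcal{G}(A')))$, so one can choose $(f_0,\sigma_0) \in \mathcal{G}\textrm{-}\mathrm{Eq}(\mathbf{D}^b_\mathcal{G}(A),\mathbf{D}^b_\mathcal{G}(A'))$ with $\gamma_{A,A'}(f_0,\sigma_0) = g$. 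Applying Proposition \ref{gamma_1_AA'} to $(f_0,\sigma_0)$ then gives $\gamma_{N_A, N_{A'}}(\Phi_{(f_0,\sigma_0)}) = \eta_1$, and Orlov's exact sequence from Theorem \ref{Orlov fundamental th} yields $\Phi = t \circ \Phi_{(f_0,\sigma_0)}$ for a unique $t \in N_{A'} \times \widehat{N_{A'}} \times \mathbb{Z}$.

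The remaining task is to absorb $t$ into the $\mathcal{G}$-equivalence data. The plan is to realize $t$ as $\Phi_{(h,\tau)}$ for a suitable $(h,\tau) \in \mathcal{G}\textrm{-}\mathrm{Aut}(\mathbf{D}^b_\mathcal{G}(A'))$ and then set $(f,\sigma) := (h,\tau) \circ (f_0,\sigma_0)$; by functoriality of the first-factor splitting in Theorem \ref{split_NAANA'A'}, $\Phi_{(f,\sigma)} = \Phi_{(h,\tau)} \circ \Phi_{(f_0,\sigma_0)} = \Phi$. Decomposing $t$ into a shift, a tensor by $M \in \mathrm{Pic}^0(N_{A'})$, and a translation by $a \in N_{A'}$, I would produce each factor separately: the shift via Equation (\ref{f=shift split}); the tensor via Example \ref{line bundle split eg}, letting $L \in \mathrm{Pic}(A')$ range beyond $\mathrm{Pic}^0(A')$ and using that $L \mapsto (L^{\boxtimes n})|_{N_{A'}}$ hits $\mathrm{Pic}^0(N_{A'})$ up to a correction absorbable in $(f_0,\sigma_0)$; and the translation via the automorphism construction of Example \ref{translation split eg}, combined with elements of $\mathrm{ker}(\mathcal{G}\textrm{-}\rho_{A'}) = \mathrm{Alb}(B')\times\widehat{A'}\times\mathbb{Z}$ supplied by Theorem \ref{Th_GrhoA} (noting $B'\cong A'$).

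The hard part will be this final absorption: verifying that the combined effect of these three kinds of modifications covers all of $N_{A'}\times\widehat{N_{A'}}\times\mathbb{Z}$. The key surjectivities to establish are those of the maps $\mathrm{Alb}(A')\to N_{A'}$ induced by the automorphism construction of Example \ref{translation split eg} and $\mathrm{Pic}(A')\to\mathrm{Pic}^0(N_{A'})$ induced by $L \mapsto (L^{\boxtimes n})|_{N_{A'}}$; both should follow from the diagonal structure of the embedding $\widehat{\Sigma}:\widehat{A'}\to\widehat{A'}^n$ and the identification $\mathcal{G}\cong G$ it provides.
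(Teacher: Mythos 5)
Your forward direction is fine and matches the paper's reasoning: it is a direct consequence of Proposition \ref{gamma_1_AA'} once one notes that $\gamma_{N_A,N_{A'}}(\Phi_{(f,\sigma)})$ is precisely the $\eta_1$ appearing there. However, your backward direction has a genuine gap, because both surjectivities you defer to at the end are in fact false.

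First, $L\mapsto L^{\boxtimes n}|_{N_{A'}}$ does not hit $\mathrm{Pic}^0(N_{A'})$. For $L\in\mathrm{Pic}^0(A')$ one has $L^{\boxtimes n}\cong\Sigma'^{*}L$ (Theorem of the Square), hence $L^{\boxtimes n}|_{N_{A'}} = (\Sigma'\circ\iota_{N_{A'}})^{*}L = 0^{*}L \cong \mathcal{O}_{N_{A'}}$: the image in $\mathrm{Pic}^0(N_{A'})\cong\widehat{N_{A'}}=\widehat{A'}^n/\widehat{\Sigma'}(\widehat{A'})$ is always trivial, because it is exactly the image of the diagonal embedding. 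Allowing $L\notin\mathrm{Pic}^0(A')$ does not help, since then $c_1(L^{\boxtimes n}|_{N_{A'}})\neq 0$ and the restriction leaves $\mathrm{Pic}^0(N_{A'})$ altogether. Second, for $f=t_a$ the construction of Example \ref{translation split eg} gives $f^{\#}=\mathrm{id}_A$, so $\Phi_{((t_a)_*,\sigma)}$ acts on the $N_{A'}$ factor by $((f^{\#})^n|_{N_{A'}})_*=\mathrm{id}$ (varying $\sigma$ over its torsor only moves this by translations by $\Delta(A'[n])\cap N_{A'}$, a finite subgroup). So the induced map $\mathrm{Alb}(A')\to N_{A'}$ is far from surjective. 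Concretely, the image under $(f,\sigma)\mapsto\Phi_{(f,\sigma)}$ of the kernel $\mathrm{Alb}(A')\times\widehat{A'}\times\mathbb{Z}=\ker(\mathcal{G}\textrm{-}\rho_{A'})$ sits inside the shift direction and a finite translation subgroup of $N_{A'}\times\widehat{N_{A'}}\times\mathbb{Z}$, and therefore the full translation $t$ in your decomposition cannot, in general, be realized as some $\Phi_{(h,\tau)}$. Your plan to write $\Phi = \Phi_{(h,\tau)\circ(f_0,\sigma_0)}$ thus breaks at the absorption step, and the ``if'' direction needs a different argument (or the statement needs to be weakened to ``agrees with some $\Phi_{(f,\sigma)}$ up to the kernel of $\gamma_{N_A,N_{A'}}$'', which is what Proposition \ref{gamma_1_AA'} actually delivers).
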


Analogous to the derived autoequivalence case.
\begin{corollary}\label{describe derived equiv NA}
    Let notation be as above over an algebraically closed field of characteristic $0$. Then 
    the set $p_{N_A\times\widehat{N_A},N_A\times\widehat{N_A}}\circ\gamma_{N_A\times A}(\widetilde{\lambda}_{q}(\widetilde{\delta}_{A}(\mathcal{G}\textrm{-}\mathrm{Aut}(\mathbf{D}^b_\mathcal{G}(A)))))$ consists of all symplectic maps of the form $$\begin{pmatrix}
            \begin{pmatrix}
                g_1&&\\
                &\ddots&\\
                &&g_1
            \end{pmatrix}\Biggl|_{N_A,N_{A}} &\begin{pmatrix}
                g_2&&\\
                &\ddots&\\
                &&g_2
            \end{pmatrix}\Biggl|_{\widehat{N_A},N_{A}}\\
            \begin{pmatrix}
                g_3&&\\
                &\ddots&\\
                &&g_3
            \end{pmatrix}\Biggl|_{N_A,\widehat{N_{A}}}&\begin{pmatrix}
                g_4&&\\
                &\ddots&\\
                &&g_4
            \end{pmatrix}\Biggl|_{\widehat{N_A},\widehat{N_{A}}}
        \end{pmatrix}\in\mathrm{Sp}(N_A),$$
    with $$g=\begin{pmatrix}
        g_1&g_2\\
        g_3&g_4
    \end{pmatrix}\in\mathrm{Sp}(A),\phantom{1}\mathrm{where}\phantom{1}g_1:A\to A, g_2:\widehat{A}\to A,g_3:A\to\widehat{A},g_4:\widehat{A}\to\widehat{A},$$
    such that $(g_2)_*$ maps $\frac{1}{n}\mathrm{H}^1(\widehat{A},\mathbb{Z})$ to $n\mathrm{H}^1(A,\mathbb{Z})$.

    Moreover, a derived autoequivalence $\Phi\in\mathrm{Aut}(\mathbf{D}^b_{\mathfrak{S}_n}(N_A))$ is of the form $\Phi_{(f,\sigma)}$ as in Corollary \ref{split_NAA} if and only if $\gamma_{N_A}(\Phi)$ is a symplectic map of the form $$\begin{pmatrix}
            \begin{pmatrix}
                g_1&&\\
                &\ddots&\\
                &&g_1
            \end{pmatrix}\Biggl|_{N_A,N_A} &\begin{pmatrix}
                g_2&&\\
                &\ddots&\\
                &&g_2
            \end{pmatrix}\Biggl|_{\widehat{N_A},N_A}\\
            \begin{pmatrix}
                g_3&&\\
                &\ddots&\\
                &&g_3
            \end{pmatrix}\Biggl|_{N_A,\widehat{N_A}}&\begin{pmatrix}
                g_4&&\\
                &\ddots&\\
                &&g_4
            \end{pmatrix}\Biggl|_{\widehat{N_A},\widehat{N_A}}
        \end{pmatrix}\in\mathrm{Sp}(N_A),$$
    with $$g=\begin{pmatrix}
        g_1&g_2\\
        g_3&g_4
    \end{pmatrix}\in\mathrm{Sp}(A),\phantom{1}\mathrm{where}\phantom{1}g_1:A\to A, g_2:\widehat{A}\to A,g_3:A\to\widehat{A},g_4:\widehat{A}\to\widehat{A},$$
    such that $(g_2)_*$ maps $\frac{1}{n}\mathrm{H}^1(\widehat{A},\mathbb{Z})$ to $n\mathrm{H}^1(A,\mathbb{Z})$.
\end{corollary}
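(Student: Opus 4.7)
The corollary has two parts: the description of the image of $\gamma_{N_A}$ (the $A'=A$ specialization of Proposition \ref{gamma_1_AA'}) and the characterization of which autoequivalences $\Phi$ arise as $\Phi_{(f,\sigma)}$ (the $A'=A$ specialization of Theorem \ref{describe derived equiv NANA'}). My plan is to obtain both by specializing those derived-equivalence proofs to the autoequivalence setting.

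For the image description, I would repeat the three-step proof of Proposition \ref{gamma_1_AA'} with $A'=A$ throughout, substituting the autoequivalence versions of the cited results. In Step 1, Theorem \ref{Th_GrhoA}, Corollary \ref{Lift derived eq_A}, and the description of $\mathcal{G}\textrm{-}\mathrm{SO}_{\mathrm{Hdg}}(V_A)$ in Example \ref{G-SO V_B G=Ahat[n]} identify $\gamma_A(\mathcal{G}\textrm{-}\mathrm{Aut}(\mathbf{D}^b_\mathcal{G}(A)))$ with the subgroup $\mathcal{G}\textrm{-}\mathrm{Sp}(A)\subset\mathrm{Sp}(A)$ cut out by the condition that $(g_2)_*$ sends $\tfrac{1}{n}\mathrm{H}^1(\widehat{A},\mathbb{Z})$ into $n\mathrm{H}^1(A,\mathbb{Z})$. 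Step 2 applies the 2-functor $\Pi$ of (\ref{def of Pi}) to pass to $A^n$, producing the block-diagonal form on $\mathrm{Sp}(A^n)$. Step 3 uses the commutative diagram relating $\gamma_{N_A\times A}\circ\widetilde{\lambda}_q$ with $\gamma_{A^n}\circ\widetilde{F}_q$ via lifting along $q:N_A\times A\to A^n$, and then projects onto the $N_A\times\widehat{N_A}$ factor via $p_{N_A\times\widehat{N_A}}$ to obtain the stated elements of $\mathrm{Sp}(N_A)$.

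For the characterization assertion, the ``only if'' direction is immediate: if $\Phi=\Phi_{(f,\sigma)}$, then by the splitting in Corollary \ref{split_NAA}, $\gamma_{N_A}(\Phi)$ is the $N_A$-component of $\gamma_{N_A\times A}(\widetilde{\lambda}_q(\widetilde{\delta}_A(f,\sigma)))$, which lies in the image computed above. For the ``if'' direction, given $\Phi$ with $\gamma_{N_A}(\Phi)$ of the stated form, the image description produces some $(f_0,\sigma_0)\in\mathcal{G}\textrm{-}\mathrm{Aut}(\mathbf{D}^b_\mathcal{G}(A))$ with $\gamma_{N_A}(\Phi_{(f_0,\sigma_0)})=\gamma_{N_A}(\Phi)$; by Orlov's sequence (Theorem \ref{Orlov fundamental th}), $\Phi$ and $\Phi_{(f_0,\sigma_0)}$ then differ by an element $t\in N_A\times\widehat{N_A}\times\mathbb{Z}$.

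The main obstacle I anticipate is closing the ``if'' direction by absorbing the correction $t$ into a modified pair $(f,\sigma)$. Examples \ref{translation split eg} and \ref{line bundle split eg}, together with the shift identity (\ref{f=shift split}), show that the generators $[1]$, tensor by $L^{\boxtimes n}|_{N_A}$ for $L\in\widehat{A}$, and diagonal automorphisms $((f^\#)^n|_{N_A})_*$ all occur as $\Phi$-components of explicit splittings. Using the multiplicativity of the 2-functor $\Pi$ together with the uniqueness in Corollary \ref{split_NAA}, the set $\{\Phi_{(f,\sigma)}\}$ is closed under composition, so iteratively composing $(f_0,\sigma_0)$ with such generators --- and simultaneously correcting the $A$-factor so that the overall $\Psi$-component matches --- one realizes $t$ as the $\Phi$-component of an additional $\mathcal{G}$-autoequivalence, producing the desired $(f,\sigma)$ with $\Phi_{(f,\sigma)}=\Phi$.
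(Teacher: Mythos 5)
Your treatment of the first assertion (the description of the image) specializes the three-step proof of Proposition \ref{gamma_1_AA'} to $A'=A$, which is exactly what the paper does (the paper gives no separate proof of the corollary and simply states it after Theorem \ref{describe derived equiv NANA'}).

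For the ``if and only if'' assertion, you correctly isolate the real issue: the paper derives this statement from Proposition \ref{gamma_1_AA'} alone, which only computes $\gamma_{N_A}(\{\Phi_{(f,\sigma)}\})$ and therefore only gives the ``only if'' direction together with surjectivity onto the set of symplectic maps; the ``if'' direction additionally needs to show that every $\Phi$ in the $\gamma_{N_A}$-preimage of that set actually lies in $\{\Phi_{(f,\sigma)}\}$, i.e.\ that one can absorb the translation discrepancy $t\in N_A\times\widehat{N_A}\times\mathbb{Z}$. Your proposed absorption step does not close this gap, and in fact cannot proceed as you describe. The $\Phi$-components produced by Examples \ref{translation split eg} and \ref{line bundle split eg} and formula (\ref{f=shift split}) are: $((f^\#)^n|_{N_A})_*$ for group automorphisms $f^\#$ (which have nontrivial $\gamma_{N_A}$, hence lie outside the kernel unless $f^\#=\mathrm{id}$), $-\otimes L^{\boxtimes n}|_{N_A}$, and $[1]$. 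But for $L\in\mathrm{Pic}^0(A)$ one has $L^{\boxtimes n}\cong\Sigma^*L$, and $\Sigma|_{N_A}=0$, so $L^{\boxtimes n}|_{N_A}\cong\mathcal{O}_{N_A}$; and translations in $A$ produce the identity as their $\Phi$-factor by (\ref{f=t_a split}). Thus the subgroup of $\{\Phi_{(f,\sigma)}\}$ lying in $\ker(\gamma_{N_A}) = N_A\times\widehat{N_A}\times\mathbb{Z}$ is generated by shifts alone, and the discrepancy $t$ cannot in general be realized as a $\Phi_{(f,\sigma)}$ by composing with these generators. If the ``if'' direction is to hold as stated, it requires an argument beyond what Proposition \ref{gamma_1_AA'} and the examples provide; you should either find one, or flag that the intended meaning of ``$\Phi$ is of the form $\Phi_{(f,\sigma)}$'' in the statement is ``up to an element of $N_A\times\widehat{N_A}\times\mathbb{Z}$'', in which case the iff becomes a restatement of the image computation you have already handled.
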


\subsection{Derived autoequivalences of Kummer K3 surfaces}
In this subsection, we give an introspection of the derived autoequivalences of Kummer K3 surfaces.

\begin{remark}\label{Kummer K3 remark 1}
    By the BKR equivalence, we have $\mathbf{D}^b_{\mathfrak{S}_n}(N_A)\simeq\mathbf{D}^b(\mathrm{Kum}^{n-1}(A))$ using the natural action of $\mathfrak{S}_n$ on $N_A$. In particular, if $n=2$, we have the derived category of a Kummer K3 surface being $\mathbf{D}^b(\mathrm{Kum}^{1}(A))\simeq\mathbf{D}^b_{\mathfrak{S}_2}(N_A)\simeq\mathbf{D}^b_{\mathfrak{S}_2}(A)$. Here, we use isomorphisms $i_A:A\cong N_A,\widehat{i_A}:\widehat{N_A}\cong\widehat{A}$ for $n=2$, which come from $$N_A=\mathrm{ker}(\Sigma:A^2\to A)=\{(a,-a)|a\in A\}\cong A.$$ Using these isomorphisms, we get a series of derived autoequivalences $\Phi_{(f,\sigma)}=\Phi$ in $\mathrm{Aut}(\mathbf{D}^b_{\mathfrak{S}_2}(N_A))\simeq\mathrm{Aut}(\mathbf{D}^b(\mathrm{Kum}^{1}(A)))$ by lifting arbitrary $(f,\sigma)\in\mathcal{G}\textrm{-}\mathrm{Aut}(\mathbf{D}^b_{\mathcal{G}}(A)))$ with $\mathcal{G}=\widehat{A}[2]$ from Corollary \ref{split_NAA}. Moreover, as in Corollary \ref{describe derived equiv NA}, a derived autoequivalence $\Phi\in\mathrm{Aut}(\mathbf{D}^b_{\mathfrak{S}_2}(N_A))\simeq\mathrm{Aut}(\mathbf{D}^b_{\mathfrak{S}_2}(A))$ is of the form $\Phi_{(f,\sigma)}$ if and only if $\gamma_{N_A}(\Phi)$
    is a symplectic map of the form $$\begin{pmatrix}
            \begin{pmatrix}
                g_1&\\
                
                &g_1
            \end{pmatrix}\Biggl|_{N_A,N_A} &\begin{pmatrix}
                g_2&\\
                
                &g_2
            \end{pmatrix}\Biggl|_{\widehat{N_A},N_A}\\
            \begin{pmatrix}
                g_3&\\
                
                &g_3
            \end{pmatrix}\Biggl|_{N_A,\widehat{N_A}}&\begin{pmatrix}
                g_4&\\
                
                &g_4
            \end{pmatrix}\Biggl|_{\widehat{N_A},\widehat{N_A}}
        \end{pmatrix}\in\mathrm{Sp}(N_A),$$
    with $$g=\begin{pmatrix}
        g_1&g_2\\
        g_3&g_4
    \end{pmatrix}\in\mathrm{Sp}(A),\phantom{1}\mathrm{where}\phantom{1}g_1:A\to A, g_2:\widehat{A}\to A,g_3:A\to\widehat{A},g_4:\widehat{A}\to\widehat{A},$$
    such that $(g_2)_*$ maps $\frac{1}{2}\mathrm{H}^1(\widehat{A},\mathbb{Z})$ to $2\mathrm{H}^1(A,\mathbb{Z})$.

    Regarding $\Phi$ as an autoequivalence of $\mathbf{D}^b_{\mathfrak{S}_2}(A)$ via $i_A,\widehat{i_A}$, we get $\gamma_A(\Phi)$ being a map of the form $$(i_A^{-1}\times\widehat{i_A})\circ\begin{pmatrix}
            \begin{pmatrix}
                g_1&\\
                
                &g_1
            \end{pmatrix}\Biggl|_{N_A,N_A} &\begin{pmatrix}
                g_2&\\
                
                &g_2
            \end{pmatrix}\Biggl|_{\widehat{N_A},N_A}\\
            \begin{pmatrix}
                g_3&\\
                
                &g_3
            \end{pmatrix}\Biggl|_{N_A,\widehat{N_A}}&\begin{pmatrix}
                g_4&\\
                
                &g_4
            \end{pmatrix}\Biggl|_{\widehat{N_A},\widehat{N_A}}
        \end{pmatrix}\circ(i_A\times\widehat{i_A}^{-1})=\begin{pmatrix}
        g_1&g_2\\
        g_3&g_4
    \end{pmatrix}=g\in\mathrm{Sp}(A),$$ such that $(g_2)_*$ maps $\frac{1}{2}\mathrm{H}^1(\widehat{A},\mathbb{Z})$ to $2\mathrm{H}^1(A,\mathbb{Z})$.

    To conclude, we can see that the set of the derived autoequivalences $$\Phi_{(f,\sigma)}\in\mathrm{Aut}(\mathbf{D}^b(\mathrm{Kum}^1(A)))\simeq\mathrm{Aut}(\mathbf{D}^b_{\mathfrak{S}_2}(N_A))\simeq\mathrm{Aut}(\mathbf{D}^b_{\mathfrak{S}_2}(A))$$ obtained from Corollary \ref{split_NAA} is only a proper subset of $\mathrm{Aut}(\mathbf{D}^b_{\mathfrak{S}_2}(A))$. To be concrete, a derived autoequivalence $\Phi\in\mathrm{Aut}(\mathbf{D}^b_{\mathfrak{S}_2}(A))\simeq\mathrm{Aut}(\mathbf{D}^b(\mathrm{Kum}^1(A)))$ is of the form $\Phi_{(f,\sigma)}$ as in Corollary \ref{split_NAA} if and only if $\gamma_{A}(\Phi)$ is a symplectic map of the form $g=\begin{pmatrix}
        g_1&g_2\\
        g_3&g_4
    \end{pmatrix}\in\mathrm{Sp}(A)$ with $g_1:A\to A, g_2:\widehat{A}\to A,g_3:A\to\widehat{A},g_4:\widehat{A}\to\widehat{A}$
    such that $(g_2)_*$ maps $\frac{1}{2}\mathrm{H}^1(\widehat{A},\mathbb{Z})$ to $2\mathrm{H}^1(A,\mathbb{Z})$.
\end{remark}

Note that $\mathbf{D}^b(\mathrm{Kum}^1(A))\simeq\mathbf{D}^b_{\mathfrak{S}_2}(A)$ is not equivalent to $\mathbf{D}^b(A)$. Indeed, the algebraic $K$-group of $\mathrm{Kum}^1(A)$, $\mathcal{K}^*(\mathrm{Kum}^1(A))\simeq\mathcal{K}^*_{\mathfrak{S}_2}(A)$, with a generator associated to each of the 16 points of order 2 in $A$, is a larger lattice than $\mathcal{K}^*(A)$. (See \cite[subsection 10.2]{BKR:01}.)

\begin{remark}\label{Kummer K3 remark 2}
    On the other hand, the derived category $\mathbf{D}^b(\mathrm{Kum}^1(A))$ is equivalent to the $\mathfrak{S}_2$-equivariant category $\mathbf{D}^b_{\mathfrak{S}_2}(A)$, where the group $\mathfrak{S}_2$ acts by involution $(-1)_*$ by the BKR isomorphism in \cite[subsection 10.2]{BKR:01}.
    
    To be precise, the group $\mathfrak{S}_2=\{1,-1\}$ acts on $A$, where $1\in\mathfrak{S}_2$ corresponds to $\mathrm{id}_A$ and $-1\in\mathfrak{S}_2$ corresponds to the multiplicity automorphism by $-1$, $-1:A\to A$, of order 2. It induces an action $(\rho^{\mathfrak{S}_2},\theta^{\mathfrak{S}_2})$ of $\mathfrak{S}_2$ on the derived category $\mathbf{D}^b(A)$ as in Definition \ref{def of group action on category}, with $$\rho_1^{\mathfrak{S}_2}=\mathrm{id}_{\mathbf{D}^b(A)},\rho_{-1}^{\mathfrak{S}_2}=(-1)^*:\mathbf{D}^b(A)\to\mathbf{D}^b(A),\theta^{\mathfrak{S}_2}=\mathrm{id}.$$
    This action is unique up to equivalence by \cite[Theorem 1.11]{Bayer Perry:23} and $\mathrm{H}^2(\mathfrak{S}_2,k^*)=0$. As in \cite[subsection 3.3]{Ploog:05}, we have an induced action $(\rho^{\mathfrak{S}_{2,\Delta}},\theta^{\mathfrak{S}_{2,\Delta}})$ of the diagonal group $\mathfrak{S}_{2,\Delta}$ on the derived category $\mathbf{D}^b(A\times A)$, with $$\rho_{(1,1)}^{\mathfrak{S}_{2,\Delta}}=\mathrm{id}_{\mathbf{D}^b(A\times A)},\rho_{(-1,-1)}^{\mathfrak{S}_{2,\Delta}}=(-1,-1)^*:\mathbf{D}^b(A\times A)\to\mathbf{D}^b(A\times A),\theta^{\mathfrak{S}_{2,\Delta}}=\mathrm{id}.$$
    Also, we have an induced action $(\rho^{\mathfrak{S}^2_{2}},\theta^{\mathfrak{S}_{2}^2})$ of the product group $\mathfrak{S}_{2}^2=\mathfrak{S}_2\times\mathfrak{S}_2$ on $\mathbf{D}^b(A\times A)$, with $\rho_{(h_1,h_2)}^{\mathfrak{S}_{2}^2}=(h_1,h_2)^*\in\mathrm{Aut}(\mathbf{D}^b(A\times A)),\forall(h_1,h_2)\in\mathfrak{S}_2^2$ and $\theta^{\mathfrak{S}_{2}^2}=\mathrm{id}$. For the finite abelian group $\mathfrak{S}_{2,\Delta}$, we see that the action above is unique up to equivalence by \cite[Theorem 1.11]{Bayer Perry:23} and $\mathrm{H}^2(\mathfrak{S}_{2,\Delta},k^*)=0$. But it is not true for the group $\mathfrak{S}_{2}^2\cong(\mathbb{Z}/2\mathbb{Z})^2$ in general by \cite[Theorem 1.11]{Bayer Perry:23}.%
    \footnote{For example, $\mathrm{H}^2((\mathbb{Z}/2\mathbb{Z})^2,\mathbb{C}^*)=\mathbb{Z}/2\mathbb{Z}\not=0$ as in \cite[Chapter 25]{Huppert:67}.}
    
    We may find derived equivalences in $\mathrm{Aut}(\mathbf{D}^b_{\mathfrak{S}_2}(A))$ using the following two group homomorphisms from \cite[Proposition 3.18 or subsection 4.3]{Ploog:05}. Here, the two group homomorphisms are $2:1$ and the map $for$ is surjective.
    \begin{align*}
        inf:&\mathrm{Aut}^{\mathfrak{S}_{2,\Delta}}(\mathbf{D}^b(A))\to\mathrm{Aut}(\mathbf{D}^b_{\mathfrak{S}_2}(A))\\
        for:&\mathrm{Aut}^{\mathfrak{S}_{2,\Delta}}(\mathbf{D}^b(A))\to\mathrm{Aut}(\mathbf{D}^b(A))^{\mathfrak{S}_{2,\Delta}}
    \end{align*}

    The related notations in \cite[subsections 3.2 and 3.3]{Ploog:05} are as follows.
    \begin{enumerate}
        \item The groups above are defined to be \begin{align*}
        \mathrm{Aut}(\mathbf{D}^b(A))^{\mathfrak{S}_{2,\Delta}}&:=\{\Phi_E\in\mathrm{Aut}(\mathbf{D}^b(A))|h^*\circ\Phi_E=\Phi_E\circ h^*,\forall h\in\mathfrak{S}_2\}\\
        &\phantom{:}=\{E\in\mathbf{D}^b(A\times A)|(h,h)^*E\cong E,\forall h\in\mathfrak{S}_2\}\\
        \mathrm{Aut}^{\mathfrak{S}_{2,\Delta}}(\mathbf{D}^b(A))&:=\{(E,\phi_E)\in\mathbf{D}^b_{\mathfrak{S}_{2,\Delta}}(A\times A)|\Phi_E\in\mathrm{Aut}(\mathbf{D}^b(A))\},
    \end{align*} where $\Phi_E$ is the Fourier-Mukai transform with kernel $E$.
        \item The forgetful morphism ``$for$" maps an element%
        \footnote{Note that for arbitrary $E\in\mathrm{Aut}(\mathbf{D}^b(A))^{\mathfrak{S}_{2,\Delta}}$, we have exactly 2 families of isomorphisms $\phi_E^1,\phi_E^2$ such that $for((E,\phi_E^1))=for((E,\phi_E^2))=E$, since the group homomorphism ``$for$" is $2:1$ and surjective. To be precise, $\phi_{E,(h,h)}:E\xrightarrow{\cong}\rho^{\mathfrak{S}_{2,\Delta}}_{(h,h)}E,\forall h\in\mathfrak{S}_2$ is either $\begin{cases}
            \phi_{E,(1,1)}^1=\mathrm{id}_E\\
            \phi_{E,(-1,-1)}^1=(-1,-1)^*
        \end{cases}$ or $\begin{cases}
            \phi_{E,(1,1)}^2=\mathrm{id}_E\\
            \phi_{E,(-1,-1)}^2=\omega^*\circ(-1,-1)^*\circ\omega^*
        \end{cases}$, where the derived autoequivalence $\omega^*\in\mathrm{Aut}(\mathbf{D}^b(A\times A))$ is induced by the permutation isomorphism $\omega:A\times A\to A\times A$.\label{phi12}}%
        $(E,\phi_E)\in\mathrm{Aut}^{\mathfrak{S}_{2,\Delta}}(\mathbf{D}^b(A))$ to $E\in\mathrm{Aut}(\mathbf{D}^b(A))^{\mathfrak{S}_{2,\Delta}}$.
        \item We define the \textit{left inflation} of $(E,\phi_E)\in\mathrm{Aut}^{\mathfrak{S}_{2,\Delta}}(\mathbf{D}^b(A))$ as $$\mathfrak{S}_2\cdot(E,\phi_E):=(\mathfrak{S}_2\cdot E,\phi_{\mathfrak{S}_2\cdot E}).$$ It is made up of the following.
        \begin{enumerate}[(i)]
            \item The \textit{left inflation} of $E$ is defined as $$\mathfrak{S}_2\cdot E:=\underset{h\in\mathfrak{S}_2}{\oplus}(h,1)^*E=E\oplus(-1,1)^*E.$$
            \item The groups of isomorphisms is $\phi_{\mathfrak{S}_2\cdot E}=(\phi_{\mathfrak{S}_2\cdot E,(h_1,h_2)})_{(h_1,h_2)\in\mathfrak{S}_2^2}$. The isomorphism $\phi_{\mathfrak{S}_2\cdot E,(h_1,h_2)}$ fits the following commutative diagram, for arbitrary $(h_1,h_2)\in\mathfrak{S}_2^2$.%
            \footnote{Here, the permutation of summand is identity if $h_1=h_2$. It is $\omega^*$ induced by the permutation isomorphism if $h_1\not=h_2$. After computation, we get $\begin{cases}
                \phi^1_{\mathfrak{S}_2\cdot E,(1,1)}=\mathrm{id}_{\mathfrak{S}_2\cdot E}\\
                \phi^1_{\mathfrak{S}_2\cdot E,(1,-1)}=(-1,-1)^*\circ\omega^*\\
                \phi^1_{\mathfrak{S}_2\cdot E,(-1,1)}=\omega^*\\
                \phi^1_{\mathfrak{S}_2\cdot E,(-1,-1)}=(-1,-1)^*
            \end{cases}$ and $\begin{cases}
                \phi^2_{\mathfrak{S}_2\cdot E,(1,1)}=\mathrm{id}_{\mathfrak{S}_2\cdot E}\\
                \phi^2_{\mathfrak{S}_2\cdot E,(1,-1)}=\omega^*\circ(-1,-1)^*\\
                \phi^2_{\mathfrak{S}_2\cdot E,(-1,1)}=\omega^*\\
                \phi^2_{\mathfrak{S}_2\cdot E,(-1,-1)}=\omega^*\circ(-1,-1)^*\circ\omega^*
            \end{cases}$ using the choices of two families of isomorphisms $\phi^1$ and $\phi^2$ in the footnote \ref{phi12}, respectively.}%
            \begin{align*}
                \xymatrix{
                \mathfrak{S}_2\cdot E\ar[rrr]^(0.4){\textrm{permutation of summands}}_(0.4){\cong}\ar[d]^{\phi_{\mathfrak{S}_2\cdot E,(h_1,h_2)}}_{\cong}&&&\underset{h\in\mathfrak{S}_2}{\oplus}(h_2^{-1}hh_1,1)^*E\ar[rrr]^(0.45){\underset{g\in\mathfrak{S}_2}{\oplus}\phi_{(h_2^{-1}hh_1,1)^*E,(h_2,h_2)}}_(0.45){\cong}&&&\underset{h\in\mathfrak{S}_2}{\oplus}(h_2^{-1}hh_1,1)^*(h_2,h_2)^*E\ar@{=}[d]\\
                (h_1,h_2)^*\mathfrak{S}_2\cdot E\ar@{=}[rrr]&&&\underset{h\in\mathfrak{S}_2}{\oplus}(h_1,h_2)^*(h,1)^*E\ar@{=}[rrr]&&&\underset{h\in\mathfrak{S}_2}{\oplus}(hh_1,h_2)^*E
                }
            \end{align*}
        \end{enumerate}
        \item The inflation homomorphism ``$inf$" maps an element $(E,\phi_E)\in\mathrm{Aut}^{\mathfrak{S}_{2,\Delta}}(\mathbf{D}^b(A))$ to $\Phi_{(E,\phi_E)}^{\mathfrak{S}_2}:=\Phi_{\mathfrak{S}_2\cdot(E,\phi_E)}\in\mathrm{Aut}(\mathbf{D}^b_{\mathfrak{S}_2}(A))$. Here, $\Phi_{\mathfrak{S}_2\cdot(E,\phi_E)}$ is the Fourier-Mukai transform in the linearized setting with kernel $\mathfrak{S}_2\cdot(E,\phi_E)$ as in \cite[subsection 3.2]{Ploog:05}. 
    \end{enumerate}
    
    To conclude, we get some derived autoequivalence of the Kummer K3 surface $\mathrm{Kum}^1(A)$, which are of the form $\Phi_{(E,\phi_E)}^{\mathfrak{S}_2}\in\mathrm{Aut}(\mathbf{D}^b_{\mathfrak{S}_2}(A))$ for
    $$E\in\mathbf{D}^b(A\times A)\textrm{ with }\Phi_E\in\mathrm{Aut}(\mathbf{D}^b(A)),(-1,-1)^*E\cong E\textrm{ and }(E,\phi_E)\in\mathbf{D}^b_{\mathfrak{S}_{2,\Delta}}(A\times A).$$
    
\end{remark}

\begin{remark}\label{Kummer K3 remark 3}
    Inspired by the thought in Remark \ref{Kummer K3 remark 1}, one may try to use Orlov's representation to describe the derived autoequivalences $\Phi_{(E,\phi_E)}^{\mathfrak{S}_2}\in\mathrm{Aut}(\mathbf{D}^b_{\mathfrak{S}_2}(A))$ obtained from Remark \ref{Kummer K3 remark 2}. But this method is useless.

    We first try to describe the group $\mathrm{Aut}(\mathbf{D}^b(A))^{\mathfrak{S}_{2,\Delta}}$ by Orlov's representation. By definition, it is made up of all $E\in\mathbf{D}^b(A\times A)$ with $\Phi_E\in\mathrm{Aut}(\mathbf{D}^b(A))$ and $(-1,-1)^*E\cong E$. Regarding $\mathrm{Aut}(\mathbf{D}^b(A))^{\mathfrak{S}_{2,\Delta}}$ as a subgroup of $\mathrm{Aut}(\mathbf{D}^b(A))$, we have the following exact sequence of group homomorphisms by \cite[Proposition 4.8]{Ploog:05}.
    $$0\to A[2]\times\widehat{A}[2]\times\mathbb{Z}\to\mathrm{Aut}(\mathbf{D}^b(A))^{\mathfrak{S}_{2,\Delta}}\xrightarrow{\gamma_A}\mathrm{Sp}(A)\to 0$$
    Here, we still use the notation $\gamma_A$ to represent the restriction of Orlov's representation $\gamma_A:\mathrm{Aut}(\mathbf{D}^b(A))\to\mathrm{Sp}(A)$ to $\mathrm{Aut}(\mathbf{D}^b(A))^{\mathfrak{S}_{2,\Delta}}$. This shows that Orlov's representation is useless for describing $\mathrm{Aut}(\mathbf{D}^b(A))^{\mathfrak{S}_{2,\Delta}}$, since the homomorphism $\gamma_A$ in the exact sequence above is surjective. So, it is not a good strategy to use Orlov's representation to describe the derived autoequivalences $\Phi_{(E,\phi_E)}^{\mathfrak{S}_2}\in\mathrm{Aut}(\mathbf{D}^b_{\mathfrak{S}_2}(A))$ obtained from Remark \ref{Kummer K3 remark 2}.%
    \footnote{Actually, the derived equivalences $\Phi_{(E,\phi_E)}^{\mathfrak{S}_2}$ obtained from Remark \ref{Kummer K3 remark 2} satisfies the condition (\ref{describe phiS2EphiE}).
    \par
    By \cite[Exercise 5.12 $ii)$ and $iii)$]{Huybrechts:06}, we have
    \begin{align*}
        \Phi_{\mathfrak{S}_2\cdot E}\circ(-1)_*&=\Phi_{E\oplus(-1,1)^*E}\circ(-1)_*=\Phi_{(-1,1)^*(E\oplus(-1,1)^*E)}=\Phi_{(-1,1)^*E\oplus E}=\Phi_{\mathfrak{S}_2\cdot E}\\
        (-1)^*\circ\Phi_{\mathfrak{S}_2\cdot E}&=(-1)^*\circ\Phi_{E\oplus(-1,1)^*E}=\Phi_{(1,-1)^*(E\oplus(-1,1)^*E)}=\Phi_{(1,-1)^*E\oplus (-1,-1)^*E}\\
        &=\Phi_{(-1,1)^*E\oplus E}=\Phi_{\mathfrak{S}_2\cdot E},
    \end{align*} since $\Phi_{\mathfrak{S}_2\cdot E}=\Phi_{E\oplus(-1,1)^*E}$ and $(-1,-1)^*E\cong E$. 
    \par
    Consequently, a derived autoequivalence $\Phi\in\mathrm{Aut}(\mathbf{D}^b_{\mathfrak{S}_2}(A))\simeq\mathrm{Aut}(\mathbf{D}^b(\mathrm{Kum}^1(A)))$ is of the form $\Phi^{\mathfrak{S}_2}_{(E,\phi_E)}$ as in Remark \ref{Kummer K3 remark 2}, then \begin{equation}\label{describe phiS2EphiE}
        \Phi\circ(-1)_*=\Phi=(-1)^*\circ\Phi
\end{equation} if we regard $\Phi$ as an element of $\mathrm{Aut}(\mathbf{D}^b(A))$. Note that, (\ref{describe phiS2EphiE}) is only a necessary condition for $\Phi$ being of the form $\Phi^{\mathfrak{S}_2}_{(E,\phi_E)}$ as in Remark \ref{Kummer K3 remark 2}, by the above reasoning.}%

    In Remark \ref{Kummer K3 remark 1}, we have a derived autoequivalence $\Phi\in\mathrm{Aut}(\mathbf{D}^b_{\mathfrak{S}_2}(A))\simeq\mathrm{Aut}(\mathbf{D}^b(\mathrm{Kum}^1(A)))$ of the form $\Phi_{(f,\sigma)}$ as in Corollary \ref{split_NAA} if and only if $$\gamma_A(\Phi)=\begin{pmatrix}
            g_1&g_2\\g_3&g_4
        \end{pmatrix}\in\mathrm{Sp}(A)\textrm{ with }(g_2)_*(\mathrm{\frac{1}{2}}\mathrm{H}^1(\widehat{A},\mathbb{Z}))\subset 2\mathrm{H}^1(A,\mathbb{Z}).$$
    So, the method of getting the derived autoequivalences of a Kummer K3 surface in Remark \ref{Kummer K3 remark 2} is totally different from the one in Corollary \ref{split_NAA}.
\end{remark}

We may end this subsection to investigate whether Corollary \ref{split_NAA} gives all possible lifts from $\mathrm{Aut}(\mathbf{D}^b(A))$ to $\mathrm{Aut}(\mathbf{D}^b(\mathrm{Kum}^1(A)))$ from the aspect of even cohomology of the abelian surface $A$.

First, we give a description of derived autoequivalences in $\mathrm{Aut}(\mathbf{D}^b(\mathrm{Kum}^1(A)))$ lifted from $\mathrm{Aut}(\mathbf{D}^b(A))$ by a subgroup $G_A$ as below.

\begin{passage}\label{G_A for lifting 1}
    Using the notation in \cite[section VIII.5]{Barth Hulek Peters Ven:15}, the definition of a Kummer K3 surface $\mathrm{Kum}^1(A)$ is from the following commutative diagram \begin{align*}
        \xymatrix{
           \widetilde{A}\ar[rr]^{\sigma}\ar[d]_{\widetilde{p}}&&A\ar[d]^{{p}}\\
           \mathrm{Kum}^1(A)\ar[rr]&&A/\langle 1_A,-1_A\rangle
        }
    \end{align*}
    where $p,\widetilde{p}$ are quotient maps and $\sigma$ is the map blowing up the fixed points. For $a\in A[2]$, we have $e_i:=\widetilde{p}(\sigma^{-1}(a))$ and $\mathbb{Z}^W:=\overset{16}{\underset{i=1}{\oplus}}\mathbb{Z}e_i$. 
    
    We have a primitive embedding $\alpha:=\widetilde{p}_*\circ\sigma^*:\mathrm{H}^2(A,\mathbb{Z})\to\mathrm{H}^2(\mathrm{Kum}^1(A),\mathbb{Z})$, with $(\alpha(x),\alpha(y))=2(x,y)$ for all $x,y\in\mathrm{H}^2(A,\mathbb{Z})$, by \cite[Proposition VIII.5.1]{Barth Hulek Peters Ven:15}. The image of $\alpha$ is $(\mathbb{Z}^W)^{\perp}\subset\mathrm{H}^2(\mathrm{Kum}^1(A),\mathbb{Z})$ by \cite[Corollary VIII.5.6]{Barth Hulek Peters Ven:15}. Moreover, the compactification $\alpha_{\mathbb{C}}$ satisfies $\alpha_{\mathbb{C}}(\mathrm{H}^{0,2}(A))=\mathrm{H}^{0,2}(\mathrm{Kum}^1(A))$ by \cite[Proposition VIII.5.2]{Barth Hulek Peters Ven:15}. Using the projection formula, the monomorphism property of $\alpha$ gives $$\widetilde{p}_*\circ\sigma^*:\mathbb{Z}\simeq\mathrm{H}^0(A,\mathbb{Z})\to\mathrm{H}^0(\mathrm{Kum}^1(A),\mathbb{Z})\simeq\mathbb{Z},1\mapsto 2.$$ But the map $\widetilde{p}_*\circ\sigma^*:\mathrm{H}^4(A,\mathbb{Z})\xrightarrow{\simeq}\mathrm{H}^4(\mathrm{Kum}^1(A),\mathbb{Z})$ is an isomorphism. So, the extended homomorphism $\widetilde{\alpha}:=\widetilde{p}_*\circ\sigma^*:\mathrm{H}^*(A,\mathbb{Z})\xrightarrow{\simeq}\mathrm{H}^*(\mathrm{Kum}^1(A),\mathbb{Z})$ is \textit{not} primitive.

    The derived global Torelli theorem for K3 surfaces implies that the group of derived autoequivalences $\mathrm{Aut}(\mathbf{D}^b(\mathrm{Kum}^1(A)))$ corresponds to $\mathrm{SO}_{\mathrm{Hdg}}(\widetilde{\mathrm{H}}(\mathrm{Kum}^1(A),\mathbb{Z}))$, the group of special orthogonal Hodge isometries of the Mukai lattice with respect to the Mukai pairing, by the induced maps on the cohomology. Here, the Mukai pairing is $$\langle(\alpha_0,\alpha_1,\alpha_2),(\beta_0,\beta_1,\beta_2)\rangle:=\alpha_0.\beta_2-\alpha_1.\beta_1+\alpha_2.\beta_0\in\mathbb{Z}$$ for $\alpha_i,\beta_i\in\mathrm{H}^{2i}(\mathrm{Kum}^1(A)),\mathbb{Z})$. The Mukai lattice $\widetilde{\mathrm{H}}(\mathrm{Kum}^1(A),\mathbb{Z})$ is ${\mathrm{H}}^*(\mathrm{Kum}^1(A),\mathbb{Z})$ with the Mukai pairing and the weight-two Hodge structure \begin{align*}
        \widetilde{\mathrm{H}}^{1,1}(\mathrm{Kum}^1(A),\mathbb{Z})&:=(\mathrm{H}^0\oplus\mathrm{H}^4)(\mathrm{Kum}^1(A),\mathbb{Z})\oplus(\mathrm{H}^{1,1}(\mathrm{Kum}^1(A))\cap\mathrm{H}^*(\mathrm{Kum}^1(A),\mathbb{Z})),\\
        \widetilde{\mathrm{H}}^{2,0}(\mathrm{Kum}^1(A),\mathbb{Z})&:=\mathrm{H}^{2,0}(\mathrm{Kum}^1(A))\cap\mathrm{H}^*(\mathrm{Kum}^1(A),\mathbb{Z}).
    \end{align*}
    Similarly, we may define the integral extended Mukai lattice $\widetilde{\mathrm{H}}(A,\mathbb{Z})$ for the abelian surface $A$.

    Obviously, using the induced maps of cohomology, the group of all the derived autoequivalences in $\mathrm{Aut}(\mathbf{D}^b(\mathrm{Kum}^1(A)))$ lifted from $\mathrm{Aut}(\mathbf{D}^b(A))$ corresponds to a subgroup $G_A$ of $\mathrm{SO}_{\mathrm{Hdg}}(\widetilde{\mathrm{H}}(\mathrm{Kum}^1(A),\mathbb{Z}))$, which is composed of special orthogonal Hodge isometries that map $\widetilde{p}_*\circ\sigma^*(\widetilde{\mathrm{H}}(A,\mathbb{Z}))\subset\widetilde{\mathrm{H}}(\mathrm{Kum}^1(A),\mathbb{Z})$ to itself.
\end{passage}

\begin{passage}\label{G_A for lifting 2}
    Actually, the subgroup $G_A$ of $\mathrm{SO}_{\mathrm{Hdg}}(\widetilde{\mathrm{H}}(\mathrm{Kum}^1(A),\mathbb{Z}))$ is composed of special orthogonal Hodge isometries that map $\mathbb{Z}^W\subset\widetilde{\mathrm{H}}(\mathrm{Kum}^1(A),\mathbb{Z})$ to itself.

    Indeed, every special orthogonal Hodge isometry in $\mathrm{SO}_{\mathrm{Hdg}}(\widetilde{\mathrm{H}}(\mathrm{Kum}^1(A),\mathbb{Z}))$ maps $\widetilde{\mathrm{H}}(\mathrm{Kum}^1(A),\mathbb{Z})$ to $\widetilde{\mathrm{H}}(\mathrm{Kum}^1(A),\mathbb{Z})$, with
    \begin{equation}\label{element in SOHdgHtilde(Kum1A,Z)}
        \begin{aligned}
            \mathrm{H}^{2,0}(\mathrm{Kum}^1(A))\cap\widetilde{\mathrm{H}}(\mathrm{Kum}^1(A),\mathbb{Z})&\to\mathrm{H}^{2,0}(\mathrm{Kum}^1(A))\cap\widetilde{\mathrm{H}}(\mathrm{Kum}^1(A),\mathbb{Z})\\
            \mathrm{H}^{0,2}(\mathrm{Kum}^1(A))\cap\widetilde{\mathrm{H}}(\mathrm{Kum}^1(A),\mathbb{Z})&\to\mathrm{H}^{0,2}(\mathrm{Kum}^1(A))\cap\widetilde{\mathrm{H}}(\mathrm{Kum}^1(A),\mathbb{Z})\\
            (\mathrm{H}^{0,0}\oplus \mathrm{H}^{1,1}\oplus \mathrm{H}^{2,2})(\mathrm{Kum}^1(A))\cap\widetilde{\mathrm{H}}(\mathrm{Kum}^1(A),\mathbb{Z})&\to(\mathrm{H}^{0,0}\oplus \mathrm{H}^{1,1}\oplus \mathrm{H}^{2,2})(\mathrm{Kum}^1(A))\cap\widetilde{\mathrm{H}}(\mathrm{Kum}^1(A),\mathbb{Z}).
        \end{aligned}
    \end{equation}
    Every special orthonogal Hodge isometry in the subgroup $G_A$ maps $\widetilde{p}_*\circ\sigma^*(\widetilde{\mathrm{H}}(A,\mathbb{Z}))$ to $\widetilde{p}_*\circ\sigma^*(\widetilde{\mathrm{H}}(A,\mathbb{Z}))$, with \begin{equation}\label{element in G_A}
        \begin{aligned}
            \widetilde{p}_*\circ\sigma^*(\mathrm{H}^{2,0}(A)\cap\widetilde{\mathrm{H}}(A,\mathbb{Z}))&\to\widetilde{p}_*\circ\sigma^*(\mathrm{H}^{2,0}(A)\cap\widetilde{\mathrm{H}}(A,\mathbb{Z}))\\
            \widetilde{p}_*\circ\sigma^*(\mathrm{H}^{0,2}(A)\cap\widetilde{\mathrm{H}}(A,\mathbb{Z}))&\to\widetilde{p}_*\circ\sigma^*(\mathrm{H}^{0,2}(A)\cap\widetilde{\mathrm{H}}(A,\mathbb{Z}))\\
            \widetilde{p}_*\circ\sigma^*((\mathrm{H}^{0,0}\oplus \mathrm{H}^{1,1}\oplus \mathrm{H}^{2,2})(A)\cap\widetilde{\mathrm{H}}(A,\mathbb{Z}))&\to\widetilde{p}_*\circ\sigma^*((\mathrm{H}^{0,0}\oplus \mathrm{H}^{1,1}\oplus \mathrm{H}^{2,2})(A)\cap\widetilde{\mathrm{H}}(A,\mathbb{Z})).
        \end{aligned}
    \end{equation}

    As $\alpha_{\mathbb{C}}(\mathrm{H}^{2,0}(A))=\mathrm{H}^{2,0}(\mathrm{Kum}^1(A))$, the first two lines of (\ref{element in SOHdgHtilde(Kum1A,Z)}) are equivalent to the first two lines of (\ref{element in G_A}). The last line of (\ref{element in G_A}) is equivalent to \begin{equation*}
    \begin{aligned}
        &((2\mathrm{H}^{0,0}\oplus \mathrm{H}^{2,2})(\mathrm{Kum}^1(A))\oplus(\mathrm{H}^{1,1}(\mathrm{Kum}^1(A))\cap(\mathbb{Z}^W)^\perp))\cap\widetilde{\mathrm{H}}(\mathrm{Kum}^1(A),\mathbb{Z})\\
        &\to((2\mathrm{H}^{0,0}\oplus \mathrm{H}^{2,2})(\mathrm{Kum}^1(A))\oplus(\mathrm{H}^{1,1}(\mathrm{Kum}^1(A))\cap(\mathbb{Z}^W)^\perp))\cap\widetilde{\mathrm{H}}(\mathrm{Kum}^1(A),\mathbb{Z}).
    \end{aligned}    
    \end{equation*}
    Since every element of $G_A$ satisfies the last line of (\ref{element in SOHdgHtilde(Kum1A,Z)}), the condition can be simplified to $$(2\mathrm{H}^{0,0}\oplus \mathrm{H}^{2,2})(\mathrm{Kum}^1(A))\oplus(\mathrm{H}^{1,1}(\mathrm{Kum}^1(A))\cap(\mathbb{Z}^W)^\perp)\to(2\mathrm{H}^{0,0}\oplus \mathrm{H}^{2,2})(\mathrm{Kum}^1(A))\oplus(\mathrm{H}^{1,1}(\mathrm{Kum}^1(A))\cap(\mathbb{Z}^W)^\perp).$$ 
    It is equivalent to $(\mathbb{Z}^W)^\perp\to(\mathbb{Z}^W)^\perp$, and moreover, $\mathbb{Z}^W\to\mathbb{Z}^W$, since every element of $G_A$ satisfies the first two lines of (\ref{element in SOHdgHtilde(Kum1A,Z)}).

    In conclusion, the subgroup $G_A$ of $\mathrm{SO}_{\mathrm{Hdg}}(\widetilde{\mathrm{H}}(\mathrm{Kum}^1(A),\mathbb{Z}))$ is composed of special orthogonal Hodge isometries that map $\mathbb{Z}^W\subset\widetilde{\mathrm{H}}(\mathrm{Kum}^1(A),\mathbb{Z})$ to itself.
\end{passage}

To compare the description of $\Phi\in\mathrm{Aut}(\mathbf{D}^b_{\mathfrak{S}_2}(A))\simeq\mathrm{Aut}(\mathbf{D}^b(\mathrm{Kum}^1(A)))$ being of the form $\Phi_{(f,\sigma)}$ in Remark \ref{Kummer K3 remark 1} with the subgroup $G_A$ of $\mathrm{SO}_{\mathrm{Hdg}}(\widetilde{\mathrm{H}}(\mathrm{Kum}^1(A),\mathbb{Z}))$, we need to translate the condition for $\gamma_A(\Phi)=g\in\mathrm{Sp}(A)$ in the level of rational even cohomology $\widetilde{\mathrm{H}}(A,\mathbb{Q})$ using the triality of $\mathrm{Spin}(8)$ as in subsection \ref{triality of Spin(8)}. Note that the triality proposition can only be used for the Kummer K3 surface case since $\dim(V_{N_A})=\dim(V_A)=8$.

\begin{passage}\label{Jg*J-1 general}
    Let $V_A:=\mathrm{H}^1(A, \mathbb{Z})\oplus\mathrm{H}^1(\widehat{A}, \mathbb{Z}), W_A:=\mathrm{H}^1(A, \mathbb{Z}), W_A':=\mathrm{H}^1(\widehat{A}, \mathbb{Z})$ and $p_{W_A},p_{W_A'}$ be two projections of $V_A$. Using the bilinear symmetric form $Q_A$ of $V_A$ as in subsection \ref{Relating the images of lambda_q(f,sigma) and f via Orlov's representation}, we have the natural isomorphism $W'_A=\mathrm{H}^1(\widehat{A}, \mathbb{Z})\cong\mathrm{H}^1(A, \mathbb{Z})^*$ that maps $\beta\in\mathrm{H}^1(\widehat{A}, \mathbb{Z})$ to $2Q_A(\beta,-)=(\beta,-)_{V_A}$. Using Remarks \ref{SP vs SO remark} and \ref{Kummer K3 remark 1}, a derived autoequivalence $\Phi\in\mathrm{Aut}(\mathbf{D}^b_{\mathfrak{S}_2}(A))\simeq\mathrm{Aut}(\mathbf{D}^b(\mathrm{Kum}^1(A)))$ is of the form $\Phi_{(f,\sigma)}$ as in Corollary \ref{split_NAA} if and only if the symplectic map $g=\gamma_{A}(\Phi)$ induces $g_*\in\mathrm{SO}^+_{\mathrm{Hdg}}(V_A)$ with $p_{W_A}\circ g_*\circ p_{W_A'}$ mapping $\frac{1}{2}W_A'$ to $2W_A$.

    Similarly for the rational vector space $V_{A,\mathbb{Q}}$, we have $V_{A,\mathbb{Q}}=W_{A,\mathbb{Q}}\oplus W'_{A,\mathbb{Q}}$. We denote by $S^+_{A,\mathbb{Q}}:=\mathrm{H}^\mathrm{even}(A,\mathbb{Q})=\widetilde{\mathrm{H}}(A,\mathbb{Q})$ a half-spin representation. The triality of $\mathrm{Spin}(8)$ in Proposition \ref{Ex 20.51 Fulton Harris} implies that the element $g_*\in\mathrm{SO}^+_{\mathrm{Hdg}}(V_{A,\mathbb{Q}})$ in the standard representation corresponds to an element $J_A\circ g_*\circ J_A^{-1}\in\mathrm{SO}^+(S^+_{A,\mathbb{Q}})$ in the half-spin representation. So, the condition $p_{W_A}\circ g_*\circ p_{W_A'}(\frac{1}{2}W_A')\subset2W_A$ corresponds to $J_A\circ p_{W_A}\circ g_*\circ p_{W_A'}\circ J_A^{-1}(\frac{1}{2}J_A(W_A'))\subset2J_A(W_A)$. 
    
    We still use $g_*:\widetilde{\mathrm{H}}(A,\mathbb{Q})\to\widetilde{\mathrm{H}}(A,\mathbb{Q})$ to denote the isomorphism in even cohomology induced by $\Phi$. Then, we can conclude that a derived autoequivalence $\Phi\in\mathrm{Aut}(\mathbf{D}^b_{\mathfrak{S}_2}(A))\simeq\mathrm{Aut}(\mathbf{D}^b(\mathrm{Kum}^1(A)))$ is of the form $\Phi_{(f,\sigma)}$ as in Corollary \ref{split_NAA} if and only if $\Phi$ induces an isomorphism $g_*:\widetilde{\mathrm{H}}(A,\mathbb{Q})\to\widetilde{\mathrm{H}}(A,\mathbb{Q})$ in even cohomology such that $p_{J_A(W'_{A,\mathbb{Q}})}\circ g_*\circ p_{J_A(W_{A,\mathbb{Q}})}$ maps $\frac{1}{2}J_A(W'_{A})$ to $2J_A(W_A)$, where $p_{J_A(W_{A,\mathbb{Q}})}$ and $p_{J_A(W'_{A,\mathbb{Q}})}$ are two projections of $\widetilde{\mathrm{H}}(A,\mathbb{Q})=J_A(W_{A,\mathbb{Q}})\oplus J_A(W'_{A,\mathbb{Q}})$.

    This kind of $g_*$ forms only a subgroup of $G_A$. That means that there may be other ways to lift some derived autoequivalences of abelian surfaces to those of the corresponding Kummer K3 surfaces.%
    \footnote{I suspect that the method in Corollary \ref{split_NAA} gives the unique way to lift some derived autoequivalences of abelian surfaces to those of the corresponding generalized Kummer varieties, which are not Kummer K3 surfaces.}%
    
\end{passage}

\begin{passage}\label{Jg*J-1 explicit}
    We may compute the subspaces $J_A(W_A)$ and $J_A(W'_{A})$ for the automorphism $J_A$ using the bilinear symmetric form $Q_A$ and the contents in passages \ref{section 20.1 even, Fulton Harris}, \ref{section 20.3 even, Fulton Harris} and Proposition \ref{Ex 20.50 Fulton Harris}.

    Using the bilinear symmetric form $Q_A$, we may choose elements \begin{align*}
        v_1&=dz_1+(d\bar{z_1}\wedge dz_2\wedge d\bar{z_2})\in V_{A}\\
        s_1&=1+ (dz_1\wedge d\bar{z_1}\wedge dz_2\wedge d\bar{z_2})\in S^+_{A}
    \end{align*} such that $\langle v_1, v_1\rangle_{V_A}=\langle s_1, s_1\rangle_{S^+_A}=1$ by the notation in passage \ref{section 20.3 even, Fulton Harris}. Indeed, the element $d\bar{z_1}\wedge dz_2\wedge d\bar{z_2}\in\mathrm{H}^3(A,\mathbb{Z})\cong\mathrm{H}^1(\widehat{A},\mathbb{Z})$ corresponds to the element $\ell_{z_1}\in\mathrm{H}^1(\widehat{A},\mathbb{Z})^*$ such that $\ell_{z_1}(dz_1)=1, \ell_{z_1}(d\overline{z_1})=\ell_{z_1}(dz_2)=\ell_{z_1}(d\overline{z_2})=0$ by the natural isomorphism $\mathrm{H}^1(\widehat{A},\mathbb{Z})\cong\mathrm{H}^1(A,\mathbb{Z})^*$.
    
    Now, we may use the elements $v_1\in V_{A}, s_1\in S^+_{A}$ to construct the automorphism $J_A$ in Proposition \ref{Ex 20.50 Fulton Harris}. 
    Precisely, for \begin{align*}
        w'&=a_1d\bar{z_1}\wedge dz_2\wedge d\bar{z_2}+a_2d{z_1}\wedge dz_2\wedge d\bar{z_2}+a_3d{z_1}\wedge d\bar{z_1}\wedge d\bar{z_2}+a_4d{z_1}\wedge d\bar{z_1}\wedge d{z_2}\in W'_A\\
        w&=b_1dz_1+b_2d\bar{z_1}+b_3dz_2+b_4d\bar{z_2}\in W_A
    \end{align*} with $a_i, b_i\in\mathbb{Z}, i=1,2,3,4$, we have \begin{align*}
        J_A(w')&=2Q_A(w',v_1)s_1-w'\cdot v_1\cdot s_1\\
        &=a_1(1+2dz_1\wedge d\bar{z_1}\wedge dz_2\wedge d\bar{z_2})-a_2(dz_2\wedge d\bar{z_2})-a_3(d\bar{z_1}\wedge d\bar{z_2})-a_4(d\bar{z_1}\wedge dz_2)\in S^+_{A}\\
        J_A(w)&=2Q_A(w,v_1)s_1-w\cdot v_1\cdot s_1\\
        &=b_1(2+dz_1\wedge d\bar{z_1}\wedge dz_2\wedge d\bar{z_2})-b_2(dz_1\wedge d\bar{z_1})-b_3(d{z_1}\wedge d{z_2})-b_4(d{z_1}\wedge d\bar{z_2})\in S^+_{A}.
    \end{align*}
    So the subspaces $J_A(W_A)$ and $J_A(W'_{A})$ for $\widetilde{\mathrm{H}}(A,\mathbb{Z})$ can be expressed as \begin{align*}
        J_A(W'_A)&=\langle1+2dz_1\wedge d\bar{z_1}\wedge dz_2\wedge d\bar{z_2},dz_2\wedge d\bar{z_2},d\bar{z_1}\wedge d\bar{z_2},d\bar{z_1}\wedge dz_2\rangle_{\mathbb{Z}}\\
        J_A(W_A)&=\langle2+dz_1\wedge d\bar{z_1}\wedge dz_2\wedge d\bar{z_2},dz_1\wedge d\bar{z_1},d{z_1}\wedge d{z_2},d{z_1}\wedge d\bar{z_2}\rangle_{\mathbb{Z}}
    \end{align*} using the generators with integer coefficients.
\end{passage}

\begin{remark}
    Moreover, using passages \ref{Jg*J-1 general} and \ref{Jg*J-1 explicit}, we have the following equivalence for $n\geq 2$ in general. 
    
    Let $g=\begin{pmatrix}
            g_1&g_2\\g_3&g_4
        \end{pmatrix}\in\mathrm{Sp}(A)$ as before, then
    \begin{align*}
        (g_2)_*(\frac{1}{n}\mathrm{H}^1(\widehat{A},\mathbb{Z}))\subset n\mathrm{H}^1(A,\mathbb{Z})
        \Leftrightarrow (g_2)_*(\frac{1}{n}J_A(W'_A))\subset J_A(W_A),
    \end{align*}
    where the subspaces $J_A(W_A)$ and $J_A(W'_{A})$ for $\widetilde{\mathrm{H}}(A,\mathbb{Z})$ are \begin{align*}
        J_A(W'_A)&=\langle1+2dz_1\wedge d\bar{z_1}\wedge dz_2\wedge d\bar{z_2},dz_2\wedge d\bar{z_2},d\bar{z_1}\wedge d\bar{z_2},d\bar{z_1}\wedge dz_2\rangle_{\mathbb{Z}}\\
        J_A(W_A)&=\langle 2+dz_1\wedge d\bar{z_1}\wedge dz_2\wedge d\bar{z_2},dz_1\wedge d\bar{z_1},d{z_1}\wedge d{z_2},d{z_1}\wedge d\bar{z_2}\rangle_{\mathbb{Z}}
    \end{align*} using the generators with integer coefficients.

\end{remark}

\subsection{Triality of $\mathrm{Spin}(8)$}\label{triality of Spin(8)}
In this subsection, we recall some notation and results about the triality of $\mathrm{Spin}(8)$ in \cite[section 20]{Fulton Harris:04} and \cite[Chapter IV]{Chevalley:54} used in passages \ref{Jg*J-1 general} and \ref{Jg*J-1 explicit}. 

First, we recap the Clifford algebra and half-spin representations.

\begin{passage}
    Given a real vector space $V$ and a bilinear symmetric form $Q$ on $V$, the \textit{Clifford algebra} $C(V,Q)$ can be defined to be the universal algebra with this property: if $E$ is any associative algebra with unit, and a linear mapping $j:V\to E$ is given such that $j(v)^2=Q(v,v)\cdot1$ for all $v\in V$, or equivalently \begin{equation}\label{universal prop of C(V,Q)}
    j(v_1)\cdot j(v_2)+j(v_2)\cdot j(v_1)=2Q(v_1,v_2)\cdot 1
    \end{equation}
    for all $v_1,v_2\in V$, then there should be a unique homomorphism of algebras from $C(V,Q)$ to $E$ extending $j$. The Clifford algebra can be constructed by taking the tensor algebra $T^*(V):=\underset{n\geq0}{\oplus}V^{\otimes n}$ and setting $C(V,Q)=T^*(V)/I(Q)$, where $I(Q)$ is the two-sided ideal generated by all elements of the form $v\otimes v-Q(v,v)\cdot1$. It is automatic that the constructed $C(V,Q)$ satisfies the universal property required.

    Since the ideal $I(Q)\subset T^*(V)$ is generated by elements of even degree, the Clifford algebra inherits a $\mathbb{Z}/2\mathbb{Z}$ grading: $C(V,Q)=C(V,Q)^{\mathrm{even}}\oplus C(V,Q)^{\mathrm{odd}}$, where $C(V,Q)^{\mathrm{even}}$ and $C(V,Q)^{\mathrm{odd}}$ are spanned by products of an even number or an odd number of elements in $V$ respectively. In particular, $C(V,Q)^{\mathrm{even}}$ is a subalgebra of $C(V,Q)$.
\end{passage}

\begin{passage}\label{section 20.1 even, Fulton Harris}
    When $\dim(V)=2n$ is even, we write $V=W\oplus W'$, where $W$ and $W'$ are $n$-dimensional isotropic spaces for $Q$. This decomposition determines an isomorphism of algebras $C(V,Q)\cong\mathrm{End}(\wedge^*W)$, where $\wedge^*W=\underset{i=0}{\overset{n}{\oplus}}\wedge^iW$ by \cite[Lemma 20.9]{Fulton Harris:04}. 
    
    Indeed, this isomorphism is deduced from the universal property of $C(V,Q)$ to the linear mapping $j=l\oplus l':V=W\oplus W'\to\mathrm{End}(\wedge^*W)$ that satisfies (\ref{universal prop of C(V,Q)}). Here, (\ref{universal prop of C(V,Q)}) can be verified by setting $l(w)=L_w$, the left mutiplication by $w$ on the exterior algebra $\wedge^*W$, and setting $l'(w')=D_\ell$, where $\ell\in W^*$ is defined by $\ell(w)=2Q(w,w'),\forall w\in W$ and $D_{\ell}$ is the derivation of $\wedge^*W$ such that $$D_{\ell}(w_1\wedge\dots\wedge w_r):=\sum(-1)^{i-1}\ell(w_i)(w_1\wedge\dots\hat{w_i}\wedge\dots\wedge w_r).$$

    We deduce to get an isomorphism $C(V,Q)^{\mathrm{even}}\cong\mathrm{End}(\wedge^{\mathrm{even}}W)\oplus\mathrm{End}(\wedge^{\mathrm{odd}}W)$. Using the embedding of Lie algebras, $\mathfrak{so}_{2n}(\mathbb{R})\hookrightarrow C(V,Q)^{\mathrm{even}}$, in \cite[Lemma 20.7]{Fulton Harris:04}, we now have an embedding of Lie algebras:
    $$\mathfrak{so}_{2n}(\mathbb{R})\subset C(V,Q)^{\mathrm{even}}\cong\mathfrak{gl}(\wedge^{\mathrm{even}}W)\oplus\mathfrak{gl}(\wedge^{\mathrm{odd}}W).$$
    Hence, we have two representations of $\mathfrak{so}_{2n}(\mathbb{R})$, which we denote by $S^+=\wedge^{\mathrm{even}}W$ and $S^-=\wedge^{\mathrm{odd}}W$. They are irreducible representations by \cite[Proposition 20.15]{Fulton Harris:04}. These two representations are called the \textit{half-spin representations} of $\mathfrak{so}_{2n}(\mathbb{R})$. The sum $S=S^+\oplus S^-=\wedge^*W$ is called the \textit{spin representation}.
\end{passage}

Now, we may state the triality of $\mathrm{Spin}(8)$. That is, for the $\dim V=8$ case. Actually, triality for $\mathfrak{so}(V)$ is only valid when $\dim V=8$. 

Indeed, simple Lie algebra determines the Dynkin diagram for the irreducible root system. When $\dim V=2n+1$ is odd with $n\geq 2$, the Dynkin diagram $B_n$ has a trivial automorphism group. When $\dim V=2n$ is even with $n\geq4$, the Dynkin diagram $D_n$ satisfies $\mathrm{Aut}(D_n)=\begin{cases}
    \mathfrak{S}_3, n=4\\
    \mathbb{Z}/2\mathbb{Z}, n\geq5.
\end{cases}$ 

\begin{passage}\label{section 20.3 even, Fulton Harris}
    Let the notation be as above. The linear map $j:V\to\mathrm{End}(\wedge^*W)$ in passage \ref{section 20.1 even, Fulton Harris}, gives rise to bilinear maps $V\times S^+\xrightarrow{\cdot}S^-, V\times S^-\xrightarrow{\cdot}S^+$. Let $\langle-,-\rangle_V$ denote the symmetric form corresponding to the quadratic form in $V$ (That is, $Q(-,-)=\langle-,-\rangle_V$.) and similarly for $S^+$ and $S^-$. Define a product $S^+\times S^-\xrightarrow{\cdot}V, (s,t)\mapsto s\cdot t$ by requiring $\langle v,s\cdot t\rangle_V=\langle v\cdot s, t\rangle_{S^-}$. 

    The above products determine a commutative but non-associative product on the direct sum $V\oplus S^+\oplus S^-$. The operation $(v,s,t)\mapsto \langle v\cdot s,t\rangle_{S^-}:=F(v+s+t)$ determines a cubic form on $V\oplus S^+\oplus S^-$. It determines a symmetric trilinear form $\Theta$ on $V\oplus S^+\oplus S^-$ by polarization. That is, $$\Theta(v,s,t):=F(v)+F(s)+F(t)-F(v+s)-F(v+t)-F(s+t)+F(v+s+t).$$
\end{passage}

The following two propositions give an algebraic version of triality.

\begin{proposition}[{\cite[Exercise 20.50]{Fulton Harris:04}, see also \cite[section 4.3]{Chevalley:54}}]\label{Ex 20.50 Fulton Harris}
    Choose elements $v_1\in V$ and $s_1\in S^+$ with $\langle v_1,v_1\rangle_V=\langle s_1,s_1\rangle_{S^+}=1$. Construct a map $J$ to be the composition of two involutions $\mu$ and $\nu$, which are determined by the following:
    \begin{enumerate}
        \item $\mu$ interchanges $S^+$ and $S^-$, and maps $V$ to itself, with $\mu(s)=v_1\cdot s$ for $s\in S^+$ and $\mu(v)=2\langle v,v_1\rangle_Vv_1-v$ for $v\in V$.
        \item $\nu$ interchanges $V$ and $S^-$, and maps $S^+$ to itself, with $\nu(v)=v\cdot s_1$ for $v\in V$ and $\mu(s)=2\langle s,s_1\rangle_{S^+}s_1-s$ for $s\in S^+$.
    \end{enumerate}
    Then, the map $J$ is an automorphism of $V\oplus S^+\oplus S^-$ of order $3$, such that $$J(V)=S^+, J(S^+)=S^-, J(S^-)=V,$$ preserving their quadratic forms, and compatible with the cubic form $F$.
\end{proposition}

Note that the map $J$ depends on the choice of $v_1,s_1$.

\begin{proposition}[{\cite[Exercise 20.51]{Fulton Harris:04}, see also \cite[sections 4.2-4.5]{Chevalley:54}} and \cite{Jacobson:79}]\label{Ex 20.51 Fulton Harris}
    In this algebraic form, the triality of $\mathrm{Spin}(V)\cong\mathrm{Spin}(8)$ is to say that there is an automorphism $j\in\mathrm{Aut}(\mathrm{Spin}(V))$ of order $3$, which is compatible with $J$. It means that for all $x\in\mathrm{Spin}(V)$, we have the following commutative diagram. \begin{align*}
        \xymatrix{
        V\ar[rr]^{J}\ar[d]^{\rho(x)}&&S^+\ar[rr]^{J}\ar[d]^{\rho^+(j(x))}&&S^-\ar[rr]^{J}\ar[d]^{\rho^-(j^2(x))}&&V\ar[d]^{\rho(x)}\\
        V\ar[rr]^{J}&&S^+\ar[rr]^{J}&&S^-\ar[rr]^{J}&&V
        }
    \end{align*}

    If $j':\mathfrak{so}(V)\to\mathfrak{so}(V)$ is the map induced by $j$, then we have the ``local triality equation" $$\Theta(Xv,s,t)+\Theta(v,Ys, t)+\Theta(v,s,Zt)=0$$ for $X\in\mathfrak{so}(V)\cong\mathfrak{so}(8), Y=j'(X), Z=j'(Y)$. It can be translated to the fact that $j$ is compatible with the trilinear form $\Theta$.
\end{proposition}

\bibliographystyle{plain}

\end{document}